\documentclass[a4paper,11pt,english,reqno]{amsart}
\usepackage{amsmath,amssymb,amsfonts,dsfont,amsthm,upgreek,bm,mathtools}    
\usepackage[utf8]{inputenc}
\usepackage[english]{babel}
\usepackage[T1]{fontenc} 
\usepackage{graphicx,xcolor,pgfkeys}
\usepackage{float}
\usepackage[font=small,labelfont=bf]{caption}
\usepackage[a4paper, margin=2.4cm]{geometry}
\usepackage{tikz,pgfplots}
\usepackage{tkz-fct}
\usepgfplotslibrary{polar}
\usetikzlibrary{patterns, arrows}

\usepackage{enumerate}
\usepackage{enumitem}
\usepackage[normalem]{ulem}
\newcommand\redsout{\bgroup\markoverwith{\textcolor{red}{\rule[0.5ex]{2pt}{1pt}}}\ULon}
\newcommand{\stkout}[1]{\ifmmode\text{\redsout{\ensuremath{#1}}}\else\redsout{#1}\fi}
%
\usepackage{scalerel,stackengine}
\stackMath
\newcommand\reallywidecheck[1]{%
\savestack{\tmpbox}{\stretchto{%
  \scaleto{%
    \scalerel*[\widthof{\ensuremath{#1}}]{\kern-.6pt\bigwedge\kern-.6pt}%
    {\rule[-\textheight/2]{1ex}{\textheight}}
  }{\textheight}%
}{0.6ex}}%
\stackon[1pt]{#1}{\scalebox{-0.8}{\tmpbox}}%
}

\newcommand{\prob}{\mathds{P}}
\newcommand{\erw}{\mathds{E}}
\newcommand{\e}{\mathrm{e}}

\newcommand{\loc}{\mathrm{loc}}
\newcommand{\comp}{\mathrm{comp}}

\newcommand{\tr}{\mathrm{tr\,}}

\newcommand{\supp}{\mathrm{supp\,}}

\newcommand{\dist}{\mathrm{dist\,}}

\newcommand{\Op}{\mathrm{Op}}
\newcommand{\mO}{\mathcal{O}}
\newcommand{\C}{\mathds{C}}
\newcommand{\R}{\mathds{R}}

\newcommand{\N}{\mathds{N}}

\newcommand{\wt}{\widetilde}

\newcommand{\Vol}{\mathrm{Vol}}

\newcommand{\wh}{\widehat}

\newcommand{\gI}{\mathfrak{I}}
\newcommand{\gd}{\mathfrak{d}}

\newcommand{\by}{\boldsymbol{y}}
\newcommand{\wit}{\widetilde}
\newcommand{\wih}{\widehat}

\newcommand{\bw}{\boldsymbol{w}}

\newcommand{\E}{\mathbb{E}}

\newcommand{\fK}{\mathfrak{K}}

\newcommand{\cO}{\mathcal{O}}

\newcommand{\cK}{\mathcal{K}}

\newcommand{\cU}{\mathcal{U}}

\renewcommand{\geq}{\geqslant}

\renewcommand{\leq}{\leqslant}

\newcommand{\bx}{\boldsymbol{x}}

\newcommand{\bH}{\boldsymbol{H}}

\newcommand{\dd}{\mathrm{d}}
\newtheorem{thm}{Theorem}
\newtheorem{corollary}[thm]{Corollary}

\newtheorem{prop}[thm]{Proposition}
\newtheorem{lem}[thm]{Lemma}

\newtheorem{definition}[thm]{Definition}
\newtheorem{rem}[thm]{Remark}

\newtheorem{ex}[thm]{Example}
\newtheorem{hypo}[thm]{Hypothesis}

\numberwithin{equation}{section}
\numberwithin{thm}{section}

\setcounter{tocdepth}{1}
\usepackage{hyperref}
\hypersetup{pdfborder=0 0 0, 
	    colorlinks=true,
	    citecolor=blue,
	    linkcolor=blue,
	    urlcolor=blue,
	    pdfauthor={Maxime Ingremeau, Martin Vogel}
	   }
\def\corM{\textcolor{black}}
 \def\red{\textcolor{black}}

 \def\MI{\textcolor{black}}

 

\tikzset{
xmin/.store in=\xmin, xmin/.default=-3, xmin=-3,
xmax/.store in=\xmax, xmax/.default=3, xmax=3,
ymin/.store in=\ymin, ymin/.default=-3, ymin=-3,
ymax/.store in=\ymax, ymax/.default=3, ymax=3,
}
\newcommand {\grille}
{\draw[help lines] (\xmin,\ymin) grid (\xmax,\ymax);}
\newcommand {\axes} {
\draw[->] (\xmin,0) -- (\xmax,0);
\draw[->] (0,\ymin) -- (0,\ymax);
}
\newcommand {\fenetre}
{\clip (\xmin,\ymin) rectangle (\xmax,\ymax);}

\title{Emergence of Gaussian fields in noisy quantum chaotic dynamics}
\author{Maxime Ingremeau}
\address[Maxime Ingremeau]{\MI{Institut Fourier UMR 5582, Laboratoire de Mathématiques\\Université Grenoble Alpes, CS 40700, 38058 Grenoble cedex 9, France}}
\email{Maxime.Ingremeau@univ-grenoble-alpes.fr}
\author{Martin Vogel}
\address[Martin Vogel]{Institut de Recherche Math{\'e}matique Avanc{\'e}e - UMR 7501, 
Universit{\'e} de Strasbourg et CNRS, 7 rue René-Descartes, 67084 Strasbourg Cedex, France.}
\email{vogel@math.unistra.fr}
\date{\today}
\keywords{Quantum chaos; Random perturbations.}

\usepackage[most]{tcolorbox}
%

\begin{document}
\begin{abstract}
	We study the long time \red{semiclassical} Schr\"odinger evolution of Lagrangian states $f_h$ 
	on a compact Riemannian manifold $(X,g)$ of negative sectional curvature. We 
	consider two models of random Schr\"odinger operators 
	$P_h^\alpha=-h^2\Delta_g +h^\alpha Q_\omega$, $0<\alpha\leq 1$, where  
	the Laplace-Beltrami operator $-h^2\Delta_g$ on $X$ 
	is subject to a small random perturbation $h^\alpha Q_\omega$ given by 
	either a random potential or a random pseudo-differential operator. 
	Here, the potential or the symbol of $Q_\omega$ is bounded, but oscillates 
	and decorrelates at scale $h^{\beta}$, $0< \beta < \frac{1}{2}$.
	We prove a quantitative result that, under appropriate conditions on 
	$\alpha,\beta$, in probability with respect to $\omega$ the long 
	time propagation 
	$$
		\e^{\frac{i}{h}t_h P_h^\alpha } f_h, 
		\quad o(|\log h|)=t_h\to\infty, ~~h\to 0,
	$$
	rescaled to the local scale of $h$ around a uniformly at random 
	chosen point $x_0$ on $X$, converges in law to an isotropic stationary 
	monochromatic Gaussian field -- the Berry \red{random wave model}. We also provide 
	an $\omega$-almost sure version of this convergence along sufficiently fast 
	decaying subsequences $h_j\to 0$. 
\end{abstract}
\maketitle
\tableofcontents

\section{Introduction}
\textbf{Background.} 
The theory of quantum chaos aims at understanding the nature  
of a quantum system when its associated classical Hamiltonian 
system is chaotic. A guiding example is the Laplace-Beltrami 
operator on a negatively curved smooth Riemannian manifold $X$. 
There the geodesic flow has the Anosov property \cite{Ebe} which, 
in a sense, is the ideal chaotic behavior. The corresponding quantum dynamics 
is given, in the high-energy or semiclassical limit, by the 
unitary group generated by the Laplace-Beltrami operator $\Delta_g$ on 
$L^2(X)$. The chaotic nature of the geodesic flow is conjectured 
(and indeed proven in some cases) to have \red{ a deep}  
influence on the spectral properties of the Laplacian. For instance, the  
random matrix conjecture by Bohigas-Giannoni-Schmit \cite{BGS84a,BGS84b,Bo91} 
states that the fluctuations of the high-lying eigenvalues 
should resemble those of large Wigner random matrices. 
The corresponding eigenfunctions are conjectured by Rudnick-Sarnack  
to be \emph{uniquely quantum ergodic} \cite{RS94} 
(see also \cite{Vo79}). More precisely, it is conjectured that 
the family of eigenfunctions $\{\psi_\lambda\}_{\lambda}$ of $\Delta_g$ 
indexed by their corresponding eigenvalue satisfies
\begin{equation}\label{eq:QE}
	\langle \Op_{\lambda^{-1/2}}(a) \psi_\lambda | \psi_\lambda\rangle 
	\to \int_{S^*X}a d\rho, \quad \lambda \to \infty,
\end{equation}
for any $a\in C^\infty(S^*X)$.  This conjecture is motivated by the quantum 
ergodicity theorem of \v{S}nirel'man \cite{Shni}, Zelditch \cite{Zel} and 
Colin de Verdière \cite{CdV},  claiming that (\ref{eq:QE}) holds for a density 
one sequence of eigenvalues $\lambda$.  We refer the reader to \cite{Dya} for 
an account of recent advances regarding the quantum unique ergodicity conjecture.
\\
\par 
Another way of understanding the delocalization properties of the Laplacian's 
eigenfunctions is covered by \emph{Berry's random wave conjecture} \cite{Ber}. 
It claims that, in the high energy limit, quantum chaotic eigenfunctions 
should resemble at a local scale a random superposition of plane waves.  \red{This conjecture has been confirmed by numerical and experimental evidence (see for instance (\cite{hejhal1992topography}, \cite{aurich1993statistical}, \cite{backer1998rate}, \cite{barnett2006asymptotic}) or experimentally (\cite{savytskyy2004experimental}, \cite{hul2005investigation} \cite{kuhl2007nodal})).  However,  the precise mathematical formulation of this conjecture has been debated for a long time,  since it compares  a 
sequence of deterministic objects with a random object.
Motivated }
by the Benjamini-Schramm convergence in the theory of large random graphs, 
and by work by Bourgain \cite{Bou} on the torus, 
it was recently suggested in \cite{ABLM, LWL} \MI{(see also \cite{Alba})} that to make sense of the 
randomness in Berry's heuristic one should look at the eigenfunctions near 
a random point. More precisely, when $\psi_\lambda$ -- an eigenfunction of 
the Laplacian at energy $\lambda$ -- is rescaled to the scale of the wavelength 
$\lambda^{-\frac{1}{2}}$ around a randomly chosen point on the manifold, it defines 
a family of random functions, whose law 
should converge weakly to that of an isotropic stationary monochromatic 
Gaussian random field. See section \ref{ss:local_limits_def} 
for definitions and for a more precise statement. 
Note that this interpretation of Berry's conjecture implies the quantum 
unique ergodicity conjecture, as is proven in \cite{LWL}. 
\\
\par
\textbf{The setting: Quantum chaotic propagation.}
In the present work we will adopt a semiclassical point of view. Rescaling the 
eigenvalue equation $(-\Delta_g - \lambda)\psi_\lambda = 0$ by the eigenvalue 
$\lambda=h^{-2}$ we get the semiclassical equation $(-h^2\Delta_g -1)\psi_h=0$ 
where $h>0$ denotes the semiclassical parameter. Moreover,  in this paper, we will 
not be concerned with genuine eigenfunctions 
of the Laplacian but rather with another important question in quantum chaos: understanding 
the long-time behaviour of the Schrödinger \red{evolution of some} highly oscillating 
initial data: \red{Lagrangian (or WKB) states}.  \red{More precisely,} we wish to study,  \red{for $t_h \gg 1$,} 
 $\e^{\red{i h t_h \Delta_g}} f_h$ which is the solution to 
\begin{equation}\label{intro_eq1}
	\begin{cases}
	ih\partial_t u = -h^2\Delta_g u, \\
	u|_{t=0} = f_h = a\e^{\frac{i}{h}\phi}
	\end{cases}
\end{equation} 
\red{where $a, \phi$ are suitable smooth $h$-independent functions.}
\par
For such long-time propagated quantum objects, one can sometimes prove 
properties analogous to those of genuine eigenfunctions.
For instance, in \cite{Schu},  Schubert considered Lagrangian states 
$f_h$ \red{as in (\ref{intro_eq1}),} associated with Lagrangian manifolds that are transverse to the 
stable directions of the dynamics (see section \ref{subsec:Lag}), on 
a manifold of negative sectional curvature. He could show that the 
analogue of (\ref{eq:QE}) holds, with $\psi_\lambda$ replaced with 
$\e^{-i h t_h \Delta_g} f_h$, where $t_h$ goes to infinity as $h\to 0$, 
while remaining smaller than some constant times $|\log h|$.
Hence, the large-time evolution of Lagrangian states under the semiclassical 
Schrödinger equation \eqref{intro_eq1} satisfies \red{an analogue of} quantum unique ergodicity. 
It is thus natural to wonder if such functions do also satisfy an analogue 
of Berry's conjecture.
\par 
This questions was first raised in \cite{IngRiv}, and was given a partial 
positive answer. Namely, in \cite{IngRiv},  the authors considered Lagrangian 
states  \red{as in (\ref{intro_eq1}),} with a \emph{generic} phase \red{$\phi$}.  They first took the limit $h\to 0$, and 
then $t\to \infty$ to obtain convergence to a Gaussian field.
This result is probably not optimal, and it seems natural to conjecture that 
the family of functions $\e^{-i h t_h \Delta} f_h$ satisfy Berry's conjecture
as soon as $f_h$ is a Lagrangian state associated with a Lagrangian manifold that 
is transverse to the stable directions of the dynamics \red{(without any additional genericity assumption)}, and as soon as 
$t_h\to \infty$ with $t_h \leq c |\log h|$,  for some small enough $c$.
\\
\par 
\textbf{The result: noisy quantum chaotic propagation.}
The aim of this paper is to prove a result of this kind, not for the genuine 
semiclassical Laplacian $-\frac{h^2}{2}\Delta_g$, but for generic small 
perturbations 
of the form 
\begin{equation*}
	P_{h,\omega}= -\frac{h^2}{2}\Delta_g + h^\alpha Q_\omega, \quad 
	\alpha>0,
\end{equation*}
where $Q_\omega$ is either a bounded random potential or a bounded 
semiclassical random 
pseudo-differential operator obtained from the quantization of a 
random symbol oscillating at scale $h^\beta$, $\beta \in ]0,1/2[$. 
The presence of a small noise term can be motivated by the fact that 
in genuine physical situations an ``ideal'' evolution operator 
can be perturbed by many different sources, many of which are uncontrolled 
by the experimentalist. It therefore seems relevant on its own to 
study the propagation of initial data under the Schr\"odinger evolution 
semi-group induced by $P_{h,\omega}$. 
\\
\par
The aim of this paper is to study the family of functions 
\begin{equation*}
	\e^{\red{-}it_h P_{h,\omega}} f_h
\end{equation*}
where $f_h$ is a Lagrangian state associated with a Lagrangian manifold that is 
close enough to the unstable directions of the dynamics. Following our 
interpretation of Berry's conjecture, we rescale the propagated Lagrangian 
state to the microscale $h$ around \red{a point $\textsc{x}$ chosen uniformly at random}
on $X$. This rescaling around $\textsc{x}$ makes $\e^{\red{-}it_h P_{h,\omega}} f_h$ a 
random smooth function that depends on the \emph{additional} random parameter 
$\omega$.
\\
\par 
\emph{
\red{Under 
appropriate conditions on $\alpha$ and $\beta$, and supposing that $t_h\to \infty$ with $t_h \ll |\log h|$,  our main results can be summed up as follows. }}

\red{\emph{Theorem \ref{th:MartinEtMaximeSontDesSuperBeauxGosses}  says that  the function $(\e^{\red{-} it_h P_{h,\omega}} f_h)$, rescaled around a \emph{fixed} (and generic) point,  converges in law (with respect to $\omega$) to an isotropic stationary 
monochromatic Gaussian field -- the Berry Gaussian field, or Berry random wave model. 
}}

\red{\emph{Theorem \ref{th:MartinEtMaximeSontDesBeauxGosses} 
states that  the law 
of the function $(\e^{\red{-} it_h P_{h,\omega}} f_h)$, rescaled around a \emph{random} point, 
converges in probability (with respect to $\omega$), to the Berry Gaussian field.}}
\par 
\emph{
The quantitative nature of our result shows (Corollary \ref{cor:LWL}) that, for 
any sufficiently fast decaying subsequence $h_j\to 0$, the randomly rescaled 
smooth function $(\e^{\red{-} it_h P_{h,\omega}} f_h)$ satisfies Berry's conjecture 
$\omega$-almost surely. 
}
\\
\par 
The idea of adding a small generic perturbation to the semiclassical 
Laplace-Beltrami operator to obtain additional properties on the propagator 
is not new. 
For instance, in \cite{EswTot, CaJaTo},  the authors propagate eigenfunctions by the 
Schrödinger equation perturbed by a random perturbation, and obtain improved 
$L^p$ bounds by averaging over the perturbation; however, these results do not 
give information about the eigenfunctions (or propagated eigenfunctions) of a 
genuine Schrödinger operator. 
In a similar spirit, in \cite{EswRiv}, the authors perturbed the 
Laplace-Beltrami operator on a manifold of negative sectional curvature, by adding to 
it a small random potential of size $\gg h^{1/2}$. They show that, for any 
initial data which is microlocalized near the energy layer there is a high 
probability that its propagation up to time $O(|\log h|)$ by the perturbed 
Schrödinger equation satisfies some form of quantum ergodicity.
\par 
Note that the kind of perturbations we consider is somehow different from 
those of \cite{EswTot, CaJaTo, EswRiv}. The perturbations imposed in these 
papers are always large enough to modify the underlying classical dynamics: 
a wave packet microlocalized around $(x_0,\xi_0)$, when propagated by the 
perturbed Schrödinger equation in the time scales under consideration in 
these papers, is not microlocalized around the image of $(x_0,\xi_0)$ by 
the corresponding geodesic flow.
In contrast, we permit much smaller perturbations, which do not affect 
the classical dynamics \red{on macroscopic scales}, but which will only modify the phases of wave packets.
\par
This allows for some delicate phase cancellations between wave packets, which 
we believe to be a good toy model for quantum chaos. It should thus be easier 
to prove quantum chaotic properties for eigenfunctions of $P_{h,\omega}$ with 
a generic $\omega$ than for the genuine Laplacian; such considerations will be 
pursued elsewhere.
\\
\par
\textbf{Acknowledgements.} This article was influenced by discussions 
with Alejandro Rivera, and we would like to thank him for that. 
The authors would also like to thank Stéphane Nonnenmacher for suggesting 
Remark \ref{RemStephane}, as well as Ofer Zeitouni for a helpful discussion. 
Both authors were partially funded by the Agence Nationale de la Recherche, 
through the project ADYCT (ANR-20-CE40-0017).
\section{Main results}
\subsection{Lagrangian states}\label{subsec:Lag}
Let $(X,g)$ be a smooth compact connected Riemannian manifold without 
boundary and of negative sectional curvature. A \emph{Lagrangian state} 
on $X$ is a family of functions $f_h\in C^\infty(X)$ indexed by $h\in]0,1]$, 
defined by
\begin{equation}\label{eq:LagState}
f_h(x)=a(x)\e^{i\phi(x)/h},
\end{equation}
where $\phi\in C^\infty(\cO)$ for some simply connected open subset 
$\cO\subset X$ and $a\in C^\infty_c(\cO)$. To a Lagrangian state we can 
associate a \emph{Lagrangian manifold}
\begin{equation*}
	\Lambda_\phi:= \{ (x, d_x \phi) ; x\in \cO\}\ \subset T^*X.
\end{equation*}
A Lagrangian state is called \emph{monochromatic} if 
$\Lambda_\phi\subset S^*X:= \{(x,\xi)\in T^*X; |\xi|_x=1\}$, i.e. if 
\begin{equation}\label{eq:LagState_monochrom}
	|d_x \phi|_x =1 \quad  \text{for all } x\in \cO.
\end{equation}
As we will explain in Section \ref{sec:Hyperbolicity}, the dynamics of the 
geodesic flow is hyperbolic on $S^*X$, so for any $\rho\in S^*X$ we may 
decompose the tangent spaces $T_\rho S^*X$ into \emph{unstable}, \emph{neutral} 
and \emph{stable directions}
\begin{equation*}
	T_\rho S^*X=E_\rho^+\oplus E_\rho^0 \oplus E_\rho^-.
\end{equation*} 
\begin{definition}\label{def:LagInstable}
For every $\eta>0$, we say that a Lagrangian manifold 
$\Lambda_{\phi}\subset S^*X$ is $\eta$-unstable if, 
for every $\rho\in \Lambda_\phi$ and for every $v\in T_\rho \Lambda$, 
writing $v= (v_+, v_-, v_0)\in E_\rho^+\oplus E_\rho^-\oplus E_\rho^0$, 
we have 
\begin{equation*}
	|(0,v_-,0)|_{\rho} \leq \eta |v|_\rho.
\end{equation*} 
\end{definition}
Recall that the intrinsic distance $\mathrm{dist}_{\Lambda}(\rho_1, \rho_2)$ 
between two points $\rho_1, \rho_2\in \Lambda$ is the minimal length of 
curves in $\Lambda$ joining $\rho_1$ and $\rho_2$, the length being computed 
using an arbitrary metric on $T^*X$ (see section \ref{sec:notation}).  We define the 
\emph{distortion} of $\Lambda$ as
\begin{equation}\label{eq:DefDistorsion}
\mathrm{distortion}(\Lambda) 
:= \sup\limits_{\rho_1, \rho_2\in \Lambda} 
	\frac{\mathrm{dist}_{\Lambda}(\rho_1, \rho_2)}{\mathrm{dist}_{T^*X}(\rho_1, \rho_2)}.
\end{equation}
\subsection{Noisy propagation of Lagrangian states}
\label{subsec:PropLagState1}
Let $h>0$ and let $0\leq \delta = \delta(h) \ll 1$.  Consider the 
Schrödinger-type operator
\begin{equation}\label{eq:SchroedingerOp}
	P_h^\delta := -\frac{h^2}{2} \Delta_g + \delta Q_\omega, 
\end{equation}
where $\Delta_g$ denotes the Laplace-Beltrami operator on $(X,g)$ and 
where $Q_\omega$ is a random perturbation described in detail below. 
The aim of this paper is to study the large-time evolution of monochromatic 
Lagrangian states $f_h$ on $X$ under the Schrödinger equation 
\begin{equation*}
	\begin{cases}
	ih\partial_t u = P_h^\delta u, \\
	u|_{t=0} = f_h.
	\end{cases}
\end{equation*} 
In other word\red{s} we are interested in the propagated Lagrangian state 
\begin{equation}\label{intro_eq2}
	u=\e^{\red{-}i\frac{t}{h}P_h^\delta}f_h, \quad \text{for } t\gg1.
\end{equation}
We will consider the two types of random perturbations $Q_\omega$: 
\begin{enumerate}[label=\arabic*.]
	\item The case where $Q_\omega$ is the 
	operator of multiplication by a random real-valued smooth function 
	\begin{equation}\tag{Random Potential case}\label{eq:Pot}
		q_\omega : X \longrightarrow \R.
	\end{equation}
	\item The case where $Q_\omega$ is a pseudo-differential operator 
	given by the quantization 
	\begin{equation}\tag{Random $\Psi$DO case}\label{eq:Pseudo}
		Q_\omega := \Op_h(q_\omega)
	\end{equation}
	of a random smooth real-valued function
	\begin{equation*}
		q_\omega : T^*X \longrightarrow \R.
	\end{equation*}
	\red{We refer to Definition \ref{def:Quantization} below for the choice of quantization 
	used in the \ref{eq:Pseudo}.}
\end{enumerate}
Let us now describe what models of random functions $q_\omega$ we consider. 
Fix a parameter $\beta \in ]0, 1/2[$, let $J_h\subset \N$ be a set of indices 
of cardinality $|J_h| =O(h^{-M})$, for some $M>0$, and let 
$\{q_j\}_{j\in J_h}$ be a family of \red{$h$-dependent} smooth compactly 
supported functions on $X$, when we are in the \ref{eq:Pot}, or on $T^*X$ 
when we are in the \ref{eq:Pseudo}. To construct a random function on 
$X$ (resp. on the phase space $T^*X$) from the single-site potentials $q_j$,  
we let $\omega=\{\omega_j\}_{j\in J_h}$ be a sequence of independent and 
identically distributed (iid) random variables (the precise assumptions we make 
on the $\{\omega_j\}$ will be described in Hypothesis \ref{Hyp:Prob} below) 
and we set
\begin{equation}\label{eq:randomSymbol}
	\begin{split}
	&q_\omega(\rho) 
	:= 
	\sum_{j\in J_h} \omega_j \,q_j(\rho), \quad \text{in the \ref{eq:Pseudo},} 
	\\
	&q_\omega(x)
	:= 
	\sum_{j\in J_h} \omega_j \,q_j(x), \quad \text{in the \ref{eq:Pot}.} 
	\end{split}
\end{equation}
We make the following additional assumptions.
\begin{hypo}[Hypotheses on the single-site potential]\label{HypPot}
	$\phantom{.}$
\begin{enumerate}[label=\roman*.]
	\item Each $q_j$ is compactly supported, with a support of diameter 
		$O(h^\beta)$ uniformly in $j\in J_h$. 
	\item There exists $C>0$, independent of $h$, such that for all $\rho \in X$ (resp. $\rho \in T^*X$), $\rho$ belongs 
	to the support of at most $C$ functions $q_j$.
	\item For any $k\in \N$, there exists $C_k>0$ such that
	\begin{equation}\label{eq:PotentialDer}
		\|q_j\|_{C^k} \leq C_k h^{-\beta k} \quad \forall j\in J_h.
	\end{equation}
	\item \MI{There exists $c>0$ such that for all $h\in (0,1]$ and all $j\in J_h$, $\max_{\rho\in X} |q_j(\rho)|\geq c$ (resp. $\max_{\rho\in T^*X} |q_j(\rho)|\geq c$). }
	\item There exists $c_0>0$ such that, for any $T>0$ and any $\rho \in S^*X$, 
	we have \red{for all $h\in (0,1]$}
	\begin{equation}\label{eq:PotentialEverywhere}
	 \sum_{j\in J_h}  \int_0^T  q_j \left(\Phi^t (\rho)\right) dt  
	 \geq c_0 T,
	\end{equation}
	in the \ref{eq:Pseudo}. Here, $\Phi^t : T^*X \longrightarrow T^*X$ 
	denotes the geodesic flow. In the \ref{eq:Pot}, we work with the 
	same assumption but with $q_j \left(\Phi^t (\rho)\right)$ replaced 
	by $q_j \left(\pi_X\circ \Phi^t (\rho)\right)$, where 
	$\pi_X:T^*X\to X$ denotes the projection on the base manifold $X$. 
\end{enumerate}
\end{hypo}

\red{Note that, in the case (\ref{eq:Pseudo}),  the first three points in Hypothesis \ref{HypPot} imply that $q_\omega$ is a random element of the symbol class $S^{-\infty}_\beta(T^*X)$ (cf. Section 
	\ref{sec:SemClass} for definition of this notion).}
	

%
\begin{ex}
To build such a family of single-site potentials, one may for instance cover 
$X$ (resp. $S^*X)$ by geodesic balls $B(\rho_j, h^\beta)$ of radius $h^\beta$ 
and centred at $\rho_j$, such that each point belongs to at most $C$ 
balls\footnote{\red{\corM{Notice} that this implies that each point of $X$ belongs 
to at most $C'= \frac{\sup_{x\in M} \mathrm{Vol}(B(x,  3 h^\beta))}{\inf_{x\in M} \mathrm{Vol}(B(x,  h^\beta))}$ 
balls of radius $2h^\beta$, so that $C'$ is bounded independently of $h$.}}.
We may then take 
\begin{equation*}
	q_j 
	=
	\chi \left(
		h^{-\beta} \mathrm{dist}(\rho_{j,h},\rho)
		\right),
\end{equation*}
where $\chi\in C_c^\infty([0, \infty);[0,1])$ takes value $1$ on $[0,1]$, and 
where $\mathrm{dist}$ means either $\mathrm{dist}_X$ or $\mathrm{dist}_{T^*X}$.
\end{ex}

\red{In the example above, the supports of the $q_j$ overlap; however, note that this is not necessary for \eqref{eq:PotentialEverywhere} to hold.}

\begin{hypo}\label{Hyp:BetaDelta}
We suppose that there exists $0<\varepsilon_0< \frac{1}{4}$ and $h_0>0$ such that 
for all $h\leq h_0$, we have
\begin{equation}\label{eq:CondBeta}
\delta h^{-2\beta - \varepsilon_0} \leq 1,
\end{equation}
\begin{equation}\label{eq:CondBetaDelta}
\delta^2  h^{\beta-2} \geq h^{-\varepsilon_0}.
\end{equation}
In the  \ref{eq:Pot}, we will also need to assume that 
\begin{equation}\label{eq:CondBetaDelta2}
\delta  h^{\beta-1} \leq h^{\varepsilon_0}.
\end{equation}
\end{hypo}
\begin{rem}\label{rem:specialdelta}
It is natural to consider the case $\delta = h^\alpha$ 
	with $2\beta \leq \alpha$. However, we will stick with a 
	coupling constant $\delta$ for the sake of generality.
	\par
	Note that when $\delta = h^\alpha$, conditions (\ref{eq:CondBeta}) 
	and (\ref{eq:CondBetaDelta}) rewrite
	$$0<\beta < \min \left( \frac{\alpha}{2}, 2-2\alpha \right),$$
	while (\ref{eq:CondBetaDelta2}) rewrites
	$$\beta > 1-\alpha.$$
	These conditions are plotted on Figure \ref{Fig:Parameters}. 
\end{rem}


\begin{figure}
\begin{center}
\begin{tikzpicture}[xmin=-0.8,xmax=4.9,ymin=-0.8,ymax=4.9]
\grille \axes \fenetre
\draw (-0.3,4.7) node{$\beta$};
\draw (4.7, -0.3) node{$\alpha$};
\draw plot[smooth] (\x,\x/2);
\draw plot[smooth] (\x,8-2*\x);
\draw plot[smooth] (\x, 4 -\x);
\filldraw[draw=black,fill=gray!20]
plot (0,0)--(4,0)-- (8/3,4/3) --cycle;
\filldraw[draw=black,fill=gray!80]
plot (4,0)--(16/5,8/5)-- (8/3,4/3) --cycle;
\draw [thick](-0.1,2) -- (0.1,2);
\draw (-0.3,1.7) node{$\frac{1}{2}$};
\draw [thick](-0.1,4) -- (0.1,4);
\draw (-0.3,3.7) node{$1$};
\draw [thick](2,-0.1) -- (2,0.1);
\draw (1.7,-0.35) node{$\frac{1}{2}$};
\draw [thick](4,-0.1) -- (4,0.1);
\draw (3.7,-0.3) node{$1$};
\end{tikzpicture}
\end{center}
\caption{Admissible parameters $\alpha$ and $\beta$, see Hypothesis 
\ref{Hyp:BetaDelta}. The dark grey region is admissible for the 
\ref{eq:Pot} and the \ref{eq:Pseudo}, while the light grey region is 
only admissible in the \ref{eq:Pseudo}.}\label{Fig:Parameters}

\end{figure}

Finally, we need some assumption on the probability distributions $\omega_j$.

\begin{hypo}\label{Hyp:Prob} We suppose that the iid random variables 
$(\omega_j)_{j\in J_h}$ 
\red{	  have a common distribution with a compactly supported density 
	  $m\in C^2_c(\R;[0,+\infty[)$ with respect to the Lebesgue measure.}

\end{hypo}

\subsection{Randomization, local weak limits and the Berry Gaussian field}
\label{ss:local_limits_def}
The aim of this paper is to compare a noisily propagated Lagrangian state 
$u=\e^{i\frac{t}{h}P_h^\delta}f_h$ \eqref{intro_eq2} locally near a 
\emph{randomly chosen point} on $X$ with a stationary isotropic smooth 
monochromatic Gaussian stochastic process. To do this 
we will -- roughly speaking -- pick a point $x_0$ of $X$ uniformly at random, 
rescale $u$ near $x_0$ to the microscopic scale $h$ in local geodesic 
coordinates, and then compare this now probabilistic rescaled version 
of $u$ with a Gaussian stochastic process. To make this precise we will recall 
notions introduced in \cite{IngRiv}. 
\subsubsection{Random smooth functions \red{on $\R^d$}.} 
In what follows we equip the space $C^\infty(\R^d)$ with the 
topology of the convergence of all derivatives over all compact 
sets, i.e. for the topology induced by the family of seminorms
\begin{equation*}
	\|f\|_n := \max_{x\in \overline{B(0,n)}} 
	\max_{|\alpha|\leq n}|\partial^\alpha f(x)|, \quad 
	n\in \N. 
\end{equation*}
Notice that $C^\infty(\R^d)$ is a separable Fr\'echet, 
and therefore a Polish, space. The above topology can \corM{be} 
metrized with the distance
\begin{equation}\label{eq:DistanceFunctions}
\mathrm{d}(f,g) 
= \sum_{n=1}^{\infty} 2^{-n} \min \left( \|f-g\|_n, 1 \right).
\end{equation}
We equip $C^\infty(\R^d)$ with the Borel $\sigma$-algebra 
$\mathcal{B}(C^\infty(\R^d))$. 
A \emph{random smooth function on $\R^d$} is a random variable with 
values in $C^\infty(\R^d)$. We refer the reader to the review 
\cite[Appendix A]{NS} for more details on this notion. 
\par 
We highlight 
the notion of \emph{convergence in law}. A sequence of random smooth functions
$\{\mathfrak{f}_n\}_{n\in\N}$ on $\R^d$ is said to converge in law to a random 
smooth function $\mathfrak{f}$ on $\R^d$, i.e. 
\begin{equation*}
	\mathfrak{f}_n \overset{d}{\longrightarrow} \mathfrak{f}, 
	\quad n\to \infty,
\end{equation*}
if the laws of the random functions converge weakly. More explicitly,  
this means that for all bounded continuous functions $F\in C_b(C^\infty(\R^d))$ 
we have that 
\begin{equation}\label{eq:BrezelPousseToi}
	\erw [F(\mathfrak{f}_n)] \longrightarrow \erw[F(\mathfrak{f})], 
	\quad n\to \infty.
\end{equation}
\begin{ex}
\red{Examples of functionals that could be used in (\ref{eq:BrezelPousseToi}) would be $F_1(\mathfrak{f}) = \chi_1(\mathfrak{f}(0))$, or $F_2(\mathfrak{f}) =\int_{|x|<R}  \chi_2(x,  \mathfrak{f}(x),  \nabla \mathfrak{f}(x)) \dd x$, where $R>0$ and $\chi_1 : \C \longrightarrow \C , \chi_2 : \R^d \times \C \times \C^d \longrightarrow \C$ are continuous bounded functions.}
\end{ex}
\subsubsection{Randomization and local weak limits.} Our aim is to study the convergence  
of a sequence of deterministic smooth functions on $X$, \red{rescaled to scale $h>0$,  around  a randomly chosen point.}
To avoid any topological difficulties,  \red{we will choose random points in small open sets
though all of our results will hold 
regardless of the choice of open sets.}
\par
Let $\mathcal{U}\subset X$ be a small enough open set so that we can 
define an orthonormal frame $V=(V_1,\dots,V_d)$ on it, that is to say 
a family of smooth sections 
$(V_i)_{i=1,\dots,d}: \mathcal{U}\longrightarrow TX$ such that, for each 
$x\in \mathcal{U}$, $(V_1(x),\dots,V_d(x))$ is an orthonormal basis of $T_xX$. 
If $x\in \mathcal{U}$ and $\by\in \R^d$, we will write 
$\by V(x) := \by_1 V_1(x)+\dots+\by_d V_d(x) \in T_xX$, and
\begin{equation}\label{eq:JamesBrownIsTheBest}
	\exp_{x}(\by) := \exp_x (\by V(x))\, .
\end{equation}
Here $\exp_{x}$ denotes the exponential map restricted to $T_x X$. 
Note that the map \eqref{eq:JamesBrownIsTheBest} is well defined 
for all $\by\in \R^d$ since the underlying Riemannian manifold is 
complete. All the constructions in this section will depend on the 
choice of the local frame $V$, and will hence not be intrinsic.
\par
With the above quantities and definitions in mind, we can define our 
notion of \emph{local weak limit}. 
\begin{definition}\label{def:LWL}(Local weak limit)
Let $(X,g)$ be a compact smooth Riemannian manifold. Let $\mathcal{U}\subset X$ 
be an open set and $V$ an orthonormal frame on $\mathcal{U}$ as in 
\eqref{eq:JamesBrownIsTheBest}. Let $\{f_h\}_{h>0}$ be a family of functions 
in $C^\infty(X)$, and let $\mathfrak{f}$ be a smooth random function in 
$C^{\infty}(\R^d)$. Let $\textsc{x}$ be a random variable with values in 
$\cU$ uniformly distributed with respect to the Riemannian volume measure 
on $\cU$. 
\par
Then, we say that $\mathfrak{f}$ is the local weak limit of $\{f_h\}_h$ in the 
frame $V$ if the random smooth function 
$\mathfrak{f}_{\textsc{x},h}(\by):=f_h(\exp_{\textsc{x}}(h\by))$ on $\R^d$ 
converges in law to $\mathfrak{f}$ as $h\to 0$, i.e. if 
\begin{equation*}
	\mathfrak{f}_{\textsc{x},h} \overset{d}{\longrightarrow} \mathfrak{f}, 
	\quad h\to 0.
\end{equation*}
\end{definition}
Let us give some remarks on that definition: $\mathfrak{f}_{\textsc{x},h}$ is a 
well defined \red{random function in $C^\infty(\R^d)$}
%
and, by definition,
saying that $\mathfrak{f}$ 
is the local weak limit of $\{\mathfrak{f}_{\textsc{x},h} \}_h$ in 
the frame $V$ means that, for any bounded continuous functional 
$F: C^\infty(\R^d) \longrightarrow \R$, we have 
\begin{equation}\label{eq:ExpectUnif}
	\mathbb{E}_{\mathrm{x}} [F (\mathfrak{f}_{\textsc{x},h})]:=
	\frac{1}{\Vol(\mathcal{U})} \int_\mathcal{U} 
		F (\mathfrak{f}_{\textsc{x},h})dv_g(\textsc{x})
		\longrightarrow
		\mathbb{E}[F(\mathfrak{f})] 
		\quad \text{as }
		h\to 0,
\end{equation}
where $dv_g$ denotes the Riemannian volume measure on $X$. 
\subsubsection{The Berry Gaussian field.}
An almost surely (or a.s.) $C^\infty$ (centered) Gaussian field 
on $\R^d$ is a random variable $\mathfrak{f}$ taking, up to a 
set of probability $0$, values in $C^\infty(\R^d)$ such that for any 
finite collection of points $\bx_1,\dots,\bx_k\in \R^d$, the random vector 
$(\mathfrak{f}(\bx_1),\dots,\mathfrak{f}(\bx_k))\in \C^d$ is 
(centered) Gaussian. We say that two fields $\mathfrak{f}_1$ and 
$\mathfrak{f}_2$ are \emph{equivalent} if they have 
the same law. In the sequel, unless otherwise stated, we will always 
identify fields which are equivalent. 
\par
Let $\mathfrak{f}$ be an a.s. $C^\infty$, centered Gaussian field on $\R^d$. 
Then, its \emph{covariance kernel} $K:(\bx,\by)\mapsto \E[f(\bx)\overline{f(\by)}]$ 
defined on $\R^d\times \R^d$ is \emph{positive definite}, meaning that for 
each $k$-tuple $(\bx_1,\dots,\bx_k)\in (\R^d)^k$, the matrix 
$K(\bx_i,\bx_j)_{i,j}$ is positive. 
As explained for instance in Appendix A.11 of \cite{NS}, the function 
$K$ belongs to $C^\infty(\R^d\times \R^d)$ and there is actually a 
one-to-one correspondence (up to equivalence) between smooth covariance 
kernels and a.s. $C^\infty$ centred Gaussian fields on $\R^d$.
\begin{definition}\label{DefBerry}
The Berry Gaussian field with normalization constant $\lambda\in\R$, 
denoted by $\mathrm{BGF}_\lambda$, is the unique (up to equivalence) 
a.s. $C^\infty$ stationary Gaussian field on $\R^d$ whose covariance 
kernel is $\lambda \int_{\mathbb{S}^{d-1}}\e^{i(x-y)\cdot\xi} d\sigma (\xi)$, 
where $\sigma$ is the uniform probability measure on $\mathbb{S}^{d-1}$.
\par
If $F : C^\infty(\R^d) \longrightarrow \R$ is a bounded 
continuous functional, its expectation with respect to the 
$\mathrm{BGF}_\lambda$ will be denoted by $\E_{\mathrm{BGF}_\lambda}[F]$.
\end{definition}
We remark that the Berry Gaussian field $\mathrm{BGF}_\lambda$ is 
the unique (up to equivalence) \emph{monochromatic stationary 
isotropic Gaussian field} with normalization $\E[|\mathrm{BGF}_\lambda(0)|^2] 
= \lambda$. Indeed, stationary means that its covariance 
kernel depends only on the difference $(x-y)$. Isotropic means that the 
covariance kernel is invariant under (the same) rotation of $x$ and $y$, 
so it only depends on $|x-y|$. Monochromatic means that the covariance 
kernel satisfies $-\Delta K = K$ with respect to both variables $\bx$ and $\by$, 
corresponding to the fact that a realization $f$ of the Berry 
Gaussian field satisfies $-\Delta f = f$ a.s. 
\subsection{The main results}
Let $f_h$ be a monochromatic Lagrangian state whose associated 
Lagrangian manifold is $\eta$-unstable for $\eta$ small enough. 
We study the local weak limit of the propagated Lagrangian state 
$u_h=\e^{\red{-}i\frac{t}{h}P_h^\delta}f_h$ \eqref{intro_eq2}. Following 
Definition \ref{def:LWL} we are interested in studying the limiting 
law of the random function
\begin{equation}\label{eq_LocalPropState}
	\left(\e^{\red{-} i\frac{t}{h}P_h^\delta}f_h\right)(\exp_{\textsc{x}}(h\by))
\end{equation}
where $\textsc{x}$ is a uniformly distributed random variable in 
$\mathcal{U}\subset X$. Notice that in this expression we have two 
different sources of randomness: one coming from the perturbation 
$Q_\omega$ and one coming from $\textsc{x}$. To make this distinction 
clear we will denote the expectation with respect to $\mathrm{x}$ by 
$\erw_{\mathrm{x}}$, see \eqref{eq:ExpectUnif}, and the probability 
with respect to the law of $\omega$ by $\prob_\omega$. Accordingly 
we will denote the associated expectation by $\erw_\omega$. 
\\
\par
Our first result shows that when fixing $\textsc{x}$, away from a 
set of asymptotically negligible measure, the random function 
\eqref{eq_LocalPropState} converges in law to a $\mathrm{BGF}$. 
\begin{thm}\label{th:MartinEtMaximeSontDesSuperBeauxGosses}
Let $X$ be a compact connected Riemannian manifold with 
negative sectional curvature and without boundary. Let 
$P_h^\delta$ be as in \eqref{eq:SchroedingerOp} and 
suppose that Hypotheses \ref{HypPot}, \ref{Hyp:BetaDelta} and \ref{Hyp:Prob}
are satisfied. Let $D>0$. There exists $\eta=\eta(D)>0$ such that the following holds.
\par
Let $f_h = a \e^{\frac{i}{h}\phi}$ be a monochromatic Lagrangian state 
associated with a Lagrangian manifold which is $\eta$-unstable, has distortion
$\leq D$ and satisfies $\|\phi\|_{C^{3}}\leq D$. There exists 
$X_h^0 \subset X$, with $ \mathrm{Vol}(X_h^0) \to 0$ as $h\to 0$,  
such that the following holds: 
Let $\mathcal{U}\subset X$ be an open set, and $V$ be an orthonormal 
frame on $\mathcal{U}$. Let $(t_h)_{h>0}$ be such that 
$t_h \to +\infty$, as $h\to 0$, and $|t_h|=o_{h\to 0} (|\log h|)$. 
Then, for every $x\in \mathcal{U} \setminus X_h^0$
\begin{equation*}
	\left(\e^{\red{-}\frac{i}{h}t_hP_h^\delta}f_h\right)(\exp_{x}(h\cdot))
	\overset{d}{\longrightarrow}
	\mathrm{BGF}_{\lambda_a}
\end{equation*}
with $\lambda_a= \frac{\|a\|_{\MI{L^2}}^2}{\mathrm{Vol}(X)}$.
\end{thm}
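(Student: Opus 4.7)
The plan is to construct an explicit WKB / Lagrangian decomposition of the propagated state $\e^{(i/h)t_h P_h^\delta} f_h$ and then exploit the randomness in $\omega$ to identify the local weak limit with the Berry Gaussian field via a central-limit argument.

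First, I would develop a semiclassical propagation formula for $\e^{(i/h)t_h P_h^\delta}$ acting on the monochromatic Lagrangian state $f_h$, up to times $t_h = o(|\log h|)$. Writing the evolution via a Duhamel / Dyson expansion around the unperturbed propagator $\e^{(i/h)t_h P_h^0}$ with $P_h^0 = -\tfrac{h^2}{2}\Delta_g$, the principal effect of the perturbation (under the smallness hypothesis \eqref{eq:CondBeta} on $\delta h^{-2\beta}$) is to multiply each geodesic ``branch'' by a random phase factor approximately equal to $\exp\bigl(-i(\delta/h) \int_0^{t_h} q_\omega(\Phi^s(\rho_k))\, \dd s\bigr)$. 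In parallel, since $\Lambda_\phi$ is $\eta$-unstable, standard hyperbolic dynamics (cf.\ Schubert's work) yield a WKB expansion of $\e^{(i/h)t_h P_h^0} f_h$ as a finite sum over backward geodesic branches reaching a given point $x$, each branch contributing a term of the form $a_k(x)\,\e^{i\phi_k(x)/h}$ with amplitudes $a_k$ and phases $\phi_k$ determined by the unstable Jacobian and the generating function of the flow-out Lagrangian. After rescaling near $x_0$ at scale $h$, each branch becomes a local plane wave with direction $\xi_k = \nabla\phi_k(x_0) \in S^{d-1}$ and amplitude $A_k \in \C$ close to $a_k(x_0)$.

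The core mechanism then reduces to the following. At a ``good'' base point $x_0$ --- lying outside a bad set $X_h^0$ of vanishing volume --- the number of branches $N_h \to \infty$, and by the mixing / equidistribution of the geodesic flow acting on unstable Lagrangians in negative curvature, the directions $\{\xi_k\}$ equidistribute on $S^{d-1}$ with respect to the uniform measure $\sigma$. The rescaled state then reads
\begin{equation*}
	\bigl(\e^{(i/h)t_h P_h^\delta} f_h\bigr)(\exp_{x_0}(h\by))
	\approx \sum_{k=1}^{N_h} A_k \, \e^{i\xi_k\cdot \by}\, \e^{i\theta_k(\omega)},
\end{equation*}
where $\theta_k(\omega) = (\delta/h)\int_0^{t_h} q_\omega(\Phi^s \rho_k)\, \dd s$ is, up to lower-order terms, a weighted sum of the iid variables $\omega_j$. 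By Hypothesis~\ref{HypPot}(iv) combined with \eqref{eq:CondBetaDelta}, each $\theta_k$ has variance growing like $(\delta/h)^2 h^{\beta} t_h$, which is unbounded; and the joint family $(\theta_k)_k$ is asymptotically decorrelated because distinct long geodesic orbits only rarely pass through the same single-site support of diameter $h^\beta$. A characteristic-function computation, supported by Hypothesis~\ref{Hyp:Prob}, then shows that the phase factors $(\e^{i\theta_k(\omega)})_k$ behave as iid uniform variables on $S^1$, and a Lindeberg-type CLT yields, for any finite collection $\by_1,\dots,\by_m \in \R^d$,
\begin{equation*}
	\lim_{h\to 0} \erw_\omega\bigl[F\bigl(\mathfrak{f}_{x_0,h}(\by_1),\dots,\mathfrak{f}_{x_0,h}(\by_m)\bigr)\bigr]
	= \erw_{\mathrm{BGF}_{\lambda_a}}\bigl[F(\mathfrak{f}(\by_1),\dots,\mathfrak{f}(\by_m))\bigr],
\end{equation*}
with covariance structure $\lambda_a \int_{S^{d-1}} \e^{i(\by-\by')\cdot \xi}\, d\sigma(\xi)$ and $\lambda_a = \|a\|^2/\Vol(X)$, identifying the limit as $\mathrm{BGF}_{\lambda_a}$.

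The main obstacles I anticipate are threefold: (i) controlling the WKB / branch decomposition up to the Ehrenfest-type time $t_h = o(|\log h|)$ uniformly in the random perturbation, which requires taming amplitude derivatives along the Hamiltonian flow of $P_h^\delta$ and handling the exponentially growing number of branches; (ii) quantifying the approximate independence of the phases $\theta_k(\omega)$ when distinct orbits share some $q_j$'s --- here Hypothesis~\ref{Hyp:Prob}(3) (densities for $\omega_j$) and the additional condition \eqref{eq:CondBetaDelta2} in the Random Potential case are precisely what is needed to invert the resulting characteristic-function expressions and force uniform equidistribution of the phases modulo $2\pi$; and (iii) identifying $X_h^0$ as the subset of $X$ where branches either fail to equidistribute on $S^{d-1}$ or coalesce into caustics at scale $h$, and proving $\Vol(X_h^0)\to 0$. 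Finally, upgrading finite-dimensional convergence to convergence in law in $C^\infty(\R^d)$ requires tightness, which should follow from uniform derivative bounds on the propagated state inherited from the WKB construction.
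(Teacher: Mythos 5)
Your overall strategy coincides with the paper's: a WKB/branch decomposition in which the randomness is isolated in phases given by line integrals of $q_\omega$ along backward geodesics, exclusion of a small bad set to gain independence, a characteristic-function plus Lindeberg CLT argument for the finite-dimensional laws with the Berry covariance coming from equidistribution of the branch directions, and tightness to upgrade to convergence in law in $C^\infty(\R^d)$. However, two of your steps have genuine gaps. First, deriving the branch-with-random-phase formula by a Duhamel/Dyson expansion around the free propagator does not work in the full range of Hypothesis \ref{Hyp:BetaDelta}: the effective coupling $\delta t_h/h$ is huge, so the series cannot be truncated, and resumming it by keeping the free phase and dressing each amplitude with $\e^{i\delta\theta_t/h}$ produces an ``amplitude'' oscillating at spatial scale $\delta h^{-1-\beta}$, which exceeds $h^{-1/2}$ unless $\alpha>1/2$ — exactly the obstruction discussed in Remark \ref{RemStephane}. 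The paper instead runs the WKB construction for the perturbed operator itself (perturbed Hamilton--Jacobi and transport equations, with derivative and Gr\"onwall-type comparison estimates for the perturbed flow, Sections \ref{Sec:Class}--\ref{sec:RegularityOfWKB}) and only afterwards expands the perturbed phase as $\phi_{t,0}+\delta\theta_t+O(\delta^2 h^{-2\beta-2\varepsilon})$, the error being $o(h)$ by \eqref{eq:CondBeta}.

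Second, your justification of independence — that distinct long orbits ``only rarely pass through the same single-site support'' — is insufficient, and in the \ref{eq:Pot} it is false as stated: all branches through $x$ share the single-site potentials supported near $x$ itself, and the projected geodesics cross on $X$ repeatedly, each crossing producing a common contribution to the phases. The paper's remedy is twofold: (a) the bad set $X_h^0$ is defined dynamically, as the set of points one of whose branches nearly recurs to another branch (not by caustics or by failure of direction equidistribution), and its small volume is proved via convexity of $\mathrm{dist}_X^2$ along geodesics in negative curvature; (b) in the Potential case the short time intervals during which two branches interact are excised from the phase integrals, and condition \eqref{eq:CondBetaDelta2} is precisely what makes the excised part $\Theta^1_{\omega,t}$ of size $O(h^{1+\varepsilon})$, hence negligible in $\e^{\frac{i}{h}\Theta}$; the density assumption in Hypothesis \ref{Hyp:Prob} enters separately, in the characteristic-function bound when $\delta h^{\beta-1}$ is not small. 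So the roles you assign to \eqref{eq:CondBetaDelta2} and to the density hypothesis are not the ones actually needed, and without the recurrence-based bad set and the time-excision device the claimed independence, and hence the CLT, does not follow.
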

\red{In the previous theorem,  the quantity $\|\phi\|_{C^3}$ is not 
intrinsic, and is introduced by fixing an atlas on the manifold as in 
\eqref{eq:sa8.1} below.  The quantity $\eta(D)$ thus depends implicitly 
on this construction.}
The assumption on the distortion of $\Lambda$ (as defined in (\ref{eq:DefDistorsion})) 
is purely technical, and it automatically follows from bounds on $\|\phi\|_{C^2}$ 
if $\phi$ is defined on a convex set.
We insist here on the fact that the convergence in law stated in  
Theorem \ref{th:MartinEtMaximeSontDesSuperBeauxGosses} is 
with respect to the $\omega$ random variables coming from the 
random perturbation and for a \emph{fixed} $x$.
\\
\par
Our second main result concerns the random smooth function 
\begin{equation}\label{eq_LocalPropState2}
	\left(\e^{\red{-}i\frac{t}{h}P_h^\delta}f_h\right)(\exp_{\textsc{x}}(h\by))
\end{equation}
where $\textsc{x}$ is a uniformly distributed random variable in 
$\mathcal{U}\subset X$. The random variable $\textsc{x}$ generates 
the law of \eqref{eq_LocalPropState2} which depends on $\omega$. 
Indeed, this law is, with respect to $\omega$ a random probability 
measure. The result below states that this random law converges 
weakly in probability (with respect to $\omega$) to the law of 
the BGF.

\begin{thm}\label{th:MartinEtMaximeSontDesBeauxGosses}
Let $X$ be a compact connected Riemannian manifold with 
negative sectional curvature and without boundary. Let 
$P_h^\delta$ be as in \eqref{eq:SchroedingerOp} and 
suppose that Hypotheses \ref{HypPot},  \ref{Hyp:BetaDelta} and \ref{Hyp:Prob}
are satisfied. Let $D>0$. There exists $\eta=\eta(D)>0$ such that the following 
holds:
\par
Let $\mathcal{U}\subset X$ be an open set, and $V$ be an orthonormal 
frame on $\mathcal{U}$. Let $f_h = a \e^{\frac{i}{h}\phi}$ be a 
monochromatic Lagrangian state associated with a Lagrangian manifold 
which is $\eta$-unstable, has distortion $\leq D$ and 
satisfies $\|\phi\|_{C^{3}}\leq D$. Let $(t_h)_{h>0}$ be such that 
$t_h \underset{h\to 0}{\longrightarrow} +\infty$ and 
$|t_h|=o_{h\to 0} (|\log h|)$. Then, for any $\varepsilon>0$ and 
every $F\in C_b(C^\infty(\R^d))$ we have that for $h>0$ small enough 
\begin{equation}\label{eq:LocalRandomX}
	\prob_\omega
	\left[
	\left| \erw_{\mathrm{x}}[
		F(\e^{\red{-}\frac{i}{h}t_hP_h^\delta}f_h(\exp_{\textsc{x}}(h\cdot)))] 
		- \E_{\mathrm{BGF}_{\lambda_a}}[F] 
	\right| \geq \varepsilon
	\right] = O(h^\infty)
\end{equation}
where $\lambda_a= \frac{\|a\|_{\MI{L^2}}^2}{\mathrm{Vol}(X)}$.
\end{thm}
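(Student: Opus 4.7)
Theorem \ref{th:MartinEtMaximeSontDesSuperBeauxGosses} provides, for each fixed $x \in \mathcal{U}\setminus X_h^0$, the convergence $\erw_\omega[F(\e^{\frac{i}{h}t_hP_h^\delta}f_h(\exp_x(h\cdot)))]\to \E_{\mathrm{BGF}_{\lambda_a}}[F]$. Since $F$ is bounded and $\mathrm{Vol}(X_h^0)\to 0$, Fubini's theorem and dominated convergence yield
\begin{equation*}
\erw_\omega\erw_{\mathrm{x}}[F(\cdots)] \;=\; \erw_{\mathrm{x}}\erw_\omega[F(\cdots)] \xrightarrow[h\to 0]{} \E_{\mathrm{BGF}_{\lambda_a}}[F].
\end{equation*}
Thus the random quantity $Y_h:=\erw_{\mathrm{x}}[F(\cdots)]$ already has its mean under $\omega$ converging to the target. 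It remains to upgrade this to a concentration statement with an $O(h^\infty)$ tail.

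The natural route is the method of moments. For every $p\in\N$, Markov's inequality gives
\begin{equation*}
\prob_\omega\bigl[\,|Y_h - \E_{\mathrm{BGF}_{\lambda_a}}[F]| \ge \varepsilon\,\bigr] \;\le\; \varepsilon^{-2p}\,\erw_\omega\bigl[\,|Y_h - \E_{\mathrm{BGF}_{\lambda_a}}[F]|^{2p}\,\bigr].
\end{equation*}
It therefore suffices to show that for every $p$ there is some $\gamma_p>0$ (with $\gamma_p\to\infty$ as $p\to\infty$) such that the right-hand factor is $O(h^{\gamma_p})$; optimising $p$ then produces $O(h^\infty)$. Introducing iid uniform copies $\mathrm{x}_1,\dots,\mathrm{x}_{2p}$ on $\mathcal{U}$ and writing $F_i:=F(\e^{\frac{i}{h}t_hP_h^\delta}f_h(\exp_{\mathrm{x}_i}(h\cdot)))$, the $2p$-th moment expands as
\begin{equation*}
\erw_\omega\!\left[\prod_{i=1}^{2p}\bigl(F_i-\E_{\mathrm{BGF}_{\lambda_a}}[F]\bigr)\right]
\end{equation*}
integrated over $\mathcal{U}^{2p}$, reducing everything to a joint decorrelation estimate for $F_1,\dots,F_{2p}$ under $\omega$.

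The core step is thus a decorrelation bound: I would show that for every $2p$-tuple $(\mathrm{x}_1,\dots,\mathrm{x}_{2p})$ whose pairwise distances exceed a well-chosen scale $h^\gamma$, the random variables $F_i$ are approximately independent under $\omega$, so that $\erw_\omega[\prod F_i]$ factorises and each factor is close to $\E_{\mathrm{BGF}_{\lambda_a}}[F]$ by Theorem \ref{th:MartinEtMaximeSontDesSuperBeauxGosses}. The ``bad'' configurations, where two of the $\mathrm{x}_i$ lie within distance $h^\gamma$, occupy measure $O(h^{d\gamma})$ of $\mathcal{U}^{2p}$ and are absorbed by the crude bound $\|F\|_\infty^{2p}$. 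Finally, combining the moment bound with the mean convergence above and the triangle inequality will give the stated $O(h^\infty)$ probability in \eqref{eq:LocalRandomX}.

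\textbf{Main obstacle.} Establishing the decorrelation of $F_1,\ldots,F_{2p}$ for well-separated $\mathrm{x}_i$'s is the hard part. Via a WKB/Lagrangian-propagation description of $\e^{\frac{i}{h}t_hP_h^\delta}f_h$, the wave near $\mathrm{x}_i$ is expressible as a sum over geodesic trajectories from $\Lambda_\phi$ reaching an $h$-neighborhood of $\mathrm{x}_i$ within time $t_h$, with $\omega$-dependence entering through an accumulated phase $\sum_j \omega_j \int q_j$ along each trajectory. One must argue that, when $\mathrm{x}_i\neq \mathrm{x}_j$ are separated at scale $h^\gamma$, the families of trajectories terminating near them visit essentially disjoint collections of single-site supports, so the corresponding sets of ``active'' indices $j\in J_h$ are disjoint and the iid assumption in Hypothesis \ref{Hyp:Prob} yields independence. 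Because $t_h$ can be as large as $|\log h|$, along each trajectory an order $h^{-\beta}t_h$ of single-site potentials are visited, and one has to count carefully how many trajectories of two nearby points may share such sites; this is where the $\eta$-unstable transversality of $\Lambda_\phi$, the distortion bound, and the hyperbolicity of the geodesic flow (controlling the number of returns and near-intersections of geodesics) are essential. Once this combinatorial/dynamical overlap is shown to be small relative to the full count, the remainder of the argument is bookkeeping in the spirit of the Wick/Isserlis expansion used to prove the fixed-$x$ Theorem \ref{th:MartinEtMaximeSontDesSuperBeauxGosses}.
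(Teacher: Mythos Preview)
Your high-level plan (moments plus Markov, exploiting approximate independence of the phases) matches the paper's strategy, but there is a concrete gap that prevents the $O(h^\infty)$ conclusion.

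\textbf{The centering problem.} You center the moment expansion at the constant $c=\E_{\mathrm{BGF}_{\lambda_a}}[F]$, so the summands $F_i - c$ are \emph{not} mean-zero under $\omega$: Theorem~\ref{th:MartinEtMaximeSontDesSuperBeauxGosses} only gives $\E_\omega[F_i]-c=o(1)$ with no polynomial rate (its proof uses the Lindeberg CLT, which carries no rate). On your ``good'' set the product therefore factorises to $\prod_i(\E_\omega[F_i]-c)$, which is merely $o(1)$, while your ``bad'' set (some pair within distance $h^\gamma$) contributes a \emph{fixed} power $O(h^{d\gamma})$. Hence the $2p$-th moment is bounded by $o(1)+O(h^{d\gamma})$ with exponent not growing in $p$; Markov then yields $o(1)$ but not $O(h^\infty)$. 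The paper avoids this by centering at the $\omega$-mean itself (working with $Z_j-\E_\omega[Z_j]$): now the summands are genuinely mean-zero, so for a product to have nonzero expectation \emph{every} index must be coupled to another one, and a combinatorial count (Lemma~\ref{lem:BoundNumberTerms}) gives $\E_\omega\bigl[|Z-\E_\omega[Z]|^{2p}\bigr]=O(h^{p\gamma})$ with exponent linear in $p$. The deterministic drift $\E_\omega[Y_h]-c=o(1)$ is then absorbed by the ``for $h$ small enough'' in the statement via \eqref{eq:Aprox1}--\eqref{eq:Decoupage3}.

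\textbf{The decorrelation condition and the cell structure.} Your condition ``pairwise distance $>h^\gamma$'' is not the right one: two points $x_i,x_j$ that are far apart in $X$ can still have trajectories $\Phi^{-s}(\rho_{\wit{x}_i})$, $\Phi^{-s'}(\rho_{\wit{x}_j})$ that come within $h^\beta$ of each other (on $X$ in the \ref{eq:Pot}, on $S^*X$ in the \ref{eq:Pseudo}), so the corresponding phases share active indices $j\in J_h$ and are dependent. The correct condition, supplied by Proposition~\ref{PropSection8}, is $x_j\notin\mathcal{V}_{t,\varepsilon}(x_i)$---a tubular neighbourhood of the union of all trajectories through $x_i$, not a ball. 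Moreover, that proposition proves independence only for points lying in a \emph{common} cell $\wih{X}_i$ of diameter $\sim h^{\beta-\varepsilon_1}$, because the modified phases $\Theta^0_{\omega,t}$ (needed to strip out self-intersection times in the \ref{eq:Pot}) are defined cell by cell via the sets $\gI_{i,t}$. This is why the paper uses a two-scale decomposition $\mathcal{U}\supset\wih{X}_i\supset X_{i,j}$ and proves the concentration estimate \eqref{eq:PointwiseLimit_n1} on each $\wih{X}_i$ separately, rather than drawing points uniformly from all of $\mathcal{U}$ as you propose.
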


\begin{rem}
The assumption $|t_h|=o_{h\to 0} (|\log h|)$ can be slightly weakened. Indeed, 
the proof of Theorem \ref{th:MartinEtMaximeSontDesBeauxGosses} actually shows 
that for every $L\in \N$, there exists $c_L>0$ such that, if 
$t_h \leq c_L |\log h|$, we have 
that for any $\varepsilon>0$ and 
every $F\in C_b(C^L(\R^d))$ we have that for $h>0$ small enough 
\begin{equation*}
	\prob_\omega
	\left[
	\left| \erw_{\mathrm{x}}[
		F(\e^{\red{-}\frac{i}{h}t_hP_h^\delta}f_h(\exp_{\textsc{x}}(h\cdot)))] 
		- \E_{\mathrm{BGF}_{\lambda_a}}[F] 
	\right| \geq \varepsilon
	\right] = O(h^\infty).
\end{equation*}
Here $F: C^L(\R^d)\longrightarrow \R$ is a bounded functional which 
is continuous for the topology of convergence of derivatives over 
compact sets.
\par
The possibility of extending our results to longer time scales 
will be further discussed in Remark \ref{RemStephane}.
\end{rem}
\begin{corollary}\label{cor:LWL}
Under the assumptions of 
Theorem \ref{th:MartinEtMaximeSontDesBeauxGosses} we have that 
for any sequence $h_j\to 0$, $j\to\infty$, such that there exists 
an $M>0$ such that $(h_j^M)_{j\in\N} \in \ell^1(\N)$, we have that 
for every $F\in C_b(C^\infty(\R^d))$,  
\begin{equation}\label{eq:LocalRandomX_2}
\erw_{\mathrm{x}}[
	F(\e^{\red{-}\frac{i}{h_j}t_{h_j}P_{h_j}^\delta}f_{h_j}(\exp_{\textsc{x}}(h_j\cdot)))] 
	\to  \E_{\mathrm{BGF}_{\lambda_a}}[F], \quad j\to \infty,
\end{equation}
$\omega$-almost surely.  \red{Furthermore}, $\omega$-almost surely, 
$\mathrm{BGF}_{\lambda_a}$ is the local weak limit of 
$\{\e^{\red{-}\frac{i}{h_j}t_{h_j}P_{h_j}^\delta}f_{h_j}\}_{h_j}$ in the 
frame $V$.
\end{corollary}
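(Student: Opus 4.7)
The plan is to combine the quantitative probability bound of Theorem \ref{th:MartinEtMaximeSontDesBeauxGosses} with the Borel--Cantelli lemma along the sparse subsequence $(h_j)$, and then upgrade the resulting $\omega$-almost sure convergence from a single test function to every $F\in C_b(C^\infty(\R^d))$ simultaneously by passing through a countable convergence-determining class.

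First, I would fix a countable convergence-determining family $\{F_k\}_{k\in\N}\subset C_b(C^\infty(\R^d))$ for weak convergence on $\cP(C^\infty(\R^d))$. Such a family exists since $C^\infty(\R^d)$ equipped with the distance \eqref{eq:DistanceFunctions} is separable Polish, so $\cP(C^\infty(\R^d))$ is metrizable (e.g.\ by the L\'evy--Prokhorov or bounded-Lipschitz metric) and separable; a countable subset of bounded Lipschitz functions then suffices. Writing $\mathfrak{f}_{\textsc{x},h_j}(\by) := \e^{\frac{i}{h_j}t_{h_j}P_{h_j}^\delta}f_{h_j}(\exp_{\textsc{x}}(h_j\by))$, an application of Theorem \ref{th:MartinEtMaximeSontDesBeauxGosses} to $F=F_k$ and $\varepsilon=1/\ell$ yields, for each $(k,\ell)\in\N^2$ and for $h_j$ small enough,
\begin{equation*}
	\prob_\omega\!\left[
	\left| \E_{\mathrm{x}}[F_k(\mathfrak{f}_{\textsc{x},h_j})]
	- \E_{\mathrm{BGF}_{\lambda_a}}[F_k] \right| \geq 1/\ell
	\right] \leq C_{k,\ell}\, h_j^M,
\end{equation*}
since any $O(h_j^\infty)$ quantity is in particular bounded by a constant times $h_j^M$ for small $h_j$.

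Realizing the variables $\omega$ at distinct $h_j$ on a common product probability space (on which they are automatically independent across $j$), the hypothesis $\sum_j h_j^M<\infty$ and the Borel--Cantelli lemma imply that for each fixed $(k,\ell)$, the event above occurs for only finitely many $j$, $\omega$-almost surely. Taking a countable intersection over $(k,\ell)\in\N^2$ produces a set $\Omega_0$ of full $\omega$-measure on which $\E_{\mathrm{x}}[F_k(\mathfrak{f}_{\textsc{x},h_j})]\to \E_{\mathrm{BGF}_{\lambda_a}}[F_k]$ as $j\to\infty$ for every $k$. Since $\{F_k\}$ is convergence-determining, this is equivalent to weak convergence of the laws of $\mathfrak{f}_{\textsc{x},h_j}$ (over $\textsc{x}$, with $\omega\in\Omega_0$ fixed) to that of $\mathrm{BGF}_{\lambda_a}$, which is exactly \eqref{eq:LocalRandomX_2} for every $F\in C_b(C^\infty(\R^d))$ and the local weak limit claim. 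The only non-routine input is Theorem \ref{th:MartinEtMaximeSontDesBeauxGosses} itself; the mild subtlety of producing a countable convergence-determining family on the non-locally-compact space $C^\infty(\R^d)$ can also be bypassed by invoking the $C^L$-strengthening noted in the remark following Theorem \ref{th:MartinEtMaximeSontDesBeauxGosses} and running the same argument separately in each $C^L(\R^d)$, then reassembling over $L\in\N$.
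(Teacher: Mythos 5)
Your argument is correct, and it reaches the full strength of the corollary (a single $\omega$-full-measure set on which \eqref{eq:LocalRandomX_2} holds for \emph{all} $F$ at once, hence the local weak limit statement), but it does so by a genuinely different route for the ``all $F$ simultaneously'' step. The paper first gets, exactly as you do, the fixed-$F$ statement from Theorem \ref{th:MartinEtMaximeSontDesBeauxGosses} plus Borel--Cantelli (note that independence across $j$ is irrelevant here, since only the first Borel--Cantelli lemma is used, so your product-space remark is superfluous); it then upgrades to all $F$ via a \emph{tightness} argument: the Sobolev bounds \eqref{eq:Step2SobNormEst} together with (the proof of) \cite[Lemma 1]{LWL} give a compact set $\mathcal{K}_\varepsilon\subset C^\infty(\R^d)$ carrying mass $\geq 1-\varepsilon$ uniformly in $j$ and $\omega$, and one then uses separability of $C_b(\mathcal{K}_\varepsilon)$, the fixed-$F$ statement on a countable dense family, and an $\varepsilon$-approximation of a general $F$ on $\mathcal{K}_\varepsilon$. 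You instead bypass tightness entirely by invoking the existence of a countable \emph{convergence-determining} family in $C_b(C^\infty(\R^d))$ for the Polish space $C^\infty(\R^d)$ and intersecting countably many Borel--Cantelli events. This is valid and arguably softer, but the existence of such a family deserves a more careful justification than metrizability/separability of $\cP(C^\infty(\R^d))$: one should cite the standard construction (e.g.\ Ethier--Kurtz, Prop.~3.4.4, via finite products of Lipschitz approximations of ball indicators built from a countable dense set), since convergence against each member of a fixed countable family of Lipschitz functions is not in itself obviously equivalent to convergence in the bounded-Lipschitz metric. Two further remarks: your suggested alternative ``bypass'' through the $C^L$-remark does not actually remove the subtlety, since $C^L(\R^d)$ with the topology of local convergence is again a non-locally-compact separable Fréchet space with non-separable $C_b$, so the same convergence-determining (or tightness) input is needed there; and the paper's tightness route has the advantage of being self-contained, resting only on estimates already established in the text, whereas yours imports an external (if standard) probabilistic fact. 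Each approach buys something: yours shortens the proof and needs no uniform-in-$\omega$ compactness, the paper's keeps the argument quantitative and internal.
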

\red{Note that, in the first part of Corollary \ref{cor:LWL}, the set of probability $1$ depends on $F$; this}
 first part follows readily from 
Theorem \ref{th:MartinEtMaximeSontDesBeauxGosses} and the Borel-Cantelli 
lemma.  The second part (about the local weak limit) is slightly more 
involved, and will be proved at the end of section \ref{sec:ProofMainTheorems}.
\subsection{Ideas of the proof and organization of the paper}
The central tool to obtain the results of the previous paragraph is 
the WKB method, which gives a precise description of the evolution 
of a Lagrangian state by the semiclassical Schrödinger equation. 
Namely,  when working on the universal cover $\wit{X}$ of a manifold 
of negative curvature,  it is standard that the function 
$\e^{i\frac{t}{h}\wit{P}_h^\delta}\wit{f}_h$ can be well approximated 
by another Lagrangian state:
\begin{equation}\label{eq:IntroWKB}
\wit{a}( \wit{x} ; t,h, \delta) e^{\frac{i}{h} \wit{\phi}( \wit{x} ; t,h,\delta)}.
\end{equation}

We will show that for the perturbations described in Subsection \ref{subsec:PropLagState1}, we may actually write
\begin{equation}\label{eq:IntroWKB2}
\e^{\red{-}i\frac{t}{h}\wit{P}_h^\delta}\wit{f}_h(\wit{x}) \approx a(\wit{x} ; t) e^{\frac{i}{h} \phi(\wit{x} ; t)} e^{i \frac{\delta}{h} \wit{\Theta}(\wit{x};t,h,\delta)},
\end{equation}
so that the randomness of $P_h^\delta$ appears only through the random phase $\wit{\Theta}$. Actually,  $\wit{\Theta}$ can be written as the integral of $q_\omega$ over a \red{perturbed trajectory close to }a geodesic going from $\wit{\Lambda}_\phi$ to $\wit{x}$.

When working on the initial manifold $X$, we need to sum contributions coming from different sheets in the universal cover, so that we get
\begin{equation}\label{eq:IntroWKB3}
\e^{\red{-}i\frac{t}{h}P_h^\delta}f_h(x) \approx \sum_k a_k( x ; t) e^{\frac{i}{h} \phi_k( x ; t)} e^{i \frac{\delta}{h} \Theta_k( x;t,h,\delta)},
\end{equation}
where the number of terms grows exponentially with $t$.  When performing a rescaling at scale $h$, we get
\begin{equation}\label{eq:IntroWKB4}
\left(\e^{\red{-}i\frac{t}{h}P_h^\delta} f_h\right)(\exp_x(h\by)) \approx \sum_k a_k( x ; t) e^{\frac{i}{h} \phi_k( x ; t)} e^{i \frac{\delta}{h} \Theta_k( x;t,h,\delta)} e^{i \by \cdot \nabla \phi_k(x;t)},
\end{equation}
\red{where we write $\nabla \phi_k(x; t)$ for $(V_1 \phi_k, ... V_d \phi_k)_x$. The expression in \eqref{eq:IntroWKB4} is} thus the sum of a large number of plane waves (in the $\by$ variable) with random phases. We will show that the phases can be made independent by excluding a set of points $x$ of small measure, so that  Theorem \ref{th:MartinEtMaximeSontDesSuperBeauxGosses} will  follow from  the Central Limit Theorem. 

To obtain Theorem \ref{th:MartinEtMaximeSontDesBeauxGosses}, we will show that,  if $x$ and $x'$ are at a distance $h^{\beta - \varepsilon}$ from each other, then the phases $\Theta_k( x;t,h,\delta)$ and $\Theta_k( x';t,h,\delta)$ are independent from each other,  for most choices of $x$ and $x'$. This will allow us to transfer the randomness coming from $P_h^\delta$ to spatial randomness, obtained by picking the point $x$ at random.

In section \ref{sec:Semi}, we will recall some facts about semiclassical analysis, and about the functional spaces we will use.  Section \ref{Sec:Class} will be devoted to the description of the classical dynamics of the geodesic flow, and of the Hamiltonian flow induced by the perturbation $q_\omega$.  Section \ref{sec:WKB} will present the WKB method on the universal cover, describing the functions $\wit{a}$ and $\wit{\phi}$ appearing in (\ref{eq:IntroWKB}). In section \ref{sec:RegularityOfWKB}, we will prove precise regularity estimates on $\wit{a}$ and $\wit{\phi}$, so as to be able to reach the simpler expression \eqref{eq:IntroWKB2}.  In section \ref{sec:Proj}, we will project the WKB state obtained in the previous sections on the base manifold $X$ and perform local rescalings, in order to derive expressions like \eqref{eq:IntroWKB3} and \eqref{eq:IntroWKB4}. Section \ref{Sec:Indep} is devoted to the delicate issues of independence between the phases $\Theta_k(x)$. Finally,  in section  \ref{sec:ProofMainTheorems}, we will prove Theorems \ref{th:MartinEtMaximeSontDesSuperBeauxGosses} and \ref{th:MartinEtMaximeSontDesBeauxGosses}.

\subsection{Notations and conventions}
\label{sec:notation}
In the sequel, $(X,g)$ will be a smooth connected Riemannian 
manifold of negative sectional curvature without boundary. 
We denote by $r_I$ its injectivity radius, which is a finite positive 
number as soon as $X$ is compact.
\par
We write $\chi_1\succ \chi_2$ if $\chi_1,\chi_2\in C_c^\infty$ 
take values in $[0,1]$ and $\supp \chi_2 \subset \complement\, 
\supp (1-\chi_1)$. Similarly, we write for an open relatively compact set 
$K$ that $\chi \succ \mathbf{1}_K$ and $\mathbf{1}_K \succ \chi$, 
if $\overline{K} \subset \complement\, \supp (1-\chi)$ and 
$\supp \chi \subset K$, respectively. 
\par
If $M$ is a matrix, its transpose will be denoted by $M^\dagger$. 
If $A$ is a measurable subset of $\R^d$ or of a Riemannian manifold, 
its volume will be denoted either by $\mathrm{Vol}(A)$ or by $|A|$. 
If $A$ is a finite set, we will denote its cardinality by $\mathrm{Card}(A)$ or $|A|$. 
Writing $a \asymp b$ means that there exists a constant $C>1$ such 
that $C^{-1} a \leq b \leq Ca$.
\\
\paragraph{\textbf{Cotangent space}}
We denote by $\mathrm{dist}_X$ the geodesic distance on $X$. We denote by 
$T^*X$ the cotangent bundle of $X$, and by $\pi_X : T^*X \longrightarrow X$ 
the canonical projection. We recall that the cotangent space $T^*X$ can be 
equipped in a canonical way with a symplectic form $\sigma$. 
\par  
By $|\cdot|_x$ and by $\langle \cdot , \cdot\rangle_x$ we denote the norm 
and scalar product on $T^*_xX$ (respectively on $T_x X$ whenever convenient) 
induced by the metric $g$. Furthermore, we equip the cotangent bundle 
$T^*X$ with an arbitrary metric $g_0$ such that the induced geodesic 
distance $\mathrm{dist}_{T^*X}$ on $T^*X$ is so that 
$\mathrm{dist}_{T^*X}(\rho_1,\rho_2)\geq c\dist_X(\pi_X (\rho_1),\pi_X (\rho_2))$ 
for some fixed constant $c>0$. This is for instance the case when we 
take $g_0$ to be the Sasaki metric on $T^*X$ induced by $g$.
\par
We will denote by $S^*X\subset T^*X$ the unit cotangent bundle, and by 
$\Phi^t : T^*X\longrightarrow T^*X$ the geodesic flow. We will denote by 
the same letter its restriction $\Phi^t : S^*X \longrightarrow S^*X$. 
\\
\\
\paragraph{\textbf{Universal cover}}
We will denote the universal cover of $X$ by $\widetilde{\pi}:\widetilde{X}\to X$. 
Since $X$ is a connected Riemannian manifold of negative sectional curvature, 
$\widetilde{X}$ is a simply-connected manifold of negative sectional 
curvature.  We equip $\wt{X}$ and $T^*\wt{X}$ with the lifted Riemannian metrics 
$\wt{g}$ and $\wt{g}_0$, respectively.
We denote by $\widehat{\pi}: T^*\widetilde{X} \longrightarrow T^*X$ 
the local diffeomorphism given by $\widehat{\pi}(\widetilde{x},\widetilde{\xi}) = 
(\widetilde{\pi}(\widetilde{x}),(d_{\widetilde{x}}\widetilde{\pi})^{-T}\widetilde{\xi})$.
\section{Semiclassical analysis on smooth manifolds}\label{sec:Semi}
We present a brief review of the calculus of semiclassical 
pseudo-differential operators on a smooth $d$-dimensional \red{Riemannian} manifold 
$Y$ (not necessarily compact).  For the material reviewed here as well as further details 
we refer to the standard literature \cite{Ho84,GrSj94,Ma02,Zw12,DZ19}. 
\subsection{Semiclassical pseudo-differential operators}
\label{sec:SemClass}
We begin by defining suitable symbol classes. \corM{Let $m\in\R$ and let $\eta\in[0,1/2[$. 
For an open set $V\subset \R^d$ the local symbol class $S^m_\eta(T^*V)$, $T^*V\simeq V\times \R^d$, 
is the set of all functions $a(\cdot\,; h) \in C^{\infty}(T^*V)$ such that, for all $\alpha, \beta \in \N^d$, 
and any compact set $\mathfrak{K} \subset V$,
\begin{equation}\label{eq:sa1}
\sup \limits_{0 < h \leq 1} \sup_{(x,\xi)\in \mathfrak{K}\times \R^d }
	h^{\eta(|\alpha| +|\beta|)} \langle \xi\rangle^{-m+ |\beta|}
	|\partial_x^\alpha\partial_\xi^\beta a(x,\xi;h)| < +\infty,
\end{equation}
where $\langle \xi\rangle = (1+|\xi|^2)^{1/2}$ denotes the Japanese bracket. 
}
\corM{The class of symbols $S_\eta^{m}(T^*Y)$ is the set of all functions 
$a(\cdot\,; h) \in C^{\infty}(T^*Y)$ such that for any coordinate patch $U\subset Y$ 
and diffeomorphism $\kappa: T^*U \to T^*V\simeq V\times \R^d$, $V \subset \R^d$ we have 
that 
\begin{equation}\label{eq:sa1}
	(\kappa^{-1})^*a \in S^m_\eta(T^*V).
\end{equation}
Note that the symbol class $S_\eta^{m}(T^*Y)$ does not depend on the choice of an
atlas.
}

%
%
\par
We will define the symbol space of order $-\infty$ by 
$S_\eta^{-\infty}(T^*Y):= \bigcap_{m}S_\eta^{m}(T^*Y)$. A linear continuous map 
$R=R_h: \mathcal{E}'(X) \to C^{\infty}(Y)$ is called \emph{negligible} 
if its distribution kernel $K_R$ is smooth and each of its 
$C^\infty(Y\times Y)$ seminorms is $O(h^\infty)$, i.e. it satisfies 
\begin{equation}\label{eq:sa2}
	\partial_x^\alpha \partial^\beta_y K_R(x,y)= O(h^\infty), 
\end{equation}
for all $\alpha,\beta\in \N^d$, when expressed in local coordinates.
\par
A linear continuous map $P_h: C_c^{\infty}(Y)\to \mathcal{D}'(Y)$ is called a 
\emph{semiclassical pseudo-differential operator} belonging to the space 
$\Psi_{h,\eta}^{m}$ if and only if we can express $P_h$ as 
\begin{equation}\label{eq:sa2a}
	P_h =  \sum_{k\in K} \chi_k \kappa_k^* 
	\Op_h(p_k)(\kappa_k^{-1})^* \chi_k + K_h, 
\end{equation}
\corM{where the $\kappa_k: U_k \to V_k$ are a collection of diffeomorphisms 
between open sets $U_k\subset X$ and $V_k \subset \R^d$ with the 
collection of coordinates patches $U_k$ being locally finite,
$p_k \in S^m_\eta(T^*\R^d)$, $K_h$ is negligible, and 
$\chi_k\in C_c^\infty(U_k)$.} We will refer to the induced family 
$(\kappa_k,\chi_k)_{k}$ as \emph{cut-off charts}. In \eqref{eq:sa2a} 
we use the standard semiclassical quantization of the symbols $p_k$
\begin{equation}\label{eq:sa4}
	\Op_h(p_k) u(x) = \frac{1}{(2h\pi)^d} \iint_{\R^{2d}} 
	\e^{\frac{i}{h} \cdot(x-y)\cdot \xi} 
	p_k(x,\xi;h)u(y) dy d\xi, 
	\quad u\in C_c^\infty(V_k),
\end{equation}
seen as an oscillatory integral. Here $\langle \cdot, \cdot\rangle$ denotes 
the Euclidean scalar product on $\R^d$. 
\\
\par 
Equivalently, a linear continuous map $P_h: C_c^{\infty}(Y)\to \mathcal{D}'(Y)$ 
is in $\Psi_{h,\eta}^{m}$ if and only if the following two conditions hold: 
\begin{enumerate}
	\item $\phi P_h \psi$ is negligible for all 
	$\phi,\psi \in C^\infty_c(Y)$ with $\supp \phi \cap \supp \psi = \emptyset$
	(\emph{pseudolocality});
	\item for every cut-off chart $(\kappa,\chi)$ there exists a symbol 
	$p_{\kappa,\chi}\in S^m_\eta(T^*\R^d)$ such that
	\begin{equation}\label{eq:sa3}
		\chi P_h\chi 
		= \chi\kappa^*  \Op_h(p_{\kappa,\chi})(\kappa^{-1})^*\chi.
	\end{equation}
\end{enumerate}
The property of pseudolocality can be extended to $h$-dependent cut-off functions 
$\phi,\psi\in C_c^\infty$ with support contained in some $h$-independent compact 
set, with $|\partial^\alpha \phi(x)|,|\partial^\alpha \psi(x)| \leq 
O_\alpha(h^{-\varepsilon|\alpha|})$, for some $0\leq \varepsilon<1/2$ and 
$\dist (\supp\phi,\supp\psi)\geq h^{\varepsilon_0}/C$, $0\leq\varepsilon_0<1/2$, 
$C>0$.
\\
\par 
Given a symbol $p\in S_\eta^{m}(T^*Y)$ one can obtain an operator 
$P_h \in \Psi_{h,\eta}^{m}$, for instance, in the following way: 
Take a partition of unity $\{\psi_k\}_{k \in K}$ subordinate to 
a locally finite covering of $Y$ by coordinate charts  
$\{\kappa_k:Y\supset U_k\to V_k\subset \R^d\}_{k\in K}$ 
such that \corM{$\sum \psi_k =1$}. \corM{Moreover, take $C^\infty_c(U_k)\ni
\psi_k'\succ \psi_k$.} Then
\corM{
\begin{equation}\label{eq:sa1.1}
	P_h = \sum_{k \in K} \psi_k'\kappa_k^* 
	\Op_h(p_k) 
	(\kappa_k^{-1})^* \psi_k' ~ \in \Psi_{h,\eta}^{m},
\end{equation}
where $p_k:=(\psi_k p)\circ\widehat{\kappa}_k^{-1} \in S^m_\eta(T^*\R^d)$ 
and the symplectomorphism $\widehat{\kappa}_k^{-1}:T^*V_k \to T^*U_k$ is defined 
by 
\begin{equation}\label{eq:sa1.1.0}
\widehat{\kappa}_k^{-1}(x,\xi)=(\kappa_k^{-1}(x),
(d_x\kappa^{-1}_k)^{-T}\xi).
\end{equation}
}
The correspondence $P_h \mapsto p$ is not globally 
well-defined, but it gives rise to a bijection 
\begin{equation}\label{eq:sa5}
	\Psi_{h,\eta}^{m}/ h^{1-2\eta}\Psi_{h,\eta}^{m-1} \longrightarrow 
	S_\eta^{m}(T^*Y) / h^{1-2\eta}S_\eta^{m-1}(T^*Y),
\end{equation}
see for instance \cite[Appendix E.1.7]{DZ19} for details. 
The image $\sigma_P$ of $P$ under the map \eqref{eq:sa5} is called 
\emph{principal symbol} of $P$. Note that for $P_h$ in \eqref{eq:sa1.1} 
the principal symbol is $\sigma(P_h)=p$. \corM{When $a\in S^m_\eta(T^*\R^d)$ 
admits an asymptotic expansion $a \sim a_0 + h^{k_1}a_1 + h^{k_2}a_2 + 
\dots \in S^m_\eta(T^*\R^d)$, $k_1< k_2 < \dots $, we call $a_0$ the 
principal part of $a$. When \eqref{eq:sa3} holds, then 
\begin{equation}\label{eq:principalSymbrel}
	\sigma(P_h)\circ \wh{\kappa}^{-1} = p_{\kappa,\chi}^0 \text{ on } T^*O,
\end{equation}
where $O$ is a small open neighbourhood of the support of 
$\chi\circ \kappa^{-1}$ and $p_{\kappa,\chi}^0$ is the principal part of 
$p_{\kappa,\chi}$. 
}
\subsection{Semiclassical Sobolev spaces}
We now recall the definition of \emph{semiclassical 
Sobolev spaces} on $Y$. First of all,  when $Y=\R^d$, we
define for $s\in\R$ the \emph{semiclassical Sobolev space} 
$H_{h}^s(\R^d)\subset \mathcal{S}'(\R^d)$ as the space of 
all tempered distributions $u\in\mathcal{S}'(\R^d)$ such that 
\begin{equation*}
	\| u \|_{H_{h}^s(\R^d)} 
	= \| \Op_h ( \langle \xi \rangle^s) u \|_{L^2(\R^d)} <\infty.
\end{equation*}
When $Y$ is a smooth manifold, we define for $s\in\R$ the \emph{local 
semiclassical Sobolev space} $H_{h,\loc}^s(Y)\subset \mathcal{D}'(Y)$ 
as the set of all distributions $u\in\mathcal{D}'(Y)$ such that 
\begin{equation*}
		(\kappa^{-1})^*\chi u \in H_{h}^s(\R^d)
\end{equation*}
for all local coordinate charts $\kappa: Y\supset U \to V\subset \R^d$ 
and cut-off functions $\chi\in C^\infty_c(U)$. We can turn 
$H_{h,\loc}^s(Y)$ into a Fréchet space by equipping it with the 
countable family of seminorms 
$\{\| (\kappa_k^{-1})^*\chi_k u \|_{H_{h}^s(\R^d)}\}_{k \in K}$,  
for any fixed, open and locally finite countable covering of $Y$ with 
coordinate charts $\{\kappa_k:X\supset U_k\to V_k\subset \R^d\}_{k\in K}$ 
and subordinate partition of unity $\{\chi_k\}_{k\in K}$, 
$\chi_k \in C^\infty_c(U_k)$. The topology on $H_{h,\loc}^s(Y)$ 
induced by such a family of seminorms is independent 
of the choice of open locally finite covering, coordinate charts and 
partition of unity. 
\\
\par 
We denote by $H^s_{h,\comp}(Y)$ the space of all elements of 
$H_{h,\loc}^s(Y)$ which are supported inside some $h$-independent 
compact subset of $Y$. When we are dealing with the case $h=1$ in 
the local Sobolev norms, we will simply write $H^s_{\comp}(Y)$ and
 $H^s_{\loc}(Y)$ for the corresponding spaces. We note the following 
 regularity result:  each $A\in \Psi_{h,\eta}^{m}(X)$ is bounded uniformly 
 in $h$ on compact sets as an operator  
 \begin{equation}\label{eq:sa6}
 A=	\Op_h(a):H_{h,\comp}^{s}(Y) \longrightarrow H_{h,\loc}^{s-m}(Y).
 \end{equation}
\\
\textbf{When $Y$ is compact} then 
\begin{equation*}
	H^s_{h,\comp}(Y) = H_{h,\loc}^s(Y) =: H_{h}^s(Y)
\end{equation*}
and we can equip it with the norm
\begin{equation}\label{eq:sa5.1}
	\| u \|_{H_{h}^s(Y)}^2 = \sum_{k\in K}
	\|(\kappa_k^{-1})^*\chi_k u \|_{H_{h}^s(\R^d)}^2, \quad 
	\mathrm{Card}(K)< +\infty, 
\end{equation}
where $\{\kappa_k\}$ is a finite collection of coordinate charts 
with a subordinate partition of unity $1=\sum_k\chi_k$ as above. 
This norm is not intrinsically defined, but taking different coordinate 
patches and cut-off functions in \eqref{eq:sa5.1} yields an equivalent norm (\red{uniformly in $h$}). 
\\
\par 
Similarly we define, for every $L\in \N$,  the $C^L$ norms on $Y$, by 
\begin{equation}\label{eq:sa5.15}
	\| u \|_{C^L(Y)} = \max_{k\in K}
	\|(\kappa_k^{-1})^*\chi_k u \|_{C^L(\R^d)}.
\end{equation}
Since $Y$ is compact it follows that taking different coordinate 
patches and cut-off functions in \eqref{eq:sa5.15} yields an equivalent norm. 
By standard arguments one then gets the Sobolev inequalities 
\begin{equation}\label{eq:sa8.2}
	\| u \|_{C^L(Y)} = O_{d,s,L}(1)h^{-d/2-L}
	\| u \|_{H_{h}^s(Y)} \quad \text{for } s>L +d/2. 
\end{equation}
\\
\textbf{If $Y$ is a non-compact manifold}, it is more delicate to obtain well-defined 
Sobolev norms. For the following discussion we refer the reader to 
\cite[Appendix A]{Sh92}. 
\par 
A smooth Riemannian manifold $Y$ is called a \emph{manifold of bounded 
geometry} if it has a strictly positive injectivity radius 
$r_{\mathrm{inj}}>0$ and every covariant derivative of the Riemann curvature 
tensor $R$ is bounded, i.e. for very $m=0,1,\dots$ there exists 
a $C_m>0$ such that $|\nabla^m R| \leq C_m$. 
\\
\par
Let $Y$ be a manifold of bounded geometry and denote by $dv_g$ the 
Riemannian density on $X$. We define the semiclassical 
Sobolev norm $\| \cdot \|_{H_{h}^s(Y)}$, $s\in\N$, on $C_c^\infty(Y)$ by 
\begin{equation}\label{eq:sa7}
	\| u \|_{H_{h}^s(Y)}^2 = \sum_{m=0}^s \int_Y|(h\nabla)^m u|^2dv_g, 
\end{equation}
where $|\cdot|$ is understood as the norm on tensors induced by the 
Riemannian metric $g$. 
\par 
We then define the Sobolev space $H_h^s(Y)$ to be the completion  
of $C_c^\infty(Y)$ with respect to the norm \eqref{eq:sa7}. The space 
$H_h^s(Y)$ has a natural Hilbert space structure, and it is 
naturally included in the space of distributions $\mathcal{D}'(Y)$. 
In particular $H^0_h(Y)=L^2(Y,dv_g)$, where the latter is defined via the 
$L^2$ norm with respect to the integration measure $dv_g$. 
The usual embedding theorems hold, i.e. $H^s_h(Y) \subset C^k_b(Y)$ 
if $s>k+d/2$. 
\par 
Since by assumption $Y$ has a strictly positive injectivity radius, 
we have the following result essentially due to M.~Gromov \cite{Gr81}, 
see also \cite[Lemma A.1.2]{Sh92} for a proof.
\begin{lem}\label{lem:geodesicCoord}
Let $Y$ be a smooth Riemannian manifold of bounded geometry 
and put $\varepsilon_0=r_{\mathrm{inj}}/3$. Then, for every 
$\varepsilon\in (0,\varepsilon_0)$ there exists a countable 
covering of $Y$ by balls of radius $\varepsilon$ such that 
$Y=\bigcup_{k\in K} B(x_k,\varepsilon)$ and such that the covering 
of $Y$ by balls $B(x_k,2\varepsilon)$ with double radius and 
the same centres satisfies that the maximal number of the balls 
with non-empty intersection is finite. 
\end{lem}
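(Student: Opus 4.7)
The plan is to build the covering by the standard greedy/maximal packing argument, and then use volume comparison (which works precisely because bounded geometry provides two-sided control on volumes of small metric balls) to bound the multiplicity.

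First I would fix $\varepsilon \in (0,\varepsilon_0)$ with $\varepsilon_0 = r_{\mathrm{inj}}/3$ and apply Zorn's lemma to obtain a maximal subset $\{x_k\}_{k\in K} \subset X$ with the property that the points are $\varepsilon$-separated, i.e.\ $\mathrm{dist}_X(x_k, x_j) \geq \varepsilon$ for all $k \neq j$. Maximality immediately gives the covering: if some $y\in X$ lay outside every $B(x_k,\varepsilon)$, then $\{x_k\} \cup \{y\}$ would still be $\varepsilon$-separated, contradicting maximality. Hence $X = \bigcup_{k\in K} B(x_k,\varepsilon)$. The index set $K$ is at most countable because the family of disjoint balls $\{B(x_k,\varepsilon/2)\}_{k\in K}$ lies in the separable space $X$.

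Next, to control the multiplicity of the doubled covering $\{B(x_k,2\varepsilon)\}$, suppose that $y \in X$ lies in $B(x_k,2\varepsilon)$ for every $k$ in some subset $I \subset K$. Then the separation property ensures that the balls $\{B(x_k,\varepsilon/2)\}_{k\in I}$ are pairwise disjoint, while the triangle inequality gives
\begin{equation*}
\bigsqcup_{k\in I} B(x_k,\varepsilon/2) \subset B(y, 2\varepsilon + \varepsilon/2) = B(y, 5\varepsilon/2).
\end{equation*}
Taking Riemannian volumes yields $\sum_{k\in I} \mathrm{Vol}(B(x_k,\varepsilon/2)) \leq \mathrm{Vol}(B(y,5\varepsilon/2))$.

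The main (and only) technical point is then to turn this into a bound on $|I|$ that is independent of $y$. Because $X$ has bounded geometry, the sectional curvature is bounded (being the trace of the curvature tensor, which is uniformly bounded along with its derivatives) and $r_{\mathrm{inj}} > 0$, so $5\varepsilon/2 < 5r_{\mathrm{inj}}/6 < r_{\mathrm{inj}}$ and all the balls above are geodesic balls inside normal coordinate neighborhoods. Bishop--Gromov volume comparison (or its refined version for manifolds with two-sided curvature bounds) then provides constants $0 < v(\varepsilon) \leq V(\varepsilon) < \infty$, depending only on $\varepsilon$, the dimension $d$, and the curvature bounds of $X$, such that
\begin{equation*}
v(\varepsilon/2) \leq \mathrm{Vol}(B(x,r)) \leq V(r)
\end{equation*}
for every $x \in X$ and every $r \in (0,r_{\mathrm{inj}})$. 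Combining these uniform bounds with the volume inequality above gives
\begin{equation*}
|I| \cdot v(\varepsilon/2) \leq V(5\varepsilon/2), \qquad \text{i.e.} \qquad |I| \leq \frac{V(5\varepsilon/2)}{v(\varepsilon/2)},
\end{equation*}
which is the required finite bound on the maximal multiplicity. The hardest step is therefore the uniform two-sided volume estimate, which is where the full strength of the bounded-geometry hypothesis (and not just the lower bound on the injectivity radius) enters.
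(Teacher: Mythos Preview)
The paper does not actually prove this lemma: it is stated with a reference to Gromov \cite{Gr81} and to \cite[Lemma A.1.2]{Sh92} for a proof. Your argument is the standard one (maximal $\varepsilon$-separated net plus a packing/volume-comparison bound) and is essentially what one finds in Shubin's appendix, so in that sense your proposal matches the intended proof.

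Two small corrections. First, your parenthetical ``sectional curvature is bounded (being the trace of the curvature tensor\dots)'' is wrong: the trace of the Riemann tensor is the Ricci curvature, not the sectional curvature. The correct justification is simply that bounded geometry includes $|R|\le C_0$, and sectional curvatures are components of $R$ evaluated on orthonormal pairs, hence bounded; your conclusion is fine. Second, the displayed inequality $v(\varepsilon/2)\le \mathrm{Vol}(B(x,r))\le V(r)$ is garbled: the left-hand side should also depend on the radius, i.e.\ you want uniform bounds $v(r)\le \mathrm{Vol}(B(x,r))\le V(r)$ for $r<r_{\mathrm{inj}}$, and then you apply the lower bound at $r=\varepsilon/2$ and the upper bound at $r=5\varepsilon/2$. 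With these cosmetic fixes the argument is complete.
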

This result implies the existence of a ``uniform'' partition 
of unity of $Y$ subordinate to the covering by balls from the 
above Lemma. Indeed, for every $\varepsilon\in (0,\varepsilon_0)$, 
with $\varepsilon_0>0$ as in the Lemma above, there exists a partition 
of unity $1=\sum_k \chi_k$ on $Y$ such that 
$\chi_{k} \in C^\infty_c(Y;[0,\infty[)$ with 
$\supp\chi_k\subset B(x_k,2\varepsilon)$ (the points $x_k$ as 
in the above Lemma) and such that for every $\alpha\in\N^d$ there 
exists a constant $C_\alpha>0$ such that 
$|\partial^\alpha_y \chi_k(y)|\leq C_\alpha$ in geodesic normal
 coordinates \red{(see \cite[Definition 1.4.4]{Jost})} and uniformly with respect to $k$.
\par 
Using this partition of unity we can give an alternative definition of the 
semiclassical Sobolev norm. Indeed, let 
$\kappa_k:B(x_k,2\varepsilon)=:U_k\to V_k\subset\R^d$ be the local 
geodesic coordinate chart in $B(x_k,2\varepsilon)$, then we can define 
for $s\in\R$
\begin{equation}\label{eq:sa8}
	\| u \|_{H_{h}^s(Y)}^2 = \sum_{k\in K}
	\|(\kappa_k^{-1})^*\chi_k u \|_{H_{h}^s(\R^d)}^2.
\end{equation}
The norms \eqref{eq:sa7} and \eqref{eq:sa8} are equivalent for $s\in\N$. 
\\
\par 
Similarly we define the $C^L$ norms on an open set $\cU\subset Y$, 
i.e. for $L\in\N$ 
\begin{equation}\label{eq:sa8.1}
	\| u \|_{C^L(\cU)} := \sup_{k\in K}
	\|(\kappa_k^{-1})^*\chi_k u \|_{C^L(\R^d\cap \kappa_k(\cU\cap U_k))}.
\end{equation}
Furthermore, we have the Sobolev inequalities 
\begin{equation*}
	\| u \|_{C^L(\mathcal{U})} = O_{d,s,L}(1)h^{-d/2-L}
	\| u \|_{H_{h}^s(\mathcal{U})}, \quad \text{for } s>L +d/2. 
\end{equation*}
\red{
When we only want to control the derivatives of a $C^L$ function $f$ 
on some open set $U\subset \R^d$ we 
use the $\dot{C}^L(U)$ norm defined by 
\begin{equation*}
	\| f \|_{\dot{C}^L(U)} = \max_{1\leq |\alpha|\leq L}
	\|\partial^\alpha f \|_{L^\infty(U)}.
\end{equation*}
Similarly we define the $\dot{C}^L$, $L\in\N$, norms on an open set 
$\cU\subset Y$, i.e. 
\begin{equation}\label{eq:sa8.1b}
	\| u \|_{\dot{C}^L(\cU)} := \sup_{k\in K}
	\|(\kappa_k^{-1})^*\chi_k u \|_{\dot{C}^L(\R^d\cap \kappa_k(\cU\cap U_k))}.
\end{equation}
}
\red{In this paper, we will consider the case when $Y= \wit{X}$ is the universal cover of a compact smooth Riemannian manifold $X$ of negative sectional curvature.}
Let $\kappa_k:U_k\to V_k\subset\R^d$, $k=1,\dots, M$, 
be the local geodesic coordinates on $X$ with $U_k =B(x_k,2\varepsilon)$ 
as in the discussion above \eqref{eq:sa8}, such that the coordinate patches 
$U_k$, $k=1,\dots,M$, form a finite open covering of $X$. 
Furthermore, let $\chi_k\in C^\infty_c(X;[0,1])$, $k=1,\dots,M$, be a 
finite partition of unity of $X$ subordinate to this open covering. 
\red{ In this case we will use these charts and cut-off functions 
to define the Sobolev norms on $X$ as in \eqref{eq:sa5.1}. Next, recall that the 
covering map $\widetilde{\pi}:\widetilde{X}\to X$ was defined in Section 
\ref{sec:notation}. By the covering property we have that if $U_k\subset X$ 
is a sufficiently small open set (in other words if $\varepsilon>0$ is small enough), 
then $\widetilde{\pi}^{-1}(U_k)=
\bigsqcup_{\iota \in I}\widetilde{U}_{k,\iota}$ is a countable union 
of disjoint open sets. Furthermore, the restriction 
$\widetilde{\pi}_{k,\iota}:=\widetilde{\pi}|_{U_{k,\iota}}: \widetilde{U}_{k,\iota}
\to U_k$ is a diffeomorphism.} Using 
that the covering map is a local isometry, we find that the lifted 
coordinate charts $\widetilde{\kappa}_{k,\iota}=\wt{\pi}_{k,\iota}^*\kappa_k$, 
are local geodesic coordinates on $\widetilde{X}$, the universal covering of $X$,   
with coordinate patches $\widetilde{U}_{k,\iota}=
B(\wt{\pi}^{-1}_{k,\iota}(x_k),2\varepsilon)$. Furthermore, the coordinate 
patches form a locally finite open covering of $\widetilde{X}$. The 
lifted cut-off functions $\wt{\chi}_{k,\iota}=\wt{\pi}_{k,\iota}^*\chi_{k}$ form a locally finite 
partition of unity on $\wt{X}$, i.e. $\sum_{k,\iota}\wt{\chi}_{k,\iota}=1$ 
on $\wt{X}$. We will be working mostly with the following Sobolev and $C^L$ 
norms: let $\wt{\cU}\subset \red{\wit{X}}$ be an open set, then 

\begin{equation}\label{eq:sa8.3}
	\| u \|_{H_{h}^s(\wt{X})}^2 = \sum_{k,\iota}
	\|(\kappa_{k,\iota}^{-1})^*\wt{\chi}_{k,\iota} u \|_{H_{h}^s(\R^d)}^2, 
	\quad 
	\| u \|_{C^L(\wt{\cU})} := \max_{k,\iota}
	\|(\kappa_{k,\iota}^{-1})^*\wt{\chi}_{k,\iota} u 
		\|_{C^L(\R^d\cap\kappa_{k,\iota}(\wt{\cU}\cap \wt{U}_{k,\iota}))}.
\end{equation}
\red{
The $\dot{C}^L$ norms on $\wt{\cU}$ are defined similarly.} 
Furthermore, the Sobolev inequalities \eqref{eq:sa8.2} remain valid. 

\subsection{Choice of quantization and lifts to the universal cover}
\label{subsec:LiftPseudo}
\MI{As mentioned before,} we will be working not only on a compact connected smooth Riemannian manifold 
$X$ of negative sectional curvature, but 
also on its universal cover $\widetilde{X}$. It will be useful to 
discuss lifting a pseudo-differential operator $P$ on $X$ to a 
pseudo-differential operator $\widetilde{P}$ on $\widetilde{X}$. 
\par 
Recall the covering map $\widetilde{\pi}:\widetilde{X}\to X$ and 
the map $\widehat{\pi}:T^*\widetilde{X}\to T^*X$ defined in Section 
\ref{sec:notation}. By the covering property we have that if $U\subset X$ 
is a sufficiently small open set, then $\widetilde{\pi}^{-1}(U)=
\bigsqcup_{\iota \in I}\widetilde{U}_{\iota}$ is a countable union 
of disjoint open sets. Furthermore, the restriction 
$\widetilde{\pi}_{\iota}:=\widetilde{\pi}|_{U_{\iota}}: \widetilde{U}_{\iota}
\to U$ is a diffeomorphism. Hence, we may lift a chart $\phi:U\to V\subset \R^d$ 
to a chart $\widetilde{\phi}_\iota:=(\widetilde{\pi}_{\iota}^{-1})^*\phi: 
\widetilde{U}_{\iota} \to V$ on $\widetilde{X}$. Similarly, we can lift a 
function $\chi\in C_c^\infty(U)$ to $\widetilde{\chi}_\iota:=
(\widetilde{\pi}_{\iota}^{-1})^*\chi \in C_c^\infty(\widetilde{U}_{\iota})$. 
Slightly abusing notation, we will also denote by $\widetilde{\chi}_\iota$ 
its extension by $0$ outside its support to a smooth compactly supported function 
on $\wt{X}$. 
\subsubsection{\textbf{Choice of quantization on a compact manifold.}} 
\MI{When working on a compact manifold, we will fix a (noncanonical) choice of quantization as in \eqref{eq:sa1.1}.}
\red{
\begin{definition}[Choice of quantization on a compact manifold]\label{def:Quantization} 
Fix a finite partition of unity $\{\vartheta_k\}_{k \in K}$, such that \corM{$\sum \vartheta_k =1$}, 
subordinate to a finite covering of $X$ by coordinate charts 
$\{\kappa_k:X\supset U_k\to V_k\subset \R^d\}_{k\in K}$ 
with small enough chart domains such that $\widetilde{\pi}^{-1}(U_k)$ is a disjoint union 
of open sets for every $k$ (as in the paragraph after \eqref{eq:sa8.1b}). 
\corM{Moreover, take $C^\infty_c(U_k)\ni\vartheta_k'\succ \vartheta_k$.}
Given a symbol $p \in S_{\eta}^{m}(T^*X)$, we will call 
\corM{
\begin{equation}\label{eq:sa1.15}
	\Op_h(p):=P_h = \sum_{k \in K} \vartheta'_k\kappa_k^* 
	\Op_h( p_k) 
	(\kappa_k^{-1})^* \vartheta'_k,
\end{equation}
the quantization of $p$. Here, where $p_k:=(\vartheta_k p)\circ\widehat{\kappa}_k^{-1} \in S^m_\eta(T^*\R^d)$, 
c.f. \eqref{eq:sa1.1.0}. }
\end{definition}
One can easily check that $\Op_h(p)\in \Psi_{h,\eta}^{m}$ with $\sigma(\Op_h(p)) = p$. 
}
\subsubsection{\textbf{Lifting a quantization to the universal cover}.} 
\red{
We will lift the operator defined by the quantization choice in 
Definition \ref{def:Quantization} to a pseudo-differential operator on 
$\wt{X}$ in the following way.
}
\corM{
\begin{definition}[Lift of quantization]\label{def:QuantizationLift}
Let $\{\vartheta_k\}_{k \in K}$, $\{\vartheta_k'\}_{k \in K}$ and 
$\{\kappa_k\}_K$ be as in Definition 
\ref{def:Quantization} and recall the notation of lifted charts and 
functions introduced at the beginning of Section \ref{subsec:LiftPseudo}. 
Given a symbol $p \in S_{\eta}^{m}(T^*X)$, will call   
\begin{equation}\label{eq:liftePseudo4}
	\wt{P}_h := \sum_{k \in K,\iota \in I } 
	\wt{\vartheta}'_{k,\iota}\wt{\kappa}_{k,\iota}^* 
	\Op_h(p_k) 
	(\wt{\kappa}_{k,\iota}^{-1})^* \wt{\vartheta}'_{k,\iota}~
\end{equation}
a lift of $\Op_h(p)$ to the universal cover $\wt{X}$. Here, 
$\{p_k\}_{k\in K}$ is as in Definition \ref{def:Quantization}.
\end{definition}
}
\red{From the discussion in Section \ref{sec:SemClass} we see that 
$\wt{P}_h\in \Psi_{h,\eta}^{m}(\wt{X})$ and notice that this lifted 
quantization produces properly supported operators. Moreover, it is easy to 
check that the following three properties hold: 
}
%
\begin{enumerate}
	\item the principal symbol of $\widetilde{P}_h$ is 
	given by lifted principal symbol of $P_h$, i.e. 
	\begin{equation}\label{eq:liftePseudo1}
	\sigma(\widetilde{P}_h) = \widehat{\pi}^*\sigma(P_h)\in 
		S^{m}_{\red{\eta}}(T^*\widetilde{X});
	\end{equation}
	\item for every cut-off chart $(\kappa,\chi)$ on $X$ such 
	that \eqref{eq:sa3} holds with $p_{\kappa,\chi}\in S^m_\eta(T^*\R^d)$, 
	we have that 
	\begin{equation}\label{eq:liftePseudo2}
		(\widetilde{\kappa}_\iota^{-1})^*
		\widetilde{\chi}_\iota \widetilde{P}_h\widetilde{\chi}_\iota
		\widetilde{\kappa}_\iota^* 
		\corM{=(\kappa^{-1})^*\chi P_h\chi\kappa^* }
		=  \chi\circ\kappa^{-1} \Op_h(p_{\kappa,\chi})\chi\circ\kappa^{-1}.
	\end{equation}
	\item for every $\phi,\psi \in C^\infty_c(X)$ with 
	$\supp \phi \cap \supp \psi = \emptyset$ which may depend on $h$ 
	as in the paragraph after \eqref{eq:sa3}, we have that 
	$\widetilde{\phi}_{\iota'} \widetilde{P}_h\widetilde{\psi}_\iota$ 
	is negligible and that for every $N\in\N$ 
	\begin{equation}\label{eq:liftePseudo3}
		\widetilde{\phi}_{\iota'}\widetilde{P}_h\widetilde{\psi}_\iota =
		O_{\phi,\psi,N}(h^\infty)
		:H^{-N}_{h}(\wt{X}) \to H^N_{h}(\wt{X}).
	\end{equation}
	In other words $\widetilde{\phi}_{\iota'} \widetilde{P}_h\widetilde{\psi}_\iota$ 
	is a bounded operator $H^{-N}_{h}(\wt{X}) \to H^N_{h}(\wt{X})$ 
	with operator norm $=O_{\phi,\psi,N}(h^\infty)$ which is independent of $\iota,\iota'$. 
\end{enumerate}
%
%
%
\subsubsection{\textbf{Lifting a differential operator}} 
Lifting differential operators to $\widetilde{X}$ is more straightforward 
since they are local operators: the lift of the Laplace-Beltrami operator 
$\Delta_g$ on $X$ to the cover $\widetilde{X}$ is given by 
$\Delta_{\widetilde{g}}$. Indeed, one can easily check that for 
$f\in C^\infty(X)$
\begin{equation}\label{eq:liftedLaplacian}
	\Delta_{\widetilde{g}}\widetilde{\pi}^*f = \widetilde{\pi}^*\Delta_{g}f.
\end{equation}
We will drop the metric in the subscript whenever it is clear which 
Laplacian we are using. 
\par 
We then define the lifted Schr\"odinger operator \eqref{eq:SchroedingerOp} by
\begin{equation}\label{eq:liftedSchroedinger}
	\widetilde{P}_h^\delta := -\frac{h^2}{2} \Delta_{\widetilde{g}} 
	+ \delta \widetilde{Q}_\omega,
\end{equation}
\red{
where $\widetilde{Q}_\omega$ is the multiplication operator by 
$\wt{q}_\omega=\wt{\pi}^*q_\omega$ in the \eqref{eq:Pot}, or 
$\widetilde{Q}_\omega$ is the lift of $Q_\omega=\Op_h(q_\omega)$ 
to the universal cover, as Definition \ref{def:QuantizationLift}, 
in the \eqref{eq:Pseudo}.
}
\subsubsection{\textbf{Mapping properties of the pull-back} $\wt{\pi}^*$}
The pull-back action $\wt{\pi}^*$ via the covering map $\wt{\pi}$ is a 
continuous linear map $C^n(X)\to C^n(\wt{X})$, $n\in\N$, and 
$L^2(X,dv_g)\to L^2_{\loc}(\wt{X},dv_{\wt{g}})$, and 
$H^k_h(X)\to H^k_{h,\loc}(\wt{X})$, $k\in\N$. Indeed, take the 
countable family of lifted cut-off chart $(\wt{\kappa}_{k,\iota},\wt{\chi}_{k,\iota})$ 
defined in the paragraph above \eqref{eq:sa8.3}, then
\begin{equation}\label{eq:PullBackMapping1}
	\| (\wt{\kappa}_{k,\iota}^{-1})^*\wt{\chi}_{k,\iota} \wt{\pi}^*u \|_{H_{h}^s(\R^d)}
	=
	\| (\kappa_{k}^{-1})^*\chi_{k} u \|_{H_{h}^s(\R^d)}.
\end{equation}
The formal adjoint $^t\wt{\pi}^*$ of $\wt{\pi}^*$ is given by 
\begin{equation}\label{eq:PullBackMapping2}
	(^t\wt{\pi}^* u)(x) = \sum_{\wt{\pi}(\wt{x})=x} u(\wt{x}), 
	\quad u \in C^\infty_c(\wt{X}),
\end{equation}
mapping $C^\infty_c(\wt{X})\to C^\infty(X)$ linearly. Let 
$u\in H^k_{h,\comp}(\wt{X})$, then 
\begin{equation}\label{eq:PullBackMapping3}
\begin{split}
	\|^t\wt{\pi}^* u\|_{H^k_{h}(X)} 
	&\leq \sum_{k,\iota} \|\chi_k (\pi_{k,\iota}^{-1})^*u\|_{H^k_{h}(X)} \\
	&= \sum_{k,\iota} \|(\kappa_k^{-1})^*\chi_k (\pi_{k,\iota}^{-1})^*u\|_{H^k_{h}(\R^d)} \\
	&= \sum_{k,\iota} \|(\wt{\kappa}_{k,\iota}^{-1})^*\wt{\chi}_{k,\iota}u\|_{H^k_{h}(\R^d)} \\
	&\leq C\|u\|_{H^k_{h}(\wt{X})},
\end{split}
\end{equation}
where $C>0$ only depends on the support of $u$. Hence, $^t\wt{\pi}^*$ 
is bounded uniformly in $h>0$ on compact sets as an operator 
\begin{equation}\label{eq:PullBackMapping4}
	^t\wt{\pi}^*: H^k_{h,\comp}(\wt{X}) \longrightarrow 
	H^k_{h}(X).
\end{equation}
\par Let $I\subset \R$ be an open interval and let 
$u\in C^{\infty}(I\times \wt{X})$ be such that $\supp u(\tau, \cdot)$ 
is compact for every $\tau \in I$. A straightforward computation 
in local coordinates shows that 
\begin{equation}\label{eq:PullBackMapping5}
	(i\partial_\tau + \Delta_g)(^t\wt{\pi}^* u)= 
	{^t\wt{\pi}^*}(i\partial_\tau + \Delta_{\wt{g}}) u 
	~~ \in C^{\infty}_c(I\times \wt{X}).
\end{equation}
\section{Classical dynamics}\label{Sec:Class}

\red{In this section, we will show several properties related to the Hamiltonian dynamics induced by the potential $q_\omega$ from  (\ref{eq:randomSymbol}). Many of these properties will be very similar to those of the usual geodesic flow; however, the Hamiltonian dynamics considered here depends on the parameters $h$ and $\omega$, and we will have to obtain estimates that are uniform with respect to these parameters.}

\subsection{Hamiltonian flows}\label{Sec:HamiltonFlows}

Let $(X,g)$ be a compact connected manifold of negative sectional curvature 
and let $q_\omega$ be as in \eqref{eq:randomSymbol}. Let $h\in ]0,h_0]$ with 
$h_0>0$ sufficiently small.  For 
\red{$\delta= \delta(h)>0$}, we put 
\begin{equation}\label{eq:Hamiltonian}
	p(x,\xi;\delta) := \frac{1}{2} |\xi|_x^2+ \delta q_\omega(x, \xi).
\end{equation}
This ($\delta$-dependent and therefore possibly $h$-dependent) Hamiltonian 
induces a Hamiltonian vector field $H_{p}$ which may be defined by the 
pointwise relation $H_p \lrcorner\, \sigma= -dp$, or in canonical symplectic 
coordinates by
\begin{equation}\label{eq:HamiltonianVF} 
	H_{p} 
	=\sum_{k=1}^d \frac{\partial p}{\partial \xi_k}
	\frac{\partial }{\partial x_k}
	-
	\frac{\partial p}{\partial x_k}
	\frac{\partial }{\partial \xi_k}.
\end{equation}
Hence, $H_p$ is a smooth section of $TT^*X$. For 
$\lambda, \lambda_1, \lambda_2 \geq 0$ we define the energy layers
\begin{equation}\label{eq:EnergyLayers}
\begin{split}
\mathcal{E}_{\delta,\lambda} 
	&:= \{(x,\xi) \in T^*X ; p(x,\xi;\delta) =\lambda\} 
	\subset T^*X\\
\mathcal{E}_{\delta,(\lambda_1, \lambda_2)} 
	&:= \{(x,\xi) \in T^*X; \lambda_1 < p(x,\xi;\delta) <\lambda_2\}  
		= \bigcup_{\lambda \in (\lambda_1, \lambda_2)} \mathcal{E}_{\delta,\lambda}.
\end{split}
\end{equation}
Similarly we define $\mathcal{E}_{\delta,[\lambda_1, \lambda_2]}$ 
by replacing the strict inequalities with non strict ones.
When $\delta>0$, these sets depend on the random parameter $\omega$, though we do 
not write it explicitly. 
\par
For every $\delta>0$, we denote by 
$$ \Phi^{t}_\delta:=\exp(tH_{p}), \quad \corM{t\in\R,} $$
the Hamiltonian flow on $T^*X$ generated by the vector field $H_p$. 
Furthermore, for every $\lambda>0$, we denote by 
$\Phi^{t,\lambda}_\delta=\Phi^{t}_\delta|_{\mathcal{E}_{\delta,\lambda}}$ its 
restriction to $\mathcal{E}_{\delta,\lambda}$. 
We will often write $\Phi^t$ for the flow $\Phi_0^{t,1}$, which 
is simply the  geodesic flow acting on $\mathcal{E}_{0,\frac{1}{2}} = S^*X$. 
\par 
Notice that there exists $\delta_0>0$  \red{independent of $h$} such that, for all 
$0\leq \delta<\delta_0$, we have
\begin{equation}\label{eq:NiveauxEnergieComparables}
\mathcal{E}_{0, \frac{1}{2}} 
\subset \mathcal{E}_{\delta, (\frac{1}{4},1)}
\subset \mathcal{E}_{0, (\frac{1}{8},2)}.
\end{equation}
Until further notice, we will always assume that $\delta_0$ is small enough 
so that (\ref{eq:NiveauxEnergieComparables}) holds.

\red{\begin{rem}\label{Rem:LeavingBall}
In the paper, we will use several times the following elementary fact. 
Let $\gamma, c, c'>0$. There exists $c'', h_0>0$ such that the following 
holds for all $0 < h < h_0$ and all $x_0\in X$. If $\rho\in S^*X$ with 
$\pi_X(\rho) \in B(x_0, c h^\gamma)$, then, for all 
$t\in \left[c'' h^\gamma,  r_I\right]$, we have $\pi_X \Phi^t_0(\rho) 
\notin B(x_0, c' h^\gamma)$. Here, $r_I$ denotes the injectivity radius of 
$X$ \corM{and $B(x_0, r)$ denotes the geodesic ball of radius $r>0$ centered at 
$x_0$.} 
\end{rem}}
\subsection{Hyperbolicity}\label{sec:Hyperbolicity}
\par
Since $X$ has negative curvature, we have that for every $\lambda>0$ the 
Hamiltonian flow $\Phi_0^{t,\lambda}$ is Anosov \cite{Ebe}. This implies that 
for each $\rho\in \mathcal{E}_{0,\lambda}$, there exist subspaces 
$E_\rho^+$, $E_\rho^-$, $E_\rho^0$ of $T_\rho  \mathcal{E}_{0,\lambda}$ 
-- respectively called the \emph{unstable}, \emph{stable} and \emph{neutral} 
direction at $\rho$ -- such that:
\begin{itemize}
\item $T_\rho  \mathcal{E}_{0,\lambda}=E_\rho^+\oplus E_\rho^-\oplus E_\rho^0$ 
	for every $\rho\in \mathcal{E}_{0,\lambda}$.
\item The distributions $E_\rho^+$, $E_\rho^-$ and $E_\rho^0$ depend 
	H\"older-continuously on $\rho$. 
\item The distribution $E_\rho^0$ is one dimensional and generated 
	by $\frac{\mathrm{d}}{\mathrm{d}t}|_{t=0}\Phi_0^{t,\lambda}(\rho)$. 
\item $E_\rho^+$ and $E_\rho^-$ are both $(d-1)$-dimensional, and for 
each $t\in\R$, we have 
\begin{equation}\label{eq:DirInv}
	d_\rho \Phi_0^{t,\lambda}(E^\pm_\rho)
	=E^\pm_{\textcolor{black}{\Phi_0^{t,\lambda}}(\rho)}.
\end{equation}
\item There exists $C_0>0$ and $A_0>1$ such that for each 
$\rho\in \mathcal{E}_{0,\lambda}$, $t>0$, $\xi^+\in E^+_\rho$ 
and $\xi^-\in E^-_\rho$,
\begin{equation}\label{eq:DefHyp}
\begin{aligned}
 |d_\rho\Phi_0^{-t,\lambda}(\xi^+)|_{\Phi_0^{-t,\lambda}(\rho)}&\leq C_0 A_0^{-t} |\xi^+|_{\rho}\\
 |d_\rho\Phi_0^{t,\lambda}(\xi^-)|_{\Phi_0^{t,\lambda}(\rho)}&\leq C_0 A_0^{-t}|\xi^-|_{\rho}\, .
\end{aligned}
\end{equation}
\end{itemize}
For $\rho = (x,\xi) \in \mathcal{E}_{0,\lambda}$, for some $\lambda>0$, write 
\begin{equation}\label{eq:DirNeutre2}
\hat{E}^0_\rho :=
\red{T_{\rho}} \{(x,s\xi);\, s\in\R\}
\end{equation}

Then
\begin{equation}\label{eq:DirNeutre2a}
	T_\rho T^*X = E_\rho^+\oplus E_\rho^0\oplus E_\rho^- \oplus \hat{E}_\rho^0.
\end{equation}
The structural stability lemma\footnote{\red{Recall that,  to apply this lemma, we need the flows $\Phi_\delta^{t,\lambda}$ and $\Phi_0^{t,\lambda}$ to be close in the $C^1$ topology, which is ensured by condition \eqref{eq:CondBeta}.}} (\cite[Theorem 18.2.3]{Kat}) \red{along with \eqref{eq:CondBeta}} implies that \red{ there exists $h_0>0$ such that for all $h\in ]0, h_0[$} and 
all $\omega$, the Hamiltonian flow $\Phi^{t,\lambda}_\delta$ is also an Anosov 
flow on each energy level $\mathcal{E}_{\delta, \lambda}$ for $\lambda\in (\frac{1}{4},1)$, satisfying properties analogous to those stated above. 
In particular, if $\lambda \in (\frac{1}{4},1)$ and 
$\rho\in \mathcal{E}_{\delta, \lambda} \subset \mathcal{E}_{0, 
(\frac{1}{8},2)}$, 
the Hamiltonian flow $\Phi^{t,\lambda}_\delta$ has stable, unstable and 
neutral directions at $\rho$ which are denoted by $E_{\delta,\rho}^\pm$ and 
$E_{\delta,\rho}^0$ respectively. Furthermore, the map
\begin{equation}\label{eq:ContinuityDIrections}
[0, \delta_0] \times  
	\mathcal{E}_{0, (\frac{1}{4},1)}
\ni (\delta, \rho) 
\mapsto 
(E_{\delta,\rho}^-, E_{\delta,\rho}^+,E_{\delta,\rho}^0) 
\text{ is continuous.}
\end{equation}

Thus, for \red{$h_0$} small enough, we find that for any $\lambda \in (\lambda_1, \lambda_2)$, 
any \red{$h\in [0, h_0]$} and any $\rho\in \mathcal{E}_{\delta, \lambda}$,
\begin{equation}\label{eq:DecompoTangentSpace}
	T_\rho T^*X = E_{\delta,\rho}^+\oplus 
	E_{\delta,\rho}^0\oplus E_{\delta,\rho}^- \oplus \hat{E}_{\rho}^0.
\end{equation}

By compactness and (\ref{eq:ContinuityDIrections}) we may find $c_1, c_2>0$ 
such that if $v = (v_+, v_0, v_-, \hat{v}_0) \in T_\rho T^*X 
= E_{\delta,\rho}^+\oplus E_{\delta,\rho}^0\oplus 
E_{\delta,\rho}^- \oplus \hat{E}_{\rho}^0$, we have
\begin{equation}\label{eq:NormeEquiv}
c_1 |v|_{\rho} \leq  |v_+|_{\rho} + |v_0|_{\rho} + |v_-|_{\rho} + |\hat{v}_0|_{\rho} \leq c_2 |v|_{\rho}.
\end{equation}

Here, we identify $v_+$ with $(v_+,0,0,0)\in T_\rho T^*X$ and similarly 
for $v_0,v_-$ and $\hat{v}_0$. 

\red{For every $\rho\in \mathcal{E}_{\delta, (\lambda_1, \lambda_2)}$, 
we may choose a basis of \corM{unit} vectors of $T_\rho T^*X$, 
$$\boldsymbol{e}(\rho)=(e_{1,+}(\rho), ..., e_{d-1, +}(\rho), e_{0}(\rho), e_{1,-}(\rho), ..., e_{d-1, -}(\rho), \hat{e}_0(\rho))$$ adapted to the decomposition \eqref{eq:DecompoTangentSpace}, such that $|\det(\boldsymbol{e}(\rho))|$ is bounded from below independently of $h\in (0, h_0]$, $\omega$ and of $\rho\in  \mathcal{E}_{\delta, (\lambda_1, \lambda_2)}$. In such a basis,} we have
\begin{equation}\label{eq:Matrix}
d_\rho \Phi_\delta^{t} 
	= 
	\begin{pmatrix}
	M_{\rho,\delta,t} & 0 & 0 & r_{1;\delta,t}(\rho)\\
	0 & 1 & 0 & \lambda t +  r_{2;\delta,t}(\rho)\\
	0 & 0 & (M_{\rho,\delta,t}^{-1})^\dagger &  r_{3;\delta,t}(\rho) \\
	0 & 0 & 0 & 1 +  r_{4;\delta,t}(\rho)
	\end{pmatrix},
\end{equation}
where $M_{\rho,\delta,t}$ is a $(d-1) \times (d-1)$ matrix such that 
$\|M_{\rho,\delta,t} v\| \geq C A_{\red{0}}^t |v|_\rho$ for any $v \in \R^{d-1}$. 
Here, the constants $C>0$ and $A_{\red{0}}>1$ can be chosen independent 
of $\delta\in [0, \delta_0)$ and $\rho \in \mathcal{E}_{\delta, (\frac{1}{4},1)}$.
Furthermore, it follows from \eqref{eq:ContinuityDIrections} that 
for every $j\in \{1,...,4\}$ and every $T>0$, we have
\begin{equation*}
	\sup_{t\in[0,T]}
	\sup_{\rho\in  \mathcal{E}_{0, (\frac{1}{8}, 2)}} 
 	\|r_{j;\delta,t}(\rho)\| \underset{\delta \to 0}{\longrightarrow} 0.
\end{equation*}
Continuing, we define the weak stable and unstable directions by
\begin{equation*}
	E_{\delta,\rho}^{\pm, 0} 
	:= 
	E_{\delta,\rho}^\pm \oplus E_{\delta,\rho}^0.
\end{equation*}

\subsection{Dynamics on the universal cover}

Let $X$ be as in the previous section. Recall that its universal 
cover is denoted $\widetilde{\pi}:\widetilde{X}\to X$. 
We may lift the Hamiltonian $p$ \eqref{eq:Hamiltonian} to a Hamiltonian 
\begin{equation}\label{eq:liftedHamiltonian}
	\widetilde{p}(x,\xi;\delta) := \widetilde{p} = \widehat{\pi}^*p : T^*\widetilde{X} \longrightarrow \R, 
\end{equation}
and define corresponding lifted energy layers 
\begin{equation}\label{eq:EnergyLayerLift}
\widetilde{\mathcal{E}}_{\delta,\lambda} :=  \widehat{\pi} ^{-1}\left( \mathcal{E}_{\delta,\lambda} \right), 
~
\widetilde{\mathcal{E}}_{\delta, (\lambda_1, \lambda_2)} 
:= \widehat{\pi} ^{-1}\left(\mathcal{E}_{\delta, (\lambda_1, \lambda_2)} \right)
~
\subset T^*\widetilde{X}.
\end{equation}
Notice that this direct definition is equivalent to the definition of these energy layers similar 
to \eqref{eq:EnergyLayers} \emph{mutatis mutandis}.
The lifted Hamiltonian $\widetilde{p}$ induces a Hamiltonian vector field $H_{\widetilde{p}}$,  
defined by the pointwise relation 
\begin{equation}\label{eq:liftedHV}
	H_{\widetilde{p}}\,\lrcorner\, \widetilde{\sigma} = -d{\widetilde{p}},
\end{equation}
where $\widetilde{\sigma}$ is the canonical symplectic form on $T^*\widetilde{X}$. We denote 
by $\widetilde{\Phi}_\delta^t$ the Hamiltonian flow generated by $H_{\widetilde{p}}$. Notice that 
\begin{equation}\label{eq:ProjDyn}
	\wih{\pi} \circ \widetilde{\Phi}_\delta^t = \Phi_\delta^t \circ \widehat{\pi}.
\end{equation}
In particular, these dynamics enjoy all the local properties described in 
Section \ref{sec:Hyperbolicity}. 

\subsection{Bounds on the derivatives of the flow}\label{ssec:BoundsFlow}

\red{In the rest of this section,  we will prove properties of the dynamics up to times $\gd |\log h|$ for some $0 < \gd \ll 1$.  We will always consider such time scales (which are much smaller than the usual Ehrenfest time in Quantum Chaos) in the rest of the paper,  corresponding to the result of Theorems \ref{th:MartinEtMaximeSontDesSuperBeauxGosses} and \ref{th:MartinEtMaximeSontDesBeauxGosses}. This will ensure us that any quantity growing exponentially with $t$ can be made smaller than $h^{-\varepsilon}$ for any $\varepsilon>0$, up to taking $\gd$ small enough.}

%

\red{Let 
$\rho \in \mathcal{E}_{0, ( \frac{1}{4},1 )}$.
We would like to control the derivatives of $\Phi_\delta^t(\rho)$ 
with respect to $t$. 
To this end, we first recall that, as in 
\eqref{eq:NiveauxEnergieComparables},  for $\delta_0>0$ 
sufficiently small, we have that 
\begin{equation}\label{eq:GronN00}
	\mathcal{E}_{0,(\frac{1}{4},1)}\subset 
	\mathcal{E}_{\delta,( \frac{1}{8}, 2)}\subset 
	\mathcal{E}_{0,( \frac{1}{16},  4)}.
\end{equation}
The Hamilton vector field $H_p$ \eqref{eq:HamiltonianVF} is tangent to 
$\mathcal{E}_{\delta,\lambda}$, $\lambda>0$, so the induced flow 
$\Phi^t_\delta$ preserves $\mathcal{E}_{\delta,\lambda}$, and 
we deduce that 
\begin{equation}\label{eq:GronN11}
	\Phi^t_\delta(\mathcal{E}_{0,(\frac{1}{4},1 )}) \subset 
\mathcal{E}_{0,( \frac{1}{16},  4)}, \text{ for all } t\in\R. 
\end{equation}}

\red{We will thus
consider a covering of $\mathcal{E}_{0, (\frac{1}{16},4)}$ by finitely 
many open sets $\mathcal{U}_j$, each of them endowed with a 
local chart $\red{\hat{\kappa}}_j : \mathcal{U}_j \longrightarrow \R^{2d}$, 
which is a diffeomorphism onto its image. }
\par 
\red{
For every 
$t\in \R$, let us denote by $J_{\red{\delta}}(t)$ the set of indices $j$ 
such that $\Phi_\delta^t(\rho) \in \mathcal{U}_j$. Hence, 
for 
every $j_0, j_1\in J_{\red{\delta}}(t)$, the map 
$\boldsymbol{\Phi}_{\delta,j}^t :=  \red{\hat{\kappa}}_{j_1} \circ \Phi_\delta^t\circ 
\red{\hat{\kappa}}_{j_0}^{-1}$ is a smooth map from 
$\R^{2d} \supset \kappa_{j_0} (\mathcal{U}_{j_0})$ to $ 
\red{\hat{\kappa}}_{j_1} (\mathcal{U}_{j_1})\subset \R^{2d}$. We then define,  for every $L\geq 1$}
\begin{equation*}
\red{	g_{\rho,L,\delta}(t) 
	:= 
	\max_{j_0, j_1 \in J_{\delta}(t)} \max_{\gamma \in \N^{2d} ; 1\leq |\gamma|\leq L}  
	\left| 
		\frac{\partial^{|\gamma|}}{\partial \boldsymbol{\rho}^\gamma} 
		\boldsymbol{\Phi}_{\delta,j}^{t}(\boldsymbol{\rho}) 
	\right|, ~~~~ \boldsymbol{\rho} = \hat{\kappa}_{j_0}(\mathcal{U}_j).}
\end{equation*}
computed using the Euclidean structure on  
$\R^{2d}\supset \red{\hat{\kappa}}_{j} (\mathcal{U}_{j})$. 

\red{The fact that the derivatives of the flow can grow at most exponentially with time is a common fact in dynamical systems. However, here, we must describe precisely how these estimates blow up as $h\to 0$, since the Hamilton vector field generating the dynamics has derivatives which blow up as $h\to 0$: this is the content of the following lemma.}

\begin{lem}\label{lem:DerivFlot}
For any $\varepsilon>0$, any $\lambda>0$ and any $L\in \N$, 
there exists $\gd>0$ \MI{and} $C_L>0$ such that the following holds: For all 
$h\in (0,1]$, all $t\in \R$ with $|t|\leq \gd |\log h|$ and all 
$\red{\rho \in \mathcal{E}_{0 ,(\frac{1}{4},1)}}$,
%
\begin{equation*}
g_{\rho, L,\red{\delta}}(t)
\leq 
C_Lh^{-\varepsilon} \left( 1+ h^{-(\beta+ \varepsilon) (L+1)} \delta \right).
\end{equation*}
\end{lem}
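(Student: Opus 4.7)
I would work in the finitely many local charts $\kappa_j:\mathcal{U}_j\to\mathbb{R}^{2d}$ covering the compact neighbourhood $\mathcal{E}_{0,(1/8,2)}$ of $\mathcal{E}_{0,\lambda}$, and in each chart study the Hamilton ODE $\dot z(t) = X_p(z(t))$, $z(0)=\rho$, where $X_p = J\nabla p$ splits into a kinetic part (with uniformly bounded $C^L$-norm on the compact set) and a perturbation $\delta X_{q_\omega}$ whose $k$th derivative is bounded by $C_k \delta h^{-\beta k}$ by Hypothesis \ref{HypPot}(iii). The proof proceeds by induction on $L$, combining differentiation of the flow equation with respect to the initial condition, Fa\`a di Bruno and a Duhamel/Gronwall argument.

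\textbf{Base case $L=1$.} The matrix $Y(t) := d_\rho \Phi_\delta^t$ satisfies the linear ODE $\dot Y = dX_p(z(t))\, Y$ with $Y(0) = \mathrm{Id}$. Since $\|dX_p\|_\infty \leq C(1 + \delta h^{-2\beta})$ and Hypothesis \ref{Hyp:BetaDelta} forces $\delta h^{-2\beta} \leq h^{\varepsilon_0} \leq 1$, Gronwall gives $\|Y(t)\| \leq e^{C_0|t|}$; for $|t| \leq \gd |\log h|$ with $\gd \leq \varepsilon/C_0$ this is $\leq h^{-\varepsilon}$, which beats the claimed bound.

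\textbf{Inductive step.} Assume the estimate is established for all orders $k < L$. Fix a multi-index $\alpha$ with $|\alpha| = L$. Differentiating $\dot z = X_p(z)$ via Fa\`a di Bruno yields a linear inhomogeneous equation
\begin{equation*}
\partial_t \partial^\alpha z = dX_p(z)\, \partial^\alpha z + F_\alpha\bigl(z;\{\partial^\beta z : 0<|\beta|<L\}\bigr),
\end{equation*}
where the forcing term $F_\alpha$ is a finite sum of products of the form $\nabla^k X_p(z)\cdot \prod_j \partial^{\beta_j} z$ with $2\le k\le L+1$ and $\sum |\beta_j| = L$, each factor $|\beta_j|<L$. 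By the symbolic bound on $q_\omega$ each $\|\nabla^k X_p\|_\infty$ is $\leq C(1 + \delta h^{-\beta k})$, and the inductive hypothesis controls each $|\partial^{\beta_j}z|$ by $C h^{-\varepsilon}(1 + \delta h^{-(\beta+\varepsilon)(|\beta_j|+1)})$. A short power-counting in $\delta h^{-\beta}$ then shows that the worst term in $F_\alpha$ has size at most $C_L' h^{-\varepsilon}\bigl(1 + \delta h^{-(\beta+\varepsilon)(L+1)}\bigr)$.

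\textbf{Closing the induction.} We solve for $\partial^\alpha z$ by Duhamel using the fundamental solution $Y$ of the homogeneous part:
\begin{equation*}
\partial^\alpha z(t) = \int_0^t Y(t)Y(s)^{-1} F_\alpha(s)\, ds.
\end{equation*}
Since the base case applies to $Y^{\pm1}$, we have $\|Y(t)Y(s)^{-1}\| \leq h^{-\varepsilon/2}$ for $|s|,|t| \leq \gd|\log h|$ after shrinking $\gd$; absorbing the $O(|\log h|)$ factor from the integration into another $h^{-\varepsilon/2}$ (by further shrinking $\gd$ in terms of $\varepsilon$ and $L$) yields the claimed bound $C_L h^{-\varepsilon}\bigl(1 + \delta h^{-(\beta+\varepsilon)(L+1)}\bigr)$ for $|\partial^\alpha z(t)|$. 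Along the trajectory we may cross finitely many chart boundaries; the transition maps $\kappa_j \circ \kappa_{j'}^{-1}$ are smooth with uniformly bounded $C^L$-norms on the compact set $\mathcal{E}_{0,(1/8,2)}$, so a further application of Fa\`a di Bruno shows that the passage between charts only modifies the constant $C_L$.

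\textbf{Main obstacle.} The delicate point is the bookkeeping in the induction: each application of Gronwall on $[0,\gd|\log h|]$ costs a factor $e^{C_L\gd|\log h|}=h^{-C_L\gd}$ with a constant $C_L$ that a priori could grow in $L$, and if unchecked this would destroy the clean $(L+1)$-scaling in the $\delta h^{-\beta}$ exponent. Taming this requires choosing $\gd = \gd(\varepsilon,L)$ small enough that every Gronwall factor appearing in the induction is absorbed by a single global $h^{-\varepsilon}$; and verifying that the Fa\`a di Bruno recombination of factors $\delta h^{-\beta k}$ produced by $\nabla^k X_p$ with the inductive bounds on $\partial^{\beta_j}z$ really gives total $\delta$-weight $1$ and total $h^{-\beta}$-weight exactly $L+1$, rather than some larger exponent.
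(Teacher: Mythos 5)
Your proposal is correct and follows essentially the same route as the paper's proof: an induction on $L$ in which one differentiates the flow equation, isolates the term linear in the top-order derivative (whose coefficient $dX_p$ is uniformly bounded because $\delta h^{-2\beta}\leq h^{\varepsilon_0}$), controls the remaining Fa\`a di Bruno forcing by the inductive hypothesis, and closes with Gr\"onwall/Duhamel on $[0,\gd|\log h|]$ with $\gd$ chosen so small that every exponential factor is absorbed into $h^{-\varepsilon}$ — the paper merely runs the Gr\"onwall step on $|\partial^\gamma\boldsymbol{\Phi}^t_{\delta,j}|^2$ through a coupled auxiliary estimate rather than through variation of constants. One bookkeeping slip worth fixing: since $X_p$ already contains one derivative of $p$, the correct bound is $\|\nabla^k X_p\|\leq C\bigl(1+\delta h^{-\beta(k+1)}\bigr)$ rather than $\delta h^{-\beta k}$ (and $k$ runs only up to $L$ in the forcing terms); this corrected count is precisely what produces the exponent $(L+1)$ in the final estimate, in agreement with the conclusion you state.
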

\red{Note that, thanks to (\ref{eq:CondBeta}),  $g_{\rho, L,\red{\delta}}(t)$ will be bounded \MI{by $h^{-\varepsilon}$} for $L=1$, but it may blow up \MI{more rapidly} as $h\to 0$ for $L\geq 2$.}

\begin{proof}
Let us denote by $\bH_{h,j}$ the Hamiltonian vector field generating 
the dynamics, written in coordinate chart $\kappa_j$. By (\ref{eq:Hamiltonian}), 
we have $\|\bH_{h,j}\|_{C^L} \leq C(L) (1+ \delta h^{-\beta(L+1)})$.

For any multi-index $\gamma\in \N^d$ with $|\gamma|\leq L$, we 
may write
\begin{equation}\label{eq:RecG}
\begin{aligned}
\frac{\partial}{\partial t} 
	\left| 
		\frac{\partial^{|\gamma|}}{\partial \boldsymbol{\rho}^\gamma} 
		\boldsymbol{\Phi}_{\delta,j}^{t}(\boldsymbol{\rho})
	\right|^2 
&= 2 \left\langle \frac{\partial}{\partial t} 
	\frac{\partial^{|\gamma|}}{\partial \boldsymbol{\rho}^\gamma} 
	\boldsymbol{\Phi}_{\delta,j}^{t}(\boldsymbol{\rho}), 
	\frac{\partial^{|\gamma|}}{\partial \boldsymbol{\rho}^\gamma} 
	\boldsymbol{\Phi}_{\delta,j}^{t}(\boldsymbol{\rho})  
	\right\rangle\\
&=2 \left| \left\langle \frac{\partial^{|\gamma|}}{\partial \boldsymbol{\rho}^\gamma} 
	\left[ 
		\boldsymbol{H}_{h,j} 
		\left( 
			\boldsymbol{\Phi}_{\delta,j}^{t}(\boldsymbol{\rho})  
		\right) 
	\right], 
	\frac{\partial^{|\gamma|}}{\partial \boldsymbol{\rho}^\gamma} 
	\boldsymbol{\Phi}_{\delta,j}^{t}(\boldsymbol{\rho})  \right \rangle
	\right|\\
	&\leq 2 g_{\rho,L,\red{\delta}}(t)  \left| \frac{\partial^{|\gamma|}}{\partial \boldsymbol{\rho}^\gamma} \left[
		\boldsymbol{H}_{h,j}
		\left( 
			\boldsymbol{\Phi}_{\delta,j}^{t}(\boldsymbol{\rho})  
		\right)  \right]
\right|. 
\end{aligned}
\end{equation}
Note that, in the second equality, $\boldsymbol{H}_{h,j} 
		\left( 
			\boldsymbol{\Phi}_{\delta,j}^{t}(\boldsymbol{\rho})  
		\right)$ 
is a vector field evaluated at $\boldsymbol{\Phi}_{\delta,j}^{t}(\boldsymbol{\rho})$; 
it is thus a vector, whose scalar product we take with the vector 
$\frac{\partial^{|\gamma|}}{\partial \boldsymbol{\rho}^\gamma} 
\boldsymbol{\Phi}_{\delta,j}^{t}(\boldsymbol{\rho})$. It should 
not be though of as a derivative acting on 
$\frac{\partial^{|\gamma|}}{\partial \boldsymbol{\rho}^\gamma} 
\boldsymbol{\Phi}_{\delta,j}^{t}(\boldsymbol{\rho})$.
\par
First of all, we apply (\ref{eq:RecG}) with $L=1$, integrate 
over time and take the maximum over $j$ to obtain
\begin{equation*}
	 g^{\red{2}}_{\rho,1,\red{\delta}}(t)
\leq C+  C(1+ \delta h^{-2\beta})  \int_{0}^t g^{\red{2}}_{\rho,1,\red{\delta}}(s) ds.
\end{equation*}
In particular, thanks to (\ref{eq:CondBeta}), there exists $h_0>0$ and 
$c_0>0$ such that, for all $h\in (0, h_0]$,
\begin{equation*}
	 g^{\red{2}}_{\rho,1,\red{\delta}}(t)
\leq C+ c_0  \int_{0}^t g^{\red{2}}_{\rho,1,\red{\delta}}(s) ds.
\end{equation*}
and hence, thanks to Grönwall's lemma
\begin{equation*}
	g^{\red{2}}_{\rho,1,\red{\delta}}(t)\leq C  \e^{c_0 t}.
\end{equation*}
for some $C>0$, provided (\ref{eq:CondBeta}) is satisfied. In particular, there exists 
$\gd>0$, $C_1>0$ such that for all $t\in [0, \gd |\log h|]$, we have 
$|g_{\rho,1,\red{\delta}}(t)|\leq C_1 h^{-\varepsilon}$.
\\
\par 
Next,  we use equation (\ref{eq:RecG}) to estimate recursively 
$g_{\rho,L,\red{\delta}}(t)$. More precisely, we will prove inductively 
that for all $L\in \N$, and all $\varepsilon>0$, there exists $\gd_L>0$ such that 
for all $t\in [0, \gd_{L} |\log h|]$, we have

\begin{equation}\label{eq:CondRecG2}
g_{\rho,L,\red{\delta}}(t)
\leq 
 C_L h^{-\varepsilon} \left( 1+ h^{-(\beta+ \varepsilon)(L +1)} \delta \right),
\end{equation}
and,  \red{for all $\gamma\in \N^{2d}$ with $|\gamma| \leq L$,}
\begin{equation}\label{eq:CondRecX}
 \left| \frac{\partial^{|\gamma|}}{\partial \boldsymbol{\rho}^\gamma} 
	\left[ 
		\boldsymbol{H}_{h,j} 
		\left( 
			\boldsymbol{\Phi}_{\delta,j}^{t}(\boldsymbol{\rho})  
		\right) 
	\right]\right| \leq C'_L g_{\rho,L,\red{\delta}}(t) 
		 + O(\MI{1+}\delta h^{-(\beta+ \varepsilon) (L+1)}).
\end{equation}
We have already proved (\ref{eq:CondRecG2}) for $L=1$. 

Now,  let us suppose 
that (\ref{eq:CondRecX}) holds at rank $L$. Using (\ref{eq:RecG}), 
we get, for any $\gamma \in \N^d$ with $|\gamma|\leq L$
\begin{align*}
\left| 
		\frac{\partial^{|\gamma|}}{\partial \boldsymbol{\rho}^\gamma} 
		\boldsymbol{\Phi}_{\delta,j}^{t}(\boldsymbol{\rho})
	\right|^2 &\leq C + 2\int_0^t g_{\rho,L,\red{\delta}}(s)  \left| \frac{\partial^{|\gamma|}}{\partial \boldsymbol{\rho}^\gamma} \left[
		\boldsymbol{H}_{h,j}
		\left( 
			\boldsymbol{\Phi}_{\delta,j}^{t}(\boldsymbol{\rho})  
		\right)  \right]
\right|  ds\\
&\leq C + 2  \int_0^t  \left[C'_L g_{\rho,L,\red{\delta}}^2(s) +  g_{\rho,L,\red{\delta}}(s)(1+ O(\delta h^{-(\beta+ \varepsilon) 
(L+1)}))\right] ds\\
&\leq C +2 \left(C'_L + \frac{1}{2}\right)\int_0^t g_{\rho,L,\red{\delta}}^2(s) ds 
+ t (1+ O(\delta h^{-(\beta+ \varepsilon) (L+1)}))^2.
\end{align*}

Taking the maximum over $j$ and $\gamma$ and using Grönwall's lemma, we get
\begin{equation}\label{eq:CondRecG}
g^{\red{2}}_{\rho,L,\red{\delta}}(t)
\leq 
C_L \e^{\left(C'_L+ \frac{1}{2}\right) t} \left(1+ \MI{t}\delta h^{-(\beta+ \varepsilon) (L\MI{+1})}\right)^{\MI{2}} ,
\end{equation}
from which \eqref{eq:CondRecG2} follows.

Hence, (\ref{eq:CondRecG2}) holds at rank $L$ provided 
(\ref{eq:CondRecX}) holds at rank $L$. All we have to show is that 
(\ref{eq:CondRecG2}) up to rank $L$ implies (\ref{eq:CondRecX}) at rank 
$L+1$.  To do this,  \red{we take $\gamma\in \N^{2d}$ with $|\gamma| = L+1$.}

\red{By using the multidimensionnal Faà di Bruno formula (see \cite{Faa}),  or by an induction using the chain rule $L+1$ times, we see that the derivative $\frac{\partial^{L+1}}{\partial \boldsymbol{\rho}^\gamma} 
	\left[ 
		\boldsymbol{H}_{h,j} 
		\left( 
			\boldsymbol{\Phi}_{\delta,j}^{t}(\boldsymbol{\rho})  
		\right) 
	\right]$
	can be written as a sum of two kinds of terms:
	\begin{itemize}
	\item[$\bullet$] One term is $d_{\boldsymbol{\Phi}_{\delta,j}^{t}(\boldsymbol{\rho})}
\boldsymbol{H}_{h,j}  \left( \frac{\partial^{|\gamma|}}{\partial 
\boldsymbol{\rho}^\gamma} \boldsymbol{\Phi}_{\delta,j}^{t}(\boldsymbol{\rho}) 
\right)$, and thus involves derivatives of $\boldsymbol{\Phi}_{\delta,j}^{t}(\boldsymbol{\rho})$ of order $L+1$.  Since $d_{\boldsymbol{\Phi}_{\delta,j}^{t}(\boldsymbol{\rho})}
\boldsymbol{H}_{h,j}$ is bounded by $c_0$,  this gives the 
$c_0 g_{\rho,L+1,\delta}(t)$ contribution in (\ref{eq:CondRecX})
\item[$\bullet$] All the other terms contain products of a smaller number of derivatives of 
$\boldsymbol{\Phi}_{\delta,j}^{t}(\boldsymbol{\rho})$ at a power at most $L$, 
times derivatives of $\boldsymbol{H}_{h,j}$ of order at most $L+1$.  Such terms may thus be written as
\begin{equation}\label{eq:Faa}
C \left(\frac{\partial^{|\gamma'|} }{\partial \boldsymbol{\rho}^{\gamma'}}\MI{\Big{|}_{\Phi^t_{\delta,j}(\boldsymbol{\rho})}} \boldsymbol{H}_{h,j,\ell}\right)  \times  \prod_{k=1}^{2d}  \prod_{\vartheta\in \N^{2d} ; |\vartheta| \leq L} \left(\frac{\partial^{|\vartheta|}}{\partial 
\boldsymbol{\rho}^{\vartheta}} \boldsymbol{\Phi}_{\delta,j, k}^{t}(\boldsymbol{\rho})\right)^{n_{k,\vartheta}} ,
\end{equation}
\corM{for some $n_{k,\theta}\in \N$,} where 
$\boldsymbol{\Phi}_{\delta,j, k}^{t}$ denotes the $k$-th component of 
$\boldsymbol{\Phi}_{\delta,j}^{t}$,  $\boldsymbol{H}_{h,j,\ell}$ denotes 
the $\ell$-th component of $\boldsymbol{H}_{h,j}$. 
\end{itemize}}

\red{Considering a factor of the form (\ref{eq:Faa}), we recall \corM{from \cite{Faa}} that we have}
\MI{\begin{equation*}
\begin{aligned}
L+1 &= \sum_{k=1}^{2d} \sum_{\vartheta\in \N^{2d} ; 1\leq |\vartheta| \leq L} |\vartheta| n_{k,\vartheta}\\
|\gamma'| &= \sum_{k=1}^{2d} \sum_{\vartheta\in \N^{2d} ; 1\leq |\vartheta| \leq L} n_{k,\vartheta}
\end{aligned}
\end{equation*}}


\MI{Now, since we assumed that (\ref{eq:CondRecG2}) holds up to rank $L$, we see that a factor of the form (\ref{eq:Faa}) is bounded as
\begin{equation*}
\begin{aligned}
&C \MI{\left( 1+ \delta h^{-\beta |\gamma'|} \right)} \times \prod_{k=1}^{2d} \prod_{\vartheta\in \N^{2d} ; |\vartheta| \leq L}   \left(g_{\rho,  |\vartheta|, \delta}(t)\right)^{n_{k,\vartheta}}\\ 
&\leq C' \MI{\left( 1+ \delta h^{-\beta |\gamma'|} \right)} 
  \prod_{k=1}^{2d} \prod_{\vartheta\in \N^{2d} ; |\vartheta| \leq L} \left( h^{-\varepsilon} \left(1+ \delta h^{-(\beta+ \varepsilon)( |\vartheta|+1)} \right)\right)^{n_{k,\vartheta}}\\
&\leq C''   h^{- \varepsilon |\gamma'|}   \left( 1+ \delta h^{-\beta |\gamma'|} \right) \left(1+ \delta (\delta h^{-\beta -\varepsilon})^{|\gamma'|-1}  \times  h^{-(\beta + \varepsilon) (L+2)} \right)  \\
&\leq C''   h^{- \varepsilon |\gamma'|}   \left( 1+ \delta h^{-\beta |\gamma'|} \right) \left(1+ \delta h^{\beta(|\gamma'|-1)}  \times  h^{-(\beta + \varepsilon) (L+2)} \right) ~~~~\text{ thanks to (\ref{eq:CondBetaDelta})}\\
&= C''   h^{- \varepsilon |\gamma'|} \left[1+ \delta h^{-\beta |\gamma'|} + \delta h^{\beta (|\gamma'|-1)} h^{-\beta-\varepsilon (L+2)} + \delta^2 h^{-\beta} h^{-\beta-\varepsilon (L+2)} \right]\\
&\leq  C'''   h^{- \varepsilon |\gamma'|} \left[ 1+ \delta h^{-\beta-\varepsilon (L+2)}\right],
\end{aligned}
\end{equation*}
using (\ref{eq:CondBetaDelta}) again. The result follows.}
\end{proof}

\begin{rem}\label{rem:DerivCover}
\red{The sets $\mathcal{U}_j$ in Lemma \ref{lem:DerivFlot} may be lifted to open sets $\wit{\mathcal{U}}_{j, \iota}$ covering $\wit{\mathcal{E}}_{0, \left(\frac{1}{16}, 4 \right)}$, }
equipped 
with charts $\kappa_{j,\iota}$. We may then define 
$\boldsymbol{\Phi}_{\delta,j, \iota}^t$, and $g_{\rho, L,h}(t)$ 
in a similar fashion. Using \eqref{eq:ProjDyn}, we see that 
we also have
\begin{equation*}
g_{\rho, L,h}(t)
\leq 
C_Lh^{-\varepsilon} \left( 1+ h^{-(\beta+ \varepsilon) (L+1)} \delta \right).
\end{equation*}
\end{rem}

%
%
\subsection{Comparing the dynamics}
The following lemma allows to compare perturbed and 
unperturbed trajectories. It also allows to consider 
trajectories with different values of the random 
parameter $\omega$.  If $\omega^1 = (\omega^1_j)_{j\in J_h}$ 
and $\omega^2 = (\omega^2_j)_{j\in J_h}$, let us write $\| \omega^1 - \omega^{\red{2}}\| 
:= \max_{j\in J_h} |\omega^1_j  - \omega^2_j|$.

\begin{lem}\label{Lem:Gron}
Let $\delta_0, h_0>0$ be as in the beginning of Section 
\ref{Sec:HamiltonFlows} and sufficiently small. 
There exists $C_0$ such that for all $h\in ]0,h_0]$, all 
$0 \leq \delta'\leq \delta \leq \delta_0$, all 
$\rho, \rho'\in \mathcal{E}_{0, \red{(\frac{1}{4}, 1)}}$ and all $t\in \R$, 
we have
\begin{equation}\label{eq:Gronwall2}
\begin{split}
 \mathrm{dist}_{T^*X}(\Phi_{\delta, \omega = \omega^1}^t(\rho),\Phi_{\delta', \omega = \omega^2}^t(\rho'))
 	\leq &C_0 \e^{C_0|t|}  \mathrm{dist}_{T^*X}(\rho,\rho') \\
	& +  C_0 \left(\delta \|\omega^1 - \omega^2\| + (\delta - \delta')\right)   h^{-\beta} \left( \e^{C_0|t|} -1\right).
\end{split}
\end{equation}

In particular, 
\begin{equation}\label{eq:Gronwall}
 \mathrm{dist}_{T^*X}(\Phi_{\delta, \omega = \omega^1}^t(\rho),\Phi_{\delta, \omega=\omega^2}^t(\rho'))
 	\leq C_0 \e^{C_0|t|}  \mathrm{dist}_{T^*X}(\rho,\rho') 
	 +  C_0 \delta h^{-\beta} \left( \e^{C_0|t|} -1\right).
\end{equation}
The same statement holds on the universal cover $T^*\wit{X}$.
\end{lem}
\begin{proof}
\red{We shall give the proof on the base manifold $X$, but the proof on the universal cover $\wit{X}$ is exactly the same.
Let us write $\rho_t := \Phi_{\delta, \omega = \omega^1}^t(\rho)$,  $\rho'_t:=\Phi_{\delta', \omega = \omega^2}^t(\rho')$, and let us introduce the point $\rho''_t := \Phi_{\delta, \omega = \omega^1}^t(\rho')$.  By the triangle inequality, we have
\begin{equation}\label{eq:Triangle}
\mathrm{dist}_{T^*X}(\rho_t, \rho'_t) \leq \mathrm{dist}_{T^*X}(\rho_t, \rho''_t)+ \mathrm{dist}_{T^*X}(\rho_t'', \rho'_t),
\end{equation}
and we will now bound each term in the right-hand side of (\ref{eq:Triangle}).  }

\red{We shall denote by $H_\delta$ and $H_{\delta'}$ the Hamilton vector fields respectively generating the dynamics $\Phi_{\delta, \omega = \omega^1}$ and $\Phi_{\delta', \omega = \omega^2}^t$.  }

\red{\textbf{Bound on $\mathrm{dist}_{T^*X}(\rho_t, \rho''_t)$.}
We let $\gamma : [0,1] \longrightarrow T^*X$ be a smooth curve minimizing the distance between $\rho$ and $\rho'$, parametrized with constant speed. We set $ \gamma_t(s) := \Phi^t_{\delta, \omega = \omega^1}$.  \MI{Recalling that the cotangent bundle $T^*X$ is equipped with an (arbitrary) metric $g_0$}, we have
\begin{align*}
\mathrm{dist}_{T^*X} (\rho_t, \rho''_t) \leq \int_0^1 \left\| \frac{\partial \gamma_t(s)}{\partial s} \right\| ds
\leq \sqrt{\int_0^1 \left\| \frac{\partial \gamma_t(s)}{\partial s} \right\|^2 ds} =: \sqrt{E(t)},
\end{align*}
with equality when $t=0$.
Now, we have
\begin{align*}
|E'(t)| &= 2\left|\int_0^1 \left\langle \MI{\nabla_{\partial_t \gamma_t(s)}}  \frac{\partial \gamma_t(s)}{\partial s},  \frac{\partial \gamma_t(s)}{\partial s}\right\rangle\right| ds\\
&=2 \left| \int_0^1 \left\langle\MI{\nabla_{\partial_s \gamma_t(s)}} \frac{\partial \gamma_t(s)}{\partial t},  \frac{\partial \gamma_t(s)}{\partial s}\right\rangle ds \right|\\
&\leq 2 \int_0^1 \left| \left\langle  \nabla_{\partial_s\gamma_t(s)} H_\delta,   \frac{\partial \gamma_t(s)}{\partial s}\right\rangle \right| ds 
\leq C E(t),
\end{align*}
since $\|\nabla H_\delta\| = O(1+ \delta h^{-2\beta}) = O(1)$.  By Grönwall's inequality, we thus have 
\begin{equation*}
E(t) \leq C e^{C t} E(0),
\end{equation*}
from which we obtain the first term in (\ref{eq:Gronwall2}) by taking square roots.}
\\
\\
\red{\textbf{Bound on $\mathrm{dist}_{T^*X}(\rho_t'', \rho'_t)$.}}
\red{For every $s\in [0,1]$, we define a vector field $H^s := (1-s) H_\delta + s H_{\delta'}$,  and we denote by $\Phi^{t; s}$ the associated flow.  Note that we have
\begin{equation}\label{eq:BorneDiffVectorFields}
\MI{\sup_{\rho\in \mathcal{E}_{0,(\frac{1}{16}, 4))}}} \|H_\delta(\rho) - H_{\MI{\delta'}(\rho)}\| \leq  C  \left(\delta \|\omega^1 - \omega^2\| + (\delta - \delta')\right)   \|q_\omega\|_{C^1}
	\leq C \left(\delta \|\omega^1 - \omega^2\| + (\delta - \delta')\right)  h^{-\beta}.
		\end{equation}}
\red{We then set $\Gamma_t(s) := \Phi^{t; s} (\rho')$, which forms a smooth curve joining $\rho''_t$ to $\rho'_t$, so that 
\begin{align*}
\mathrm{dist}_{T^*X}(\rho_t'', \rho'_t) \leq \int_0^1 \left\| \frac{\partial \Gamma_t(s)}{\partial s} \right\| ds
\leq \sqrt{\int_0^1 \left\| \frac{\partial \Gamma_t(s)}{\partial s} \right\|^2 ds} =: \sqrt{F(t)},
\end{align*}
with equality when $t=0$.  We have 
\begin{align*}
|F'(t)| &= 2\left|\int_0^1 \left\langle \MI{\nabla_{\partial_t \Gamma_t(s)}}\frac{\partial \Gamma_t(s)}{\partial s},  \frac{\partial \Gamma_t(s)}{\partial s}\right\rangle ds \right|\\
&= 2 \left| \int_0^1 \left\langle \MI{\nabla_{\partial_s \Gamma_t(s)}} H^s(\Gamma_t(s)),  \frac{\partial \Gamma_t(s)}{\partial s}\right\rangle ds \right|\\
&\leq2  \int_0^1 \left| \left\langle  (\nabla_{\partial s \Gamma_t(s)} H^s)\MI{(\Gamma_t(s))},   \frac{\partial \Gamma_t(s)}{\partial s}\right\rangle \right| ds  +2  \int_0^1 \left| \left\langle  (H_{\delta'} - H_\delta)(\Gamma_t(s)),   \frac{\partial \Gamma_t(s)}{\partial s}\right\rangle \right| ds
\end{align*}}

\red{To bound the second term,  we use (\ref{eq:BorneDiffVectorFields}), along with (\ref{eq:GronN11}), to bound the left-hand term of the scalar product.  By the Cauchy-Schwarz inequality, we deduce that}
\MI{\begin{align*}
|(\sqrt{F})'(t)| = (2F(t))^{-1/2} F'(t)
&\leq (2F(t))^{-1/2} \left( C F(t) + C \left(\delta \|\omega^1 - \omega^2\| + (\delta - \delta')\right)  h^{-\beta} \sqrt{F(t)}\right),
\\
&\leq C \sqrt{F(t)} + C\left[ \left(\delta \|\omega^1 - \omega^2\| + (\delta - \delta')\right)  h^{-\beta}\right].
\end{align*}}

\red{We may then use Grönwall's inequality to deduce that
\begin{equation*}
\sqrt{F(t)} \leq C'' \left[ \left(\delta \|\omega^1 - \omega^2\| + (\delta - \delta')\right)  h^{-\beta} \right] \left( e^{C''t} - 1\right),
\end{equation*}
and the result follows.}
\end{proof}
We conclude this paragraph by noting that Lemma \ref{Lem:Gron} implies the classical fact that there exists 
$C>0$ such that
\begin{equation}\label{eq:ExpRate}
\forall \rho, \rho' \in \mathcal{E}_{0, (\frac{1}{8}, 2)},  
\forall t\in \R, \mathrm{dist}_{T^*X} 
\left( 
	\Phi^t(\rho), 
	\Phi^t(\rho') 
\right) 
\leq C \e^{C |t|}
\mathrm{dist}_{T^*X}(\rho, \rho').
\end{equation}

\subsection{Dynamics of Lagrangian manifolds}\label{SecDynLag}

\red{The main aim of the rest of this section will be to show that, if $\Lambda\subset S^*X$ is an $\eta$-unstable Lagrangian manifold (for $\eta$ small enough, in the sense of Definition \ref{def:LagInstable}), then its evolution in the future projects smoothly on $\wit{X}$. This is essentially the content of Proposition \ref{Prop:OnTheCover2} below. Such a statement is standard when considering the evolution by the geodesic flow. However, here, we consider the evolution by the perturbed flow $\Phi^t_\delta$ , which makes the statement and its proof more subtle, and forces us to consider only times $t\leq \gd |\log \delta|$ for some small $\gd>0$.}

\subsubsection{\textnormal{\textbf{Lagrangian manifolds and instability}}}
\begin{definition}
Let $\eta>0$,  $\delta\geq 0$, and let 
$\Lambda\subset \mathcal{E}_{\delta, (\frac{1}{4}, 1)}$ be a submanifold. 
We shall say that $\Lambda$ is \emph{$(\delta; \eta, \kappa )$-unstable} if, 
for any $\rho\in \Lambda$ and any $v = (v_+, v_0, v_-, \hat{v}_0)
\in T_\rho \Lambda \subset T_\rho T^*X = E_{\delta,\rho}^+\oplus 
E_{\delta,\rho}^0\oplus E_{\delta,\rho}^- \oplus \hat{E}_{\delta,\rho}^0$, we 
have that 
\begin{equation*}
|v_-|_\rho \leq \eta |v|_\rho.
\end{equation*}
\begin{equation*}
|\hat{v}_0|_\rho \leq \kappa |v|_\rho.
\end{equation*}
\end{definition}

\begin{lem}\label{Lem:EtaInstable}
There exists $\gamma\red{>1}$,  $T,\kappa_0,  \eta_0, \delta_0>0$ such that the 
following holds. 
For all $0\leq \delta < \delta_0$,  $0\leq \kappa \red{\leq} \kappa_0$ and all 
$0<\eta\leq \eta_0$, if $\Lambda\subset \mathcal{E}_{\delta, (\frac{1}{4},1)}$ 
is $(\delta; \eta, \kappa)$-unstable, then for all $t\in [0, T]$, 
$\Phi^t_{\delta}(\Lambda)$ is $(\delta; \gamma \eta, \gamma\kappa)$ 
unstable. Furthermore, $\Phi^T_{\delta}(\Lambda)$ is $(\delta; \eta, 
\gamma\kappa)$ unstable. 
\end{lem}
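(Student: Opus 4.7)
\textbf{Proof plan for Lemma \ref{Lem:EtaInstable}.}

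The plan is to exploit the explicit block structure of $d_\rho\Phi^t_\delta$ given by \eqref{eq:Matrix} in the splitting $T_\rho T^*X = E^+_{\delta,\rho}\oplus E^0_{\delta,\rho}\oplus E^-_{\delta,\rho}\oplus \hat{E}^0_\rho$, together with the hyperbolicity estimates from Section \ref{sec:Hyperbolicity} and the fact that the remainders $r_{j;\delta,t}$ vanish uniformly on compact time intervals as $\delta\to 0$. Concretely, fix $\rho\in \Lambda$ and $v=(v_+,v_0,v_-,\hat{v}_0)\in T_\rho \Lambda$ with $|v_-|\leq \eta|v|$ and $|\hat{v}_0|\leq \kappa|v|$. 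Let $v'=d_\rho\Phi^t_\delta(v)$ decomposed in the basis at $\Phi^t_\delta(\rho)$. From \eqref{eq:Matrix},
\begin{equation*}
v'_- = (M^{-1}_{\rho,\delta,t})^\dagger v_- + r_{3;\delta,t}(\rho)\,\hat{v}_0, \qquad \hat{v}'_0 = (1+r_{4;\delta,t}(\rho))\,\hat{v}_0,
\end{equation*}
and the hyperbolicity estimate \eqref{eq:DefHyp} (transferred to the perturbed flow via structural stability) gives $\|(M^{-1}_{\rho,\delta,t})^\dagger v_-\|\leq C_0 A^{-t}|v_-|$, with $C_0$, $A>1$ independent of $\delta\in[0,\delta_0]$.

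The next step is to control $|v|/|v'|$ uniformly. Since $\Phi^t_\delta$ is a diffeomorphism and its inverse has derivatives bounded on the compact energy layer $\mathcal{E}_{0,(1/8,2)}$ uniformly for $(\delta,t)\in[0,\delta_0]\times[0,T]$ (by the same Grönwall-type bounds that give Lemma \ref{lem:DerivFlot} at $L=1$), there is a constant $L=L(T)$ with $|v|\leq L|v'|$. Combining this with the preceding line yields
\begin{equation*}
\frac{|v'_-|}{|v'|}\leq L\bigl(C_0 A^{-t}\eta + \epsilon(\delta)\,\kappa\bigr), \qquad \frac{|\hat{v}'_0|}{|v'|}\leq L(1+\epsilon(\delta))\,\kappa,
\end{equation*}
where $\epsilon(\delta):=\sup_{t\in[0,T]}\sup_\rho\max_j\|r_{j;\delta,t}(\rho)\|\to 0$ as $\delta\to 0$ by the continuity statement below \eqref{eq:Matrix}. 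The first conclusion, valid for every $t\in[0,T]$, then follows by choosing $\gamma$ large enough to absorb $L C_0$ and $L(1+\epsilon)$, and $\delta_0$ small enough to ensure $L\epsilon(\delta_0)\leq \gamma/2$ and $L C_0 A^{-t}+L\epsilon(\delta_0)\kappa/\eta\leq \gamma$ for the admissible range of $\eta,\kappa$.

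For the second, sharper conclusion at time $t=T$, the idea is to fix $T$ so large that $L C_0 A^{-T}\leq 1/2$; this contracts the $\eta$-contribution back to $\eta/2$. One then shrinks $\delta_0$ (and if necessary tightens $\kappa_0$ relative to $\eta_0$) so that the cross-term satisfies $L\,\epsilon(\delta)\,\kappa\leq \eta/2$ in the allowed parameter range, giving $|v'_-|\leq \eta|v'|$. The $\hat{E}^0$-bound $|\hat{v}'_0|\leq \gamma\kappa|v'|$ is preserved from the previous step since $\hat{v}_0$ evolves by multiplication by $1+r_4$ only.

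The main obstacle is the coupling term $r_{3;\delta,t}(\rho)\,\hat{v}_0$ in $v'_-$: it mixes the $\hat{E}^0$-defect into the stable component and threatens the $\leq\eta$ bound at $t=T$ whenever $\eta\ll\kappa$. Resolving this requires simultaneously exploiting (i) the smallness $\epsilon(\delta)\to 0$ provided by the continuous dependence \eqref{eq:ContinuityDIrections} of the hyperbolic splitting on $\delta$, and (ii) the freedom to shrink $\kappa_0$ (and tune $T$, $\delta_0$) so that the $\hat{E}^0$-defect never dominates the stable estimate. All other ingredients---the contraction rate, the lower bound on $|v'|$, and the control of $\hat{v}'_0$---are routine consequences of \eqref{eq:Matrix} and \eqref{eq:DefHyp}.
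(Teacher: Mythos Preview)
Your outline follows the paper in using the block form \eqref{eq:Matrix}, but there is a genuine gap in how you lower-bound $|v'|$. You invoke a global Lipschitz bound on the inverse flow, writing $|v|\leq L(T)|v'|$ with $L(T)=\sup_{\rho}\|d_\rho\Phi_\delta^{-T}\|$. The trouble is that $d\Phi_\delta^{-T}$ expands the (forward-)stable bundle $E^-$: by \eqref{eq:DefHyp} one has $|d\Phi^T_\delta\xi^-|\leq C_0 A^{-T}|\xi^-|$, hence $L(T)\geq C_0^{-1}A^T$. It follows that $L(T)\,C_0 A^{-T}\geq 1$ for \emph{every} $T>0$, so there is no choice of $T$ making $L(T)C_0A^{-T}\leq\tfrac12$. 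The second conclusion of the lemma therefore does not follow from your argument; the exponential contraction you gain on the stable component is exactly cancelled by the exponential growth of your Lipschitz constant.

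The paper's proof avoids this by extracting the lower bound on $|v'|$ from the instability hypothesis on the \emph{input} vector $w$, not from a generic estimate on the flow. Since $|w_-|_{\rho_0}+|\hat{w}_0|_{\rho_0}\leq c_2(\eta+\kappa)|w|_{\rho_0}$, one has $|w_+|_{\rho_0}+|w_0|_{\rho_0}\geq(c_1-c_2(\eta+\kappa))|w|_{\rho_0}$, i.e.\ most of $w$ lies in $E^{+}_{\delta,\rho_0}\oplus E^{0}_{\delta,\rho_0}$. These directions are \emph{not} contracted by the forward flow (the $E^+$ part is expanded by $M_{\rho,\delta,t}$, the $E^0$ part is preserved), so feeding this into \eqref{eq:Matrix} and using \eqref{eq:NormeEquiv} yields $|v'|\geq c_3|w|$ with a constant $c_3>0$ \emph{independent of $t$} (this is equation \eqref{eq:noncontract}). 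With that $T$-uniform bound in hand, $|v'_-|\leq C_0 A^{-T}|w_-|\leq C_0A^{-T}\eta|w|$ translates to $|v'_-|/|v'|\leq (C_0/c_3)A^{-T}\eta$, and now one can genuinely choose $T$ large. In short: the non-contraction of $|v'|$ must come from the geometry of $\Lambda$ --- its transversality to the stable foliation --- and not from a Lipschitz estimate on $\Phi_\delta^{-T}$.
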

We postpone the proof of this lemma until after the following Corollary.
\begin{corollary}\label{Cor:EtaInstable}
There exists  $\gamma\red{>1}$,  $T,\kappa_0, \eta_{0}, \delta_0>0$ such that the 
following holds for all $\eta< \eta_0$,  $\kappa_1< \kappa_0$, and $\delta \in [0, \delta_0)$. 
If $\Lambda \subset \mathcal{E}_{\delta, (\frac{1}{4},1)}$ is 
$(\delta, \eta, \kappa_1)$-unstable, then $\Phi_\delta^t(\Lambda)$ is 
$(\delta, \gamma \eta, \kappa_0)$-unstable for all $0 \leq t \leq 
T \left(1+ \ln \left( \frac{\kappa_0}{\kappa_1}\right) \left(\ln \gamma\right)^{\red{-1}}\right)$.
\par 
Furthermore, there exists $C, c>0$ such that, for all $\rho_0\in \Lambda$ 
and all $w\in T_{\rho_0} \Lambda$, if we write 
$d_{\rho_0} \Phi^t_\delta(w) = (\Phi_\delta^t(\rho_0), v_t)$, we have
\begin{equation}\label{eq:NonContract2}
	|v_t|_{\Phi_\delta^t(\rho_0)} \geq C c^t |w|_{\rho_0}.
\end{equation}

\end{corollary}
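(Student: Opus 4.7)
The plan is to iterate Lemma \ref{Lem:EtaInstable}. Using the constants $\gamma, T, \kappa_0, \eta_0, \delta_0$ provided by that lemma, I would fix $\eta \leq \eta_0$ and $\delta \in [0, \delta_0)$, and establish by induction on $n \in \N$ the following statement: whenever $\gamma^n \kappa_1 \leq \kappa_0$, the manifold $\Phi_\delta^{nT}(\Lambda)$ is $(\delta, \eta, \gamma^n \kappa_1)$-unstable. The base case $n=0$ is the hypothesis. For the induction step, note that $\Phi_\delta^{(n-1)T}(\Lambda)$ is $(\delta, \eta, \gamma^{n-1}\kappa_1)$-unstable with $\gamma^{n-1}\kappa_1 \leq \kappa_0/\gamma \leq \kappa_0$, so the ``Furthermore'' part of Lemma \ref{Lem:EtaInstable} applied at time $T$ yields the desired $(\delta, \eta, \gamma^n \kappa_1)$-instability.

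Next, for intermediate times $t \in [nT, (n+1)T]$, I would apply the first conclusion of Lemma \ref{Lem:EtaInstable} to $\Phi_\delta^{nT}(\Lambda)$ to obtain $(\delta, \gamma \eta, \gamma^{n+1}\kappa_1)$-instability, which is in particular $(\delta, \gamma\eta, \kappa_0)$-unstable whenever $\gamma^{n+1}\kappa_1 \leq \kappa_0$. The iteration therefore extends to all $t \in [0, NT]$ with $N = \lfloor \log(\kappa_0/\kappa_1)/\log \gamma \rfloor$, matching the time bound in the statement up to an additive $T$ to cover the last interval and a cosmetic adjustment of $\gamma$.

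For the non-contraction estimate \eqref{eq:NonContract2}, given $w \in T_{\rho_0}\Lambda$, I would decompose $w = w_+ + w_0 + w_- + \hat{w}_0$ along $E_{\delta,\rho_0}^+ \oplus E_{\delta,\rho_0}^0 \oplus E_{\delta,\rho_0}^- \oplus \hat{E}_{\rho_0}^0$. The $(\delta, \eta, \kappa_1)$-instability of $\Lambda$ gives $|w_-|_{\rho_0} \leq \eta |w|_{\rho_0}$ and $|\hat{w}_0|_{\rho_0} \leq \kappa_1 |w|_{\rho_0}$, and combining with the norm equivalence \eqref{eq:NormeEquiv} yields $|w_+|_{\rho_0} + |w_0|_{\rho_0} \geq c_0 |w|_{\rho_0}$ for some $c_0>0$, as long as $\eta$ and $\kappa_0$ are chosen small enough. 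The block structure of the differential in \eqref{eq:Matrix} then shows that the unstable component of $v_t$ has norm at least $C A^t |w_+|$ via the matrix $M_{\rho_0,\delta,t}$, while the neutral direction is preserved and the contributions of $w_-$ and $\hat{w}_0$ are respectively decaying and controlled by $\kappa_1$ times polynomial factors in $t$. Applying \eqref{eq:NormeEquiv} at $\Phi_\delta^t(\rho_0)$ gives the lower bound $|v_t| \geq C c^t |w|$.

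The main obstacle I anticipate is the precise bookkeeping of the $\hat{w}_0$ contribution in the matrix argument: this direction is transverse to the energy layer, neither stable nor unstable, and is only tamed through the smallness of $\kappa_0$ and the off-diagonal remainders $r_{j;\delta,t}$ in \eqref{eq:Matrix}. Verifying that these remainders do not overwhelm the exponential growth of the unstable component over the admissible time range will require choosing $\kappa_0$ (and hence the allowed time window) small enough in a way compatible with the iteration argument above.
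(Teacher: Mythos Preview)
Your iteration of Lemma \ref{Lem:EtaInstable} for the instability part is exactly the paper's argument and is correct.

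For \eqref{eq:NonContract2}, however, your direct matrix approach diverges from the paper and has a gap. You invoke the block form \eqref{eq:Matrix} for arbitrary $t$ in the admissible range, but the off-diagonal remainders $r_{j;\delta,t}$ are only shown to be small (e.g.\ $\leq \tfrac14$, as in \eqref{eq:CondHT}) on a \emph{fixed} interval $[0,T]$ once $\delta_0$ has been chosen. The Corollary, by contrast, must hold for every $\kappa_1<\kappa_0$ with $\delta_0$ already fixed, so the time window $T\bigl(1+\ln(\kappa_0/\kappa_1)\ln\gamma\bigr)$ is unbounded as $\kappa_1\to 0$, and you have no control on the $r_{j;\delta,t}$ over it. Your proposed remedy---shrinking $\kappa_0$---does not help: $\kappa_0$ is fixed before $\kappa_1$, and only shrinking $\delta$ tames the remainders, but $\delta_0$ is fixed too. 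The same issue affects the case $w_+=0$: the neutral component $v_0=w_0+(\lambda t+r_{2})\hat w_0$ picks up a term you cannot bound.

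The paper avoids this entirely by observing that the single-step bound \eqref{eq:noncontract}, namely $|v|_\rho\geq c_3|w|_{\rho_0}$ for $t\in[0,T]$, is already established in the proof of Lemma \ref{Lem:EtaInstable} whenever the input manifold is $(\delta,\eta,\kappa)$-unstable with $\kappa<\kappa_0$. Since your own induction shows that each $\Phi_\delta^{nT}(\Lambda)$ stays in this class, you can simply chain \eqref{eq:noncontract} across successive intervals $[nT,(n+1)T]$ to get $|v_t|\geq c_3^{\lfloor t/T\rfloor+1}|w|$, which is \eqref{eq:NonContract2} with $c=c_3^{1/T}$. This is a two-line argument once the first part is in place; no further matrix analysis is needed.
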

\begin{proof}[Proof of Corollary \ref{Cor:EtaInstable}]
\red{Let us write $n_{max} := \left\lfloor \ln \left( \frac{\kappa_0}{\kappa_1}\right) \left(\ln \gamma\right)^{\red{-1}} \right\rfloor  \in \N\cup\{0\}$, so that
$$\gamma^{n_{max}} \kappa_1 \leq \kappa_0.$$
We may therefore apply Lemma \ref{Lem:EtaInstable} $n_{max}+1$ times to obtain the first 
part of the result.}

%
\par 
%
Equation \eqref{eq:NonContract2} 
follows from iterations of equation \eqref{eq:noncontract} below.
\end{proof}
\begin{proof}[Proof of Lemma \ref{Lem:EtaInstable}]
Let $t\geq 0$, let $\rho\in \Phi_{\delta}^t(\Lambda)$, and let $v\in T_\rho \Phi_{\delta}^t(\Lambda)$. By definition, 
there exists $\rho_0\in \Lambda$ such that $\rho = \Phi_{\delta}^t(\rho_0)$, and $w\in T_{\rho_0}\Lambda$ 
such that $d_{\rho_0} \Phi^t_{\delta}(w)=v$. Let us write 
$\lambda = p(\rho, \delta) \in (\frac{1}{2}, 2) $. 

Let us write $w= (w_+, w_0, w_-, \hat{w}_0)\in E_{\delta,\rho_0}^+\oplus E_{\delta,\rho_0}^0\oplus E_{\delta,\rho_0}^- \oplus \hat{E}_{\delta,\rho_0}^0$. By assumption, we have $|w_-|_\rho \leq \eta |w|_\rho$ and $| \hat{w}_0|_\rho \leq \kappa |w|_\rho$. Now, thanks to (\ref{eq:Matrix}), we have $v= (v_+, v_0, v_-, \hat{v}_0)\in E_{\delta,\rho}^+\oplus E_{\delta,\rho}^0\oplus E_{\delta,\rho}^- \oplus \hat{E}_{\delta,\rho}^0$, with
$$\begin{pmatrix}
v_+\\
 v_0 \\
 v_-\\
  \hat{v}_0
  \end{pmatrix} = \begin{pmatrix}
  M_{\rho, \delta,t} w_++ r_{1;\delta,t}(\rho) \hat{w}_0\\
   w_0 + \lambda t \hat{w}_0 + r_{2;\delta,t}(\rho) \hat{w}_0\\
     (M_{\rho,\delta,t}^{-1})^\dagger w_- + r_{3;\delta,t}(\rho) \hat{w}_0\\
      \hat{w}_0 + r_{4;\delta,t}(\rho) \hat{w}_0\end{pmatrix}.$$

Recall that, for any $T>0$, we may find $\delta(T)>0$ such that, for all $0\leq \delta < \delta(T)$, all $t\in [0,T]$ and all $j\in\{1,...,4\}$, we have
\begin{equation}\label{eq:CondHT}
\sup_{\rho\in  \mathcal{E}_{0, (\frac{1}{2}, 2)}} |r_{j;\delta,t}(\rho)|_\rho \leq \frac{1}{4}.
\end{equation}
      
In particular, 
\begin{equation}\label{eq:Sup2}
|\hat{v}_0|_\rho   \leq \frac{5}{4} |\hat{w}_0|_{\rho_0} \leq \frac{5 \kappa}{4} |w|_{\rho_0} .
\end{equation}
     
Furthermore, 

\begin{equation}\label{eq:Inf}
\begin{aligned}
|v|_\rho &\geq c_2^{-1} \left[ |v_+|_\rho + |v_0|_\rho \right]\\
&\geq c_2^{-1} \left[ C A^t |w_+|_{\rho_0} +  |w_0|_{\rho_0} - (\lambda t+ \frac{1}{2}) |\hat{w}_0|_{\rho_0} \right]\\
&\geq c_2^{-1} \left[ C A^t |w_+|_{\rho_0} +  |w_0|_{\rho_0} - \kappa (\lambda t+ \frac{1}{2}) |w|_{\rho_0} \right].
\end{aligned}
\end{equation}

Recall that by assumption, we have $|w_-|_{\rho_0} + |\hat{w}_0|_{\rho_0} \leq \red{( \kappa + \eta )} |w|_{\rho_0}$, so that 
$|w_+|_{\rho_0} + |w_0|_{\rho_0} \geq  (c_1 - \red{(\kappa+  \eta)}) |w|_{\rho_0}$. 
Therefore, there exists $c_3 = c_3(C, c_1, c_2)>0$ such that, for all $T>0$,  there exists $\kappa(T)>0$ and $\eta_0>0$ such that, if $\kappa < \kappa(T)$, $\eta< \eta_0$ and $\delta< \delta(T)$ with $\delta(T)$ as in (\ref{eq:CondHT}), we have
\begin{equation}\label{eq:noncontract}
|v|_\rho \geq c_3 |w|_{\rho_0}.
\end{equation}

In particular, combined with (\ref{eq:Sup2}), this implies that
\begin{equation*}
|\hat{v}_0|_\rho   \leq \frac{5  \kappa}{4 \red{c_3}} |v|_{\rho}.
\end{equation*}

On the other hand, we have
\begin{align*}
| v_-|_{\rho} &\leq \left|(M_{\rho,\delta,t}^{-1})^\dagger w_- \right|_{\rho_0} \red{+ \left|r_{3;\delta,t}(\rho) \hat{w}_0 \right|_{\rho_0}} \\
&\leq  C  c_1^{-1} A^{-t} |w_-|_{\rho_0} +\red{ \frac{\kappa+\eta}{4} |w|_{\rho_0}}
\\& \leq    c_3 \left(C c_1^{-1} A^{-t} (\MI{\kappa+}\eta) + \red{\frac{\kappa+\eta}{4}} \right)  |v|_{\rho}.
\end{align*}

We thus take $T$ such that $C c_3  c_1^{-1} A^{-\red{T}}<\MI{1}$,  \red{$\kappa(T)$ and $\eta_0$ smaller to ensure that $c_3 \frac{\kappa + \eta}{4} < \frac{1}{2}$}, and
 $\delta_0 := \delta(T)$ such that (\ref{eq:CondHT}) holds, and $\kappa_0 =\kappa(T)$ such that (\ref{eq:noncontract}) holds. This gives us the result.
\end{proof}
\par 
Let us now rephrase the results of this section in terms of the Lagrangian 
states appearing in Theorem \ref{th:MartinEtMaximeSontDesBeauxGosses}.
\begin{prop}\label{Prop:LagInitiale}

\red{Let $\eta_1>0$.}
There exists $c_0>0$,  $\eta_0>0$ and $\delta_0>0$ such that the following 
holds for all $0< \eta < \eta_0$.
\par 
Let $\Lambda \subset S^*X$ be a monochromatic Lagrangian manifold which 
is $\eta$-unstable, as in section \ref{subsec:Lag}. Then for every 
$\delta < \delta_0$, for all $0\leq t < c_0 \red{|}\ln \delta\red{|}$, $\Phi_\delta^t(\Lambda)$ 
is $(\delta,  \red{\eta_1, \eta_1})$-unstable.
\end{prop}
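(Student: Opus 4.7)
My plan has two stages: first upgrade the initial $\eta$-unstability in the unperturbed sense to a $(\delta; c_1\eta, c_2\delta)$-unstability in the $\delta$-sense at $t=0$, then iterate Lemma \ref{Lem:EtaInstable} to propagate this for a long time.

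First, I would show that $\Lambda$ is initially $(\delta; 2\eta, C_2\delta)$-unstable for $\delta<\delta_0$ small. By \eqref{eq:NiveauxEnergieComparables} we have $\Lambda \subset S^*X = \mathcal{E}_{0,\frac{1}{2}} \subset \mathcal{E}_{\delta,(\frac{1}{4},1)}$, so the perturbed splitting is defined on $\Lambda$. The direction $\hat{E}^0_\rho$ is independent of $\delta$, while the hyperplane $E^+_{\delta,\rho}\oplus E^0_{\delta,\rho}\oplus E^-_{\delta,\rho}$ equals $T_\rho\mathcal{E}_{\delta,p(\rho;\delta)}$; because $p(\cdot;\delta) - p(\cdot;0) = \delta q_\omega$, the tilt of this hyperplane relative to $T_\rho S^*X = T_\rho\mathcal{E}_{0,\frac{1}{2}}$ is $O(\delta)$, uniformly on the compact set $\mathcal{E}_{0,(\frac{1}{8},2)}$. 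Hence for $v\in T_\rho\Lambda \subset T_\rho S^*X$, the projection $\hat{v}_0^\delta$ of $v$ onto $\hat E^0_\rho$ along $T_\rho\mathcal{E}_{\delta,p(\rho;\delta)}$ satisfies $|\hat{v}_0^\delta|_\rho \leq C_2\delta\,|v|_\rho$. The continuity \eqref{eq:ContinuityDIrections} of the splittings inside $T_\rho\mathcal{E}_\delta$, sharpened to a Lipschitz-in-$\delta$ estimate from the smooth dependence of the Anosov splittings on the smooth family $\{H_{p(\cdot;\delta)}\}$, gives in turn $|v_-^\delta|_\rho \leq |v_-^0|_\rho + C\delta\,|v|_\rho \leq (\eta+C\delta)|v|_\rho$. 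Choosing $\delta_0$ small enough that $C\delta_0 \leq \eta$ yields the claimed $(\delta;2\eta,C_2\delta)$-unstability.

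Second, I would iterate Lemma \ref{Lem:EtaInstable}. Taking $\delta_0$ small enough that $C_2\delta_0 < \kappa_0$ and $2\eta \leq \eta_0$, the lemma's second assertion shows that a step of duration $T$ preserves the first parameter and multiplies the second by $\gamma>1$. After $n$ iterations (each admissible as long as the current $\kappa$ stays below $\kappa_0$), $\Phi_\delta^{nT}(\Lambda)$ is $(\delta; 2\eta,\gamma^n C_2\delta)$-unstable; at intermediate time $t\in[nT,(n+1)T]$, the first assertion gives $(\delta;2\gamma\eta,\gamma^{n+1}C_2\delta)$-unstability. Setting $c:=2\gamma$, the joint requirement that both parameters be bounded by $c\eta$ reduces to $\gamma^{n+1}C_2\delta \leq c\eta$, i.e.\ $n \leq |\ln\delta|/\ln\gamma + O(1)$, which is equivalent to $t \leq c_0|\ln\delta|$ with $c_0 := T/\ln\gamma$.

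The main obstacle is Step 1: one needs a Lipschitz-in-$\delta$ (or at least polynomial-in-$\delta$) rate for the tilt of the perturbed splitting relative to the unperturbed one, whereas \eqref{eq:ContinuityDIrections} only asserts continuity. For the hyperplane component this follows directly from the explicit formula $dp_\delta - dp_0 = \delta\,dq_\omega$ and smoothness of $q_\omega$ at fixed $h$; for the stable/unstable/neutral components inside the perturbed energy surface it rests on the smooth dependence of the Anosov invariant distributions on the smooth family of vector fields $H_{p(\cdot;\delta)}$, which gives Lipschitz control uniformly on the compact energy shell. The iteration in Steps 2--3 is then mechanical.
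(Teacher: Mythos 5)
Your proof is correct and follows essentially the same route as the paper: one first shows $\Lambda$ is $(\delta,2\eta,c_1\delta)$-unstable (the paper cites the continuity \eqref{eq:ContinuityDIrections} together with smoothness of $\delta\mapsto T_\rho \mathcal{E}_{\delta,\lambda}$ and of $\delta\mapsto \hat{E}^0_\rho$, which is exactly your tilt estimate coming from $dp_\delta-dp_0=\delta\,dq_\omega$), and then iterates Lemma \ref{Lem:EtaInstable}, which is precisely the content of Corollary \ref{Cor:EtaInstable} that the paper invokes. The only remark is that the Lipschitz-in-$\delta$ dependence of the splittings you flag as the main obstacle is not actually needed for the stable component (qualitative continuity suffices there, since the target bound $2\eta$ is a fixed constant); a quantitative rate is needed only for the $\hat{E}^0$-component, where your explicit energy-surface argument supplies it.
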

\begin{proof}
Let $\gamma>1$ be as in Corollary \ref{Cor:EtaInstable}, and let $\eta_0 = \frac{\eta_1}{2 \gamma}$.

We claim that there exists $c_{1}$, $\delta_1>0$ such 
that, for all $0 \leq \delta < \delta_1$, $\Lambda$ is 
$(\delta,  \frac{\eta_1}{\gamma}, c_{1}\delta)$-unstable.  
This follows from the continuity 
relation (\ref{eq:ContinuityDIrections}), and from the fact that the map
$\delta \mapsto T_\rho \mathcal{E}_{\delta, \lambda}$ is smooth\red{, while  
$\wit{E}_\rho^0$ does not depend on $\delta$}. 
We may thus apply Corollary 
\ref{Cor:EtaInstable} to deduce the result. 
\end{proof}  
\subsubsection{\textnormal{\textbf{Unstability and projectability}}}
Let $Y$ be a $d$-dimensional Riemannian manifold (not necessarily compact). 
Recall that a Lagrangian submanifold is a submanifold $\Lambda\subset T^*Y$ 
of dimension $d$, such that the canonical symplectic form of $T^*Y$ vanishes 
on $T_\rho \Lambda$ for any $\rho \in \Lambda$, see for instance 
\cite[Chapter 1]{DiSj}. In what follows, we will focus on a special family 
of Lagrangian submanifolds, which can be written as graphs:
\begin{definition}
Let $\Lambda\subset T^*Y$ be a submanifold with $\dim \Lambda = \dim Y$ 
and let $\rho \in \Lambda$. We call $\Lambda$ \emph{projectable at a point} 
$\rho\in\Lambda$ if ${\pi_Y}|_\Lambda:\Lambda\to Y$ is a local diffeomorphism 
near $\rho$, in the sense that each $\rho$ has a neighbourhood in $\Lambda$ 
which is mapped diffeomorphically by $\pi$ onto a neighbourhood of $\pi(\rho)$.
\par
We call $\Lambda$ \emph{locally projectable} if ${\pi_Y}|_\Lambda:\Lambda\to Y$ 
is a local diffeomorphism in the sense that each point $\rho\in\Lambda$ has a 
neighbourhood in $\Lambda$ which is mapped diffeomorphically by $\pi$ onto a 
neighbourhood of $\pi(\rho)$.
\par
Similarly, we call $\Lambda$ \emph{projectable} if 
${\pi_Y }|_\Lambda:\Lambda\to \pi_Y(\Lambda)$ is a diffeomorphism. 
\end{definition}
\begin{rem}\label{Rem:ProjTrans}
The Poincar\'e Lemma shows that a locally projectable submanifold 
$\Lambda \subset T^*Y$ with $\dim \Lambda = \dim Y$ is Lagrangian if and 
only if it is locally given by the graph of a gradient. In other words, 
$\Lambda$ is Lagrangian if and only if for each 
point $\rho \in \Lambda$, we can find a real-valued $C^\infty$ function $\phi(x)$ 
defined near $\pi(\rho)$, such that $\Lambda$ coincides near $\rho$ with the 
manifold $\{(x,d_x\phi); x\in \text{some neighbourhood of } \pi(\rho)\}$.
\par
Also note that by the inverse function theorem, $\Lambda$ is projectable at 
$\rho=(x,\xi) \in \Lambda$ if and only if the manifolds $\Lambda$ and 
$T^*_x Y$ are transverse at $\rho$. 
\end{rem}
\begin{rem}\label{rem:4.7}
Suppose that $\Lambda$ is a simply connected relatively compact locally projectable 
Lagrangian manifold. If ${\pi_Y}|_\Lambda$ is injective then it is a homeomorphism 
since $\Lambda$ is relatively compact. So $\pi_Y(\Lambda)$ is simply connected, and 
therefore $\Lambda$ is projectable since $H^1_{\mathrm{de Rham}}(\pi_Y(\Lambda))=0$. 
\end{rem}
Next, we show that unstable Lagrangian submanifolds are \red{locally} projectable. 

\begin{lem}\label{Lem:InstTrans}
Let $(X,g)$ be a compact Riemannian manifold of negative sectional curvature.   
There exists $\eta_0,\delta_0>0$ such that, if $\delta\in [0, \delta_0]$ and $\Lambda \subset \mathcal{E}_{\delta, (\frac{1}{4},1)}$ is a $(\delta, \eta_0, \eta_0)$-unstable Lagrangian submanifold, then $\Lambda$ is locally projectable.

More precisely, there exists $\alpha_0>0$ such that, if 
$\Lambda \subset \mathcal{E}_{\delta, (\frac{1}{4},1)}$ is a 
$(\delta, \eta_0, \eta_0)$-unstable Lagrangian submanifold, 
then for any $\rho=(x,\xi) \in \Lambda$, the manifolds 
$\Lambda$ and $T^*_xY$ make an angle $\geq \alpha_0$ at $\rho$ \red{(with respect to the measure $g_0$)}.
\end{lem}

\begin{proof}
It is well known that for any $\rho= (x,\xi)\in T^*X$, $\xi\neq 0$, any one 
distribution $E^{\bullet}_\rho$, $\bullet\in\{+,-,0\}$, and $T^*_x X$ are 
transverse submanifolds of $T^*X$, see for instance \cite[Lemma 4.6]{Ing}. 
It follows, that for any $v\in E^{\bullet}_\rho$ and any $w\in T_\rho T^*_x X$, 
we have $|\langle v, w \rangle_{\rho}| \neq  |v|_{\rho} |w|_{\rho}$. 
By compactness we may thus find a constant $c_0>0$ such that, for all 
$\rho \in \mathcal{E}_{\delta,(\frac{1}{4},1)}$, and for any 
$v\in E^{\bullet}_\rho$, $\bullet\in\{+,-,0\}$, and any $w\in T_\rho T^*_x X$, 
we have 
\begin{equation*}
	|\langle v, w \rangle_{\rho} | \leq (1-c_0) |v|_{\rho} |w|_{\rho}.
\end{equation*}
From the continuity \eqref{eq:ContinuityDIrections}, we deduce that, if $\delta_0$ is small enough, 
for all $\rho \in \mathcal{E}_{\delta,(\frac{1}{4},1)}$, and for any $v\in E^{+0}_{\delta,\rho}$ 
and any $w\in T_\rho T^*_x X$, we have 
\begin{equation*}
	|\langle v, w \rangle_{\rho} | \leq \left(1-\frac{c_0}{2}\right) |v|_{\rho} |w|_{\rho}. 
\end{equation*}
Let $\Lambda$ be $(\delta, \eta_0)$-unstable for some $\delta\in[0, \delta_0]$, let $\rho = (x,\xi) \in \Lambda$, 
let $v\in T_\rho \Lambda$ and $w\in T_\rho T^*_x X$ with $|v|_{\rho} = |w|_{\rho}=1$. By the 
instability assumption, if $\eta$ is small enough, there exists $v'\in E_{\delta,\rho}^{+0}$ with 
$|v'|_{\rho}=1$ such that $|v-v'|_{\rho} \leq \frac{c_0}{4}$. We thus have
\begin{equation}\label{eq:Transvers}
| \langle v, w \rangle_{\rho}| \leq | \langle v', w \rangle_{\rho} | + \frac{c_0}{4} \leq 1- \frac{c_0}{4}.
\end{equation}
In particular, we have $v\neq w$, so that $T_\rho \Lambda\cap T_\rho T^*_x X=\{0\}$. 
The second observation in Remark \ref{Rem:ProjTrans} then implies that $\Lambda$ 
is projectable at $\rho$. Since the above argument holds for any $\rho\in \Lambda$ 
we conclude the first part of the statement. The statement about the angle follows 
directly from (\ref{eq:Transvers}).
\end{proof}

\subsubsection{\textnormal{\textbf{\MI{Expansion} of Lagrangian manifolds in negative curvature}}}
Let us recall a standard result concerning manifolds of negative curvature 
(see \cite[Theorem 4.8.2]{Jost}).
\begin{lem}\label{lem:convex}
Let $(X,g)$ be a simply connected manifold of nonpositive sectional curvature, 
and let $\rho, \rho'\in T^*X$. Then the map 
$\R \ni t \mapsto \mathrm{dist}_X^2 (\Phi^t(\rho), \Phi^t(\rho'))$ is convex.
\end{lem}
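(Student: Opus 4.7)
The plan is to derive the convexity from the classical second variation of energy formula on a Cartan--Hadamard manifold, exploiting that both curves $t\mapsto \pi_X(\Phi^t(\rho))$ and $t\mapsto\pi_X(\Phi^t(\rho'))$ are geodesics in $X$. Write $\gamma_0(t):=\pi_X(\Phi^t(\rho))$ and $\gamma_1(t):=\pi_X(\Phi^t(\rho'))$; these are constant-speed geodesics, since the Hamiltonian $\tfrac12|\xi|^2_x$ generates on $T^*X$ the cotangent lift of the geodesic flow on $X$. The hypotheses (simply connected, connected, nonpositive curvature, together with the completeness that holds in the intended application to the universal cover $\wt{X}$ of a compact manifold) place us in the Cartan--Hadamard setting, so at every $y_0\in X$ the exponential map $\exp_{y_0}:T_{y_0}X\to X$ is a diffeomorphism and any two points are joined by a unique constant-speed minimizing geodesic.

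I would then introduce the smooth two-parameter family
\begin{equation*}
c(s,t) := \exp_{\gamma_0(t)}\!\bigl(s\,\exp_{\gamma_0(t)}^{-1}(\gamma_1(t))\bigr),\qquad (s,t)\in[0,1]\times\R,
\end{equation*}
so that for each fixed $t$ the curve $s\mapsto c(s,t)$ is the unique constant-speed minimizing geodesic from $\gamma_0(t)$ to $\gamma_1(t)$; joint smoothness of $c$ follows from the smoothness of $\exp$ and of its inverse on a Cartan--Hadamard manifold. Since $s\mapsto c(s,t)$ has constant speed equal to $\mathrm{dist}_X(\gamma_0(t),\gamma_1(t))$, the function under study may be written
\begin{equation*}
f(t) := \mathrm{dist}_X^2(\gamma_0(t),\gamma_1(t)) = \int_0^1 |\partial_s c(s,t)|^2\,\mathrm{d}s = 2E(c(\cdot,t)),
\end{equation*}
where $E$ is the standard energy functional.

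Now fix $t_0$ and regard $\tau\mapsto c(\cdot,t_0+\tau)$ as a variation of the geodesic $c(\cdot,t_0)$ by geodesics, with variation vector field $V(s):=\partial_t c(s,t_0)$; consequently $V$ is a Jacobi field along $c(\cdot,t_0)$. The endpoint curves of the variation are $t\mapsto c(0,t)=\gamma_0(t)$ and $t\mapsto c(1,t)=\gamma_1(t)$, which are themselves geodesics, so their accelerations $\nabla_t\partial_t c$ vanish at $s=0$ and $s=1$. The boundary terms in the second variation of energy formula therefore drop out, and one obtains
\begin{equation*}
\tfrac12 f''(t_0) = E''(c(\cdot,t_0)) = \int_0^1\Bigl(|\nabla_s V(s)|^2 - \langle R(V(s),\dot c(s,t_0))\dot c(s,t_0),V(s)\rangle\Bigr)\,\mathrm{d}s.
\end{equation*}
The first integrand is manifestly nonnegative, and the second is nonnegative by the nonpositive sectional curvature hypothesis, since $\langle R(V,W)W,V\rangle \leq 0$ whenever the plane spanned by $V,W$ has nonpositive sectional curvature. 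Hence $f''(t_0)\geq 0$, proving convexity.

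The main technical point is the joint $C^2$ regularity of $c(s,t)$ and the vanishing of the boundary terms, both of which are made automatic by the Cartan--Hadamard hypothesis: $\exp^{-1}$ is globally smooth, the cut locus is empty, and so $\mathrm{dist}_X^2$ is smooth throughout the variation, obviating the delicate issues that would otherwise arise at points where minimizing geodesics fail to be unique.
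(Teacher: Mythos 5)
Your proof is correct: the paper does not prove this lemma at all but simply cites Jost [Theorem 4.8.2], and your second-variation-of-energy argument (geodesic variation $c(s,t)$, Jacobi field $V=\partial_t c$, vanishing boundary terms because the endpoint curves are geodesics, curvature term of the right sign) is precisely the standard proof of that cited result. You are also right to flag that the Cartan--Hadamard setup needs completeness, which the lemma statement omits but which holds for the universal cover $\wt{X}$ of a compact manifold, the only case used in the paper.
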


In particular, we have the following result. \red{Recall that $r_I$ denotes the injectivity radius of the manifold $X$.}

\begin{corollary}\label{cor:Convex}
Let $(X,g)$ be a connected manifold of nonpositive sectional curvature, 
and let $\rho, \rho'\in T^*X$. Suppose that $I\subset \R$ is 
an interval such that for all $t\in I$, $\mathrm{dist}_X
(\Phi^t(\rho), \Phi^t(\rho')) < r_I$. Then the map $I \ni t \mapsto 
\mathrm{dist}_X^2 (\Phi^t(\rho), \Phi^t(\rho'))$ is convex.
\end{corollary}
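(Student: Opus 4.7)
The plan is to reduce to Lemma~\ref{lem:convex} by lifting the two trajectories to the universal cover $\widetilde X$, which is simply connected and carries the pullback metric $\widetilde g$, still of nonpositive sectional curvature, so that the hypotheses of Lemma~\ref{lem:convex} are met on $\widetilde X$.

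Fix any $t_0\in I$, and pick a lift $\widetilde\rho\in T^*\widetilde X$ of $\Phi^{t_0}(\rho)$ (i.e.\ $\widehat\pi(\widetilde\rho)=\Phi^{t_0}(\rho)$). Since by hypothesis $\mathrm{dist}_X(\Phi^{t_0}(\rho),\Phi^{t_0}(\rho'))<r_I$, lifting a minimising $X$-geodesic between the base points of $\Phi^{t_0}(\rho)$ and $\Phi^{t_0}(\rho')$, starting at the base point of $\widetilde\rho$, produces a lift $\widetilde\rho'\in T^*\widetilde X$ of $\Phi^{t_0}(\rho')$ satisfying
$$\mathrm{dist}_{\widetilde X}\bigl(\widetilde\rho,\widetilde\rho'\bigr)=\mathrm{dist}_X\bigl(\Phi^{t_0}(\rho),\Phi^{t_0}(\rho')\bigr),$$
where, following the convention used in Lemma~\ref{lem:convex}, $\mathrm{dist}$ is understood as the geodesic distance between base points. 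I then set $\widetilde\rho_t:=\widetilde\Phi^{t-t_0}(\widetilde\rho)$ and $\widetilde\rho'_t:=\widetilde\Phi^{t-t_0}(\widetilde\rho')$; by the intertwining \eqref{eq:ProjDyn} at $\delta=0$, these are lifts of $\Phi^t(\rho)$ and $\Phi^t(\rho')$ for every $t\in I$.

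The heart of the matter is to establish the matched-distance identity
\begin{equation}\label{eq:crux_plan}
\mathrm{dist}_{\widetilde X}\bigl(\widetilde\rho_t,\widetilde\rho'_t\bigr)=\mathrm{dist}_X\bigl(\Phi^t(\rho),\Phi^t(\rho')\bigr)\qquad\text{for all }t\in I.
\end{equation}
I would argue that the set $J\subset I$ on which \eqref{eq:crux_plan} holds is nonempty, closed and open in $I$, hence equal to $I$. It is nonempty by construction ($t_0\in J$), and closed by continuity in $t$ of both sides of \eqref{eq:crux_plan}. For openness, if $t_1\in J$ then both sides of \eqref{eq:crux_plan} are strictly less than $r_I$ on a neighbourhood of $t_1$; on that neighbourhood, lifting a minimising $X$-geodesic from the base point of $\widetilde\rho_t$ yields a lift of $\Phi^t(\rho')$ lying within $\widetilde X$-distance $r_I$ of the base point of $\widetilde\rho_t$. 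The standard fact that on a compact manifold without conjugate points any two distinct lifts of the same point of $X$ lie at $\widetilde X$-distance at least $2r_I$ (which in our setting follows from the absence of conjugate points in nonpositive curvature, via the injectivity of $\exp_{z}$ on $B(0,r_I)$ applied at the midpoint $\widetilde z$ of the lifted segment) then forces this new lift to coincide with the base point of $\widetilde\rho'_t$, yielding \eqref{eq:crux_plan} throughout the neighbourhood.

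With \eqref{eq:crux_plan} in hand, I apply Lemma~\ref{lem:convex} on the simply connected, nonpositively curved manifold $\widetilde X$ to the pair $\widetilde\rho,\widetilde\rho'$: this yields convexity of $t\mapsto \mathrm{dist}_{\widetilde X}^2(\widetilde\rho_t,\widetilde\rho'_t)$ on all of $\R$, and in particular on $I$; by \eqref{eq:crux_plan} this function coincides on $I$ with $t\mapsto \mathrm{dist}_X^2(\Phi^t(\rho),\Phi^t(\rho'))$, which is therefore convex. The main obstacle is the openness step in \eqref{eq:crux_plan}, where the strict bound $\mathrm{dist}_X(\Phi^t(\rho),\Phi^t(\rho'))<r_I$ is indispensable: it prevents the propagated lift $\widetilde\rho'_t$ from jumping to a different sheet of the cover as $t$ varies across $I$, which is exactly the obstruction that the simple-connectedness hypothesis of Lemma~\ref{lem:convex} rules out automatically.
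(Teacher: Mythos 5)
Your proof is correct and is essentially the argument the paper has in mind: the paper states Corollary \ref{cor:Convex} as an immediate consequence of Lemma \ref{lem:convex} without writing out a proof, and your lift to the universal cover, with the open--closed argument keeping the lifts matched below the injectivity radius so that $\mathrm{dist}_{\widetilde X}$ agrees with $\mathrm{dist}_X$ along $I$, is exactly the standard way to make that deduction precise.
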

%
%
Thus,  as long as $\rho$ and $\rho'$ remain close to each other, 
the square of their distance (on the basis $X$) is a convex function. 
This convex function could be constant: this is the case when 
$\rho$ and $\rho'$ belong to the same geodesic. However, the following 
lemma guaranties that, if $\rho$, $\rho'$ belong to an unstable 
Lagrangian manifold and do not belong to the same (short) 
geodesic segment, then the distance between them is ultimately increasing.

\begin{lem}\label{Lem:DernierLemmeAvantLesVacances}
Let $(X,g)$ be a compact manifold of negative curvature, \red{and let $D>0$}.
There exist $\varepsilon >0$, $\eta_0>0$ $t_0>0$ and $C>0$ such that the 
following holds. Let $\Lambda\subset S^*X$ be an $\eta_0$-unstable Lagrangian manifold \red{(in the sense of Definition \ref{def:LagInstable})} \red{that has distortion $\leq D$ (in the sense of (\ref{eq:DefDistorsion}))}, and let 
$\rho_1, \rho_2\in \Lambda$ be such that $\mathrm{dist}_{T^*X}(\rho_1, \rho_2) < \varepsilon$. 
Then there exist $\tau \in (-C\varepsilon,  C\varepsilon)$ such that for 
all $\tau'\in (-\red{C} \varepsilon,  \red{C}\varepsilon)$, we have
\begin{equation*}
	\mathrm{dist}_X(\Phi^{t_0}(\rho_1), \Phi^{t_0+\tau'}(\rho_2)) 
	\red{\geq} 
	2 \mathrm{dist}_X\left(\rho_1, \Phi^{\tau}(\rho_2)\right)
\end{equation*}
and $\mathrm{dist}_X(\Phi^t(\rho_1), \Phi^{t+\tau}(\rho_2)) \leq C \varepsilon$ for 
all $t\in [0, t_0]$ \red{and for all $\tau'\in (- \varepsilon, \varepsilon)$}.
\end{lem}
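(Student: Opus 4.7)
The plan is to lift everything to the universal cover $\wit X$ and exploit the convexity of distance functions in nonpositive curvature. Choosing $\varepsilon$ smaller than a fixed fraction of the injectivity radius, I can lift $\rho_1,\rho_2$ to $\wit\rho_1,\wit\rho_2\in T^*\wit X$ with $\dist_{T^*\wit X}(\wit\rho_1,\wit\rho_2)<\varepsilon$, establish all distance inequalities on $\wit X$, and conclude for $\dist_X$ by projection. Let $\gamma_i(s):=\wit\pi(\Phi^s\wit\rho_i)$ denote the two unit-speed geodesics and set $f(s):=\dist_{\wit X}(\gamma_1(0),\gamma_2(s))$. By Lemma~\ref{lem:convex} (applied to the zero-momentum point $(\gamma_1(0),0)\in T^*\wit X$, which is fixed by the flow, together with $\wit\rho_2$), the function $s\mapsto f(s)^2$ is convex and proper, and hence attains its minimum at a unique $\tau^*\in\R$. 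I set $\tau:=\tau^*$. Since $f(0)\le\dist_{T^*\wit X}(\wit\rho_1,\wit\rho_2)<\varepsilon$ and $\gamma_2$ has unit speed,
\[
|\tau|=\dist_{\wit X}(\gamma_2(0),\gamma_2(\tau))\le f(0)+f(\tau)\le 2f(0)<2\varepsilon,
\]
so $\tau\in(-C\varepsilon,C\varepsilon)$ for $C=2$.

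For the intermediate bound $\dist_X(\Phi^t\rho_1,\Phi^{t+\tau}\rho_2)\le C\varepsilon$ on $t\in[0,t_0]$, I first note that since the geodesic flow moves points at uniformly bounded speed, $\dist_{T^*\wit X}(\wit\rho_1,\Phi^\tau\wit\rho_2)=O(\varepsilon)$. Lemma~\ref{Lem:Gron} with $\delta=\delta'=0$ (its universal-cover version) then yields
\[
\dist_{T^*\wit X}(\Phi^t\wit\rho_1,\Phi^{t+\tau}\wit\rho_2)\le C_0\, e^{C_0 t_0}\cdot O(\varepsilon)=O(\varepsilon)
\]
uniformly on $t\in[0,t_0]$; projecting to $\wit X$ and then to $X$ gives the desired estimate.

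The main step is the expansion at time $t_0$. For any $\tau'\in(-\varepsilon,\varepsilon)$, the intermediate bound lets me assume that distances stay within the injectivity radius, so
\[
\dist_X(\Phi^{t_0}\rho_1,\Phi^{t_0+\tau'}\rho_2)\ge\inf_{s\in\R}\dist_{\wit X}(\gamma_1(t_0),\gamma_2(s)),
\]
i.e.\ the perpendicular distance from $\gamma_1(t_0)$ to the full geodesic $\gamma_2$; I claim this exceeds $2f(\tau)$ for $t_0$ sufficiently large. The mechanism is that this perpendicular separation evolves like an unstable Jacobi field along $\gamma_1$: by the minimizing property of $\tau^*$, the displacement from $\wit\rho_1$ to $\Phi^\tau\wit\rho_2$ has no infinitesimal component along the flow direction $E^0$; by $\eta_0$-unstability of $\Lambda$ its $E^-$-component is bounded by $O(\eta_0)$ times its total norm; the remaining part lies in $E^+$. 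Under $d\Phi^{t_0}$, the $E^+$-part is expanded by a factor $\ge cA^{t_0}$ and the $E^-$-part contracted by $\le CA^{-t_0}$ (Section~\ref{sec:Hyperbolicity}), and by Rauch comparison in negative curvature this translates into a base perpendicular distance bounded below by $(c'A^{t_0}-O(\eta_0))\,f(\tau)$. Choosing first $\eta_0$ small and then $t_0$ large so that $c'A^{t_0}-O(\eta_0)>2$ concludes the proof. The main technical obstacle is upgrading the infinitesimal Jacobi-field expansion to a bound on the finite (though small) separation $f(\tau)$: this is handled by integrating the Jacobi-field estimates along a curve in $\Lambda$ joining $\wit\rho_1$ to $\wit\rho_2$, using the uniform $\eta_0$-unstability of $\Lambda$ at each point of that curve.
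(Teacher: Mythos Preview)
Your overall strategy --- pick $\tau$ to eliminate the flow component, use $\eta_0$-unstability to bound the stable part, then let hyperbolic expansion do the work --- is exactly the paper's strategy. The implementations, however, differ in a way that matters, and your version has a real gap.

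The paper does not work with base perpendicular feet or Jacobi fields. It introduces \emph{adapted coordinates} $(u_\rho,s_\rho,n_\rho)$ on a neighbourhood of $\rho$ in $S^*X$, built so that $\{u=0,\,n=0\}$ is the local strong unstable manifold $W^+_\rho$, $\{s=0,\,n=0\}$ is $W^-_\rho$, and $\{u=0,\,s=0\}$ is the flow orbit through $\rho$. The time $\tau$ is then chosen by the \emph{phase-space} condition $n_{\rho_1}(\Phi^\tau\rho_2)=0$, and $\eta_0$-unstability of $\Lambda$ translates directly into a bound on one of the coordinates of $\rho_3:=\Phi^\tau\rho_2$. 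Exponential growth of the remaining coordinate under $\Phi^t$ then comes straight from (\ref{eq:Matrix}), and the conclusion follows from the comparability estimates (\ref{eq:ComprDistUnstab})--(\ref{eq:ComparDist}). Crucially, these coordinates are defined at finite (small) separation, so there is no ``infinitesimal Jacobi field to finite distance'' upgrade to perform.

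Your gap is at ``by the minimizing property of $\tau^*$, the displacement from $\wit\rho_1$ to $\Phi^\tau\wit\rho_2$ has no infinitesimal component along $E^0$''. Minimizing $s\mapsto\dist_{\wit X}(\gamma_1(0),\gamma_2(s))$ makes the connecting base geodesic perpendicular to $\gamma_2'(\tau)$; but $E^0$ is a direction in $T_{\rho}S^*X$, not in $T_xX$, and has a nontrivial fibre component. Base perpendicularity to $\gamma_2'(\tau)$ (which is not even $\gamma_1'(0)$) says nothing about the fibre part, so the $E^0$-component of the displacement is not zero but only $O(\varepsilon)$. Under $d\Phi^{t_0}$ this residual neutral part neither grows nor decays, and you never show that it does not swamp the perpendicular base distance you are trying to lower-bound. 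Your proposed fix --- integrate Jacobi estimates along a curve in $\Lambda$ --- would control $\dist_{T^*\wit X}(\Phi^{t_0}\wit\rho_1,\Phi^{t_0}\wit\rho_2)$, but not $\inf_s\dist_{\wit X}(\gamma_1(t_0),\gamma_2(s))$; extracting the latter from the former is precisely the step you have skipped. The paper's phase-space choice of $\tau$ and its adapted coordinates are what make this separation of neutral versus hyperbolic contributions clean.
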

\red{The statement of the lemma and its proof are illustrated on Figure \ref{Fig2}.}
\begin{proof}
\red{Before proving the statement, let us start by stating a few inequalities we will need.}
First of all, since the distributions $E_\rho^{-0}$ are transverse to the 
vertical fibres of $T^*X$,  since $X$ is compact \red{and since $\Lambda$ has bounded distortion}, there exists 
$C_1, C_2, \varepsilon, \eta >0$ such that, if $\Lambda$ is a Lagrangian 
manifold which is $\eta$-unstable, and if $\rho, \rho'\in \Lambda$ with 
$\mathrm{dist}_{T^*X}(\rho, \rho') < \varepsilon$, then we have
\begin{equation}\label{eq:ComprDistUnstab}
C_1 \mathrm{dist}_{T^*X} (\rho, \rho') \leq \mathrm{dist}_X(\rho, \rho') 
\leq 
C_2 \mathrm{dist}_{T^*X}(\rho, \rho').
\end{equation}
Next, for every $\rho\in S^*X$, we introduce coordinates 
$(u_\rho, s_\rho, n_\rho)$ \MI{which send diffeomorphically a small ball $B(\rho, r)\subset S^*X$ to a neighbourhood of the origin in $\R^{2d-1}$, and} that 
are \emph{adapted},  in the sense that
\begin{itemize}
\item For every $\rho$,  the map $\rho' \mapsto (u_\rho(\rho', s_\rho(\rho'), n_\rho(\rho'))$ 
	is continuous in a neighbourhood of $\rho$.
\item $(u_\rho (\rho'),  n_\rho(\rho'))= (0,0) \Longleftrightarrow \rho'\in W_\rho^+$. 
	We then have $|s_\rho(\rho')| = \mathrm{dist}_{T^*X}(\rho, \rho') $.
\item $(s_\rho (\rho'),n_\rho(\rho')) = (0,0) \Longleftrightarrow  \rho'\in W_\rho^-$. 
	We then have  $|u_\rho(\rho')| = \mathrm{dist}_{T^*X}(\rho, \rho')$.
\item $(u_\rho(\rho'), s_\rho(\rho'))= (0,0) \Longleftrightarrow \Phi^{n_\rho(\rho')}(\rho) = \rho'$.
\end{itemize}
\red{By compactness, the radius $r$ of the ball may be chosen independently of $\rho$.} 

Since the vector spaces $E^{\bullet}_\rho$ depend continuously on $\rho$ and 
are transverse, we deduce by compactness that there exist $c_1,c_2>0$ and 
$\varepsilon>0$ such that for all $\rho, \rho'\in S^*X$ with 
$\mathrm{dist}_{T^*X}(\rho, \rho')< \varepsilon$,  we have
\begin{equation}\label{eq:ComparDist}
c_1 \left( |u_{\rho}(\rho')| +  |s_{\rho}(\rho')|  +  |n_{\rho}(\rho')|\right) 
\leq 
\mathrm{dist}_{T^*X} (\rho, \rho') 
\leq 
c_2 \left( |u_{\rho}(\rho')| +  |s_{\rho}(\rho')|  +  |n_{\rho}(\rho')| \right).
\end{equation}

\red{We may now proceed with the proof of the lemma.}  \red{The fact that, \MI{for every $\rho\in S^*X$, the map} $(u_\rho,  s_\rho, n_\rho)$ is a diffeomorphism implies that  there exists $C>0$ such that, for all $\varepsilon>0$ small enough and all $\MI{\rho, \rho'}\in \Lambda$, with $\mathrm{dist}_{T^*X}(\MI{\rho, \rho'})< \varepsilon$, there exists $\tau\MI{(\rho,\rho')} \in (-C \varepsilon, C \varepsilon)$ such that 
\begin{equation}\label{eq:TempsAnnulation}
n_{\MI{\rho}}(\Phi^{\tau\MI{(\rho,\rho')}} (\MI{\rho'}))=0.
\end{equation} 
Here, again, by compactness, the constant $C$ may be taken independent of $\rho, \rho'$.}

Let $\rho_1, \rho_2\in \red{\Lambda}$, with $\mathrm{dist}_{T^*X}(\rho_1, \rho_2)< \varepsilon$, \red{and let $\tau\MI{= \tau(\rho_1,\rho_2)}$ be as in (\ref{eq:TempsAnnulation}).}  We 
then write $\rho_3:=\Phi^\tau(\rho_2)$. Since $\rho_3\in \red{\Lambda}$, \MI{we deduce from \eqref{eq:ComparDist} and \eqref{eq:TempsAnnulation} that}
we have
\begin{equation}\label{eq:SPetit}
\MI{C_3 |u_{\rho_1}(\rho_3)|\leq }	\mathrm{dist}_{T^*X}(\rho_3, \rho_2) \leq C_4 |u_{\rho_1}(\rho_3)|
\end{equation}
provided $\varepsilon$ and $\eta_0$ are small enough\MI{, for some $C_3, C_4>0$ independent of $\rho_1, \rho_2$}.
%
%
Now, thanks to (\ref{eq:Matrix}),  for all $\varepsilon>0$ small enough,  there exists $C\red{'}>0$ such that, for 
all $\tau'\in (-\red{C}\varepsilon,  \red{C}\varepsilon)$ and all $t>0$ such that $\mathrm{dist}_{T^*X}\left( \Phi^t(\rho_1), \Phi^{t+\tau'}(\rho_2) \right) < r$,
we have

\begin{equation}\label{eq:Stab}
|u_{\Phi^t(\rho_1)}(\Phi^{t+\tau'}(\rho_2))| 
\geq 
C' \red{A_0^t} |u_{\rho_1}(\rho_3)|.
\end{equation}
We may then deduce the result by taking $t$ large enough, using 
(\ref{eq:ComprDistUnstab}), (\ref{eq:ComparDist}) and (\ref{eq:SPetit}).
\end{proof}

\begin{figure}
\definecolor{ffqqqq}{rgb}{1,0,0}
\definecolor{qqqqff}{rgb}{0,0,1}
\definecolor{zzttqq}{rgb}{0.6,0.2,0}
\begin{tikzpicture}[line cap=round,line join=round,>=triangle 45,x=1cm,y=1cm]
\clip(-4.5736226905451,-4.23363800695946) rectangle (10.25344205119935,5.439860072412265);
\fill[line width=2pt,color=zzttqq,fill=zzttqq,fill opacity=0.10000000149011612] (-1,4) -- (0,3) -- (0,-3) -- (-1,-2) -- cycle;
\fill[line width=2pt,color=zzttqq,fill=zzttqq,fill opacity=0.10000000149011612] (5,-2) -- (6,-3) -- (6,3) -- (5,4) -- cycle;
\draw [line width=1pt,color=zzttqq] (-1,4)-- (0,3);
\draw [line width=1pt,color=zzttqq] (0,3)-- (0,-3);
\draw [line width=1pt,color=zzttqq] (0,-3)-- (-1,-2);
\draw [line width=1pt,color=zzttqq] (-1,-2)-- (-1,4);
\draw [shift={(39.37360226612631,-1.1724496190809304)},line width=1pt]  plot[domain=3.0296599294300477:3.1745271352157602,variable=\t]({1*39.9086612830548*cos(\t r)+0*39.9086612830548*sin(\t r)},{0*39.9086612830548*cos(\t r)+1*39.9086612830548*sin(\t r)});
\draw [shift={(-0.5023124104641838,-0.0023246038908861645)},line width=1pt]  plot[domain=-0.008350988846211216:3.00553473319539,variable=\t]({1*0.5023299263594502*cos(\t r)+0*0.5023299263594502*sin(\t r)},{0*0.5023299263594502*cos(\t r)+1*0.5023299263594502*sin(\t r)});
\draw [line width=1pt,color=zzttqq] (5,-2)-- (6,-3);
\draw [line width=1pt,color=zzttqq] (6,-3)-- (6,3);
\draw [line width=1pt,color=zzttqq] (6,3)-- (5,4);
\draw [line width=1pt,color=zzttqq] (5,4)-- (5,-2);
\draw [line width=1.5pt,color=qqqqff] (-0.5,0.5)-- (5.502536466025894,0.48083569901689954);
\draw [shift={(5.490827704234865,-0.16065020813020686)},line width=1pt]  plot[domain=0.6540822031584338:2.44186003410664,variable=\t]({1*0.6415927557033555*cos(\t r)+0*0.6415927557033555*sin(\t r)},{0*0.6415927557033555*cos(\t r)+1*0.6415927557033555*sin(\t r)});
\draw [shift={(48.09071881974634,-0.9609838934571975)},line width=1pt]  plot[domain=3.0412539987001868:3.1778616631023615,variable=\t]({1*42.612581709290836*cos(\t r)+0*42.612581709290836*sin(\t r)},{0*42.612581709290836*cos(\t r)+1*42.612581709290836*sin(\t r)});
\draw [shift={(0.33701702345772416,20.390165708829326)},line width=1pt,color=ffqqqq]  plot[domain=4.664019736140885:5.024360270955364,variable=\t]({1*19.630149056652233*cos(\t r)+0*19.630149056652233*sin(\t r)},{0*19.630149056652233*cos(\t r)+1*19.630149056652233*sin(\t r)});
\draw [->,line width=0.5pt] (-0.44796371349093533,1.4627217864612918) -- (-0.41319780183820143,1.9439241963359382);
\draw [->,line width=0.5pt] (-0.5286111067462969,-0.45508405582641476) -- (-0.5346492438720887,-0.9915993486706324);
\draw [->,line width=0.5pt] (-0.9246865613616986,0.26958610124941185) -- (-0.8247983170684472,0.3828223744925836);
\draw [->,line width=0.5pt] (-0.0908107674415991,0.2857756437764907) -- (-0.20055110890367367,0.39926657105796337);
\draw [->,line width=0.5pt] (5.522676781485515,0.9868140837075698) -- (5.549694838229236,1.5074971242892492);
\draw [->,line width=0.5pt] (5.483871647309684,-0.26191740476367775) -- (5.47819199872675,-0.8925890188420881);
\draw [->,line width=0.5pt] (5.870292961392436,0.35669627420186095) -- (5.67640515679825,0.45351774387568417);
\draw [->,line width=0.5pt] (5.096511599032447,0.34546842751663537) -- (5.277874212678469,0.44457047443522363);
\draw [color=qqqqff](5.41604185047848,0.2291869207661375) node[anchor=north west] {$\Phi^t(\rho_1)$};
\draw [color=ffqqqq](0.349564531628937,1.5113215510953628) node[anchor=north west] {$\rho_2$};
\draw [color=qqqqff](-0.5562959176860923,0.2503080956119273) node[anchor=north west] {$\rho_1$};
\draw [color=ffqqqq](6.3354641173053405,2.4406532443101137) node[anchor=north west] {$\Phi^{t+\tau'}(\rho_2)$};
\draw [color=zzttqq](-0.5922812321137196,4.183150169087772) node[anchor=north west] {$\{n_{\rho_1}=0\}$};
\draw [color=zzttqq](5.6701471096629605,4.088104882281717) node[anchor=north west] {$\{n_{\Phi^t(\rho_1)}=0\}$};
\draw [color=ffqqqq](-2.8595517228611085,1.490200376249573) node[anchor=north west] {$\rho_3=\Phi^{\tau}(\rho_2)$};
\begin{scriptsize}
\draw [fill=qqqqff] (-0.5,0.5) circle (2.5pt);
\draw [fill=ffqqqq] (0.5799439718276151,0.7615198440698252) circle (2.5pt);
\draw [fill=ffqqqq] (-0.6121082567435837,0.7829753647523005) circle (2.5pt);
\draw [fill=qqqqff] (5.502536466025894,0.48083569901689954) circle (2.5pt);
\draw [fill=ffqqqq] (6.362204061548653,1.707556866849959) circle (2.5pt);
\end{scriptsize}
\end{tikzpicture}\caption{\red{An illustration of Lemma \ref{Lem:DernierLemmeAvantLesVacances} and of its proof.}}\label{Fig2}
\end{figure}

\subsubsection{\textnormal{\textbf{Projectability of Lagrangian manifolds on the universal cover}}}
\label{sec:DynOnUnivCover}

We shall say that a Lagrangian manifold $\Lambda\subset T^*X$ is $D$-controlled if it can be put in the form
$\Lambda = \{ (x, d_x \phi) , x\in U\}$ with $\|\phi\|_{C^{3}}\leq D$.  \red{Here, as previously (see \eqref{eq:sa8.1}), the $C^3$ norm is defined using some coordinate frame.}

Our aim in this section is to prove the following proposition.

\begin{prop}\label{Prop:OnTheCover2}
There exists $\gd_0>0$ such that, for all $D\geq \red{1}$, we may find 
$\eta(D), \delta_0(D)>0$ such that the following holds for all 
$\delta \in [0, \delta_0)$.

Let $\Lambda \subset S^*X$ be a simply connected monochromatic 
Lagrangian manifold which is $\eta$-unstable \red{(in the sense of Definition \ref{def:LagInstable})}, $D$-controlled,  \red{has diameter $\leq D$}, and
has distortion $\leq D$,  as in section \ref{subsec:Lag}. 

Consider 
a lift $\wit{\Lambda} \subset S^* \wit{X}$. Then, for all 
$0 \leq t \leq \gd_0 |\log \delta|$, $\widetilde{\Phi}_{\delta}^t(\widetilde{\Lambda})\subset 
\widetilde{\mathcal{E}}_{\delta, (\frac{1}{4}, 1)}$ 
is a simply connected Lagrangian submanifold which is projectable.
\end{prop}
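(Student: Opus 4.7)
The plan is to combine three ingredients: (i) the preservation of the Lagrangian, connectedness and simple-connectedness properties under the Hamiltonian flow; (ii) the preservation of unstability from Proposition \ref{Prop:LagInitiale} together with the local projectability criterion of Lemma \ref{Lem:InstTrans}; and (iii) the non-positive curvature of the universal cover $\wit X$ combined with Lemma \ref{Lem:Gron} to rule out global self-intersections of the projection.

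For (i), since $\wit\Phi_\delta^t$ is the flow of the Hamiltonian $\wit p$, it is a smooth symplectomorphism of $T^*\wit X$, hence sends the Lagrangian submanifold $\wit\Lambda$ to a Lagrangian submanifold; as it is a diffeomorphism, connectedness and simple-connectedness of $\wit\Lambda$ are preserved. The conservation of $\wit p$ along the flow combined with $\wit\Lambda\subset S^*\wit X$ and \eqref{eq:NiveauxEnergieComparables} forces $\wit\Phi_\delta^t(\wit\Lambda)\subset\wit{\mathcal E}_{\delta,(1/4,1)}$ for $\delta_0$ small.

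For (ii), the dynamics on the cover is locally conjugate to that on the base by \eqref{eq:ProjDyn}, so Proposition \ref{Prop:LagInitiale} transfers verbatim: choosing $\eta$ small enough and $\delta_0$ small enough, $\wit\Phi_\delta^t(\wit\Lambda)$ is $(\delta, c\eta, c\eta)$-unstable for $0\le t \le c_0|\log\delta|$. Lemma \ref{Lem:InstTrans} then gives local projectability. Because $\Lambda$ is $D$-controlled and simply connected, $\wit\Lambda$ itself is a projectable graph $\{(\wit x, d_{\wit x}\wit\phi)\}$ with $\wit\phi = \phi\circ\wit\pi$ defined on a simply connected subset of $\wit X$, so $\wit\pi|_{\wit\Lambda}$ is injective at $t=0$.

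For (iii), set $T^* := \sup\{\,t\in[0,\gd_0|\log\delta|] : \wit\Phi_\delta^s(\wit\Lambda) \text{ is projectable for all }s\in[0,t]\,\}$. By local projectability (step ii) combined with the fact that the projection is a \emph{local} diffeomorphism onto its image, projectability is an open condition in $t$, so $T^*>0$ and, if $T^* < \gd_0|\log\delta|$, at $t=T^*$ injectivity of $\wit\pi|_{\wit\Phi_\delta^{T^*}(\wit\Lambda)}$ must fail. This yields $\wit\sigma_1\ne\wit\sigma_2\in\wit\Lambda$ with $\wit x_i(s):=\wit\pi(\wit\Phi_\delta^s(\wit\sigma_i))$ satisfying $\wit x_1(s)\ne\wit x_2(s)$ for $s<T^*$ but $\wit x_1(T^*)=\wit x_2(T^*)$. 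The main obstacle is to derive a contradiction from this collision. For $\delta=0$, the $\wit x_i$ are geodesics in the Cartan–Hadamard manifold $\wit X$, so Lemma \ref{lem:convex} and Corollary \ref{cor:Convex} say $s\mapsto \mathrm{dist}_{\wit X}^2(\wit x_1^{(0)}(s),\wit x_2^{(0)}(s))$ is convex and, since $\wit\Lambda$ is $\eta$-unstable (initial tangent variation almost entirely in the unstable/neutral directions, measured up to the $D$-control on distortion), the corresponding Jacobi field expands, so this distance is bounded below by $c\,\mathrm{dist}_{\wit X}(\wit x_1^{(0)}(0),\wit x_2^{(0)}(0))>0$ on $[0,T^*]$. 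For $\delta>0$, Lemma \ref{Lem:Gron} gives
\[
\mathrm{dist}_{\wit X}\bigl(\wit x_i(s),\wit x_i^{(0)}(s)\bigr)\le C_0\,\delta h^{-\beta}(\e^{C_0 s}-1),
\]
which, together with $s\le \gd_0|\log\delta|$ and $\gd_0$ chosen so that $\delta^{1-\gd_0 C_0}h^{-\beta}\to 0$, is negligible compared with the lower bound above. Hence $\wit x_1(T^*)\ne\wit x_2(T^*)$, contradicting the definition of $T^*$. Thus $T^*=\gd_0|\log\delta|$, and $\wit\pi|_{\wit\Phi_\delta^t(\wit\Lambda)}$ is globally injective on the full time window. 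Combined with local projectability and connectedness, Remark \ref{rem:4.7} yields global projectability, completing the proof.
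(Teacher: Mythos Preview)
Your steps (i) and (ii) match the paper's approach. The genuine gap is in step (iii), in the comparison between the perturbed and unperturbed flows.

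The lower bound you extract from the $\delta=0$ dynamics is of the form $c\cdot \mathrm{dist}_{\wit X}\bigl(\wit x_1^{(0)}(0),\wit x_2^{(0)}(0)\bigr)$ (this is really Corollary~\ref{cor:412}, not a Jacobi-field one-liner, and you should cite it). But the Gr\"onwall deviation from Lemma~\ref{Lem:Gron} is a \emph{fixed} quantity of order $\delta h^{-\beta}\e^{C_0 T^*}$, independent of how close $\wit\sigma_1,\wit\sigma_2$ are. If the two ``colliding'' points happen to start arbitrarily close on $\wit\Lambda$, the perturbation dominates your lower bound and the contradiction $\wit x_1(T^*)\ne\wit x_2(T^*)$ does not follow. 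Your sentence ``$\delta^{1-\gd_0 C_0}h^{-\beta}\to 0$ \ldots\ is negligible compared with the lower bound above'' is simply false in this regime.

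The paper resolves this by splitting into two cases at the threshold $\mathrm{dist}_{T^*\wit X}(\rho_1',\rho_2')\gtrless h^{\beta/2}$. For far-apart points, your argument (Corollary~\ref{cor:412} plus Lemma~\ref{Lem:Gron} and condition~\eqref{eq:CondBeta}) does work. For nearby points, an entirely different argument is needed: one joins $\rho_1',\rho_2'$ by a short curve in $\wit\Lambda$, Taylor-expands $\wit\Phi_\delta^t$ along it using the derivative bounds of Remark~\ref{rem:DerivCover}, and then uses the uniform angle between $T\wit\Phi_\delta^t(\wit\Lambda)$ and the vertical fibre coming from Lemma~\ref{Lem:InstTrans} to conclude that the projections differ. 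This second case is essential and is missing from your proposal.
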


\red{Remember that, thanks to Hypothesis \ref{Hyp:BetaDelta}, $\delta$ will be bounded by above and below by powers of $h$, so that Proposition \ref{Prop:OnTheCover2} applies to time scales $0\leq t \leq \gd_0' |\log h|$ for some $\gd_0'>0$.}

Before proving this proposition, we first need to prove results 
about the evolution of $\Lambda$ by $\wit{\Phi}^t$.
\begin{lem}\label{lem411}
Let 
$\varepsilon>0$, $D\geq 1$.  There exists $T_0, T_1\geq 0$ such that, 
for all $T\geq T_0$, there exists $\eta(T)>0$ such that, if 
$\Lambda \subset S^* \wit{X}$ is an $\eta$-unstable Lagrangian 
manifold that is $D$-controlled \red{and has diameter $\leq D$},  the following holds. 
If $\rho_1, \rho_2\in \Lambda$,  we may find $T_2 \in [T- T_1, T+ T_1]$ 
such that $\mathrm{dist}_{\wit{X}}(\wit{\Phi}^{-T}(\rho_1), 
\wit{\Phi}^{-T_2}(\rho_2)) \leq \varepsilon 
\mathrm{dist}_{\wit{X}}(\rho_1, \rho_2) $.
\end{lem}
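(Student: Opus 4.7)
The strategy is to connect $\rho_1$ and $\rho_2$ by a smooth curve inside $\wit{\Lambda}$, decompose its tangent vectors along the Anosov splitting $E^+\oplus E^0 \oplus E^-$ of the lifted geodesic flow, and use a time reparametrization of the backward flow to cancel the neutral ($E^0$) component of the deformation. What remains lives in $E^+\oplus E^-$: the $E^+$ contribution is contracted by $e^{-\lambda T}$ under $\wit{\Phi}^{-T}$ (absorbed by choosing $T$ large), while the $E^-$ contribution is amplified by $e^{\lambda T}$ but can be made small thanks to the $\eta$-unstability by choosing $\eta = \eta(T)$ correspondingly small.

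\textbf{Construction.} Using the distortion $\leq D$ together with the $D$-control, I parametrize by arc length a smooth curve $\gamma : [0,L] \to \wit{\Lambda}$ from $\rho_1$ to $\rho_2$, of length $L \leq C_D\, \mathrm{dist}_{\wit{X}}(\rho_1,\rho_2)$ for a constant $C_D$ depending only on $D$. I decompose
\begin{equation*}
\dot\gamma(s) = v^+(s) + a(s)\, H_{\wit{p}}(\gamma(s)) + v^-(s) \in E^+_{\gamma(s)} \oplus E^0_{\gamma(s)} \oplus E^-_{\gamma(s)},
\end{equation*}
where $H_{\wit{p}}$ spans the neutral direction and $|v^-(s)| \leq \eta$ by the $\eta$-unstability. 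Set $\tau(s) := -\int_0^s a(s')\, ds'$ and $\Psi(s) := \wit{\Phi}^{-T+\tau(s)}(\gamma(s))$, so that $\Psi(0) = \wit{\Phi}^{-T}\rho_1$ and $\Psi(L) = \wit{\Phi}^{-T_2}\rho_2$ with $T_2 := T - \tau(L)$. Since $d\wit{\Phi}^t H_{\wit{p}} = H_{\wit{p}}$ and $d\wit{\Phi}^t$ preserves $E^\pm$ (equation (\ref{eq:DirInv})), the chain rule yields
\begin{equation*}
\dot\Psi(s) = d\wit{\Phi}^{-T+\tau(s)}\bigl(v^+(s)\bigr) + d\wit{\Phi}^{-T+\tau(s)}\bigl(v^-(s)\bigr),
\end{equation*}
the neutral component having been exactly absorbed by the reparametrization.

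\textbf{Uniform bound on $|\tau|$ and conclusion.} The decisive technical point, which I expect to be the main obstacle, is a uniform bound $|\tau(s)| \leq T_1$ with $T_1$ independent of $T$ and of $\rho_1,\rho_2$; this is where the $D$-control is indispensable. Because $\wit{\Lambda}$ is the graph of $d\wit{\phi}$, the 1-form $a(s)\, ds$ pulls back to the exact form $d\wit{\phi}$ along the base curve, giving $\tau(s) = \wit{\phi}(\pi\rho_1) - \wit{\phi}(\pi\gamma(s))$ and hence $|\tau(s)| \leq 2\|\wit{\phi}\|_\infty \leq 2\|\phi\|_{C^3} \leq 2D =: T_1$. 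Granted this, (\ref{eq:DefHyp}) lifted to $\wit{X}$ gives $|d\wit{\Phi}^{-T+\tau}v^+| \leq C e^{-\lambda(T - T_1)}$ and $|d\wit{\Phi}^{-T+\tau}v^-| \leq C\eta\, e^{\lambda(T + T_1)}$; projecting $\dot\Psi(s)$ to the base with $d\pi$ and integrating over $[0,L]$ yields
\begin{equation*}
\mathrm{dist}_{\wit{X}}\bigl(\wit{\Phi}^{-T}\rho_1,\, \wit{\Phi}^{-T_2}\rho_2\bigr) \leq C C_D\bigl(e^{-\lambda(T-T_1)} + \eta\, e^{\lambda(T+T_1)}\bigr)\,\mathrm{dist}_{\wit{X}}(\rho_1,\rho_2).
\end{equation*}
The conclusion then follows by choosing $T_0$ so large that $C C_D\, e^{-\lambda(T_0 - T_1)} \leq \varepsilon/2$ and setting $\eta(T) := \varepsilon / \bigl(2 C C_D\, e^{\lambda(T + T_1)}\bigr)$. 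The two key verifications whose careful implementation is the heart of the proof are the exact cancellation in $\dot\Psi$ and the uniform estimate $|\tau| \leq 2D$ via the exactness of $d\wit{\phi}$.
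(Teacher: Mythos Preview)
Your argument is correct and takes a genuinely different route from the paper.  The paper works \emph{locally}: for nearby $\rho,\rho'\in\Lambda$ it uses the $D$-control to compare $\Lambda$ with the weak unstable manifold $W^{+0}_\rho$ up to second order, finds $\rho''\in W^+_\rho$ and a small time shift $\tau$ with $\mathrm{dist}(\rho',\Phi^\tau(\rho''))\lesssim\eta\,\mathrm{dist}(\rho,\rho')$, invokes the definition of $W^+_\rho$ to contract $\Phi^{-T}\rho$ towards $\Phi^{-T}\rho''$, and then \emph{chains} along a curve in $\Lambda$ (using the distortion bound) to pass from local to global.  By contrast you run a single global curve and absorb the $E^0$-component via an explicit time reparametrization, so no chaining is needed.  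Your approach is more streamlined; the paper's has the advantage of using only the foliation picture and never the contact structure.

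Two remarks on the ingredients you rely on.  First, your key identity $a(s)=\frac{d}{ds}\wit\phi(\pi\gamma(s))$ unpacks as $\alpha(\dot\gamma)=a\,\alpha(H_{\wit p})+\alpha(v^+)+\alpha(v^-)=a$, where $\alpha$ is the Liouville form; this uses $\alpha(H_{\wit p})=2\wit p=1$ on $S^*\wit X$ and, crucially, $E^\pm\subset\ker\alpha$.  The latter is the contact-Anosov property of geodesic flows (it follows from $\mathcal L_{H_{\wit p}}\alpha=0$ on $S^*\wit X$ together with the exponential decay in (\ref{eq:DefHyp})), and while standard it is not stated in the paper, so you should spell it out.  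Alternatively you can bypass it: since $|a(s)|\leq C$ by (\ref{eq:NormeEquiv}), one gets directly $|\tau|\leq CL\leq C_D\,\mathrm{diam}(\pi\Lambda)$, which matches the paper's bound $|t_N-T|\leq cC(D)\,\mathrm{dist}_{T^*\wit X}(\rho_1,\rho_2)$ and suffices for the application.  Second, your estimate on the $E^-$ piece should read $|d\wit\Phi^{-T+\tau}v^-|\leq C\eta\,e^{C(T+T_1)}$ with the crude exponent $C$ from (\ref{eq:ExpRate}) rather than the Anosov rate $\lambda$ (the Anosov estimates (\ref{eq:DefHyp}) give a \emph{lower} bound for backward expansion of $E^-$, not an upper bound); this does not affect the conclusion since $\eta(T)$ may depend on $T$.
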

\begin{proof}
If $\rho\in S^*\wit{X}$, we define the unstable manifold at $\rho$ as
\begin{align*}
W_\rho^+&:= \{\rho'\in S^*\wit{X} ; \mathrm{dist}_{T^*\wit{X}} 
(\wit{\Phi}^t(\rho), \wit{\Phi}^t(\rho')) \underset{t\to -\infty}{\longrightarrow} 0\}.
\end{align*}
At $\rho$, the manifold $W_\rho^+$ is tangent to the space 
$E^+_\rho$ introduced in section \ref{sec:Hyperbolicity} 
(see \cite[\S 17.4]{Kat}), while the weak unstable manifold 
$W_\rho^{+ 0}:= \bigcup_{t\in \R} \wit{\Phi}^t(W_\rho^+)$ 
has tangent space at $\rho$ $E^+_\rho \oplus E^0_\rho$.

Recall that $\Lambda = \{ (x, d_x \phi) , x\in U\}$ with $\|\phi\|_{C^3}\leq D$. \red{Let $\rho \in \Lambda$.}
Since $W^{+0}_\rho$ is locally projectable,  we may write $W^{+0}_\rho$ as $\{ (x, d_x \psi) \}$ in a 
neighbourhood of $\rho$, for some smooth function $\psi$.  By definition,  \red{if $x_0= \pi_{\wit{X}}(\rho)$},  we have
we have 
$d_{x_0}\psi = d_{x_0}\phi$, while,  thanks to the $\eta$-instability hypothesis, we have 
$\red{\nabla (d\psi)_{x_0} -\nabla (d\phi)_{x_0}} = O(\eta)$. Finally,  the $D$-control hypothesis implies 
that third derivatives are bounded.  Therefore, 
we may find $\varepsilon_1 (\eta)>0$ such that,  if $\rho, \rho'\in \Lambda$ with $\mathrm{dist}_{T^*\wit{X}}(\rho,\rho')  < \varepsilon_1$, 
\red{then, writing $x' = \pi_{\wit{X}}(\rho')$, we have $\|d_{x'} \psi - d_{x'} \phi\| \leq 2 \eta \mathrm{dist}_{\wit{X}}(x', x_0)$, so that}
 $\mathrm{dist}_{T^*\wit{X}}(\rho',  W^{+0}_\rho) < 2 \eta \mathrm{dist}_{T^*\wit{X}}(\rho,\rho')$. 
In particular,  there exists $\rho'' \in W^{+}_\rho$ and $\tau \in (-c\varepsilon_1, c \varepsilon_1)$ such that $\mathrm{dist}_{T^*\wit{X}}(\red{\Phi^{\tau}}(\rho'),  \rho'') < 2 \eta \mathrm{dist}_{T^*\wit{X}}(\rho,\rho')$  and $\mathrm{dist}_{T^*\wit{X}}(\rho, \rho'') < c ~\mathrm{dist}_{T^*\wit{X}}(\rho, \rho')$.    Here, $c$ depends only on the manifold $(X,g)$.

Now, by definition of unstable manifolds,  for any $\varepsilon>0$,  there exists $T_0\red{= T_0(X,g\MI{,\varepsilon})}\geq 0$ such that,  for all $s\geq T_0$,
we have $\mathrm{dist}_{T^*\wit{X}}(\wit{\Phi}^{-s}(\rho), \wit{\Phi}^{-s}(\rho'')) < \varepsilon \mathrm{dist}_{T^*\wit{X}}(\rho, \rho'')$. 
Hence, we have \red{for all $T\geq T_0$}
\red{\begin{align*}
\mathrm{dist}_{T^*\wit{X}}(\wit{\Phi}^{-T}(\rho), \wit{\Phi}^{-T+\tau}(\rho'))
 &\leq \mathrm{dist}_{T^*\wit{X}}(\wit{\Phi}^{-T}(\rho), \wit{\Phi}^{-T}(\rho''))+ \mathrm{dist}_{T^*\wit{X}}(\wit{\Phi}^{-T}(\rho''), \wit{\Phi}^{-T+\tau}(\rho'))\\
 &\red{\leq \varepsilon \mathrm{dist}_{T^*\wit{X}}(\rho, \rho'') + C e^{CT}  \mathrm{dist}_{T^*\wit{X}}(\rho'',  \Phi^{\tau}(\rho'))}\\
&\leq c \varepsilon \mathrm{dist}_{T^*\wit{X}}(\rho, \rho') + C e^{CT} \eta \mathrm{dist}_{T^*\wit{X}}(\rho, \rho'),
\end{align*}}
thanks to (\ref{eq:ExpRate}). This quantity may thus be made smaller than $(c+1) \varepsilon \mathrm{dist}_{T^*\wit{X}}(\rho, \rho'') $ if $\eta$ is chosen smaller than some $\eta_0$ depending on $\varepsilon$ and $T$, but not on $\rho,\rho'$. 

From now on, we fix an $\varepsilon>0$,  a $T\geq T_0$, an $\eta< \eta_0(\varepsilon, T)$, and a corresponding $\varepsilon_1(\eta)$. 
The assumption on the distortion implies that 
 if $\rho_1, \rho_2\in \Lambda$, there exists a curve in $\Lambda$ of length $\leq D \mathrm{dist}_{T^*\wit{X}}(\rho_1, \rho_2)$ joining $\rho_1$ and $\rho_2$. Taking points at a distance $\approx \varepsilon_1$ on this curve, we see that there exists $N\leq  C(D) \varepsilon_1^{-1} \mathrm{dist}_{T^*\wit{X}}(\rho_1, \rho_2)  $ and $\rho_1= \rho'_1,  \rho'_2, ..., \rho'_N= \rho_2$ with $\mathrm{dist}_{T^*\wit{X}}(\rho'_j,\rho'_{j+1})  < \varepsilon_1$ and $\sum_{j=1}^N \mathrm{dist}_{T^*X} (\rho'_j, \rho'_{j+1}) \leq C'(D) \mathrm{dist}_{T^*X} (\rho_1, \rho_2)$.

Using the previous construction,  we build a sequence of times $t_1,..., t_N$ with $t_1= T$, $|t_j-t_{j+1}| \leq c\varepsilon_1$ with $\mathrm{dist}_{T^*\wit{X}}(\wit{\Phi}^{-t_j}(\rho'_j), \wit{\Phi}^{-t_{j+1}}(\rho'_{j+1})) < (c+1) \varepsilon  \mathrm{dist}_{T^*\wit{X}}(\rho'_j,  \rho'_{j+1}) $, so that the triangular inequality yields $\mathrm{dist}_{T^*\wit{X}}(\wit{\Phi}^{-t_1}(\rho'_1), \wit{\Phi}^{-t_N}(\rho'_N)) < C''(D) \varepsilon  \mathrm{dist}_{T^*\wit{X}}(\rho_1, \rho_2)$. 

Note that $|t_N-t_1| \leq C(D) \times c \red{ \mathrm{dist}_{T^*\wit{X}}(\rho_1, \rho_2)} \red{\leq Dc C(D)}$ and recall that $\mathrm{dist}_{\wit{X}}(\rho_1, \rho_2) $ and $\mathrm{dist}_{T^*\wit{X}}(\rho_1, \rho_2) $ are comparable thanks to the hypothesis of $D$-control.  Taking $\varepsilon$ possibly smaller, \red{we deduce the result by setting $T_2= t_N$ and taking $T_1 = Dc C(D)$.}
\end{proof}
\begin{corollary}\label{cor:412}
Let $D>0$. There exists $\eta>0$ such that, if $\Lambda \subset S^* \wit{X}$ 
is an $\eta$-unstable Lagrangian manifold that is $D$-controlled,  
the following holds.  For all $\rho_1, \rho_2\in \Lambda$
and for all $t\geq 0$, we have $\mathrm{dist}_{\wit{X}}
\left( \wit{\Phi}^t (\rho_1), \wit{\Phi}^t(\rho_2)\right) 
\geq  \frac{1}{2}  \mathrm{dist}_{\wit{X}}(\rho_1, \rho_2)$.
\end{corollary}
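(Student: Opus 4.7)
My plan is to derive this from the backward-time contraction in Lemma \ref{lem411} combined with the convexity of the squared geodesic distance on the universal cover (Lemma \ref{lem:convex}), which converts contraction under $\wit{\Phi}^{-t}$ into expansion (or at least non-contraction) under $\wit{\Phi}^{t}$.

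Set $d := \mathrm{dist}_{\wit{X}}(\rho_1, \rho_2)$. First I apply Lemma \ref{lem411} with a small $\varepsilon$ (say $\varepsilon = 1/8$) and with $\eta$ taken small enough depending on $D$. This produces constants $T_0, T_1$ such that for some $T \geq T_0$ we obtain $T_2 = T + a$ with $|a| \leq T_1$ and
$$\mathrm{dist}_{\wit{X}}(\wit{\Phi}^{-T}(\rho_1), \wit{\Phi}^{-T_2}(\rho_2)) \leq \varepsilon d.$$
Setting $\tilde{\rho}_2 := \wit{\Phi}^{-a}(\rho_2)$, this rewrites as $\mathrm{dist}_{\wit{X}}(\wit{\Phi}^{-T}\rho_1, \wit{\Phi}^{-T}\tilde{\rho}_2) \leq \varepsilon d$, bringing the backward-flowed pair onto the same time parameter.

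Next I introduce $h(s) := \mathrm{dist}_{\wit{X}}^2(\wit{\Phi}^s\rho_1, \wit{\Phi}^s\tilde{\rho}_2)$. Since projections of cotangent trajectories under $\wit{\Phi}^s$ are geodesics in $\wit{X}$, and $\wit{X}$ is simply connected and nonpositively curved, Lemma \ref{lem:convex} gives that $h$ is convex on all of $\R$. Monotonicity of secant slopes then yields, for every $s \geq 0$,
$$h(s) \geq h(0) + s \cdot \frac{h(0) - h(-T)}{T}.$$
As long as $h(0) \geq h(-T)$, this forces $h$ to be non-decreasing on $[0, \infty)$, so $h(s) \geq h(0)$. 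To estimate $h(0)$, the triangle inequality together with $\mathrm{dist}_{\wit{X}}(\rho_2, \tilde{\rho}_2) = |a| \leq T_1$ gives $\sqrt{h(0)} \geq d - T_1$. Assuming $d \geq 4 T_1$, we obtain $h(0) \geq (3d/4)^2$, which dominates $h(-T) \leq (d/8)^2$, so the monotonicity conclusion applies. A final triangle inequality
$$\mathrm{dist}_{\wit{X}}(\wit{\Phi}^s\rho_1, \wit{\Phi}^s\rho_2) \geq \sqrt{h(s)} - |a| \geq \tfrac{3d}{4} - T_1 \geq \tfrac{d}{2}$$
settles the case of moderately large $d$.

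The main obstacle is the regime of small separations $d < 4 T_1$, where the time-shift error $T_1$ swamps $d$ and the convexity argument above becomes vacuous. I would handle this by exploiting the fact that in the proof of Lemma \ref{lem411} the shift satisfies $T_1 = C\,\varepsilon_1(\eta)$, so taking $\eta$ smaller makes $T_1$ arbitrarily small. Equivalently, for small $d$ the tangent direction to $\Lambda$ is $\eta$-close to the weak unstable distribution $E^+ \oplus E^0$, and a direct linearization of $\mathrm{d}_{\rho_1}\wit{\Phi}^t$ using the block structure \eqref{eq:Matrix} shows that forward flow can contract a vector in $T_{\rho_1}\Lambda$ by at most a factor of order $\eta$; choosing $\eta$ small completes the proof in this regime.
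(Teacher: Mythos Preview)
Your large-$d$ argument is correct and is close in spirit to the paper's, but the small-$d$ case has a genuine gap, and the paper avoids the case split altogether by a trick you are missing.

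\textbf{The paper's approach.} The paper argues by contradiction: assume $\mathrm{dist}_{\wit X}(\wit\Phi^t\rho_1,\wit\Phi^t\rho_2)<\tfrac12 d$ for some $t>0$. It then applies Lemma~\ref{lem411} as you do, obtaining $\mathrm{dist}_{\wit X}(\wit\Phi^{-T}\rho_1,\wit\Phi^{-T'}\rho_2)\le\tfrac14 d$. The key step you do not have is this: since $\wit X$ is simply connected with nonpositive curvature, the geodesic from $\wit\Phi^{-T}\rho_1$ to $\wit\Phi^{t}\rho_1$ has length exactly $t+T$, and this equals the distance. Applying the triangle inequality through $\wit\Phi^{t}\rho_2$ and $\wit\Phi^{-T'}\rho_2$ (and using the contradiction hypothesis) gives $|T-T'|<\tfrac34 d$. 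One more triangle inequality yields $\mathrm{dist}_{\wit X}(\wit\Phi^{-T}\rho_1,\wit\Phi^{-T}\rho_2)<d$, which together with the contradiction hypothesis violates the convexity of $s\mapsto \mathrm{dist}_{\wit X}^2(\wit\Phi^s\rho_1,\wit\Phi^s\rho_2)$ at $s=0$. The point is that the time shift is bounded \emph{a posteriori} in terms of $d$ itself, so no regime distinction is needed.

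\textbf{Your small-$d$ case.} Your claim that ``$T_1=C\varepsilon_1(\eta)$, so taking $\eta$ smaller makes $T_1$ arbitrarily small'' is not what Lemma~\ref{lem411} or its proof give: the stated $T_1$ is a fixed constant depending on $D$. (What the proof of Lemma~\ref{lem411} \emph{does} show is that the actual shift for a given pair is $\le C(D)\,\mathrm{dist}_{T^*\wit X}(\rho_1,\rho_2)$, i.e.\ proportional to $d$; if you extract and use that, your convexity argument goes through uniformly without a case split, and you essentially recover the paper's bound by a different route.) Your alternative linearization sketch via \eqref{eq:Matrix} is also incomplete: it controls $d_{\rho_1}\wit\Phi^t$ on $T_{\rho_1}\Lambda$, but for points at a fixed small positive distance and for \emph{all} $t\ge 0$ you would need to integrate this and control the nonlinear error uniformly in $t$, which is not immediate.
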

\begin{proof}
Suppose for contradiction that there exist $\rho_1, \rho_2\in \Lambda$ and $t>0$ such that 
\begin{equation}\label{eq:ContradictionContraction}
\mathrm{dist}_{\wit{X}}\left( \wit{\Phi}^t (\rho_1), \wit{\Phi}^t(\rho_2)\right) <  \frac{1}{2}  \mathrm{dist}_{\wit{X}}(\rho_1, \rho_2).
\end{equation} 

Thanks to Lemma \ref{lem411}, we know (provided $\eta$ is small enough) that there exists $T$ arbitrarily large such that,  for all $\rho_1, \rho_2\in \Lambda$,   we may find $T'$ at a bounded distance from $T$ such that $\mathrm{dist}_{\wit{X}}\left( \wit{\Phi}^{-T} (\rho_1), \wit{\Phi}^{-T'}(\rho_2)\right)\leq \frac{1}{4}  \mathrm{dist}_{\wit{X}}(\rho_1, \rho_2) $.

Suppose that $T\geq T'$ (the case $T\leq T'$ is treated similarly). We have, using the fact that $\wit{X}$ is a simply connected manifold of negative curvature (so that geodesics minimize the distance between their endpoints):
\begin{align*}
t+ T &= \mathrm{dist}_{\wit{X}} \left(\wit{\Phi}^{t}(\rho_1), \wit{\Phi}^{-T}(\rho_1) \right)\\
&\leq \mathrm{dist}_{\wit{X}} \left(\wit{\Phi}^{t}(\rho_1), \wit{\Phi}^{t}(\rho_2) \right)+ \mathrm{dist}_{\wit{X}} \left(\wit{\Phi}^{t}(\rho_2), \wit{\Phi}^{-T'}(\rho_2) \right)+ \mathrm{dist}_{\wit{X}} \left(\wit{\Phi}^{-T'}(\rho_2), \wit{\Phi}^{-T}(\rho_1) \right)\\
&< t+T' + \frac{3}{4} \mathrm{dist}_{\wit{X}}(\rho_1, \rho_2),
\end{align*}
so that $|T'-T| <  \frac{3}{4} \mathrm{dist}_{\wit{X}}(\rho_1, \rho_2)$.

In particular, we have
\begin{align*}
\mathrm{dist}_{\wit{X}}\left( \wit{\Phi}^{-T} (\rho_1), \wit{\Phi}^{-T}(\rho_2)\right)&\leq \mathrm{dist}_{\wit{X}}\left( \wit{\Phi}^{-T} (\rho_1), \wit{\Phi}^{-T'}(\rho_2)\right) + \mathrm{dist}_{\wit{X}}\left( \wit{\Phi}^{-T} (\rho_2), \wit{\Phi}^{-T'}(\rho_2)\right)\\
&\leq \frac{1}{4}  \mathrm{dist}_{\wit{X}}(\rho_1, \rho_2) + |T'-T|\\
&<  \mathrm{dist}_{\wit{X}}(\rho_1, \rho_2) .
\end{align*}

This inequality, along with (\ref{eq:ContradictionContraction}), contradicts Lemma \ref{lem:convex}.
\end{proof}

We may now proceed with the proof of Proposition \ref{Prop:OnTheCover2}.
\begin{proof}[Proof of Proposition \ref{Prop:OnTheCover2}]

Since simple connectedness is preserved by homeomorphism, $\widetilde{\Phi}_{\delta}^t(\widetilde{\Lambda})$ is simply connected.  \red{It is Lagrangian, because $\widetilde{\Phi}_{\delta}^t$ is a symplectomorphism. The fact that $\widetilde{\Phi}_{\delta}^t(\widetilde{\Lambda})\subset 
\widetilde{\mathcal{E}}_{\delta, (\frac{1}{4}, 1)}$  is proven just as (\ref{eq:GronN11}) using (\ref{eq:GronN00}).}

We thus need to show that
\begin{equation}\label{eq:DifferentProjections}
\forall \rho_1, \rho_2\in \widetilde{\Phi}_{\delta}^t(\widetilde{\Lambda}),  \left(\rho_1 \neq \rho_2\right) \Longrightarrow \left(\pi_{\wit{X}}(\rho_1) \neq \pi_{\wit{X}}(\rho_2)\right),
\end{equation}
which implies that $\pi_{\wit{X}}(\widetilde{\Phi}_{\delta}^t(\widetilde{\Lambda}))$ is simply connected.  The result then follows from Proposition \ref{Prop:LagInitiale}, 
Lemma \ref{Lem:InstTrans} and Remarks \ref{Rem:ProjTrans}, \ref{rem:4.7}.

To prove (\ref{eq:DifferentProjections}), we suppose that there exists 
$\rho_1,\rho_2\in \widetilde{\Phi}_{\delta}^t(\widetilde{\Lambda})$, such that 
$\pi_{\wit{X}}(\rho_1) = \pi_{\wit{X}}(\rho_2)$, and we write 
$\rho'_j = \widetilde{\Phi}_{\delta}^{-t}(\rho_j) \in \widetilde{\Lambda}$ for $j=1,2$. 
We separate two cases:
\begin{itemize}
\item Suppose first that $\mathrm{dist}_{T^*\wit{X}}(\rho'_1, \rho'_2) > h^{ \beta/2}$.  
Since $\rho'_1, \rho'_2\in \widetilde{\Lambda}$, we thus have $\mathrm{dist}_{\wit{X}} (\rho'_1, \rho'_2) > C h^{\beta/2}$.  \red{This inequality follows from (\ref{eq:ComprDistUnstab}) and from the fact that $\wit{\Lambda}$ has bounded diameter.}

We now use Lemma \ref{Lem:Gron}. Remember that, for any $\varepsilon'>0$,  the quantities $e^{C_0t}$ can be made $O(h^{-\varepsilon'})$ if we choose $\gd_0$ small enough, since $|\log(\delta)|$ is comparable to $|\log h|$.  We thus get
\begin{align*}
\mathrm{dist}_{\wit{X}} \left(\wit{\Phi}^t_0(\rho'_1),  \wit{\Phi}^t_0(\rho'_2)\right) &\leq  \mathrm{dist}_{\wit{X}} \left(\wit{\Phi}^t_0(\rho'_1),  \wit{\Phi}^t_\delta (\rho'_1)\right) + \mathrm{dist}_{\wit{X}} \left(\wit{\Phi}^t_\delta(\rho'_1),  \wit{\Phi}^t_\delta(\rho'_2)\right) +  \mathrm{dist}_{\wit{X}} \left(\wit{\Phi}^t_\delta(\rho'_2),  \wit{\Phi}^t_0(\rho'_2)\right) \\
&\leq C \delta h^{-\beta - \varepsilon'} 
\\
& < \frac{1}{2} h^{\beta/2}
\end{align*}
if $h$ is small enough,
thanks to (\ref{eq:CondBeta}).
This contradicts Corollary \ref{cor:412}. 

\item Suppose next that $\mathrm{dist}_{T^*\wit{X}}(\rho'_1, \rho'_2) < h^{ \beta/2}$. 
Then, there exists $s_0\leq C(D) h^{ \beta/2}$ and a curve $\varrho(s)$ in $\widetilde{\Lambda}$, 
with $\varrho(0) =\rho'_1$, $\varrho(s_0) = \rho'_2$ and $\|\frac{d}{ds}\varrho(s)\|=1$ for all $s$. 
Using coordinates on $T^*X$ near $\rho_1$, the bounds on derivatives of $\wit{\Phi}^t_\delta$ 
derived in Remark \ref{rem:DerivCover} imply that, in coordinates, $\rho_2 = \rho_1 + s_0 d_{\rho'_1} 
\wit{\Phi}^t_\delta \MI{(\varrho'(0))} + O(s_0^2 h^{- \varepsilon})$. Now,  thanks to 
Proposition \ref{Prop:LagInitiale} and Lemma \ref{Lem:InstTrans}, 
the angle 
between $d_{\rho'_1} \wit{\Phi}^t_\delta \MI{(\varrho'(0))}$ and the vertical leaves is 
bounded from below: hence, (\ref{eq:DifferentProjections}) follows.
\end{itemize}
\end{proof}

\begin{rem}\label{RemMixedFlow}
In the sequel, we will also consider mixed flows $\Phi^t_\delta \circ \Phi^{t'}_0$. 
We can follow the exact same steps of the proof of Proposition \ref{Prop:OnTheCover2} 
to deduce that, for the same $\gd_0$, 
$\Lambda$ and $\delta_0$, the manifold  $\Phi^t_\delta \left( \Phi^{t'}_0(\Lambda)\right)$ 
is simply connected and projectable for all $t,t'\geq 0$ such that 
$t+t' \leq  \gd_0 |\log \delta|$.
\end{rem}
\section{WKB Ansatz on the universal cover $\wt{X}$}\label{sec:WKB}
We still consider a compact connected smooth Riemannian manifold $(X,g)$ 
and $(\widetilde{X},\widetilde{g})$ its universal cover equipped with the 
lifted Riemannian metric. In this section, we shall work on the universal cover 
$\widetilde{X}$ and with the lifted quantities $\widetilde{p}$ \eqref{eq:liftedHamiltonian}, 
$\wt{\Phi}_\delta^t$ \eqref{eq:ProjDyn}, $\widetilde{q}_\omega$, $\widetilde{P}_h^\delta$ and 
$\widetilde{Q}_\omega$, cf. Section \ref{subsec:LiftPseudo}. 
\\
\par 
Let $h\in ]0,h_0]$. Let $\wt{\mathcal{O}}\subset \wt{X}$ be a simply connected open 
relatively compact set and let $\wt{\phi}_0:\wt{\mathcal{O}}\to \R$ be a smooth map 
as in \eqref{eq:LagState_monochrom}, with $|d_x \wt{\phi}_0| = 1$ for 
all $x\in \wt{\mathcal{O}}$. We consider an initial Lagrangian state 
\begin{equation}\label{eq:trans1}
	f_h(x) =  a(x ;h)\e^{\frac{i}{h} \wt{\phi}_0(x)},
\end{equation}
where $a(\cdot\,;h)$ is a smooth compactly supported map on $\wt{\mathcal{O}}$ 
with $\|a(\cdot;h)\|_{C^L} = O_L(1)$ and such that $a(x;h)\sim a_0(x) +ha_1(x) + \dots $ 
where $a_n$ are smooth 
compactly supported maps on $\wt{\mathcal{O}}$. We assume that there exists 
a compact $h$-independent set $K\Subset\wt{\mathcal{O}}$ such that 
\begin{equation}\label{hyp:compactSupport}
	\supp a(\cdot\,;h) \subset K
\end{equation}
for all $h\in ]0,h_0]$. As discussed in Section 
\ref{subsec:Lag},  \MI{$f_h$ is a Lagrangian state associated with}
the Lagrangian manifold
\begin{equation*}
\Lambda_0 := \{ (y, d_y \wt{\phi}_0) ; y\in \wt{\mathcal{O}}\} \subset S^*\wt{X}
\end{equation*}
which is assumed to be a monochromatic, $D$-controlled Lagrangian 
manifold of distortion $\leq D$, that is $\eta$-unstable for some $\eta \leq \eta_0(D)$,
 as in Section \ref{subsec:Lag}.
\par
The main aim of this section is to approximate the propagated state 
\begin{equation}\label{eq:trans1.0}
	\red{\widetilde{u}(t,x;h,\delta) = \e^{-i\frac{t}{h}\wt{P}_h^\delta}f(x)}
\end{equation}
by a WKB state, cf. \eqref{eq:WKB_Ansatz} below. 
\\
\par 
Before doing so, let us introduce auxiliary cut-off functions which 
will be useful in this section. First of all, 
it will be useful to consider two intermediate open relatively compact sets 
$\wt{\mathcal{O}}',\wt{\mathcal{O}}''\subset\wt{\mathcal{O}}$, independent of $h$, with 
\begin{equation}\label{eq:EnsemblesEmboites}
	K\subset \wt{\mathcal{O}}'\subset \overline{\wt{\mathcal{O}}'} \subset 
	\wt{\mathcal{O}}''\subset \overline{\wt{\mathcal{O}}''}\subset\wt{\mathcal{O}}.
\end{equation}
We can construct a sequence of 
cut-off functions $\psi$, $\psi_{i} \in C^\infty_c(X;[0,1])$, 
$i=0,\dots,N\in \N$, such that 
\begin{equation}\label{eq:CutOfffun1}
	\mathbf{1}_{\wt{\mathcal{O}}''} \succ 
	\psi \succ 
	\psi_{N} \succ \dots \succ \psi_{0} \succ 
	\mathbf{1}_{\wt{\mathcal{O}}'},
\end{equation}
and 
\begin{equation*}
\begin{split}
	&|\partial_x^\alpha \psi(x)|\leq O_{\alpha}(1), \\
	&|\partial_x^\alpha \psi_{i}(x)| \leq O_{\alpha,N}(1) 
	\quad \alpha \in \N^d, i=0,\dots,N,
\end{split}
\end{equation*}
uniformly in local coordinates. We write 
\begin{equation*}
	\Lambda^\bullet_0= \{ (y, d_y \wt{\phi}_0(y)) ; y\in  \wt{\mathcal{O}}^\bullet\}, 
	\quad \bullet\in \{',''\}
\end{equation*}
and 
\begin{equation}\label{eq:eik0}
\wt{\Phi}_\delta^t \left(\wt{\Phi}^{t'}_0(y, d_y \wt{\phi}_0) \right)
	= (x_\delta^{t,t'}(y), \xi_\delta^{t,t'}(y))
	=:\rho_\delta^{t,t'}(y).
\end{equation}
\red{
\begin{rem}
	It will be crucial further below to consider the mixed 
	flow \eqref{eq:eik0} where we first apply the unperturbed 
	Hamliton flow $\wt{\Phi}^{t'}_0$ until time $t'$, and then the perturbed 
	Hamliton flow $\wt{\Phi}^{t}_\delta$ until time $t$.
\end{rem}
}
We know from Proposition \ref{Prop:OnTheCover2} and Remark \ref{RemMixedFlow} that there exists 
$\mathfrak{d}>0$ such that for any $t,t'\geq 0$ such that $t+t' \leq \gd |\log h|$,
the Lagrangian submanifold $\wt{\Phi}^t_\delta \left( \wt{\Phi}^{t'}_0 (\Lambda_0)\right)$ is projectable and 
simply connected. Hence, for any $t,t'\geq 0$ such that $t+t' \leq \gd |\log h|$, there exists a 
smooth map $\wt{\phi}_{t,t',\delta} : \wt{\mathcal{O}}_{t, t', \delta} \to \R$ such that 
\begin{equation}\label{eq:eik0.0}
\begin{aligned}
\Lambda_{t,t'} 
	&:=\wt{\Phi}^t_\delta \left(\wt{\Phi}^{t'}_0 (\Lambda_0) \right)
	= \{ (x, d_x \wt{\phi}_{t,t',\delta}) ; x\in  \wt{\mathcal{O}}_{t,t',\delta} \}\\
\Lambda^{\bullet}_{t,t'} 
	&:=\wt{\Phi}^t_\delta  \left(\wt{\Phi}^{t'}_0(\Lambda_0^\bullet) \right)
	= \{ (x, d_x \wt{\phi}_{t,t',\delta}) ; x\in  \wt{\mathcal{O}}^\bullet_{t,t',\delta} \}, 
	\quad \bullet\in \{',''\}
\end{aligned}
\end{equation}
where $\wt{\mathcal{O}}_{t,t',\delta} = \pi_{X}\left(\wt{\Phi}^t_\delta \left(\wt{\Phi}^{t'}_0 (\Lambda_0) \right)\right),
\wt{\mathcal{O}}^{\bullet}_{t,t',\delta} = \pi_{X}\left( \wt{\Phi}^t_\delta  \left(\wt{\Phi}^{t'}_0(\Lambda_0^\bullet) \right) \right)
\subset X$ are open sets. 
\red{
\begin{rem}
	Notice that the mixed flow \eqref{eq:eik0} as well as the resulting 
	evolved Lagrangian manifolds \eqref{eq:eik0.0} and the sets  
	$\wt{\mathcal{O}}_{t,t',\delta}$, $\wt{\mathcal{O}}^{\bullet}_{t,t',\delta}$ 
	depend on the random perturbation. Therefore they depend on $\delta$ 
	and on  the random parameter $\omega$. 
\end{rem}
}

For $t=t'=0$ we have not evolved the initial 
Lagrangian manifold $\Lambda_0$, so $\wt{\phi}_{0,\delta} = \wt{\phi}_0$ and 
$\wt{\mathcal{O}}^{\bullet}_{0,0,\delta}=\wt{\mathcal{O}}^{\bullet}$, 
$\wt{\mathcal{O}}_{0,\delta}=\wt{\mathcal{O}}$. \corM{When we consider 
evolution under either solely the perturbed or unpertubed flow we will 
write $\Lambda_t=\Lambda_t(\delta)=\wt{\Phi}^t_\delta(\Lambda_0)$ and 
similarly we shall
write $\wt{\phi}_{t,\delta}$, $\wt{\mathcal{O}}_{t,\delta}^\bullet$, 
$\bullet\in \{',''\}$, and $\rho_\delta^{t}(y)=(x_\delta^{t}(y), \xi_\delta^{t}(y))$ instead of 
$\wt{\phi}_{t,0,\delta}$, $\wt{\mathcal{O}}_{t,0,\delta}^\bullet$ and 
$\rho_\delta^{t,0}(y) =(x_\delta^{t,0}(y), \xi_\delta^{t,0}(y))$.  
}
\begin{lem}
If $t,t'\geq 0$ with $t+t'\leq \gd |\log h|$ for $\gd$ 
and $h$ small enough, we have
\begin{equation}\label{eq:SetInclusion}
	\wt{\mathcal{O}}'_{t,t',\delta}\subset \overline{\wt{\mathcal{O}}'}_{t,t',\delta} \subset 
	\wt{\mathcal{O}}''_{t,t',\delta}\subset \overline{\wt{\mathcal{O}}''}_{t,t',\delta}
	\subset \wt{\mathcal{O}}_{0, t+t',0}
\end{equation}
and 
\begin{equation}\label{eq:SetInclusion2}
\overline{\wt{\mathcal{O}}'}_{0,t+t',0}
	\subset \wt{\mathcal{O}}_{t, t',\delta}''.
\end{equation}
\end{lem}

%
%
%
\begin{proof}
\MI{
1. The first three inclusion follow from 
\eqref{eq:EnsemblesEmboites}.To prove the last inclusion in 
\eqref{eq:SetInclusion}, we use the following elementary 
fact\footnote{\MI{Indeed, if there existed a point $\wit{x}$ in 
an $r$-neighbourhood of $\wit{\cK}$ that were not in $\wit{\cU}$, 
there would exist a geodesic segment of length $<r$ from $\wit{x}$ 
to $\wit{\cK}$: but, since this geodesic segment would have to 
cross $\partial \wit{\cU}$, we would reach a contradiction).}} 
if $\widetilde{\cU}\subset \widetilde{X}$ is open, 
$\wit{\cK}\subset \wit{\cU}$ is compact, and if 
$\dist(\partial \wit{\cU}, \wit{\cK}) >r$, then $\wit{\cU}$ contains 
an $r$-neighbourhood of $\wit{\cK}$.}

\MI{Now, there exists a small $r>0$ such that, if we denote by 
$\wit{\cO}''(r)$ an $r$-neighbourhood of $\wit{\cO}''$, we have 
$\overline{\wit{\cO}''(r)}\subset \wit{\cO}$. Since $x^{t+t'}_0$ is a diffeomorphism, 
we have $\partial \left(x_0^{t+t'}\left(\wit{\cO}''(r)\right)\right)= 
x_0^{t+t'}\left(\partial \wit{\cO}''(r)\right)$. But, if 
$y\in \partial \wit{\cO}''(r)$, i.e., if $y$ is at a distance $r$ from 
$\wit{\cO}''$, then we know from Corollary \ref{cor:412} that 
$x_0^{t+t'}(y)$ is at a distance $\geq \frac{r}{2}$ from 
$\corM{x_0^{t+t'}}(\wit{\cO}'')$.}
\par 
\MI{ 
We note that 
\corM{$x_0^{t+t'}\left(\wit{\cO}''(r)\right)\subset \wit{\cO}_{0, t+t',0}$ 
and} applying the previous fact, we deduce that 
$x_0^{t+t'}(\wit{\cO}''(r))$ contains an $\frac{r}{2}$-neighbourhood of $\wit{\cO}''_{0, t+t',0}$. 
Now, by Lemma \ref{Lem:Gron}, for all $h$ small enough, a point in 
$\wt{\cO}''_{t,t',\delta}$ is contained in an \corM{$\frac{r}{2}$-neighbourhood} 
of $\cO''_{0, t+t',0}$, and hence in $\wt{\cO}_{0,  t+t', 0}$. This gives us 
the last inclusion in (\ref{eq:SetInclusion}).}

\MI{2. Note that, in the previous argument, we could replace the 
diffeomorphism $x_0^{t+t'}$ with the diffeomorphism $x_\delta^{t+t'}$. 
\corM{Indeed,} using Lemma \ref{Lem:Gron} along with Corollary \ref{cor:412}, 
we see that any point in $x_\delta^{t+t'}\left(\partial \wit{\cO}'(r)\right)$ 
must be at a distance $\frac{r}{3}$ from $\wt{\mathcal{O}}'_{t,t',\delta}$ 
for $h>0$ small enough. \corM{Then, similarly as in step 1 we deduce} that 
$\wt{\mathcal{O}}_{t,t',\delta}''$ contains an 
$\frac{r}{3}$-neighbourhood of $\wt{\mathcal{O}}'_{t,t',\delta}$. 
\corM{The inclusion \eqref{eq:SetInclusion2} then follows from another 
application of Lemma \ref{Lem:Gron}.}
}
\end{proof}
%
%
%
Continuing, we note by \eqref{eq:SetInclusion} that the map \red{$\wt{\phi}_{t,0}$} is well-defined 
on $\wt{\mathcal{O}}''_{t,\delta}$. Moreover, since $\wt{\Phi}^{t,t'}_\delta(\Lambda_0)$ is projectable, 
the map $y \mapsto x_\delta^{t,t'}(y)$ is a diffeomorphism onto its image and we shall 
denote its inverse by $y_\delta^{-t, -t'}(x)$, or, more simply,  $y_\delta^{-t}(x)$ if $t'=0$. Then, the functions 
\begin{equation}\label{eq:CutOfffun2}
	\psi_t := \psi\circ y_\delta^{-t}, \quad \psi_{i,t}:=\psi_i\circ y_\delta^{-t}, 
	~~ i =0,\dots, N,
\end{equation}
are smooth functions in $x$ and $t$ and satisfy 
\begin{equation}\label{eq:CutOfffun31}
	\mathbf{1}_{\wt{\mathcal{O}}''_{\delta,t}} \succ 
	\psi_t \succ 
	\psi_{N,t} \succ \dots \succ \psi_{0,t} \succ 
	\mathbf{1}_{\wt{\mathcal{O}}'_{\delta,t}}. 
\end{equation}
\red{Note that transported cut-off functions are also $\delta$ and $\omega$ dependent 
although we will not denote this explicitly to ease the notation.}
\MI{Since we have $\overline{\wt{\mathcal{O}}''}_{t,\delta}\subset \wt{\mathcal{O}}_{t,\delta}$ by (\ref{eq:EnsemblesEmboites}),  the map}
$\wt{\phi}_{t,\delta}$ is well defined on $\supp\psi_t$ and on 
$\supp \psi_{i,t}$, $i=0,\dots,N$. We will give estimates on the derivatives \MI{of these functions}
further below in the text. 
\\
\par
The aim of this section is to prove the following statement, which describes 
a WKB approximation to the propagated state $\wit{u}(t,x;h,\delta)$, 
cf. \eqref{eq:trans1.0}.
\begin{prop}\label{prop:WKB}
Let $\widetilde{X}$ be the universal cover of a compact connected smooth 
Riemannian manifold $X$ of negative sectional curvature. Let $q_\omega$ 
be as in \eqref{eq:randomSymbol} and let $\wit{Q}_\omega$ be either the lift 
of $\Op_h(q_\omega)$ (in the \ref{eq:Pseudo}) or the multiplication operator with the function $\wt{q}_\omega
=\wt{\pi}^*q_\omega$ (in the \ref{eq:Pot}). Let $\widetilde{\mathcal{O}}\subset 
\widetilde{X}$ be a simply connected relatively compact open set, and let 
$\wt{\phi}_0 : \widetilde{\mathcal{O}}\longrightarrow \R$ be a smooth map such that 
the Lagrangian manifold
\begin{equation*}
\widetilde{\Lambda}_0 := \{ (y, d_y \wt{\phi}_0(y)) ; y\in \widetilde{\mathcal{O}}\}
\end{equation*}
is $D$-controlled,  has distortion $<D$ and is  $\eta$-unstable for some 
$\eta< \eta_0(D)$, as in Proposition \ref{Prop:OnTheCover2}. 
Let $a(x;h)$ be a smooth compactly supported function on $\widetilde{\mathcal{O}}$ 
with $\|a(\cdot;h)\|_{C^L} = O_L(1)$ and such that 
$a(x;h)\sim a_0(x) +ha_1(x) + \dots$ where the $a_n$ are smooth compactly supported 
maps on $\wit{\mathcal{O}}$ such that \eqref{hyp:compactSupport} holds. Let \red{$\psi_t,\psi_{n,t}$}
be as in \eqref{eq:CutOfffun2}, \eqref{eq:CutOfffun31}. 
\par
Then, the function 
\begin{equation}\label{eq:WKB_Ansatz}
u_\delta(t,x;h)
=\e^{\frac{i}{h} \wt{\phi}_{t,\delta}(x)} b(t,x;\delta,h)
=\e^{\frac{i}{h} \wt{\phi}_{t,\delta}(x)} 
\sum_{n=0}^{N-1}h^n b_n(t,x;\delta), 
\quad \red{x \in \wt{X}},
\end{equation}
with $\supp b(t,\cdot;h,\delta)\Subset \wt{\mathcal{O}}''_{t,\delta}$,  
$b_n$ given in \eqref{eq:transEq4}, 
\eqref{eq:transEq5}, 
and phase $\wt{\phi}_{t,\delta}$ given in \eqref{eq:eik0.0}, 
satisfies 
\begin{equation}\label{eq:WKB_Ansatz.0}
\begin{dcases}
	&ih\partial_t u_\delta 
	- 
	\widetilde{P}_{h}^\delta u_\delta   
	=
	-\frac{ih^{N+1}}{2}\e^{\frac{i}{h} \wt{\phi}_{t,\delta}(x)}
	\Delta_{\widetilde{g}} b_{N-1}
	+\delta\sum_{n=1}^{N-1}h^n (1-\psi_{n,t})\wt{Q}_\omega 
	\e^{\frac{i}{h} \wt{\phi}_{t,\delta}(x)}b_{n-1}
	\\
	&\phantom{ih\partial_t u - 	\widetilde{P}_h^\delta u  =}
	+\delta h^{N-1} \e^{\frac{i}{h} \wt{\phi}_{t,\delta}(x)} 
	\mathcal{R}_{\delta,t} b_{N-1}, \quad \red{\text{ on }
	\wit{\mathcal{O}}_{t,\delta}''}\\
	& u_\delta (0,x; h) = \e^{\frac{i}{h} \wt{\phi}_{0}(x)} 
	\sum_{n=0}^{N-1}h^n a_n(x), 
\end{dcases}
\end{equation}
where $\widetilde{P}_{h}^\delta:= -h^2\Delta_{\wt{g}}+\delta\wit{Q}_\omega$ and 
\begin{equation}\label{eq:WKB2.0}
	\mathcal{R}_{\delta,t}
	:=
		\e^{-\frac{i}{h} \wt{\phi}_{t,\delta}(x)} \wit{Q}_\omega 
		 \e^{\frac{i}{h} \wt{\phi}_{t,\delta}(x)}
		-\wit{q}_\omega(x,d_x\wt{\phi}_{t,\delta}(x))
		-\frac{h}{i}H^1_{\wt{\phi}_{t,\delta},\wit{q}_\omega}
		-\frac{h}{2i}\mathrm{div}_{\wit{g}} (H^1_{\wt{\phi}_{t,\delta},\wit{q}_\omega}),
\end{equation}
and the vector field $H^1_{\wit{q}_\omega}$ is defined 
in local canonical coordinates by 
\begin{equation*}
	H^1_{\wt{\phi}_{t,\delta},\wit{q}_\omega}
	:=\sum_{k=1}^d \frac{\partial \wit{q}_\omega(x,d_x\wt{\phi}_{t,\delta})}{\partial \xi_k}
	\frac{\partial }{\partial x_k}. 
\end{equation*}
\red{In \eqref{eq:WKB2.0} the first and third term on the right hand side act as 
(pseudo-)differential operators whereas the second and fourth term act by multiplication 
on the functions $b_{n-1}$ in \eqref{eq:WKB_Ansatz.0}. Moreover, in the \ref{eq:Pot} $\mathcal{R}_{\delta,t}=0$.} 
\end{prop}
%
%
We will present detailed results concerning the regularity of this 
WKB solution and a precise remainder estimate in Section 
\ref{sec:RegularityOfWKB} below. For now, we 
turn to the proof of the above proposition which will take up the 
rest of the current section.  
\subsection{The WKB Ansatz}
Let us now explain how, for any $N\in\N$, the state \eqref{eq:trans1.0}
can be approximated by the WKB state $u_\delta$ defined in 
\eqref{eq:WKB_Ansatz}. We note that 
each $b_n$ can also depend on $h>0$ when $\delta>0$, but we will 
not denote this explicitly. We want $u_\delta$ to solve 
$ih\partial_t u_\delta = \wt{P}_{h}^\delta u_\delta$, i.e. 
\begin{equation}\label{eq:WKB}
\begin{split}
	&(- b \partial_t \wt{\phi}_{t,\delta} + ih \partial_t b)\\
	&= 
	\frac{1}{2}
	\left[
		b |d_x \wt{\phi}_{t,\delta}|_x^2 - ih b \Delta_g \wt{\phi}_{t,\delta}
 		- 2 i h \langle d_x \wt{\phi}_{t,\delta}, d_x b \rangle_x 
		 -h^2 \Delta_g b
	 \right] 
	+ \delta \e^{-\frac{i}{h} \wt{\phi}_{t,\delta}} \wt{Q}_\omega \e^{\frac{i}{h} \wt{\phi}_{t,\delta}} b, 
\end{split}
\end{equation}
\red{on $\mathcal{O}''_{t,\delta}$} up to a small remainder. 
\\
\par
\textbf{Case 1: $Q_{\omega}$ is a random potential}. Consider first the case 
where $\wt{Q}_\omega$ is the operator of multiplication by the function $\wt{q}_\omega$. 
Hence, $P_{h}$ is a differential operator and $H^1_{\wt{\phi}_{t,\delta},\wit{q}_\omega}\equiv 0$ 
and $\mathcal{R}_{\delta,t}\equiv 0$. We can study equation \eqref{eq:WKB} 
on $\wt{\mathcal{O}}''_{\delta,t}$. Furthermore,
\begin{equation*}
	\left(
		\e^{-\frac{i}{h} \wt{\phi}_{t,\delta}} \wt{Q}_\omega  \e^{\frac{i}{h} \wt{\phi}_{t,\delta}}b
		\right)(x) 
	= \wt{q}_\omega(x) b(x).
\end{equation*}
Expanding $b$ as in \eqref{eq:WKB_Ansatz} and regrouping the terms in 
\eqref{eq:WKB} according to matching powers of $h$, we see that the 
functions $\wt{\phi}_t$ and $b_n$ must satisfy the following differential equations
\begin{equation}\label{eq:WKB2a}
\begin{cases}
 \partial_t \wt{\phi}_{t,\delta} 
 	&= -\wt{p}(x,d_x\wt{\phi}_{t,\delta}(x);\delta),\\
 	\partial_tb_0 
	&=  -\frac{1}{2} b_0 \Delta_g \wt{\phi}_{t,\delta} 
	- \langle d_x  \wt{\phi}_{t,\delta}, d_x b_0 \rangle_x, 
 \\
  \partial_t b_n 
  &= -\frac{1}{2} b_n \Delta_g \wt{\phi}_{t,\delta} 
  - \langle d_x  \wt{\phi}_{t,\delta}, d_x b_n \rangle_x
  + \frac{i}{2}\Delta_g b_{n-1},
  ~~n\geq 1, \quad \red{\text{on } \wt{\mathcal{O}}''_{\delta,t}.}
\end{cases}
\end{equation}
The initial condition $\widetilde{u}|_{t=0} = f$ implies that we must 
require that $\wt{\phi}_{0,\delta}=\wt{\phi}_0$ and that $b_n|_{t=0} = a_n$. This 
gives us initial conditions for each of equation in \eqref{eq:WKB2a}. 
\red{Suppose that we have solved these differential equations up to order $n=N-1$, 
such that the support of $b_n(t,\cdot;\delta)$ is contained in the support 
of $\psi_{0,t}$ for every $n=0,\dots,N-1$, then the function $u_\delta (t,x;h)$ of \eqref{eq:WKB_Ansatz} satisfies 
\eqref{eq:WKB_Ansatz.0}. Notice that by \eqref{eq:CutOfffun31} and 
since $\wt{Q}_{\omega}$ is a multiplication operator, the second term 
on the right hand side in the first line of \eqref{eq:WKB_Ansatz.0} 
is equal to $0$.}
\\
\par
\textbf{Case 2: $Q_{\omega}$ is a random pseudo-differential operator.}
When $\wt{Q}_\omega$ is a pseudo-differential operator we follow formally the 
same strategy as above and solve the pseudo-differential equations 
\begin{equation}\label{eq:WKB2}
\begin{cases}
 \partial_t \wt{\phi}_{t,\delta} 
 	&= -\wt{p}(x,d_x\wt{\phi}_{t,\delta}(x);\delta),\\
 	\partial_tb_0 
	&=  \mathcal{L}b_0
 \\
  \partial_t b_n 
  &= \mathcal{L}b_n
  + \frac{i}{2}\Delta_g b_{n-1}
  -ih^{-2}\delta \psi_{n,t}\mathcal{R}_{\delta,t} b_{n-1},
  ~~n\geq 1,  \quad \red{\text{on } \wt{\mathcal{O}}''_{\delta,t}.}
\end{cases}
\end{equation}
where $\mathcal{L}$ is the differential operator defined by
\begin{equation}
	\mathcal{L}u := 
	-\frac{1}{2} u \Delta_g \wt{\phi}_{t,\delta} 
	- \langle d_x  \wt{\phi}_{t,\delta}, d_x u \rangle_x
	-\delta	(H^1_{\wt{\phi}_{t,\delta},\wt{q}_\omega}u)
	- \frac{\delta}{2} \mathrm{div}_g (H^1_{\wt{\phi}_{t,\delta},\wt{q}_\omega})
	u.
\end{equation}
\red{
\begin{rem}
	Notice that the last two terms in the definition of $\mathcal{L}$ are 
	somewhat artificial as we have added them to $\mathcal{L}$ and substracted 
	them again in the error term $\mathcal{R}_{\delta,t}$. This regrouping 
	allows us to show that the solution $b_0$ is given by the (unitary) evolution of 
	$a_0$ under the flow $\pi_X\wt{\Phi}^t_\delta$, see Proposition \ref{lem:TransportEqSol} 
	below for details.
\end{rem}
}
The initial condition $\widetilde{u}|_{t=0} = f$ implies that we  
require that $\wt{\phi}_{0,\delta}=\wt{\phi}_0$ and that $b_n|_{t=0} = a_n$. This 
gives us initial conditions for each of equation in \eqref{eq:WKB2}. 
\red{Suppose that we have solved these differential equations up to order 
$n=N-1$, such that the support of $b_n(t,\cdot;\delta)$ is contained in the support 
of $\psi_{n,t}$ for every $n=0,\dots,N-1$, then the function $u_\delta (t,x;h)$ of 
\eqref{eq:WKB_Ansatz} satisfies \eqref{eq:WKB_Ansatz.0}.}
\\
\par
\red{
\paragraph{\textbf{Overview over the proof of Proposition \ref{prop:WKB}.}} 
The first equation in \eqref{eq:WKB2a} and \eqref{eq:WKB2} is called 
the \emph{Hamilton-Jacobi equation}, whereas the second and the third 
equations are called the $0^{th}$ order and $n^{th}$ order \emph{transport equations}, 
respectively. We will present the proof of Proposition \ref{prop:WKB} 
only in Case 2, since the proof in Case 1 follows the same steps. 
In Section \ref{sec:EikonaEquation} below, we will solve the Hamilton-Jacobi 
equation, in Section \ref{sec:phaseDer} we will discuss estimates on the 
derivatives of the solution to the Hamilton-Jacobi equation, and in Section 
\ref{sec:SolvTransport} below, we will solve the transport equations iteratively 
which then completes the proof of Proposition \ref{prop:WKB} together with 
the above discussion. 
}
\begin{rem}\label{RemStephane}
When writing a function as a Lagrangian state $a(x) \e^{\frac{i}{h} \wt{\phi}(x)}$ 
with $a$ and $\wt{\phi}$ depending on $h$, there is some freedom in the choice of 
$a$ and $\wt{\phi}$: the only essential requirement is that the amplitude $a(x)$ 
should oscillate at a scale \red{$\gg \sqrt{h}$}, to remain in a nice symbol class.
\\
In particular,  equation (\ref{eq:WKB}) will also be satisfied up to a remainder 
$O(h^N)$ if we have
\begin{equation}\label{eq:WKBAlternativ}
\begin{cases}
 \partial_t \wt{\phi}_{t,0} 
 	&= -\wt{p}(x,d_x\wt{\phi}_{t,0}(x);0),\\
 	\partial_t b_0 
	&=  
 	-\frac{1}{2} b_0 \Delta \wt{\phi}_{t,0} 
	 	- \langle d_x \wt{\phi}_{t,0}, d_x b_0 \rangle_x 
		-i h^{-1} \delta \e^{-\frac{i}{h} \wt{\phi}_{t,0}}
		\wt{Q}_\omega \e^{\frac{i}{h} \wt{\phi}_{t,0}} b_0,
 \\
  \partial_t b_n 
  &=  -\frac{1}{2} b_n \Delta \wt{\phi}_{t,0}
  - \langle d_x \wt{\phi}_{t,0}, d_x b_n \rangle_x + \frac{i}{2}\Delta b_{n-1}
	-i h^{-1} \delta  \e^{-\frac{i}{h} \wt{\phi}_{t,0}}
	\wt{Q}_\omega \e^{\frac{i}{h} \wt{\phi}_{t,0}} b_n,
  ~~n\geq 1.
\end{cases}
\end{equation}
With such a formulation,  the phase $\wt{\phi}_{t,0}$ does not depend on the 
perturbation, while the amplitudes \red{depend on the perturbation}.
\\
Using techniques analogue to those of \cite{NZ}, it should be possible to 
use the formulation (\ref{eq:WKBAlternativ}) to describe the propagation 
of Lagrangian states by $U_h^\delta(t)$ up to times $M |\log h|$ for an 
arbitrary $M>0$, while in the sequel, we will only be able to consider 
times that are $\gd |\log h|$ for some small $\gd$. 
Such an approach could thus be a starting point to extend Theorem 
\ref{th:MartinEtMaximeSontDesBeauxGosses} to larger times; this will 
be pursued elsewhere.
\\
However,  to have a good control over the functions $b_n$ starting 
from (\ref{eq:WKBAlternativ}), we need the phase depending on the 
perturbation to oscillate less rapidly than $\sqrt{h}$, for which 
we need to have $\alpha> \frac{1}{2}$. This is thus more restrictive 
than the situation covered by Hypothesis \ref{Hyp:BetaDelta}. This is 
why we focus on the formulation (\ref{eq:WKB2}) in this paper.
\par
Furthermore, the independence considerations of Section \ref{Sec:Indep} 
give strong constraints on the time scales on which we are working, 
making it complicated to extend Theorems \ref{th:MartinEtMaximeSontDesSuperBeauxGosses} 
and \ref{th:MartinEtMaximeSontDesBeauxGosses} up to arbitrary logarithmic times.
\end{rem}
\subsection{Solving the Hamilton-Jacobi equation \red{for $\wt{\phi}_{t,\delta}$}}\label{sec:EikonaEquation}
\subsubsection{Applying the method of characteristics}
\par
\corM{Solving the Hamilton-Jacobi equations in \eqref{eq:WKB2a}, \eqref{eq:WKB2} is 
standard but we will discuss here one approach for the readers' convenience.} 
Recall that, since $\wt{\Phi}^t_\delta(\Lambda_0)$ is projectable, the map $y \mapsto x_\delta^t(y)$ 
is a diffeomorphism onto its image and that we denote its inverse by 
$y_\delta^{-t}(x)$. 
Hence, 
\begin{equation}\label{eq:eik1.1}
	d_x \wt{\phi}_{t,\delta} = \xi_\delta^t(y_\delta^{-t}(x)),
\end{equation}
or, equivalently,  
\begin{equation}\label{eq:eik1}
d_{x_\delta^t(y)} \wt{\phi}_{t,\delta} = \xi_\delta^t(y).
\end{equation}
\par
Let $y_0\in \wt{\mathcal{O}}_0$ and let $t_0 >0$. Fix a system of local coordinates 
$(y^1,..., y^d)$ for $y$ near $y_0$, and fix another system of local coordinates 
$(x^1,..., x^d)$ for $x$ in a small neighbourhood of $x_\delta^{t_0}(y_0)\in 
\wt{\mathcal{O}}_{t_0, \delta}$. Notice that by taking a small enough neighbourhood around 
$x_\delta^{t_0}(y_0)$ we can arrange so that this system of coordinates 
$(x^1,..., x^d)$ is a system of coordinates near $x_\delta^{t}(y_0)$ for all 
$t\in]t_0-\varepsilon,t_0+\varepsilon[$ for $\varepsilon >0$ sufficiently small. 
We also take the canonical coordinates $(\xi_1,\dots, \xi_d)$ in 
$T^*_x\wt{\mathcal{O}}_{t,\delta}$ induced by the coordinate system $(x^1,..., x^d)$. 
Notice that both those coordinate systems are independent of 
$t\in]t_0-\varepsilon,t_0+\varepsilon[$. We shall write 
$(x_{\delta,1}^t(y),..., x_{\delta,d}^t(y))$ for the coordinates of 
$x_\delta^t(y)$, and $(\xi_{\delta,1}^t(y),..., \xi_{\delta,d}^t(y))$ for the 
coordinates of $\xi_{\delta}^t(y)$, in this system.
\par
In local coordinates, equation \eqref{eq:eik1} takes the form
\begin{equation*}
\left(\frac{\partial}{\partial x^i} \wt{\phi}_{t,\delta}\right)(x_\delta^t(y)) 
	=  \xi_{\delta,i}^t(y),
\end{equation*}
and the Hamilton equations $\frac{d}{dt} \wt{\Phi}^t_\delta(\rho_\delta^t) 
= H_p|_{\rho_\delta^t}$ take the form  
\begin{equation}\label{eq:HamiltonEq}
	\begin{dcases}
	\frac{d }{d t}x_{\delta,i}^t = \partial_{\xi_{i}} \wt{p}(\rho_\delta^t;\delta),\\
	\frac{d}{d t}  \xi_{\delta,i}^t= -\partial_{x_{i}} \wt{p}(\rho_\delta^t;\delta)
	\end{dcases}
\end{equation}
Thus, using the Einstein summation convention, we obtain in local coordinates
\begin{equation*}
\begin{split}
\frac{\partial}{\partial {y^j}} 
	\bigg[ \left(\frac{\partial}{\partial t}  
	\wt{\phi}_{t,\delta}\right)& (x_\delta^t(y))\bigg]
=  \frac{\partial x_{\delta,i}^t(y)}{\partial y^j} 
	\left(\frac{\partial^2 }{\partial t \partial {x^i}} 
	\wt{\phi}_{t,\delta}\right) (x_\delta^t(y))\\
&=  \frac{\partial x_{\delta,i}^t(y)}{\partial y^j} 
	\left[ \frac{\partial}{\partial t} 
	\left( \left( \frac{\partial}{\partial x^i} \wt{\phi}_{t,\delta} \right)
	\!(x_\delta^t(y))
	\right) 
	- \left(\frac{\partial}{\partial t} x_{\delta,k}^t(y)\right)  
	\left( \frac{\partial^2}{\partial x^i \partial x^k} \wt{\phi}_{t,\delta}\right)
	\!(x_\delta^t(y)) \right]\\
&=  \frac{\partial x_{\delta,i}^t(y)}{\partial y^j} 
	\left[ \frac{\partial}{\partial t} \xi_{\delta,i}^t(y)
	 -\left(\frac{\partial}{\partial t} x_{\delta,k}^t(y)\right) 
	 \!\left( \frac{\partial^2}{\partial x^i \partial x^k} 
 	\wt{\phi}_{t,\delta}\right)\!(x_\delta^t(y)) \right]\\
&=  \frac{\partial x_{\delta,i}^t(y)}{\partial y^j}
	\left[ -\left(\frac{\partial}{\partial x^i} p\right)
		\!(\rho_\delta^t(y);\delta) 
	-\left( \frac{\partial}{\partial \xi^k} p \right)\!(\rho_\delta^t(y);\delta)
	\times 
	\left(\frac{\partial^2}{\partial x^i \partial x^k} \wt{\phi}_{t,\delta}\right)
	\!(x_\delta^t(y)) \right]\\
&= -\frac{\partial}{\partial y^j} p (\rho_\delta^t(y);\delta).
\end{split}
\end{equation*}
Hence, 
\begin{equation}\label{eq:eik2}
d_y (\partial_t  \wt{\phi}_{t,\delta}\circ x_\delta^t(y)) 
	=- d_y p (\rho_\delta^t(y);\delta), 
\end{equation}
i.e. the maps $y\mapsto (\partial_t\wt{\phi}_{t,\delta})(x^t_\delta(y)))$ and 
$p (\rho_\delta^t(y);\delta))$ have the same gradient, so they must coincide 
up to a constant $c_\delta(t)$, which we can take equal to zero, since 
$\wt{\phi}_{t,\delta}$ is defined up to an additive constant. Therefore, 
\begin{equation}
 \left(\partial_t \wt{\phi}_{t,\delta}\right) (x_\delta^t(y)) 
 	= - p (\rho_\delta^t(y);\delta) 
	=  -p (y, d_y \wt{\phi}_0(y); \delta),
 \end{equation}
since $p$ is invariant under the action of the flow $\wt{\Phi}^t_\delta$. 
\par
If $x\in \wt{\mathcal{O}}_{t,\delta}$, we may thus take 
$y = y_{\delta}^{-t}(x)$ to obtain that
\begin{equation*}
	 \partial_t \wt{\phi}_{t,\delta}(x) 
	 = 
	 - p\!\left(x, d_x \wt{\phi}_{t,\delta};\delta\right). 
\end{equation*}
Therefore, the map $ \wt{\phi}_{t,\delta}(x)$ is a  solution of the system
\begin{equation}\label{eq:Eikonal}
 \begin{cases}
	\frac{\partial}{\partial t} \wt{\phi}_{t,\delta}(x) 
	= 
	- \wt{p}(x,d_x \wt{\phi}_{t,\delta};\delta)
	\\
	\psi_h \big|_{t=0} = \wt{\phi}_0.
\end{cases}
\end{equation}
\subsubsection{An approximate solution for the Hamilton-Jacobi equation}
\red{Recall first the notions introduced in the paragraph before \eqref{eq:CutOfffun2}. 
We want to compare the solution of the perturbed and unperturbed Hamilton-Jacobi 
equations. In view of \eqref{eq:SetInclusion}, we will work therefore on $\wt{\mathcal{O}}''_{t,\delta}$. 
Let $x\in \wt{\mathcal{O}}''_{t,\delta}$}, and let $0\leq s\leq t$. We define the point 
(see Figure \ref{fig:Zeta})
\begin{equation*}
\zeta^{s,t}_\delta(x)
:= \rho_0^{t-s}\left( y_\delta^{-s, -(t-s)}(x)\right)
 = \wt{\Phi}^{-s}_\delta \left( x, d_x \wt{\phi}_{s,t-s,\delta} \right).
\end{equation*}
We claim that $\zeta^{s,t}_\delta(x)$ does not depend on $t$ \red{in the 
following sense}
\begin{equation}\label{eq:PIndepT}
\text{ for all } 0\leq t,t' \text{ with $t+t' \leq \gd \log \delta$,} ~~ 
\left(x\in \wt{\mathcal{O}}''_{t,\delta}\cap  \wt{\mathcal{O}}''_{t',\delta}\right) 
\Longrightarrow \left(\zeta^{s,t}_\delta(x) = \zeta^{s,t'}_\delta(x)\right).
\end{equation}
Indeed, by definition,  $y_\delta^{-s, -(t-s)}(x)$ is the unique point 
$y$ such that $\pi_X\left(\wt{\Phi}_\delta^{s} \left( \wt{\Phi}_0^{t-s}  (y, d_y \wt{\phi}_0) 
\right)\right)= x$. Similarly, $y_\delta^{-s, -(t'-s)}(x)$ is the only point 
$y'$ such that $\pi_X\left(\wt{\Phi}_\delta^{s} \left( \wt{\Phi}_0^{t'-s}  (\red{y'}, d_{\red{y'}} \wt{\phi}_0) 
\right)\right)= x$. Hence, since by Remark \ref{RemMixedFlow} the Lagrangian manifold 
$\wt{\Phi}^s_\delta\circ \wt{\Phi}^{t-s}_0(\Lambda_0)$ is projectable, it follows that 
$y' = \pi_X\left( \rho_0^{t-t'}(y)\right)$, from which (\ref{eq:PIndepT}) follows. 
\definecolor{ffqqqq}{rgb}{1,0,0}
\definecolor{qqzzff}{rgb}{0,0.6,1}
\definecolor{qqqqff}{rgb}{0,0,1}

\begin{figure}
\begin{center}
\begin{tikzpicture}[scale=1.2, line cap=round,line join=round,>=triangle 45,x=1cm,y=1cm]
\clip(-5.174962280502898,-1.4355416830535686) rectangle (6.75076285489357,6.946015855130226);
\draw [rotate around={0:(0,0)},line width=1pt,color=qqqqff,fill=qqqqff,fill opacity=0.08] (0,0) ellipse (2.2360679774997907cm and 1cm);
\draw [rotate around={0:(0,4)},line width=1pt,color=qqzzff,fill=qqzzff,fill opacity=0.09] (0,4) ellipse (3.1622776601683817cm and 1cm);
\draw [->,line width=1pt,color=qqqqff] (0,0) -- (0,1);
\draw [->,line width=1pt,color=qqqqff] (2,0) -- (2,1);
\draw [->,line width=1pt,color=qqqqff] (-2,0) -- (-2,1);
\draw [->,line width=1pt,color=qqqqff] (1,0) -- (1,1);
\draw [->,line width=1pt,color=qqqqff] (-1,0) -- (-1,1);
\draw [->,line width=1pt,color=qqzzff] (0,4) -- (0,5);
\draw [->,line width=1pt,color=qqzzff] (1,4) -- (1.5,5);
\draw [->,line width=1pt,color=qqzzff] (2,4) -- (3,5);
\draw [->,line width=1pt,color=qqzzff] (-1,4) -- (-1.5,5);
\draw [->,line width=1pt,color=qqzzff] (-2,4) -- (-3,5);
\draw [line width=1pt,color=ffqqqq] (0,0)-- (0,4);
\draw [shift={(0.5330781525065621,4.019748055717795)},line width=1pt,color=ffqqqq]  plot[domain=1.8509267705166443:3.1786210503940184,variable=\t]({1*0.5334438137090377*cos(\t r)+0*0.5334438137090377*sin(\t r)},{0*0.5334438137090377*cos(\t r)+1*0.5334438137090377*sin(\t r)});
\draw [shift={(0.19740715575977225,4.9795520314820765)},line width=1pt,color=ffqqqq]  plot[domain=-1.1724477287520685:0.2668907851049957,variable=\t]({1*0.4851391580088101*cos(\t r)+0*0.4851391580088101*sin(\t r)},{0*0.4851391580088101*cos(\t r)+1*0.4851391580088101*sin(\t r)});
\draw [shift={(1.1398878169067652,5.538513143504153)},line width=1pt,color=ffqqqq]  plot[domain=2.5855198261779226:3.878985255403154,variable=\t]({1*0.6410457709464107*cos(\t r)+0*0.6410457709464107*sin(\t r)},{0*0.6410457709464107*cos(\t r)+1*0.6410457709464107*sin(\t r)});
\draw [->,line width=1pt,color=ffqqqq] (0.5954254429055894,5.876892226405684) -- (1.0729773820653716,6.565913061970178);
\draw [color=qqqqff](2.5458757055605528,0.6980761925619469) node[anchor=north west] {$\Lambda_0$};
\draw [color=qqzzff](2.771561738999331,3.513212504403548) node[anchor=north west] {$\wt{\Phi}_0^{t-s}(\Lambda_0)$};
\draw [color=ffqqqq](0.8948041977715996,6.363983453103904) node[anchor=north west] {$(x, d_x \wt{\phi}_{s,t-s,\delta})$};
\draw [color=ffqqqq](-1.0769790417461431,4.095244906429871) node[anchor=north west] {$\zeta_{\delta}^{s,t}(x)$};
\draw [color=ffqqqq](-0.7800237345898566,-0.2922873659631213) node[anchor=north west] {$y_\delta^{-s,-(t-s)}(x)$};
\begin{scriptsize}
\draw [fill=ffqqqq] (0,0) circle (2pt);
\draw [fill=ffqqqq] (0,4) circle (2pt);
\draw [fill=ffqqqq] (0.5954254429055894,5.876892226405684) circle (2pt);
\end{scriptsize}
\end{tikzpicture}
\end{center}
\caption{Construction of the point $\zeta_\delta^{s,t}(x)$}\label{fig:Zeta}
\end{figure}
Note that, if $x\in \wt{\mathcal{O}}''_{s,\delta}$, we have the simpler expression
$$\zeta^{s,t}_\delta(x) = \wt{\Phi}_\delta^{-s} (x, d_x \wt{\phi}_{s,\delta}).$$
Next, we claim that 
\begin{equation}\label{eq:PhiIntegrale2}
	\wt{\phi}_{t,\delta}(x) 
	= 
	\wt{\phi}_{t,0}(x) - \delta
		\int_0^t \wt{q}_\omega\left(\zeta_\delta^{s,t}(x)\right) d s, 
	\quad \red{x\in \wt{\mathcal{O}}''_{t,\delta}.}
\end{equation}
Indeed, both expressions coincide when $t=0$, and, thanks to (\ref{eq:PIndepT}), 
we see that the integrand does not depend on $t$, so that the derivative with respect 
to $t$ of the right-hand side is equal to
\begin{align*}
 -\frac{1}{2} - \delta 
\wt{q}_\omega(\wt{\Phi}_\delta^{-t} (x, d_x \wt{\phi}_{t,\delta}))
&= -  \frac{1}{2}|d_{y_\delta^{-t}(x)} \wt{\phi}_0 |_x^2 
- \delta \wt{q}_\omega(\wt{\Phi}_\delta^{-t} (x,d_x \wt{\phi}_{t,\delta}))
~~\text{ (since $\wt{\phi}_0$ is monochromatic)}\\
&=  - \wt{p}( y_\delta^{-t}(x), d_{y_\delta^{-t}(x)} \wt{\phi}_0 ;\delta)\\
&= -\wt{p}(\wt{\Phi}_\delta^{-t}(x, d_x \wt{\phi}_{t,\delta}));\delta)\\
&= - \wt{p}(x, d_x \wt{\phi}_{t,\delta};\delta).
\end{align*}
In the last line we used that $\wt{p}(\cdot, \cdot; \delta)$ is invariant under  
$\wt{\Phi}^t_\delta$, the flow generated by the Hamilton vector field 
\eqref{eq:liftedHV}.
\par
Note that, when \red{$x\in \wt{\mathcal{O}}''_{s,\delta}$} for all $s\in [0,t]$, 
equation (\ref{eq:PhiIntegrale2}) reduces to
\begin{equation*}
	\wt{\phi}_{t,\delta}(x) 
	= 
	\wt{\phi}_{t,0}(x) - \delta
		\int_0^t \wt{q}_\omega(\wt{\Phi}_\delta^{-s} (x, d_x \wt{\phi}_{s,\delta}))d s.
\end{equation*}
In the sequel, we will often want to have an approximate expression 
for $\wt{\phi}_{t,\delta}$. Such approximations will rely on the following lemma.

\begin{lem}\label{lem:BoundDiffFlows}
Suppose that $\beta, \delta, \varepsilon_0$ are as in (\ref{eq:CondBeta}). 
Then for any $0<\varepsilon < \varepsilon_0$, there exists $\gd>0$ and $C>0$ 
such that 
for all  $t\in \R$ with $0\leq t \leq \mathfrak{d} |\log h|$, we have 
for all \red{$x\in \wt{\mathcal{O}}''_{t,\delta}$} and all $0\leq s \leq t$
\begin{equation*}
	\mathrm{dist}_{T^*X}
	\left(
		(x, d_x \wt{\phi}_{t,0}), (x, d_x \wt{\phi}_{s,t-s,\delta}) 
	\right) 
	\leq 
	C\delta h^{-\beta- \varepsilon},
\end{equation*}
and, in particular,
\begin{equation*}
	\mathrm{dist}_{T^*X}
	\left(
		(x, d_x \wt{\phi}_{t,0}), (x, d_x \wt{\phi}_{t,\delta}) 
	\right) 
	\leq 
	C\delta h^{-\beta- \varepsilon}.
\end{equation*}
\end{lem}

\begin{proof}
Let us write $y' = y_\delta^{-s, -(t-s)}(x)$ and $\rho_0 :=(x, d_x \wt{\phi}_{t,0})$, 
$\rho_\delta := (x, d_x \wt{\phi}_{s,t-s,\delta})$. By definition, we have 
$\rho_\delta = \wt{\Phi}^s_\delta \left( \wt{\Phi}^{t-s}_0 \left(y', d_{y'}
\wt{\phi}_0 \right)\right)$. Let us also consider $\rho'_0 :=  \wt{\Phi}_0^t (y',  d_{y'}\wt{\phi}_0)$. 
Thanks to Lemma \ref{Lem:Gron}, we know that for any $\varepsilon>0$, 
provided $\gd$ is small enough, we have
\begin{equation}\label{eq:LePtiGronwallDeLaFin}
	\mathrm{dist}_{T^*X} ( \rho_\delta, \rho'_0) \leq C_0\delta 
	h^{-\beta - \varepsilon}.
\end{equation}
If we write $x'_0:= \pi_X (\rho'_0)$, we thus have
\begin{equation}\label{eq:DistSansPerturb}
\mathrm{dist}_X(x, x'_0) \leq c C_0\delta h^{-\beta - \varepsilon},
\end{equation}
since, for any $\rho_1, \rho_2\in T^*X$,  $\mathrm{dist}_{T^*X}
(\rho_1,\rho_2)\geq c\dist_X(\pi_X (\rho_1),\pi_X (\rho_2))$,
see Section \ref{sec:notation}.
Since $\rho'_0\in \wt{\Phi}^t_0(\Lambda_0)$, it can be written as 
$\rho'_0 = (x'_0, d_{x'_0} \wt{\phi}_{t,0})$, and equation (\ref{eq:BorneCL}) 
below with $L=2$ along with (\ref{eq:DistSansPerturb}) implies that, 
provided that $\gd$ is small enough,
  \begin{equation*}
  \mathrm{dist}_{T^*X}(\rho_0, \rho'_0) \leq \red{C h^{-\varepsilon} }
  	\mathrm{dist}_X (x,x'_0) 
	\leq C\delta h^{-\beta- 2\varepsilon }.
  \end{equation*}
 Combining thus with (\ref{eq:LePtiGronwallDeLaFin}), we deduce 
\begin{equation*}
  \mathrm{dist}_{T^*X}(\rho_0, \rho_\delta) \leq C' \delta 
  	h^{-\beta - 2 \varepsilon} 
\end{equation*} 
The result follows by taking $\varepsilon>0$ and thus 
$\gd>0$ smaller.
\end{proof}
Combining Lemmas \ref{lem:BoundDiffFlows} and \ref{Lem:Gron}, we 
obtain that there exists a $C>0$ such that for any $t\in [0, \gd |\log h|]$, 
any $s \in [0,t]$ and all \red{$x\in \wt{\mathcal{O}}''_{t,\delta}$}, we have
\begin{equation}\label{eq:PetitCoupDeStressDeLaFin}
	\mathrm{dist}_{T^*X}
	\left( \zeta_\delta^{s,t}(x),  \wt{\Phi}_0^{-s}
		(x, d_x \wt{\phi}_{t,0})
	\right) 
	\leq 
	C\delta h^{-\beta- 2 \varepsilon}.
\end{equation}
provided $\gd$ is small enough. Therefore,  (up to taking 
$\varepsilon$ and $\gd$ smaller), we obtain from (\ref{eq:PhiIntegrale2}) that
\begin{equation}\label{eq:PhaseApprochee}
\wt{\phi}_{t,\delta}(x) 
=  \wt{\phi}_{t,0}(x)
- \delta \int_0^t  \wt{q}_\omega 
	\left( \wt{\Phi}_{0}^{-s} (x, d_x \wt{\phi}_{t,0}) \right)
	 d s + O \left( \delta^2 h^{-2\beta-2\varepsilon} \right).
\end{equation}

\subsection{Controlling the derivatives of the phase}\label{sec:phaseDer}

In this section we provide estimates on the $\dot{C}^L$ norms of the time evolved 
phase $\wt{\phi}_{t,\delta}$.  \MI{Recall that these norms were introduced in \eqref{eq:sa8.1b}.}

\begin{prop}\label{lem:ControlDeriv}
Suppose that $\beta, \delta, \varepsilon_0$ are as in (\ref{eq:CondBeta}). 
For any $0<\varepsilon<\varepsilon_0$, there exist $\mathfrak{d}>0$,  
and $h_0>0$ such that the following holds: 
for all $L\in \N$,  there exists a constant $C_L>0$ such that 
for all $h\in (0,h_0]$ and all $t \in [0,\mathfrak{d}|\log h|]$
we have that
\begin{equation}\label{eq:BorneC1}
	\|\wt{\phi}_{t,\delta}\|_{\dot{C}^1(\wt{\mathcal{O}}_{t,\delta})} \leq C_1 .
\end{equation}
and, if $L\geq 2$
\begin{equation}\label{eq:BorneCL}
	\|\wt{\phi}_{t,\delta}\|_{\dot{C}^L(\wt{\mathcal{O}}_{t,\delta})} 
	\leq C_L h^{- (L-2)( \beta + \varepsilon) - \varepsilon}.
\end{equation}

\end{prop}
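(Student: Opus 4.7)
The plan is to reduce everything to the formula $d_x\phi_{t,\delta}(x)=\xi_\delta^t(y_\delta^{-t}(x))$ from (\ref{eq:eik1.1}), which expresses the first derivative of the phase as the momentum component of the Hamiltonian flow on $\Lambda_0$ composed with the inverse of its base projection. Estimating $\|\phi_{t,\delta}\|_{C^L}$ then amounts to controlling $L-1$ derivatives of the composition $\xi_\delta^t\circ y_\delta^{-t}$, which I will handle by combining Lemma \ref{lem:DerivFlot} (and Remark \ref{rem:DerivCover}) with an inductive use of the inverse function theorem for $y_\delta^{-t}$, leveraging the expanding behaviour of $d\Phi_\delta^t|_{T\Lambda_0}$ given by Corollary \ref{Cor:EtaInstable}.

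For $L=1$, the gradient $|d_x\phi_{t,\delta}(x)|=|\xi_\delta^t(y_\delta^{-t}(x))|$ is uniformly bounded because the momentum remains on an energy layer $\mathcal{E}_{\delta,\lambda}\subset\mathcal{E}_{0,(1/4,4)}$ invariant under $\Phi_\delta^t$. Normalizing $\phi_{t,\delta}$ by fixing its value at a reference point (which is harmless for the WKB Ansatz) yields the $C^1$ bound.

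For $L\geq 2$, I would apply Faà di Bruno's formula to write, for any multi-index $\alpha$ with $|\alpha|=L-1$,
\begin{equation*}
\partial_x^\alpha\bigl(\xi_\delta^t\circ y_\delta^{-t}\bigr)(x)=\sum_{\gamma,\{\beta_j\}}c_{\alpha,\gamma,\{\beta_j\}}\,(\partial_y^\gamma\xi_\delta^t)(y_\delta^{-t}(x))\,\prod_j\partial_x^{\beta_j}y_\delta^{-t}(x),
\end{equation*}
where $|\gamma|\leq|\alpha|$ and $\sum_j|\beta_j|=|\alpha|$. The factors $\partial_y^\gamma\xi_\delta^t$ are bounded, via Lemma \ref{lem:DerivFlot} and Remark \ref{rem:DerivCover}, by $g_{\rho,|\gamma|,h}(t)\leq C_{|\gamma|}h^{-\varepsilon}(1+\delta h^{-(\beta+\varepsilon)(|\gamma|+1)})$. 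For the factors $\partial_x^{\beta_j}y_\delta^{-t}$, I would differentiate the identity $x_\delta^t\circ y_\delta^{-t}=\mathrm{id}$ iteratively: the first derivative is $[(\partial_y x_\delta^t)(y_\delta^{-t})]^{-1}$, whose operator norm is $O(1)$ (in fact exponentially small in $t$) thanks to the lower bound (\ref{eq:NonContract2}); higher derivatives are then bounded inductively by polynomial expressions in $\partial_y^{\gamma'}x_\delta^t$ for $|\gamma'|\leq|\beta_j|$ times fixed powers of $(\partial_y x_\delta^t)^{-1}$, which gives $\|\partial_x^{\beta_j}y_\delta^{-t}\|_\infty\leq C_{|\beta_j|}\,g_{\rho,|\beta_j|,h}(t)$. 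The dominant contribution to $\partial_x^\alpha d_x\phi_{t,\delta}$ is thus bounded by $C_L\,g_{\rho,L-1,h}(t)$. Using (\ref{eq:CondBeta}) in the form $\delta\leq h^{2\beta+\varepsilon_0}$, one gets
\begin{equation*}
g_{\rho,L-1,h}(t)\leq C_L h^{-\varepsilon}\bigl(1+h^{-(L-2)\beta-L\varepsilon+\varepsilon_0}\bigr)\leq C_L'\,h^{-(L-2)(\beta+\varepsilon)-\varepsilon},
\end{equation*}
after absorbing $\varepsilon$-losses by choosing the internal $\varepsilon$ parameter in Lemma \ref{lem:DerivFlot} sufficiently small compared to the $\varepsilon$ appearing in the statement. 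This gives (\ref{eq:BorneCL}); fixing $\mathfrak{d}$ small enough ensures $t\leq\mathfrak{d}|\log h|$ throughout.

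The main obstacle is the bookkeeping in the Faà di Bruno expansion together with the inductive control of $\partial_x^{\beta_j}y_\delta^{-t}$: one must verify that the exponential blow-ups built into Lemma \ref{lem:DerivFlot} (absorbed in the $h^{-\varepsilon}$ and $h^{-(\beta+\varepsilon)(k+1)}$ factors when $t\leq\mathfrak{d}|\log h|$) do not accumulate beyond the single worst-case factor $g_{\rho,L-1,h}(t)$, and that the expanding character of $\partial_y x_\delta^t|_{T\Lambda_0}$ — not just boundedness — is what keeps the derivatives of the inverse $y_\delta^{-t}$ from introducing extra $e^{C t}=h^{-C\mathfrak{d}}$ losses. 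Choosing $\mathfrak{d}$ small compared to $\varepsilon$ (depending on $L$) is what finally closes the estimate.
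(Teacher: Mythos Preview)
Your approach is essentially the paper's: it too writes $d_x\phi_{t,\delta}$ via (\ref{eq:ExprPhiG}) as a composition of $\Phi_\delta^t\!\restriction_{\Lambda_0}$ with $y_\delta^{-t}$, bounds the forward map by Lemma~\ref{lem:DerivFlot}, bounds the inverse via (\ref{eq:NonContract2}) together with a Faà--di--Bruno/inverse--function computation (packaged there as Lemma~\ref{lem:CLCompo} and Corollary~\ref{cor:CLInverse}), and then absorbs the accumulated $h^{-C\varepsilon}$ losses by shrinking $\mathfrak d$. One caveat: your parenthetical that $\|(\partial_y x_\delta^t)^{-1}\|$ is ``in fact exponentially small'' is not justified by (\ref{eq:NonContract2}), since the flow direction in $T\Lambda_0$ is neutral (only non-contracting); the paper accordingly only uses $\|(\partial_y x_\delta^t)^{-1}\|\le C c^{\,t}\le h^{-\varepsilon}$, which is all your argument actually needs.
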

Before proving the proposition, we shall prove two \red{results} concerning 
estimates on compositions of functions, which we will use several 
times in the sequel. 

\red{
\begin{lem}\label{lem:CLCompo}
Let $f_h: \R^d\supset \Omega \longrightarrow \R^{d'}$ and 
$g_h : \R^{d''} \supset \Omega' \longrightarrow \Omega$ be smooth 
functions.  Let $k\in \N$, and suppose that there exists $\sigma_1, \sigma_1', \sigma_2, \sigma_3 \geq 0$ 
with $\sigma_1+\sigma_2\leq \sigma_1'$ such that for all $1\leq k'\leq k$, we have
\begin{align*}
\|f_h\|_{\dot{C}^{k'}} &= O(h^{-\sigma_3 -(k'-1) \sigma_1})\\
\|g_h\|_{\dot{C}^{k'}}&= O(h^{-\sigma_2 - (k'-1) \sigma'_1}).
\end{align*}
We then have
\begin{equation*}
	\|f_h\circ g_h\|_{\dot{C}^k} = O(h^{- \sigma_3  -\sigma_2 -(k-1)\sigma'_1}).
\end{equation*} 
\end{lem}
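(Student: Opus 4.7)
The plan is to argue by induction on $k$, using Faà di Bruno's formula for the derivatives of a composition. The base case $k=1$ follows from the chain rule: $D(f_h\circ g_h)=(Df_h)(g_h)\cdot Dg_h$, so that $\|f_h\circ g_h\|_{C^1}\leq \|f_h\|_{C^0}+\|Df_h\|_\infty\cdot\|Dg_h\|_\infty$. Substituting $\|Df_h\|_\infty\leq \|f_h\|_{C^1}=O(h^{-\sigma_3})$ and $\|Dg_h\|_\infty\leq \|g_h\|_{C^1}=O(h^{-\sigma_2})+O(\|f_h\circ g_h\|_{C^0})=O(h^{-\sigma_2})+O(h^{-\sigma_3})$ yields the target $O(h^{-\sigma_3-\sigma_2})$, where the condition $\sigma_1+\sigma_2\geq \sigma_3$ is invoked to absorb the self-referential contribution of size $O(h^{-2\sigma_3})$ into the desired bound.

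For the inductive step, I will use Faà di Bruno's formula
\begin{equation*}
D^k(f_h\circ g_h)=\sum_{j=1}^{k}\sum_{\pi\in \Pi_{k,j}}(D^j f_h)(g_h)\cdot \prod_{B\in\pi}D^{|B|}g_h,
\end{equation*}
where $\Pi_{k,j}$ denotes the set of partitions of $\{1,\ldots,k\}$ into exactly $j$ nonempty blocks. Taking sup norms, this leads to
\begin{equation*}
\|f_h\circ g_h\|_{C^k}\leq C_k\sum_{j=1}^k \|f_h\|_{C^j}\sum_{\substack{\ell_1+\cdots+\ell_j=k\\ \ell_i\geq 1}}\prod_{i=1}^{j}\|g_h\|_{C^{\ell_i}}.
\end{equation*}
Into this I will plug the hypothesis $\|f_h\|_{C^j}=O(h^{-\sigma_3-(j-1)\sigma_1})$ together with, for each factor, either the ``principal'' bound $O(h^{-\sigma_2-(\ell_i-1)\sigma_1})$ or the induction hypothesis providing $O(\|f_h\circ g_h\|_{C^{\ell_i-1}})=O(h^{-\sigma_3-(\ell_i-1)\sigma_2-(\ell_i-2)\sigma_1})$.

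The dominant contribution arises from the unique partition with $j=k$ blocks of size $1$, when the principal part is chosen in every factor; it contributes exactly $h^{-\sigma_3-(k-1)\sigma_1}\cdot h^{-k\sigma_2}$, matching the target. All other admissible partitions and choices produce terms which must be verified to be no larger than this. The main obstacle is the combinatorial bookkeeping required to handle the case in which several factors $\|g_h\|_{C^{\ell_i}}$ are replaced by their induction-hypothesis component: each such replacement trades a factor $h^{-\sigma_2-(\ell_i-1)\sigma_1}$ for $h^{-\sigma_3-(\ell_i-1)\sigma_2-(\ell_i-2)\sigma_1}$, and the worst case (which turns out to be $\ell_i=1$) gives rise to exactly the linear inequality $\sigma_1+\sigma_2\geq \sigma_3$ assumed in the statement. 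I will therefore reduce the entire induction to verifying this single inequality, which is precisely the standing hypothesis, and conclude.
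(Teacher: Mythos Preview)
Your overall strategy (Faà di Bruno plus induction) is the natural one and is in the same spirit as the paper's recursive Leibniz-rule argument. However, there is a concrete mistake in the way you close both the base case and the $\ell_i=1$ branch of the inductive step.

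In the base case you obtain the contribution $O(h^{-2\sigma_3})$ and assert that the hypothesis $\sigma_1+\sigma_2\geq\sigma_3$ absorbs it into the target $O(h^{-\sigma_3-\sigma_2})$. That implication is false: $h^{-2\sigma_3}=O(h^{-\sigma_3-\sigma_2})$ is equivalent to $\sigma_3\leq\sigma_2$, which is strictly stronger than $\sigma_3\leq\sigma_1+\sigma_2$. The same error reappears in your Faà di Bruno bookkeeping: the formula $\|f_h\circ g_h\|_{C^{\ell_i-1}}=O(h^{-\sigma_3-(\ell_i-1)\sigma_2-(\ell_i-2)\sigma_1})$ is the induction hypothesis only for $\ell_i\geq 2$; at $\ell_i=1$ the available bound is $\|f_h\circ g_h\|_{C^0}\leq\|f_h\|_{C^0}=O(h^{-\sigma_3})$, not $O(h^{-\sigma_3+\sigma_1})$. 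With the correct bound, the $\ell_i=1$ replacement in the $j=k$ term produces exactly the uncontrolled $O(h^{-2\sigma_3})$ above. A one–line example ($d=d'=1$, $\sigma_1=\sigma_3=1$, $\sigma_2=0$, $f_h(x)=h^{-1}\sin x$, $g_h(x)=h^{-1}x$) shows that the lemma as literally stated actually fails for $k=1$.

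The paper's own proof sidesteps this by tacitly using $\|g_h\|_{C^1}=O(h^{-\sigma_2})$ (it writes ``we gain a factor $O(\|g_h\|_{C^1})=O(h^{-\sigma_2})$''), which amounts to dropping the self-referential term at $k=1$; this holds in every application in the paper. If you assume this as well, your Faà di Bruno argument does go through, but then the worst case is $\ell_i=2$, not $\ell_i=1$, and you must use the slack $(k-j)\sigma_2$ coming from the principal estimate (your write-up treats each replacement in isolation). With that accounting, each replacement costs at most $\sigma_3-\sigma_1$ while freeing at least $\sigma_2$ of slack, and the hypothesis $\sigma_3\leq\sigma_1+\sigma_2$ closes the induction.
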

}
\red{
\begin{proof}
We argue as at the end of the proof of Lemma \ref{lem:DerivFlot}.
If $\gamma$ is a multi-index of length $k\geq 1$, the multidimensional
F\`aa di Bruno formula \cite{Faa} implies that $\partial_{x}^\gamma (f_h\circ g_h)$ 
may be written as a finite sum of terms of the form
\begin{equation}\label{eq:Faa3}
C \left[\left(\partial_{x}^{\gamma'} f_h\right) \circ g_h \right] 
\times  \prod_{j=1}^{d}  \prod_{\vartheta\in \N^{d''} ; |\vartheta| 
\leq k}\left( \partial_{x}^{\vartheta} g_h^j )\right)^{n_{j,\vartheta}},
\end{equation}
\corM{for some $n_{j,\vartheta}\in\N$. Here, $g_h^j$, $j=1,\dots,d$, is the $j$-th 
component of the vector-valued function $g_h$.} Moreover, we recall that
\begin{align*}
k&=\sum_{j=1}^{d} \sum_{\vartheta\in \N^{d''} ;  |\vartheta| \leq k} |\vartheta| n_{j,\vartheta} \\
|\gamma'| &= \sum_{j=1}^{d} \sum_{\vartheta\in \N^{d''} ;  |\vartheta| \leq k}  n_{j,\vartheta}.
\end{align*}
In the end, each term gives a contribution that is bounded by 
$C h^{- \mathfrak{L}}$, with 
\begin{align*}
\mathfrak{L} &= \sigma_3 + (|\gamma'|-1) \sigma_1 + \sum_{j=1}^d  \sum_{\vartheta\in \N^{d''} ; |\vartheta| \leq k} n_{j,\vartheta} \left(\sigma_2 + (|\vartheta| -1) \sigma_1'\right)\\
&= \sigma_3 + (|\gamma'|-1) \sigma_1 + k \sigma_1'+ (\sigma_2 - \sigma'_1) |\gamma'|\\
&=  \sigma_3 +\sigma_2 + (|\gamma'|-1) \left(\sigma_1 + \sigma_2 - \sigma'_1 \right) + (k-1) \sigma_1'\\
&\leq  \sigma_3 +\sigma_2 + (k-1) \sigma_1',
\end{align*}
and the result follows.
\end{proof}
}
\red{
\begin{corollary}\label{cor:CLInverse}
Let $\psi_h : \R^d \supset \Omega \longrightarrow \Omega' \subset \R^d$ be 
a smooth diffeomorphism.  Let $\partial \psi_h$ denote the Jacobian matrix 
of $\psi_h$. Suppose that there exists $\sigma, \sigma', \sigma'' \geq 0$ 
with 
$\sigma''\geq \sigma'+ \sigma$
such that
\begin{equation}\label{eq:EstimDerivInverse_1}
\|(\partial \psi_h)^{-1}\|_{C^0} = O (h^{-\sigma'}),
\end{equation}
and for all $k\geq 1$, 
\begin{equation}\label{eq:EstimDerivInverse_0}
	\|\partial \psi_h\|_{\dot{C}^k} = O(h^{-\sigma - (k-1) \sigma''}).
\end{equation}
Then, we have, for all $k\geq 1$,
\begin{equation}\label{eq:EstimDerivInverse}
	\|(\psi_h)^{-1}\|_{\dot{C}^k} 
	= O(h^{-(k-1)(\sigma' + \sigma'') -  \sigma ' }).
\end{equation}
\end{corollary}
}
\corM{
\begin{proof}
1. We shall set $f_h= \left(\partial \psi_h\right)^{-1}$ and $g_h = \psi_h^{-1} $. 
We start by recalling that
\begin{equation}\label{eq:DerivInvers}
\partial \psi_h^{-1} = f_h \circ g_h,
\end{equation}
so 
\begin{equation}\label{eq:DerivInvers1}
\|g_h\|_{\dot{C}^k} \leq O(1) \|f_h \circ g_h\|_{C^{k-1}}.
\end{equation}
2. We begin by with estimating $\|f_h\|_{C^{k}}$. The case $k=0$ is 
given by \eqref{eq:EstimDerivInverse_1}. For the case $k\geq 1$, 
we first note by an induction argument that for any $\gamma\in \N^d$ 
with $|\gamma|\geq 1$ the derivative 
$\partial^\gamma f_h$ is given by a finite linear combination of terms 
of the form 
\begin{equation*}
	f_h (\partial^{\beta_1} f_h^{-1} ) f_h (\partial^{\beta_2} f_h^{-1} ) f_h \dots 
	f_h (\partial^{\beta_n} f_h^{-1} ) f_h =: T_{n,\beta},
\end{equation*}
where $\beta_j\in \N^d$, $j=1,\dots,n$, with $\sum_1^n \beta_j = \gamma$ and 
$1\leq n \leq |\gamma|$. By \eqref{eq:EstimDerivInverse_0}, \eqref{eq:EstimDerivInverse}, 
\begin{equation*}
	\sup_{\Omega'}\| T_{n,\beta}\| 
	\leq
	O(1)\|f_h\|^{n+1}_{C^0} \prod_1^n \|f_h^{-1}\|_{\dot{C}^{|\beta_j|}}
	\leq O(1)h^{-\sigma'(n+1)-\sigma n -\sum_1^n(|\beta_j|-1)\sigma''}.
\end{equation*}
Using that $\sigma'' \geq \sigma'+\sigma$ we get 
\begin{equation*}
	\sigma'(n+1)+\sigma n +\sum_1^n(|\beta_j|-1)\sigma'' 
	=\sigma' + n(\sigma'+\sigma - \sigma'') + |\gamma|\sigma''
	\leq \sigma' + |\gamma|\sigma''.
\end{equation*}
So 
\begin{equation}\label{eq:DerivInvers2}
	\|f_h\|_{C^{k}} = O(h^{-\sigma' - k\sigma''}).
\end{equation}
3. We deduce from \eqref{eq:DerivInvers}, \eqref{eq:DerivInvers1} that 
$\|g_h\|_{\dot{C}^1} = O(h^{-\sigma'})$. For $k\geq 2$, we use the splitting 
\begin{equation*}
	\|f_h \circ g_h\|_{C^{k-1}} \leq
	\|f_h \circ g_h\|_{C^{0}} + \|f_h \circ g_h\|_{\cdot{C}^{k-1}}.
\end{equation*}
We may then apply Lemma \ref{lem:CLCompo} 
recursively, with $\sigma_1 = \sigma''$, $\sigma_1'= \sigma' + \sigma''$, 
$\sigma_2 = \sigma'$ and $\sigma_3 = \sigma+\sigma'$. Thus,
\begin{align*}
	\|g_h\|_{\dot{C}^k} \leq O(1)\|f_h \circ g_h\|_{C^{k-1}}
	&= O(h^{-\sigma'}) + O(h^{-(\sigma'+\sigma'') - \sigma' - (k-2)(\sigma'+\sigma'')})\\
	&= O(h^{-\sigma'}) + O(h^{ - \sigma' - (k-1)(\sigma'+\sigma'')})
\end{align*}
and the result follows.
\end{proof}
}
We may now proceed with the proof of Proposition \ref{lem:ControlDeriv}.

\begin{proof}[Proof of Proposition \ref{lem:ControlDeriv}] 
Recall that equation \eqref{eq:eik1} implies that
\begin{equation}\label{eq:ExprPhiG}
(x,  d_x \wt{\phi}_{t,\delta}) 
= \wt{\Phi}_\delta^t 
	\left( 
		y^{-t}_{\delta, \Lambda_t} (x), 
		d_{y^{-t}_{\delta, \Lambda_t} (x)} \wt{\phi}_0
	\right),
\end{equation}
and \eqref{eq:BorneC1} directly follows from the fact that $\wt{\Phi}^t_\delta$ preserves the  energy 
layers $\wt{\mathcal{E}}_{\delta, \lambda}\subset T^* \wt{X}$ which are lifts of bounded 
energy layers in $T^*X$. Next, we modified our notation a bit: 
by adding $\Lambda_0$ and $\Lambda_t$ to the subscript we insist on the fact that 
$x^t_{\delta, \Lambda_0}:\pi_X(\Lambda_0)\to \pi_X(\Lambda_t)$ and 
$y^{-t}_{\delta, \Lambda_t}:\pi_X(\Lambda_t)\to\pi_X(\Lambda_0)$, respectively.
\par 
To estimate $\|\wt{\phi}_{t,\delta}\|_{\dot{C}^L(\wt{\mathcal{O}}_t)}$, we must first estimate the 
derivatives of $y^{-t}_{\delta, \Lambda_t}$. 
More precisely, \corM{for charts $\kappa_{k,\iota}$} as in \eqref{eq:sa8.3}, we define
\begin{align}\label{eq:localization1a}
x^t_{\delta, \Lambda_0, k,\iota, k',\iota'} 
	&= \kappa_{k, \iota} \circ x^t_{\delta, \Lambda_0} \circ 
		\left(\kappa_{k',\iota'}^{-1}\right) : 
		\kappa_{k',\iota'}(\widetilde{U}_{k',\iota'}\cap \wit{\cO}_0) 
		\subset \R^d \longrightarrow \kappa_{k,\iota}(\widetilde{U}_{k,\iota}\cap 
		\wit{\cO}_t )\subset \R^d\\
		\label{eq:localization1b}
y^{-t}_{\delta, \Lambda_t,  k,\iota, k',\iota'} 
	&= \kappa_{k',\iota'} \circ y^{-t}_{\delta, \Lambda_t} \circ 
	\left(\kappa_{k,\iota}^{-1}\right) :  \kappa_{k,\iota}(\widetilde{U}_{k,\iota} \cap 
		\wit{\cO}_t) \longrightarrow 	\kappa_{k',\iota'}(\widetilde{U}_{k',\iota'}\cap \wit{\cO}_0),
\end{align}
and we want to bound $\|y^{-t}_{\delta, \Lambda_t,k,\iota, k',\iota'} \|_{\corM{\dot{C}^L}}$.
\par 
First, using Lemma \ref{lem:DerivFlot} and the smoothness of $\wt{\phi}_0$, 
we see that $\corM{L\geq 1}$ there exists $C_L, \gd_L>0$ such that if 
$t\in [0, \gd_{L} |\log h|]$ we have
\begin{equation}\label{eq:CLg}
\begin{split}
\| x^t_{\delta, \Lambda_0, k,\iota, k',\iota'} \|_{\corM{\dot{C}^L}} 
&\leq 
C_Lh^{-\varepsilon} \left( 1+ h^{-(\beta+ \varepsilon) (L+1)} \delta \right)\\
&\leq 
\corM{
C_Lh^{-\varepsilon} \left( 1+ h^{-\varepsilon-(\beta+ \varepsilon) (L-1)} \right).}
\end{split}
\end{equation}
\corM{
In the last inequality, we used \eqref{eq:CondBeta}. Moreover, 
the constants independent of  $(k,\iota, k',\iota')$. In particular, 
we have $\|x^t_{\delta, \Lambda_0,k,\iota, k',\iota'}\|_{\dot{C}^{1}} 
= O(h^{-\varepsilon})$.}
\\
\par
Next, if $\partial_{\by} x^t_{\delta, \Lambda_0, k,\iota, k',\iota'}$ denotes the Jacobian 
matrix of $x^t_{\delta, \Lambda_0, k,\iota, k',\iota'}$ at $\by$, we claim that there exists 
$C, c>0$, independent of $k,\iota, k',\iota'$,  $t$ and $\by\in \kappa_{k',\iota'}
(\widetilde{U}_{k',\iota'}\cap \wit{\cO}_0)$, such that
\begin{equation}\label{eq:LowerJacobian}
\left\| \left( \partial x^t_{\delta, \Lambda_0,k,\iota, k',\iota'} \right)^{-1}(\by) \right\|
 \leq C c^{t}.
\end{equation}
Indeed, if $y\in \wit{\cO}_0$ and $u\in T_y \wit{\cO}_0$, let us write 
$w:= (u, d_y (d \wt{\phi}_0) u)\in T_{(y,  d_y \wt{\phi}_0)} T^*\wit{X}$. Then we 
know from (\ref{eq:NonContract2}) that $\|d_{(y, d_y\wt{\phi}_0)} 
\wt{\Phi}_\delta^t (w)\| \geq C c_0^t \|w\| \geq C c_0^t \|u\|$ for some $c_0>0$. 
Now, $d_{(y, d_y\wt{\phi}_0)} \wt{\Phi}_\delta^t (w)$ is a vector of 
$T_{\wt{\Phi}^t_\delta(y, d_y\wt{\phi}_0)}\wt{\Phi}_\delta^t (\Lambda_0)$, 
and we know that $\wt{\Phi}_\delta^t (\Lambda_0)$ is projectable, so that we also have 
$\|\pi_{T\wit{X}} d_{(y, d_y\wt{\phi}_0)} \wt{\Phi}_\delta^t (w)\| \geq C' c_0^t \|u\|$. 
Noting that, for any $\bw\in \R^d$, $(\partial_{\by} x^t_{\delta, \Lambda_0,k,\iota, k',\iota'})  
(\bw)$ can be identified with a $\pi_{T\wit{X}} d_{(y, d_y\wt{\phi}_0)} \wt{\Phi}_\delta^t (w)$ 
written in coordinates gives us equation (\ref{eq:LowerJacobian}).
\par 
We may thus apply Corollary \ref{cor:CLInverse} with \corM{
$\sigma = 2\varepsilon$, $\sigma' = \varepsilon$, $\sigma''= \beta + 3\varepsilon$,} 
and we deduce that, \MI{for all $L\geq 1$}
\begin{equation}\label{eq:BorneGInverse}
\begin{aligned}
\| y^{-t}_{\delta, \Lambda_t, k,\iota, k',\iota'} \|_{\MI{\dot{C}}^L} 
&\leq C_L h^{- \varepsilon \corM{- (L-1) (\beta+4\varepsilon)}}.
\end{aligned}
\end{equation}
with constants independent of $k,\iota, k',\iota'$.
\par

Coming back to (\ref{eq:ExprPhiG}), we may apply Lemma \ref{lem:CLCompo} 
(along with Lemma \ref{lem:DerivFlot} \corM{and \eqref{eq:CondBeta}}) with
$\sigma_1= \beta +  \MI{\varepsilon}$,  \corM{$\sigma_1'= \beta+4\varepsilon$}, 
\corM{$\sigma_2 = \varepsilon$ and $\sigma_3= 2\varepsilon$}, to deduce that 
\MI{for all $L\geq 1$ }
$$\corM{\| d \wt{\phi}_{t,\delta} \|_{\dot{C}^L} = O(h^{-3 \varepsilon -  (L-1) (\beta + 4\varepsilon) })}.$$
We deduce the result by taking $\varepsilon$ smaller.
\end{proof}
The same proof applies to the phases $\wt{\phi}_{t,t',\delta}$ 
introduced in \eqref{eq:eik0.0}.  In the sequel, we will only 
need the fact that for any $\varepsilon>0$, there exists $C, \gd>0$ 
such that,  if $0\leq t,t' \leq \gd |\log h|$ we have 
\corM{
\begin{equation}\label{eq:C2Perturb}
\|\wt{\phi}_{t,t',\delta}\|_{C^2(\wt{\cO}_{t,t',\delta})} \leq C h^{-\varepsilon}.
\end{equation}
}
\subsection{Solving the transport equations}\label{sec:SolvTransport}
In this section, we follow a general approach to solving the transport 
equations, see for instance \cite{AN}. From Proposition \ref{Prop:OnTheCover2}
we know that the transported Lagrangian manifold $\wt{\Phi}^t_\delta(\Lambda_0)$ 
\eqref{eq:eik0.0} is projectable for all times $0\leq t \leq \gd |\log h|$ for 
some $\gd >0$. Hence, the flow \eqref{eq:eik0} induces a flow (defined only for 
times $0\leq t\leq \gd  |\log h|$)
\begin{equation}\label{eq:transEq1}
	x^t_{\delta,\Lambda_s} : 
	\wt{\mathcal{O}}_{s,\delta}
	\ni y\mapsto 
	x_{\delta,\Lambda_s}^t(y) \in 
	\wt{\mathcal{O}}_{t+s,\delta}, \quad s\geq 0.
\end{equation}
This flow has the property that 
$x^t_{\delta,\Lambda_{s+\tau}} \circ x^\tau_{\delta,\Lambda_s} 
= x^{t+\tau}_{\delta,\Lambda_s}$. In view of the discussion after 
\eqref{eq:eik0.0}, we write for the inverse of $x^t_{\delta,\Lambda_s}$
\begin{equation}\label{eq:transEq1b}
	y^{-t}_{\delta,\Lambda_{s+t}} : 
	\wt{\mathcal{O}}_{s+t,\delta}
	\ni x\mapsto 
	y_{\delta,\Lambda_{s+t}}^{-t}(x) \in 
	\wt{\mathcal{O}}_{s,\delta}.
\end{equation}
We now estimate the derivatives of  $x^t_{\delta,\Lambda_s}$ and 
$y^{-t}_{\delta,\Lambda_{s+t}}$. To this end, we introduce 
$x^t_{\delta, \Lambda_s,  k,\iota, k',\iota'}$ and 
$y^{-t}_{\delta, \Lambda_{t+s},  k,\iota, k',\iota'}$ just as in 
(\ref{eq:localization1a}, \ref{eq:localization1b}).
\begin{lem}\label{lem:DerivativesOfFlow1}
Suppose that $\beta, \delta, \varepsilon_0$ are as in 
(\ref{eq:CondBeta}). We have that for any $0<\varepsilon<\varepsilon_0$, 
there exist $\mathfrak{d}>0$, and $h_0>0$ such that the following holds: 
for all $L\in \N$,  there exists a constant $C_L>0$ such that 
for all $h\in (0,h_0]$ and all $s,t \in [0,\mathfrak{d}|\log h|]$, with 
$s+t\leq \mathfrak{d}|\log h|$, we have for all $k,\iota, k',\iota'$ and all $L\geq 1$
\begin{equation}\label{eq:EstimateFlow1b}
	\|x^t_{\delta,\Lambda_s,k,\iota, k',\iota'}\|_{C^L}, 
	\|y^{-t}_{\delta,\Lambda_{s+t},k,\iota, k',\iota'} \|_{C^L} 
	\leq C_L(h^{-(L-1)(\beta +\varepsilon) - \varepsilon }),
\end{equation}
with constants independent of $k,\iota, k',\iota'$.

\end{lem}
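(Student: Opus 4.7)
The plan is to obtain the bounds by expressing $x^t_{\delta,\Lambda_s}$ as a composition of three maps with controlled $C^L$ norms and then invoking Lemma \ref{lem:CLCompo}. Namely, since $\Phi_\delta^s(\Lambda_0)=\Lambda_s$ is projectable (Proposition \ref{Prop:OnTheCover2}), I can write
\begin{equation*}
x^t_{\delta,\Lambda_s}(y) = \pi_X\circ \Phi^t_\delta\bigl(y, d_y\phi_{s,\delta}(y)\bigr),
\end{equation*}
which in coordinates becomes $x^t_{\delta,\Lambda_s,k,\iota,k',\iota'} = \pi\circ\boldsymbol{\Phi}^t_{\delta,j,\iota}\circ\eta_s$, where $\eta_s(y)=(y,d_y\phi_{s,\delta}(y))$. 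Proposition \ref{lem:ControlDeriv} gives $\|\eta_s\|_{C^L}\leq C_L h^{-(L-1)(\beta+\varepsilon)-\varepsilon}$ for all $L\geq 1$ (the non-trivial derivatives coming from $d\phi_{s,\delta}$), while Lemma \ref{lem:DerivFlot} combined with Remark \ref{rem:DerivCover} and Hypothesis \ref{Hyp:BetaDelta} gives $\|\boldsymbol{\Phi}^t_{\delta,j,\iota}\|_{C^L}\leq C_L h^{-(L-1)(\beta+\varepsilon)-\varepsilon}$ (after absorbing the term $h^{-(\beta+\varepsilon)(L+1)}\delta$ via \eqref{eq:CondBeta} at the cost of enlarging $\varepsilon$ slightly). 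Projection to $X$ is just a smooth coordinate map, so it does not spoil the estimates.

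Applying Lemma \ref{lem:CLCompo} with parameters $\sigma_1=\beta+\varepsilon$, $\sigma_2=\varepsilon$, $\sigma_3=\varepsilon$ then yields
\begin{equation*}
\|x^t_{\delta,\Lambda_s,k,\iota,k',\iota'}\|_{C^L} = O\!\left(h^{-\varepsilon - L\varepsilon - (L-1)(\beta+\varepsilon)}\right),
\end{equation*}
which, after replacing $\varepsilon$ by a smaller quantity at the very beginning of the argument, gives the advertised bound. The uniformity in $(k,\iota,k',\iota')$ follows since only finitely many projected charts appear and the lifted charts are isometric copies of them.

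For the inverse map $y^{-t}_{\delta,\Lambda_{s+t}}$, I apply Corollary \ref{cor:CLInverse} to $\psi_h = x^t_{\delta,\Lambda_s,k,\iota,k',\iota'}$. The key input is a $C^0$ bound on the inverse Jacobian, for which I repeat the argument used in the proof of Proposition \ref{lem:ControlDeriv}: for $y\in \cO_{s,\delta}$ and $u\in T_y\cO_{s,\delta}$, the vector $w=(u,(d^2\phi_{s,\delta})u)$ belongs to $T_{(y,d_y\phi_{s,\delta})}\Lambda_s$; Corollary \ref{Cor:EtaInstable} (whose hypotheses are preserved along $[0,\mathfrak{d}|\log h|]$ by Proposition \ref{Prop:OnTheCover2} and the stability Lemma \ref{Lem:EtaInstable}) then shows that $\|d\Phi^t_\delta(w)\|\geq C c^t \|u\|$. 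Projectability of $\Phi^t_\delta(\Lambda_s)$ turns this into a lower bound on $\|\partial x^t_{\delta,\Lambda_s,k,\iota,k',\iota'}\,u\|$, so $\|(\partial x^t)^{-1}\|_{C^0}=O(1)$, i.e.\ $\sigma'=0$. Plugging $\sigma=\varepsilon$, $\sigma''=\beta+\varepsilon$, $\sigma'=0$ into Corollary \ref{cor:CLInverse} gives $\|y^{-t}_{\delta,\Lambda_{s+t},k,\iota,k',\iota'}\|_{C^L}=O(h^{-(L-1)(\beta+3\varepsilon)})$, which is the desired bound upon shrinking $\varepsilon$.

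The main subtlety, and therefore the step to execute with care, is Step 2 — the passage from the somewhat implicit estimate of Lemma \ref{lem:DerivFlot} (which mixes a deterministic contribution $h^{-\varepsilon}$ and a $\delta$-dependent contribution $h^{-\varepsilon-(\beta+\varepsilon)(L+1)}\delta$) to the clean form $O(h^{-(L-1)(\beta+\varepsilon)-\varepsilon})$ required to feed Lemma \ref{lem:CLCompo}. This reduction is exactly where Hypothesis \ref{Hyp:BetaDelta} enters in an essential way: \eqref{eq:CondBeta} gives $\delta\leq h^{2\beta+\varepsilon_0}$, so the $\delta$-term contributes at most $h^{-(\beta+\varepsilon)(L-1)+(\varepsilon_0-C\varepsilon)}$, which is absorbable in the desired form provided $\varepsilon$ is chosen smaller than a fixed fraction of $\varepsilon_0$. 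Everything else is routine bookkeeping on the composition of seminorm estimates.
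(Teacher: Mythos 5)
Your argument is correct, but it takes a genuinely different route from the paper's own (much shorter) proof. The paper simply observes the group identities $x^t_{\delta,\Lambda_s}=x^{t+s}_{\delta,\Lambda_0}\circ y^{-s}_{\delta,\Lambda_s}$ and $y^{-t}_{\delta,\Lambda_{s+t}}=x^{s}_{\delta,\Lambda_0}\circ y^{-(s+t)}_{\delta,\Lambda_{s+t}}$ and feeds the estimates already obtained in the proof of Proposition \ref{lem:ControlDeriv} --- namely \eqref{eq:CLg}, \eqref{eq:BorneGInverseC1} and \eqref{eq:BorneGInverse} --- into Lemma \ref{lem:CLCompo}; in particular the lower bound on the inverse Jacobian is only ever needed for the initial Lagrangian $\Lambda_0$, where it was proved in \eqref{eq:LowerJacobian}. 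You instead rerun the Proposition \ref{lem:ControlDeriv} argument with $\Lambda_s$ as initial manifold: you write $x^t_{\delta,\Lambda_s}=\pi_X\circ\Phi^t_\delta\circ\eta_s$ with $\eta_s(y)=(y,d_y\phi_{s,\delta}(y))$, control $\eta_s$ by \eqref{eq:BorneC1}--\eqref{eq:BorneCL}, control the flow by Lemma \ref{lem:DerivFlot} together with \eqref{eq:CondBeta}, compose via Lemma \ref{lem:CLCompo}, and invert via Corollary \ref{cor:CLInverse} using the non-contraction estimate \eqref{eq:NonContract2} on $\Lambda_s$. This buys a self-contained proof, at the price of having to re-justify the instability hypotheses along the evolved manifold (which you do through Proposition \ref{Prop:LagInitiale} and Corollary \ref{Cor:EtaInstable}) --- a verification the paper's composition trick avoids entirely. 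Your exponent bookkeeping (absorbing the $\delta h^{-(\beta+\varepsilon)(L+1)}$ term via \eqref{eq:CondBeta} at the cost of requiring $\varepsilon$ to be a fixed fraction of $\varepsilon_0$, then shrinking $\varepsilon$ at the outset) is consistent with the paper's conventions and gives the stated bound.

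One small overstatement to fix: from \eqref{eq:NonContract2} you only obtain $\|d\Phi^t_\delta(w)\|\geq Cc^t\|u\|$ with an unspecified constant $c\in(0,1]$, so the inverse-Jacobian bound you are entitled to cite is $\|(\partial x^t_{\delta,\Lambda_s})^{-1}\|_{C^0}=O(c^{-t})=O(h^{-\varepsilon})$ once $\mathfrak d$ is chosen small (this is exactly how \eqref{eq:LowerJacobian} is exploited in the paper, with $\sigma'=\varepsilon$), not $O(1)$ as you assert. This is harmless: taking $\sigma=\sigma'=\varepsilon$ and $\sigma''=\beta+\varepsilon$ in Corollary \ref{cor:CLInverse} gives $O(h^{-(L-1)(\beta+4\varepsilon)-\varepsilon})$, which still yields \eqref{eq:EstimateFlow1b} after the same initial shrinking of $\varepsilon$.
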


\begin{proof}
	Observing that $x^t_{\delta,\Lambda_s} = 
	x^{t+s}_{\delta,\Lambda_0}\circ y^{-s}_{\delta,\Lambda_s}$ and 
	$y^{-t}_{\delta,\Lambda_{s+t}} = 
	x^{s}_{\delta,\Lambda_0}\circ y^{-(s+t)}_{\delta,\Lambda_{s+t}}$, 
	the result follows by applying Lemma \ref{lem:CLCompo} 
	to the estimates \eqref{eq:CLg}, and 
	\eqref{eq:BorneGInverse}, \corM{and then taking $\varepsilon>0$ 
	smaller.}
\end{proof}
\begin{rem}\label{rem:Cut-off_est1}
Let $0<\varepsilon<\varepsilon_0$. Applying Lemma \ref{lem:CLCompo} to \eqref{eq:CutOfffun2} with \MI{$\sigma_1=0$},
$\sigma'_1=(\beta+\varepsilon)$, $\sigma_2=\varepsilon$, $\sigma_3=0$, 
yields that there exist $\mathfrak{d}>0$, and $h_0>0$ such that 
for all $L\in \N$, there exists a constant $C_L>0$ such that 
for all $h\in (0,h_0]$ and all $s \in [0,\mathfrak{d}|\log h|]$,
\begin{equation}\label{eq:SetInclusion_Cut-off2}
	\|\psi_t\|_{C^L(\wt{\mathcal{O}}_{t,\delta})},
	\|\psi_{n,t}\|_{C^L(\wt{\mathcal{O}}_{t,\delta})}
	\leq C_L(h^{-(L-1)(\beta+\varepsilon)-\varepsilon)}),
	\quad L\geq 1. 
\end{equation}
When $L=0$ we trivially obtain a uniform bound of order $O(1)$ since 
$|\psi_t|,|\psi_{n,t}|\leq 1$, see the paragraph above \eqref{eq:CutOfffun1}. 
\end{rem}

For $0\leq t\leq c|\log h|$ and $s\geq 0$ (cf. \eqref{eq:transEq1}), 
we introduce the operator $T^t_{\delta,\Lambda_s}$, mapping functions $f$ on 
$\wt{\mathcal{O}}_{s,\delta}=\pi_{\widetilde{X}} \Lambda_s$ into functions on 
$\wt{\mathcal{O}}_{t+s,\delta}=\pi_{\widetilde{X}} \Lambda_{t+s}$, defined by
\begin{equation}\label{eq:homSolTrans}
	(T^t_{\delta,\Lambda_s} f)(x) 
	:= f\circ y^{-t}_{\delta,\Lambda_{s+t}}(x) 
	\left( J^{-t}_{\delta,\Lambda_{s+t}}(x)\right)^{1/2},
\end{equation}
where $f$ is a smooth function on $\wt{\mathcal{O}}_{s, \delta}$ and 
$J^{-t}_{\delta,\Lambda_{s+t}}(x)$ is the inverse of the Jacobian of the 
map $x^t_{\delta,\Lambda_{s+t}}$ at the point $x$ with respect to the 
Riemannian volume on $\widetilde{X}$. In local coordinates we find that 
\begin{equation}\label{eq:homSolTrans.1}
	J^{-t}_{\delta,\Lambda_{s+t}}(x)
	=
	\frac{\sqrt{\det \wh{g}(y^{-t}_{\delta,\Lambda_{s+t}}(x))}
	\bigg| \det \frac{\partial y^{-t}_{\delta,\Lambda_{s+t}}}{\partial x}(x)\bigg|}
	{ \sqrt{\det g(x)}},
\end{equation}
\red{
defined on $\wt{\mathcal{O}}_{s+t, \delta}$. Here, $g(x)$ is the metric 
expressed in local coordinates $x$ and $\wh{g}(y)$ is the metric  
expressed in local coordinates $y$.}
\red{
\begin{rem}
	The operator 
	$T^t_{\delta,\Lambda_s}:L^2(\wt{\mathcal{O}}_{s,\delta})\to L^2(\wt{\mathcal{O}}_{t+s,\delta})$ 
	is an isometry. 
\end{rem}
}
\begin{prop}\label{lem:Jacobian}
The quantity $J^{-t}_{\delta,\Lambda_{s+t}}(x)$ defined in 
\eqref{eq:homSolTrans.1} satisfies 
\begin{equation}\label{eq:transEq3}
	J^{-t}_{\delta,\Lambda_{s+t}}(x) = 
	\exp\left\{
	\int_0^{-t}
	\left(\Delta_g\wt{\phi}_{s+\tau+t,\delta}(y_{\delta,\Lambda_{s+t}}^{\tau}(x))
	+\delta\, \mathrm{div}_g(H^1_{\wt{\phi}_{s+\tau+t,\delta},\wit{q}_\omega})(y_{\delta,\Lambda_{s+t}}^{\tau}(x))
	\right)
	d\tau
\right\}.
\end{equation}
\par 
Moreover, suppose that $\beta, \delta, \varepsilon_0$ are as in 
(\ref{eq:CondBeta}). For any $0<\varepsilon<\varepsilon_0$, there exist 
$\mathfrak{d}>0$, and $h_0>0$ such that the following holds: 
for all $L\in \N$, there exists a constant $C_L>0$ such that 
for all $h\in (0,h_0]$ and all $s,t \in [0,\mathfrak{d}|\log h|]$, 
with $s+t\leq \mathfrak{d}|\log h|$, we have that
\begin{equation}\label{eq:transEq3b}
	\|\log J^{-t}_{\delta,\Lambda_{s+t}}\|_{C^L(\wt{\mathcal{O}}_{s+t,\delta})} \leq 
	C_L  h^{-\varepsilon-L(\beta+\varepsilon)}.
\end{equation}
Here, the $C^L$ norms are to be understood as in \eqref{eq:sa8.1}
\end{prop}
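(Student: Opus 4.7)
The map $F^\tau := x^\tau_{\delta,\Lambda_s}$ is the flow on $\wt{X}$ generated by the time-dependent vector field
\[
V_{r,\delta}(x) := (\partial_\xi p)(x, d_x\phi_{r,\delta};\delta),\qquad r = s+\tau,
\]
as follows from the Hamilton equations \eqref{eq:HamiltonEq} combined with the identification $\xi^\tau_\delta(y) = d_{x^\tau_\delta(y)}\phi_{\tau,\delta}$ of \eqref{eq:eik1}. Using $p(x,\xi;\delta) = \tfrac{1}{2}|\xi|_x^2 + \delta q_\omega(x,\xi)$, a direct coordinate computation (the first term coming from $\partial_\xi(\tfrac{1}{2}|\xi|_x^2)|_{\xi = d\phi}$ being the $g$-gradient of $\phi_{r,\delta}$) yields
\[
\mathrm{div}_g V_{r,\delta}(x) = \Delta_g\phi_{r,\delta}(x) + \delta\,\mathrm{div}_g H^1_{\phi_{r,\delta},q_\omega}(x).
\]
By Liouville's theorem for a time-dependent flow on a Riemannian manifold, $\frac{d}{d\tau}\log[dv_g(F^\tau(y))/dv_g(y)] = (\mathrm{div}_g V_{s+\tau,\delta})(F^\tau(y))$. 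Setting $y = y^{-t}_{\delta,\Lambda_{s+t}}(x)$ and using $J^{-t}_{\delta,\Lambda_{s+t}}(x) = dv_g(y)/dv_g(F^t(y))$, I would integrate from $\tau = 0$ to $\tau = t$ and then change variable $\tau = \sigma + t$; the identity $F^{\sigma+t}(y^{-t}(x)) = F^\sigma(x) = y^\sigma_{\delta,\Lambda_{s+t}}(x)$ for $\sigma \in [-t,0]$ directly produces \eqref{eq:transEq3}.

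\textbf{Step 2 ($C^L$ bound on the integrand).} In view of \eqref{eq:transEq3} it suffices to bound the $C^L$ norm of the integrand uniformly in $\tau$ and then multiply by $|t| = O(|\log h|)$. Proposition \ref{lem:ControlDeriv} directly yields $\|\Delta_g\phi_{r,\delta}\|_{C^L} = O(h^{-L(\beta+\varepsilon)-\varepsilon})$, since this involves $\|\phi\|_{C^{L+2}}$. For the perturbative term, in local coordinates $\mathrm{div}_g H^1_{\phi,q_\omega}(x)$ is a sum of terms $(\partial_x^\alpha \partial_\xi^\gamma q_\omega)(x, d_x\phi)$ with $|\alpha|+|\gamma| = 2$ together with one term of the form $(\partial^2_{\xi\xi}q_\omega)(x,d_x\phi)\cdot\partial^2\phi(x)$, multiplied by smooth metric factors. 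Each additional differentiation contributes $h^{-\beta}$ when it falls on $q_\omega$ or at most $h^{-(\beta+\varepsilon)}$ when it falls on $d_x\phi$ at order $\geq 2$; after $L$ such differentiations a crude Leibniz/Fa\`a di Bruno expansion gives $\|\mathrm{div}_g H^1_{\phi,q_\omega}\|_{C^L} = O(h^{-2\beta-\varepsilon-L(\beta+\varepsilon)})$. Absorbing the prefactor $\delta$ via \eqref{eq:CondBeta}, which provides $\delta h^{-2\beta} \leq h^{\varepsilon_0}$, then gives $\|\delta\,\mathrm{div}_g H^1_{\phi,q_\omega}\|_{C^L} = O(h^{-L(\beta+\varepsilon)-\varepsilon})$ after a harmless relabeling of $\varepsilon$.

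\textbf{Step 3 (composition, integration, main difficulty).} To handle the composition with the change of variables $y^\tau_{\delta,\Lambda_{s+t}}$, I would apply Lemma \ref{lem:CLCompo} with $\sigma_1 = \beta + \varepsilon$ and $\sigma_2 = \varepsilon$ (using $\|y^\tau_{\delta,\Lambda_{s+t}}\|_{C^k} = O(h^{-(k-1)(\beta+\varepsilon)-\varepsilon})$ from Lemma \ref{lem:DerivativesOfFlow1}) and $\sigma_3 = \beta + 2\varepsilon$ (from Step 2). The hypothesis $\sigma_1 + \sigma_2 \geq \sigma_3$ holds with equality, and the lemma yields $\|(\Delta_g\phi_{r,\delta} + \delta\,\mathrm{div}_g H^1_{\phi_{r,\delta},q_\omega})\circ y^\tau_{\delta,\Lambda_{s+t}}\|_{C^L} = O(h^{-L\beta-(2L+1)\varepsilon})$. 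Integrating over $\tau \in [0,-t]$ with $|t| \leq \mathfrak{d}|\log h|$ contributes at most an extra factor $|\log h| = O(h^{-\varepsilon})$, and choosing $\varepsilon$ sufficiently small at the outset (and relabeling at the end) recovers the desired bound $O(h^{-L(\beta+\varepsilon)-\varepsilon})$. The only genuine technical point is the bookkeeping of $h$-exponents: $\mathrm{div}_g H^1_{\phi,q_\omega}$ genuinely oscillates at the fine scale $h^\beta$ inherited from $q_\omega$, while the phase derivatives grow at the much milder rate $h^\varepsilon$, so it is precisely the smallness of $\delta$ from \eqref{eq:CondBeta} that renders the perturbative contribution comparable to---indeed dominated by---the unperturbed $\Delta_g\phi$ term.
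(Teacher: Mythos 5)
Your proposal is correct and follows essentially the paper's own route: the Liouville-theorem computation in Step 1 is the same identity the paper obtains from the transport equation \eqref{eq:JacExpress1} (i.e.\ \eqref{eq:0orderTPE6}) solved by Duhamel along the characteristics of $L$, since integrating the divergence of the Hamiltonian projection $(\partial_\xi p)(x,d_x\phi_{r,\delta};\delta)$ along the flow is exactly that solution. Steps 2--3 reproduce the paper's estimate with the same ingredients -- Proposition \ref{lem:ControlDeriv}, Lemma \ref{lem:DerivativesOfFlow1}, Lemma \ref{lem:CLCompo}, the absorption of $\delta$ via $\delta h^{-2\beta}\leq h^{\varepsilon_0}$ from \eqref{eq:CondBeta}, the harmless $|\log h|$ factor from the time integral, and a final shrinking/relabeling of $\varepsilon$ -- so there is nothing to add.
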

We postpone the proof of this Proposition to the end of this section. Before 
going on we remark that when $\delta=0$ the formula \eqref{eq:transEq3} has 
already been stated in \cite[Equation (3.9)]{AN}. Furthermore, notice that 
when $\wt{Q}_\omega$ is the operator of multiplication by $\wt{q}_\omega$, then 
$H^1_{\wt{\phi}_{t,\delta},\wt{q}_\omega}=0$, so the second integrand in \eqref{eq:transEq3} vanishes. 
Continuing, recall from the discussion after \eqref{eq:trans1} that 
$a(x;h)\sim a_0(x) +ha_1(x) + \dots$ where $a,a_n$ are smooth compactly 
supported maps on $\wt{\mathcal{O}}$. Furthermore, recall that the support of 
$a(\cdot;h)$ is assumed to be contained in a compact subset of $\wt{\mathcal{O}}$ 
independently of $h$. 
\begin{prop}\label{lem:TransportEqSol}
1. The $0^{th}$ order transport equation in \eqref{eq:WKB2} and in 
\eqref{eq:WKB2a} (depending on which case of $\wt{Q}_\omega$ we consider) is 
explicitly solved by
\begin{equation}\label{eq:transEq4}
	b_0(t,x;\delta) = (T^t_{\delta,\Lambda_0} a_0)(x), \quad t\geq 0, 
\end{equation}
with intial condition $b_0(0,x;\delta) = a_0(x)$. Furthermore, 
$\supp b_0(t,\cdot;\delta)\subset \supp \psi_{0,t}$. 
\\
\par
2. The higher order transport equations are solved iteratively for 
$n\geq 1$ by
\begin{equation}\label{eq:transEq5}
	b_n(t,x;\delta) =(T^t_{\delta,\Lambda_0} a_n)(x) 
	+\frac{i}{2}\int_0^t 
	\left(
	T^{t-s}_{\delta,\Lambda_{s}} \Delta b_{n-1}(s,\cdot\,;\delta)
	-2h^{-2}\delta T^{t-s}_{\delta,\Lambda_{s}} \psi_{n,s}\mathcal{R}_{\omega,s} b_{n-1}
	(s,\cdot\,;\delta)
	\right)\!(x) d s,
\end{equation}
with initial condition $b_n(0,x;\delta) = a_n(x)$. Furthermore, 
$\supp b_n(t,\cdot;\delta)\subset \supp \psi_{n,t}$. Additionally 
we have that when $\wt{Q}_\omega$ is a multiplication operator then 
$\mathcal{R}_{\omega,s}=0$ and 
$\supp b_n(t,\cdot;\delta)\subset \supp \psi_{0,t}$.
\end{prop}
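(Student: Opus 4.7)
The plan is to verify the explicit formulas \eqref{eq:transEq4} and \eqref{eq:transEq5} by direct differentiation, exploiting the semigroup property
\[
T^{t+\tau}_{\delta,\Lambda_s} = T^{\tau}_{\delta,\Lambda_{s+t}}\circ T^t_{\delta,\Lambda_s},
\]
which is an immediate consequence of the cocycle relation $x^{t+\tau}_{\delta,\Lambda_s} = x^\tau_{\delta,\Lambda_{s+t}}\circ x^t_{\delta,\Lambda_s}$ together with the multiplicativity of Jacobians under composition. This reduces part~1 of the proposition to computing the infinitesimal generator $\partial_\tau T^\tau_{\delta,\Lambda_s}|_{\tau=0^+}$ and matching it with the operator $\mathcal{L}$. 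Part~2 will then drop out of Duhamel's principle applied to the inhomogeneous transport equation $\partial_t b_n = \mathcal{L}b_n + S_n$, with $S_n$ denoting the source term in \eqref{eq:WKB2}.

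For the infinitesimal generator, I work in local coordinates and Taylor-expand \eqref{eq:homSolTrans} to first order in $\tau$. The Hamilton equations \eqref{eq:HamiltonEq} combined with \eqref{eq:eik1.1} give $\partial_\tau y^{-\tau}_{\delta,\Lambda_{s+\tau}}(x)|_{\tau=0^+} = -V(x)$, where $V$ is the tangent vector field at $x$ whose action on any test function $g$ is $Vg = \langle d_x\phi_{s,\delta}, d_x g\rangle_x + \delta\, H^1_{\phi_{s,\delta},q_\omega} g$; equivalently $V = \mathrm{grad}_g\phi_{s,\delta} + \delta H^1_{\phi_{s,\delta},q_\omega}$. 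Differentiating the representation \eqref{eq:transEq3} of Proposition \ref{lem:Jacobian} at $\tau = 0^+$ — the $\tau$-dependencies of the upper integration limit and of $\phi_{s+\tau,\delta}$ inside the integrand contribute only at order $\tau^2$ — yields
\[
\partial_\tau \log J^{-\tau}_{\delta,\Lambda_{s+\tau}}(x)\big|_{\tau=0^+} = -\Delta_g\phi_{s,\delta}(x) - \delta\,\mathrm{div}_g(H^1_{\phi_{s,\delta},q_\omega})(x) = -\mathrm{div}_g V(x).
\]
Combining the two expansions gives
\[
\partial_\tau (T^\tau_{\delta,\Lambda_s} g)(x)\big|_{\tau=0^+} = -Vg - \tfrac12 (\mathrm{div}_g V)\,g = \mathcal{L} g,
\]
which is exactly the generator in the transport equations \eqref{eq:WKB2}. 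Together with $T^0_{\delta,\Lambda_s} = \mathrm{Id}$, this establishes \eqref{eq:transEq4}, and Duhamel's formula then delivers \eqref{eq:transEq5}.

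The support statements follow from an induction on $n$ built on two observations: since $T^t_{\delta,\Lambda_s}$ is a pushforward along the diffeomorphism $x^t_{\delta,\Lambda_s}$, one has $\supp(T^t_{\delta,\Lambda_s}f)\subset x^t_{\delta,\Lambda_s}(\supp f)$; moreover iterating \eqref{eq:CutOfffun2} gives $\psi_{n,s+t} = \psi_{n,s}\circ y^{-t}_{\delta,\Lambda_{s+t}}$, so $\supp \psi_{n,s+t} = x^t_{\delta,\Lambda_s}(\supp \psi_{n,s})$. For $n=0$, assumption \eqref{hyp:compactSupport} and the nesting \eqref{eq:CutOfffun1} give $\supp a_0\subset K\subset \supp \psi_0$, whence $\supp b_0(t,\cdot)\subset \supp \psi_{0,t}$. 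For $n\geq 1$, the term $T^t_{\delta,\Lambda_0}a_n$ is treated identically; the integrand $\Delta b_{n-1}(s,\cdot)$ has by the inductive hypothesis and \eqref{eq:CutOfffun31} support in $\supp \psi_{n-1,s}\subset \supp \psi_{n,s}$, while the second integrand in \eqref{eq:transEq5} carries $\psi_{n,s}$ explicitly; after propagation by $T^{t-s}_{\delta,\Lambda_s}$ everything lies in $\supp \psi_{n,t}$. In the \ref{eq:Pot} case, $H^1_{\phi,q_\omega}\equiv 0$ and $\mathcal{R}_{\omega,s}\equiv 0$, so the second integrand vanishes and the induction never leaves $\supp \psi_{0,t}$.

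The only genuinely delicate step is the first-order expansion of $J^{-\tau}_{\delta,\Lambda_{s+\tau}}$ at $\tau=0^+$: both the integration window in \eqref{eq:transEq3} and the Lagrangian inside the integrand evolve with $\tau$, so one must verify that the corresponding contributions are $o(\tau)$ uniformly in $(s,x)$. The quantitative $C^L$ estimates of Proposition \ref{lem:ControlDeriv} on the phase $\phi_{t,\delta}$ and of Lemma \ref{lem:DerivativesOfFlow1} on the characteristic flow are precisely what make the Taylor expansion legitimate uniformly on the logarithmic time window $t\in[0,\mathfrak{d}|\log h|]$.
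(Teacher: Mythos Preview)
Your approach is correct and is essentially a repackaging of the paper's computation: both arguments verify that $T^t_{\delta,\Lambda_0}a_0$ solves the $0^{\text{th}}$-order transport equation by differentiating in $t$, and then invoke Duhamel for $n\geq 1$. The paper differentiates $b_0(t,x)=(T^t_{\delta,\Lambda_0}a_0)(x)$ directly at an arbitrary time $t$ in local coordinates, deriving along the way the identity \eqref{eq:0orderTPE6} for $(\partial_t+L)\log J^{-t}$; you instead exploit the cocycle relation to reduce to the generator at $\tau=0^+$, which is a cleaner organization of the same calculation.

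There is, however, a logical circularity in your citation of \eqref{eq:transEq3}. In the paper, the proof of Proposition~\ref{lem:Jacobian} (and in particular the formula \eqref{eq:transEq3}) is deferred until \emph{after} the proof of the present proposition, and it explicitly invokes \eqref{eq:0orderTPE6}, which is derived inside the proof of Proposition~\ref{lem:TransportEqSol}. So as the paper is structured, you cannot appeal to \eqref{eq:transEq3} here. The fix is easy: compute $\partial_\tau\log J^{-\tau}_{\delta,\Lambda_{s+\tau}}|_{\tau=0^+}$ directly from the local-coordinate expression \eqref{eq:homSolTrans.1} via Jacobi's formula $\partial_\tau\log|\det M|=\tr(M^{-1}\partial_\tau M)$, together with $\partial_\tau y^{-\tau}|_{\tau=0}=-\partial_\xi p(x,d_x\phi_{s,\delta})$. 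This yields $-\mathrm{div}_g V$ in one line and avoids any reference to \eqref{eq:transEq3}. Your support argument via $\psi_{n,t}=\psi_n\circ y^{-t}_{\delta,\Lambda_t}$ is in fact more transparent than the paper's appeal to Lemma~\ref{Lem:Gron}.
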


\begin{proof}[Proof of Proposition \ref{lem:TransportEqSol}] 
1. \red{The second assertion \eqref{eq:transEq5} follows immediately 
from the first by applying Duhamel's formula to \eqref{eq:WKB2}.}
\\
\par
2. We check that \eqref{eq:transEq4} solves the $0^{th}$ order transport 
equation in \eqref{eq:WKB2} in local coordinates. By continuity 
we can arrange to take the same local coordinates for a small open time interval 
$t\in ]t_0-\varepsilon_0,t_0+\varepsilon_0[\cap [0,+\infty[$ with $t_0\geq 0$ 
and $\varepsilon_0>0$. To ease the notation we will drop the subscripts of 
$x^t_{\delta,\Lambda_0}$, $y^{-t}_{\delta,\Lambda_{t}}$ and 
$J^{-t}_{\delta,\Lambda_{t}}(x)$. In the following computations if 
$f : \R^d\longrightarrow \R$, $\nabla_x f$ is to be understood as a 
$n \times 1$ matrix while $\nabla^\dagger_x f$ is its transpose 
(a $1 \times n$ matrix), and $\cdot$ denotes either the matrix product or the 
usual scalar product in $\R^d$ (which needs not coincide with the Riemannian 
scalar product $\langle \cdot , \cdot \rangle_x$).
\par
A direct computation yields 
\begin{equation}\label{eq:0orderTPE1}
	\partial_t b_0(t,x)
	= (J^{-t}(x))^{1/2}  \times \left(\nabla^\dagger_x a_0(x) \cdot \left(\partial_t y^{-t}(x)\right) \right)
	+ a_0(y^{-t}(x)) \times \partial_t(J^{-t}(x))^{1/2} , 
\end{equation}
and
\begin{equation}\label{eq:0orderTPE2}
	\nabla_xb_0(t,x)
	= (J^{-t}(x))^{1/2}  \times \left( \frac{\partial y^{-t}(x)}{\partial x}\right)^\dagger \cdot \nabla_x a_0(x)
	+ a_0(y^{-t}(x)) \nabla_x (J^{-t}(x))^{1/2} . 
\end{equation}
Applying $\frac{d}{dt}$ to the relation $x^t(y^{-t}(x))=x$ yields the identity
\begin{equation}\label{eq:0orderTPE3}
	(\partial_t x^t)(y^{-t}(x))=-\frac{\partial x^{t}}{\partial y}(y^{-t}(x)) 
		\cdot \partial_t y^{-t}(x).
\end{equation}
Since $\frac{\partial y^{-t}(x)}{\partial x}=(\frac{\partial x^{t}}{\partial y}(y^{-t}(x)) )^{-1}$, 
we get by combining (\ref{eq:0orderTPE1}--\ref{eq:0orderTPE3}) that
\begin{equation}\label{eq:0orderTPE4}
	\partial_t b_0(t,x)
	= -\nabla_x b_0(t,x)\cdot (\partial_t x^{t})(y^{-t}(x)) 
	+ \frac{b_0(t,x)}{2}
	\left(
		\partial_t
		+ (\partial_t x^{t})(y^{-t}(x)) \cdot \nabla_x
	\right)\log (J^{-t}(x)).
\end{equation}
Next, we focus on the second term on the right hand side of 
\eqref{eq:0orderTPE4}. Write $L:=(\partial_t x^{t})(y^{-t}(x)) \cdot \nabla_x$. 
By \eqref{eq:homSolTrans.1} we get that 
\begin{equation}\label{eq:0orderTPE5}
\begin{split}
	(\partial_t +L)\log (J^{-t}(x))
	&= \frac{1}{2}\tr \left[ 
		\corM{\wh{g}}(y^{-t}(x))^{-1} (\partial_t +L)\corM{\wh{g}}(y^{-t}(x))
	\right] \\
	&\phantom{=}
	+\tr \left[ 
		\left(\frac{\partial y^{-t}(x)}{\partial x}\right)^{-1} 
		(\partial_t +L)\left(\frac{\partial y^{-t}(x)}{\partial x}\right)
	\right]\\
	&\phantom{=} 
		- L \log \sqrt{\det g(x)}.
\end{split}
\end{equation}
Here in the traces the differential operator $\partial_t+L$ is applied 
componentwise to the matrices. 
\par
Using \eqref{eq:0orderTPE3}, 
a straightforward computation shows that 
\begin{equation*}
 	(\partial_t +L)\corM{\wh{g}}(y^{-t}(x))=0.
\end{equation*}
\par 
Turning to the second term on the right hand side of \eqref{eq:0orderTPE5}, 
we first note that \eqref{eq:HamiltonEq} implies that
\begin{equation}\label{eq:ReformulationClassicalDynamics}
(\partial_t x^{t})(y^{-t}(x)) = (d_{\xi}\wt{p})(x,d_x\wt{\phi}_{t,\delta};\delta).
\end{equation}
Using this fact along with \eqref{eq:0orderTPE3} and the 
symmetry of the Hessian (with respect to $x$) of $y^{-t}$, we get that 
\begin{equation*}
	\partial_t \left(\frac{\partial y^{-t}(x)}{\partial x}\right) 
	=- L \left(\frac{\partial y^{-t}(x)}{\partial x}\right) 
		- \left(\frac{\partial y^{-t}(x)}{\partial x}\right)  \cdot 
		M,
\end{equation*}
where $M$ is the $d\times d$ matrix whose entries are given by
\begin{equation*}
	M_{m,n}= \partial_{x_m} \big[(\partial_{\xi_n}\wt{p})
			(x,d_x\wt{\phi}_{t,\delta};\delta)\big].
\end{equation*}
Hence,
\begin{equation}\label{eq:0orderTPE6}
	(\partial_t +L)\log (J^{-t}(x))
	= 
	- \mathrm{tr}[M] 
		- L \log \sqrt{\det g(x)}.
\end{equation}
Using \eqref{eq:ReformulationClassicalDynamics}, \eqref{eq:Hamiltonian}, 
and plugging \eqref{eq:0orderTPE6} into \eqref{eq:0orderTPE4}, we get 
\begin{equation}\label{eq:0orderTPE7}
\begin{split}
\partial_t b_0(t,x)
	&=-\nabla_x b_0(t,x)\cdot(d_\xi \wt{p})(x,d_x\wt{\phi}_{t,\delta};\delta)
	-\frac{b_0(t,x)}{2}
		\nabla_x\cdot(d_\xi \wt{p})(x,d_x\wt{\phi}_{t,\delta};\delta) 
	\\
	&\phantom{=}
	-\frac{b_0(t,x)}{2} (d_\xi \wt{p})(x,d_x\wt{\phi}_{t,\delta};\delta)\cdot \left(\nabla_x
	\log\det\sqrt{\det g(x)}\right)	
	\\
	&= - \langle d_x \wt{\phi}_{t,\delta}, d_x b_0 \rangle_x
	-\frac{1}{2} b_0 \Delta_g \wt{\phi}_{t,\delta} 
	-\delta	\nabla_x b_0(t,x)\cdot (\nabla_\xi \wt{q}_\omega) (x,d_x\wt{\phi}_{t,\delta} )
	\\
	&\phantom{=}
	- \frac{\delta b_0(t,x)}{2}
	\left[((\nabla_\xi \wt{q}_\omega)(x,d_x\wt{\phi}_{t,\delta})\cdot\nabla_x)
	\log \sqrt{\det g(x)}
	+
	\nabla_x\cdot(\nabla_\xi \wt{q}_\omega)(x,d_x\wt{\phi}_{t,\delta}) 
	\right]
	\\
	&= - \langle d_x \wt{\phi}_{t,\delta}, d_x b_0 \rangle_x
	-\frac{1}{2} b_0 \Delta_g \wt{\phi}_{t,\delta} 
	-\delta	(H^1_{\wt{\phi}_{t,\delta},\wit{q}_\omega}b_0) (x) 
	- \frac{\delta}{2} \mathrm{div}_g (H^1_{\wt{\phi}_{t,\delta},\wit{q}_\omega})
	b_0.
\end{split}
\end{equation}
Hence, \eqref{eq:transEq4} solves the $0^{th}$ order transport equation in 
\eqref{eq:WKB2}. To end the proof of \eqref{eq:transEq4} we note that $b(0,x)=a_0(x)$. 
Finally,  the fact that $\supp b_0(t,\cdot;\delta)\subset \supp \psi_{0,t}$ 
follows from equation (\ref{eq:transEq4}) and Lemma \ref{Lem:Gron}.
\end{proof}

We end this section with the proof of Proposition \ref{lem:Jacobian}.
\begin{proof}[Proof of Proposition \ref{lem:Jacobian}] 
1. Recall the local coordinate representation \eqref{eq:homSolTrans.1} 
of $J^{-t}_{\delta,\Lambda_{s+t}}(x)$ and write 
$K(t,x)=\log J^{-t}_{\delta,\Lambda_{s+t}}(x)$.
\par 
First note that $K(0,x)=0$. We know from \eqref{eq:0orderTPE6} 
that 
\begin{equation}\label{eq:JacExpress1}
	(\partial_t +L)K(t,x)
	= 
	- \nabla_x \cdot \left(\nabla_\xi \wt{p}(x,d_x\wt{\phi}_{s+t,\delta};\delta)\right)
		- L \log \sqrt{\det g(x)}.
\end{equation}
Strictly speaking \eqref{eq:JacExpress1} has been proven in 
\eqref{eq:0orderTPE6} only for the case $s=0$, however,  
one can easily check that it also holds in the case $s\geq 0$. 
\par 
Recall from the discussion before \eqref{eq:0orderTPE4} 
that $L=(\partial_t x_{\delta,\Lambda_{s}}^{t})
(y_{\delta,\Lambda_{s+t}}^{-t}(x)) \cdot\nabla_x$. Hence, 
the homogeneous form of the equation \eqref{eq:JacExpress1} 
is solved by $(y_{\delta,\Lambda_{s+t}}^{-t})^*f$, $f\in C^1$. 
However, since $y_{\delta,\Lambda_{s}}^0(x)=x$, the initial 
condition $K(0,x)=0$ implies that the 
homogeneous solution is constantly equal to $0$. The unique 
solution to equation \eqref{eq:JacExpress1} with initial 
condition $K(0,x)=0$ can then be obtained by Duhamel's formula:
\begin{equation*}
\begin{split}
	K(t,x)
	&= 
	-\int_0^t (y_{\delta,\Lambda_{s+t}}^{\tau-t})^* 
	\left( \nabla_x \cdot (\nabla_\xi \wt{p})(x,d_x\wt{\phi}_{s+\tau,\delta};\delta)
	+ (\nabla_\xi \wt{p})(x,d_x\wt{\phi}_{s+\tau,\delta};\delta) \cdot\nabla_x
	\log \sqrt{\det g(x)} 
	\right)d\tau
	\\
	&= 
	\int_0^{-t}
	\left(\Delta_g\wt{\phi}_{s+\tau'+t,\delta}(y_{\delta,\Lambda_{s+t}}^{\tau'}(x))
	+ \delta\,\mathrm{div}_g(H^1_{\wt{\phi}_{s+\tau'+t,\delta},\wit{q}_\omega})(y_{\delta,\Lambda_{s+t}}^{\tau'}(x))
	\right)d\tau'.
\end{split}
\end{equation*}
In the last line we performed the linear change of variables 
$\tau = t +\tau'$ followed by a direct computation similar to 
the last line of \eqref{eq:0orderTPE7}. Taking the exponential of 
$K(t,x)$ implies \eqref{eq:transEq3}. 
\\
\par

2. Let us now turn to the regularity estimate. Let 
$\beta, \delta, \varepsilon_0$ be as in (\ref{eq:CondBeta}). 
Using Lemma \ref{lem:CLCompo}, Proposition \ref{lem:ControlDeriv}, 
Lemma \ref{lem:DerivativesOfFlow1} and \eqref{eq:PotentialDer}, we 
find that for any $0<\varepsilon<\varepsilon_0$, there exist 
$\mathfrak{d}>0$, and $h_0>0$ such that the following holds: 
for all $L\in \N$, there exists a constant $C_L>0$ such that 
for all $h\in (0,h_0]$ and all $s,t \in [0,\mathfrak{d}|\log h|]$, 
with $(s+t)\leq\mathfrak{d}|\log h|$ and all \red{$\tau'\in[-t,0]$}
\begin{equation}\label{eq:JacExpress2}
\begin{split}
	&\|(\Delta_g\wt{\phi}_{s+\tau'+t,\delta})\circ y_{\delta,\Lambda_{s+t}}^{\tau'} 
	\|_{C^L(\wt{\mathcal{O}}_{s+t,\delta})}
	=O_L(h^{-\varepsilon -L(\beta+\corM{2\varepsilon})}), \\
	&\|\delta (\nabla_\xi \wt{q}_\omega)(x,d_x\wt{\phi}_{s+\tau,\delta};\delta) 
	\circ 
	y_{\delta,\Lambda_{s+t}}^{\tau'}\|_{C^L(\wt{\mathcal{O}}_{s+t,\delta})}
	=O_L(\delta h^{-\beta\corM{-\varepsilon} -L(\beta+\corM{2\varepsilon})}), \\
	&\|\delta [\nabla_x (\nabla_\xi \wt{q}_\omega)(x,d_x\wt{\phi}_{s+\tau,\delta};\delta)]
	\circ 
	y_{\delta,\Lambda_{s+t}}^{\tau'}\|_{C^L(\wt{\mathcal{O}}_{s+t,\delta})}
	=O_L(\delta h^{-2\beta -\varepsilon -L(\beta+\corM{2\varepsilon})}), \\
	&\left\| (\nabla_x \log \sqrt{\det g} )
	\circ
	y_{\delta,\Lambda_{s+t}}^{\tau'}\right\|_{C^L(\wt{\mathcal{O}}_{s+t,\delta})}
	=
	\begin{cases}
	O(1), ~~ L=0,\\
	O_L(h^{\beta - L(\beta+\MI{\varepsilon})}), ~~ L\geq 1,
	\end{cases}
\end{split}
\end{equation}
where in the last line we used as well that the derivatives of 
$\log\det g(x)$ are uniformly bounded on $\wt{\mathcal{O}}_{s+t,\delta}$ 
with respect to $h$ since $g$ is the lifted metric from the underlying 
compact manifold. 
\par
The product rule shows that 
\begin{equation}\label{eq:JacExpress3}
\delta \left\| \left[ 
		(\nabla_\xi \wt{q}_\omega)(x,d_x\wt{\phi}_{s+\tau,\delta};\delta) \cdot
		(\nabla_x \log \sqrt{\det g} )
		\right]\circ
		y_{\delta,\Lambda_{s+t}}^{\tau'}\right\|_{C^L(\wt{\mathcal{O}}_{s+t,\delta})}
		=O_L\!\left(\delta h^{-\beta \corM{-\varepsilon} -L(\beta+\corM{2\varepsilon})}\right).
\end{equation}
After potentially increasing the constant in the error estimate, 
we see that \eqref{eq:JacExpress3} is bounded by the right hand \corM{side} 
of the third line of \eqref{eq:JacExpress2}. By \eqref{eq:CondBeta} 
we see the \corM{third} line in \eqref{eq:JacExpress2} 
is bounded by $O_L(h^{-\varepsilon -L(\beta+\corM{2\varepsilon})})$. Thus, 
\corM{by taking $\varepsilon$ smaller,} 
\begin{equation*}
	\|K(t,\cdot)\|_{C^L(\wt{\mathcal{O}}_{s+t,\delta})} \leq C_L |\log h| 
 	h^{-\varepsilon-L(\beta+\MI{\varepsilon})}.
\end{equation*}
The estimate \eqref{eq:transEq3b} follows.
\end{proof}
\section{$C^L$ and Sobolev estimates of the WKB solution on $\wt{X}$}\label{sec:RegularityOfWKB}
In all the sequel,  we will suppose that $\beta, \delta, \varepsilon_0$ are 
as in Hypothesis \ref{Hyp:BetaDelta}. In this section we continue working 
with a smooth connected compact Riemannian manifold $X$ and its universal 
cover $\wit{X}$, see the beginning of Section \ref{sec:WKB}. We will 
largely use the notations from section \ref{subsec:LiftPseudo}.

\subsection{\MI{Estimates on the leading term in the WKB Ansatz}}
\MI{We shall start with estimates involving the term \corM{$b_0$ 
\eqref{eq:transEq4}} in the WKB expansion \corM{\eqref{eq:WKB_Ansatz}}.}

\begin{lem}\label{cor:DecayAmplitude}
\corM{Recall the notation introduced in \eqref{eq:eik0.0} and 
the subsequent paragraph.}
There exists $c,C>0$ such that, if $\gd>0$ and $h_0>0$ 
is small enough, we have for all $h\in ]0,h_0]$ and 
all $0\leq t \leq \gd |\log h|$
\begin{equation*}
	\sup_{x\in \corM{\wit{\mathcal{O}}_{t,0}}} |b_0(t,x;0)| \leq C\e^{-ct}.
\end{equation*}
\end{lem}
\begin{proof}
%
By (\ref{eq:transEq1b}), (\ref{eq:homSolTrans}) and \eqref{eq:transEq4} 
we have 
$$b_0(t,x;0) = a_0  \circ y^{-t}_{0,\Lambda_{t}}(x)  
\left( J^{-t}_{0,\Lambda_{t}}(x)\right)^{1/2},
$$
where $J^{-t}_{0,\Lambda_{t}}$ \red{is the Jacobian determinant of 
$y^{-t}_{0,\Lambda_{t}}$, or, in other words, the inverse of the 
Jacobian determinant of $x^t_{0,\Lambda_0}$, as in \eqref{eq:homSolTrans.1}, 
\eqref{eq:transEq3}.}
\\
We will now show that this  Jacobian determinant grows exponentially,  \MI{so that
\begin{equation}\label{eq:DecayJacob}
J^{-t}_{0,\Lambda_{t}}\leq C e^{-c't}
\end{equation}
for some $C, c'>0$.}

If $y\in \wt{\mathcal{O}}$,  let us write $\rho= (y, d_y \wit{\phi}_0)$.
If $w\in T_y \corM{\wt{X}}$,  we have
\begin{equation*}
d_y x^t_{0,\Lambda_0} (w) = d_{\wt{\Phi}_0^t(\rho)} \pi_{\corM{\wt{X}}} \circ  
d_\rho \wt{\Phi}^t_0( w,  \red{\operatorname{Hess}}_{y}(\corM{\wt{\phi}_0})(w)).
\end{equation*}
\\
We will now explain why $d_{\wt{\Phi}_0^t(\rho)} \pi_{\corM{\wt{X}}} \circ  d_\rho \wt{\Phi}^t_0$ 
has eigenvalues growing exponentially with $t$, except one which remains 
bounded from below.
Let $v\in T_{(y, d_y \phi_0)} \Lambda_0$.
\begin{itemize}
\item If $v\in E^0_{0,\rho}$, then $|d_\rho \wt{\Phi}^t_0(v)|_{\wt{\Phi}_0^t(\rho)}$ 
is bounded from above and from below independently of $t$, as follows from 
(\ref{eq:NormeEquiv}) and (\ref{eq:Matrix}). Furthermore, since $E^0_{0,\rho}$ 
is transverse to the vertical fibres of $T^*X$, we obtain that 
$|d_{\wt{\Phi}^t(\rho)} \pi_X \circ  d_\rho \wt{\Phi}^t_0 (v)|$ is bounded from below 
independently of $t$.
\item  If $v\in E^+_{0,\rho} \oplus E^-_{0,\rho}$, using the fact that 
$\Lambda_0$ is transverse to the stable directions, we deduce from 
(\ref{eq:NormeEquiv}) and (\ref{eq:Matrix}) that 
$d_{\rho} \wt{\Phi}^t_0(v)$ is a vector whose norm is $\geq C \e^{ct}$ for some 
$C,c>0$, and that this vector is very close to the unstable direction 
$E^-_{\wt{\Phi}_0^t(\rho)}$. Therefore, using the fact that the unstable 
directions are transverse with the vertical fibres,  we deduce that
$$d_{\wt{\Phi}_0^t(\rho)} \pi_{\corM{\wt{X}}} \circ d_{\rho} \wt{\Phi}^t_0(v)\geq C' \e^{ct}.$$
\end{itemize}
It follows from this that the Jacobian determinant of $x^t_{0,\Lambda_0}$ 
grows exponentially with $t$. The statement follows.
\end{proof}

\MI{Next, we show that the first term in the WKB expansion does not depend much on $\delta$.}

\begin{prop}\label{Prop:LeadingAmpl}
\corM{Recall the notation introduced in \eqref{eq:eik0.0} and the subsequent 
paragraph.} Let $0<\varepsilon< \varepsilon_0$. There exists $\mathfrak{d}>0$, $h_0>0$ 
such that for all $h\in (0, h_0]$ and all $t\in [0, \gd |\log h|]$, 
then $\supp b_0(t,\cdot\,;\delta)\subset \wt{\mathcal{O}}_{t,\delta}''
\subset\wt{\mathcal{O}}_{t,0}$ and 
$\supp b_0(t,\cdot\,;0)\subset\wt{\mathcal{O}}_{t,0}'\subset 
\wt{\mathcal{O}}_{t,0}$. Moreover, 
\begin{equation*}
		| b_0(t,x;\delta) - b_0(t,x;0) | = O(\delta h^{-2\beta -\varepsilon}),
\end{equation*}
%
%
uniformly in $t\in [0,\mathfrak{d} |\log h|]$ and $x\in \wt{\mathcal{O}}_{t,0}$.
\end{prop}
\begin{proof}
\corM{
1. The inclusion concerning $\supp b_0(t,\cdot\,;\delta)$ follows from  
Proposition \ref{lem:TransportEqSol} and \eqref{eq:SetInclusion}. 
The inclusion concerning $\supp b_0(t,\cdot\,;0)$ is an easy consequence of  
\eqref{hyp:compactSupport}, \eqref{eq:EnsemblesEmboites}, 
\eqref{eq:homSolTrans} and \eqref{eq:transEq4}. By \eqref{eq:SetInclusion2} 
we know that $\overline{\wt{\mathcal{O}}}_{t,0}'\subset \wt{\mathcal{O}}_{t,\delta}''$. 
Thus, we may from now on work with $x\in \wt{\mathcal{O}}_{t,\delta}''$ which 
is a set where the flows $y^{-t}_{\delta,\Lambda_t}$, $y^{-t}_{0,\Lambda_t}$ 
as well as the phases $\wt{\phi}_{t,\delta}$, $\wt{\phi}_{t,0}$ are well-defined. 
}
\\
\par 
2. Recall from \eqref{eq:transEq4} that $b_0(t,x;\delta) = 
(T^t_{\delta,\Lambda_0} a_0)(x)$, with 
\begin{equation*}
	(T^t_{\delta,\Lambda_0} f)(x) 
	= f\circ y^{-t}_{\delta,\Lambda_{t}}(x) 
	\left( J^{-t}_{\delta,\Lambda_{t}}(x)\right)^{1/2},
\end{equation*}
cf. \eqref{eq:homSolTrans}, and where $J^{-t}_{\delta,\Lambda_{t}}(x)$ 
is as in \red{\eqref{eq:homSolTrans.1}}, \eqref{eq:transEq3}. Let 
$\corM{\varepsilon_0>}\varepsilon>0$ be fixed but arbitrary and let $h>0$. 
Let $\mathfrak{d}>0$ and let $0\leq t\leq \mathfrak{d} |\log h|$. 
By Lemma \ref{Lem:Gron} it follows that for $\mathfrak{d}>0$ small 
enough
\begin{equation}\label{eq:transEq12}
	\mathrm{dist}_{\widetilde{X}} (x_\delta^t(y), x_0^t(y)) 
	\leq 
	O( \delta h^{-\beta -\varepsilon}), 
\end{equation}
for all $y\in \wt{\mathcal{O}}$ and all $t\in[0, \mathfrak{d} |\log h|]$. 
\corM{Let $x\in \wt{\mathcal{O}}_{t,\delta}''$}. Thanks to Lemmas 
\ref{Lem:Gron} and \ref{lem:BoundDiffFlows}, we know that if $\gd$ is 
small enough,
\begin{equation}\label{eq:transEq11}
	\mathrm{dist}_{\widetilde{X}} 
	\left(\corM{y^{-t}_{0,\Lambda_t}(x)}, \corM{y^{-t}_{\delta,\Lambda_t}(x)}  \right)
	=
	O( \delta h^{-\beta -2\varepsilon}),
\end{equation}
and, by Taylor expansion,  we get
\begin{equation}\label{eq:transEq13}
	| a_0\circ y^{-t}_{\delta,\Lambda_t}(x) - a_0\circ y^{-t}_{0,\Lambda_t}(x) | 
	= 
	O(\delta h^{-\beta -2\varepsilon}).
\end{equation}
Both estimates \eqref{eq:transEq11}, \eqref{eq:transEq13} are uniform 
in $t\in[0, \mathfrak{d} |\log h|]$ and \corM{$x\in\wt{\mathcal{O}}_{t,\delta}''$}.
\\
\par
3. Next, using Proposition \ref{lem:ControlDeriv}, Remark \ref{rem:DerivCover}, 
Lemma \ref{lem:DerivFlot} and Lemma \ref{lem:CLCompo}, we deduce from 
\eqref{eq:PhiIntegrale2} that, for any $\varepsilon>0$, there exists 
$\gd>0$ such that for all $t\leq \gd |\log h|$,
\begin{equation}\label{eq:CloseDeltaPhase}
	\Delta \wt{\phi}_{t,\delta}(x) 
	= 
	\Delta \wt{\phi}_{t,0}(x) + O(\delta h^{-2\beta-\varepsilon} |\log h| ),
\end{equation}
\corM{uniformly in \corM{$x\in\wt{\mathcal{O}}_{t,\delta}''$}.} 
\MI{Furthermore, using} Proposition \ref{lem:ControlDeriv} and \eqref{eq:transEq11}, we deduce that
\begin{equation}\label{eq:transEq14}
	(\Delta \wt{\phi}_{t,0})( y^{-t}_{\delta,\Lambda_0} (x)) = 
	(\Delta \wt{\phi}_{t,0})( y^{-t}_{0,\Lambda_0} (x)) 
		+ O(\delta h^{-2\beta -\MI{4}\varepsilon}),
\end{equation}
uniformly in $t\in[0, \mathfrak{d} |\log h|]$ and 
\corM{$x\in \wt{\mathcal{O}}_{t,\delta}''$}. 
Noting that $\delta\, \mathrm{div}_g(H^1_{\wt{\phi}_{s+\tau'+t,\delta},\wit{q}_\omega})
(y_{\delta,\Lambda_{s+t}}^{\tau}(x))= O(\delta h^{-2\beta-\varepsilon})$,  we deduce from (\ref{eq:transEq3}) along with \eqref{eq:CloseDeltaPhase} and \eqref{eq:transEq14} that 
%
\begin{equation}\label{eq:transEq10}
\begin{aligned}
	(J^{-t}_{\delta,\Lambda_{t}}(x) )^{1/2}&= 
	\left(1 + O(\delta h^{-2\beta- \MI{4}\varepsilon}|\log h|) \right)
	\exp\left\{ \frac{1}{2}
	\int_0^{-t}\Delta_g\wt{\phi}_{\tau+t,0}(y_{\MI{0},\Lambda_{t}}^{\tau}(x)) d\tau 
	\right\}\\
	&=
	\left(1 + O(\delta h^{-2\beta- \MI{4}\varepsilon}|\log h|) \right) (J^{-t}_{0,\Lambda_{t}}(x) )^{1/2}.
	\end{aligned}
\end{equation}
%

Combining \eqref{eq:transEq10}, \eqref{eq:transEq14}, \corM{\eqref{eq:DecayJacob}} 
and \eqref{eq:transEq13}, 
we conclude that for any $\varepsilon>0$, there exists a $\mathfrak{d}>0$ 
sufficiently small, such that for $h>0$ small enough 
\begin{equation}\label{eq:transEq2}
	\left|a_0\circ y^{-t}_{\delta,\Lambda_0}(x) 
		\left( J^{-t}_{\delta,\Lambda_t}(x)\right)^{1/2}
		-a_0\circ y^{-t}_{0,\Lambda_0}(x) 
		\left( J^{-t}_{0,\Lambda_t}(x)\right)^{1/2}\right| 
	= 
	O(\delta h^{-2\beta -\MI{4}\varepsilon}\corM{|\log h|}), 
\end{equation}
for all $t\in[0,\mathfrak{d} |\log h|]$ and all \corM{$x\in \wt{\mathcal{O}}_{t,\delta}''$}. 
\MI{Taking $\varepsilon$, and hence $\gd$ smaller}, this proves the result.
\end{proof}

\MI{We conclude this subsection with an estimate on the operator $T^t_{\delta,\Lambda_s}$ appearing in \eqref{eq:homSolTrans}.}
\begin{prop}\label{lem:lem_P1}
Suppose that $\beta, \delta, \varepsilon_0$ are as in 
(\ref{eq:CondBeta}) and let $T^t_{\delta,\Lambda_s}$
be as in \eqref{eq:homSolTrans}. 
If $0<\varepsilon< \varepsilon_0$, there exists 
$\mathfrak{d}>0$, $h_0>0$ such that the following holds 
uniformly in $h\in (0, h_0]$ and $t,s \in[0, \mathfrak{d} |\log h|]$, 
with $s+t \leq \mathfrak{d} |\log h|$:
\par
Let $f\in C_c^\infty(\wt{\mO}_{s,\delta})$ such that, for all $L\in \N$ there 
exists a constant $C_L>0$ such that
\begin{equation*}
	\|f\|_{C^L} \leq C_L h^{-L (\beta + \varepsilon)}.
\end{equation*}
Then, \red{for all $L\in\N$} there exists a constant $C_L'>0$ such that 
\begin{equation}\label{eq:lem_P1.11}
	\|T^t_{\delta,\Lambda_s} f\|_{C^L} 
	\leq C'_L h^{-L(\beta+2\varepsilon)}.
\end{equation}
\end{prop}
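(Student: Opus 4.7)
The plan is to exploit the explicit formula
\[
(T^t_{\delta,\Lambda_s} f)(x)
= f\bigl(y^{-t}_{\delta,\Lambda_{s+t}}(x)\bigr)\, \bigl(J^{-t}_{\delta,\Lambda_{s+t}}(x)\bigr)^{1/2}
\]
from \eqref{eq:homSolTrans} and to estimate each of the two factors in $C^L$ separately before combining them via the Leibniz rule. First I would apply Lemma \ref{lem:CLCompo} with parameters $\sigma_1 = \beta+\varepsilon$, $\sigma_2 = \varepsilon$ and $\sigma_3 = \beta+\varepsilon$: the hypothesis $\|f\|_{C^k}\leq C_k h^{-k(\beta+\varepsilon)}$ agrees with $\|f\|_{C^k}=O(h^{-\sigma_3-(k-1)\sigma_1})$, the bound \eqref{eq:EstimateFlow1b} from Lemma \ref{lem:DerivativesOfFlow1} agrees with $\|y^{-t}_{\delta,\Lambda_{s+t}}\|_{C^k}=O(h^{-\sigma_2-(k-1)\sigma_1})$, and the compatibility $\sigma_1+\sigma_2\geq\sigma_3$ holds trivially. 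The lemma then yields
\[
\|f\circ y^{-t}_{\delta,\Lambda_{s+t}}\|_{C^L} = O\bigl(h^{-L(\beta+2\varepsilon)}\bigr).
\]

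Next I would control the Jacobian factor $(J^{-t}_{\delta,\Lambda_{s+t}})^{1/2}$. The crucial preliminary point is the uniform $C^0$ bound $\|J^{-t}_{\delta,\Lambda_{s+t}}\|_\infty = O(1)$. This is obtained by going back to the representation \eqref{eq:homSolTrans.1}: since $\Lambda_s$ is $\eta$-unstable, Corollary \ref{Cor:EtaInstable} (via equation \eqref{eq:NonContract2}) shows that the restricted flow $y\mapsto x^t_{\delta,\Lambda_s}(y)$ expands tangent vectors exponentially, so the singular values of its Jacobian are all bounded below, and hence the Jacobian of the inverse $y^{-t}_{\delta,\Lambda_{s+t}}$ has determinant bounded pointwise; combined with the bounded lifted metric factors in \eqref{eq:homSolTrans.1}, this gives the desired $O(1)$ pointwise bound. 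Writing $(J^{-t})^{1/2}=\exp(\tfrac{1}{2}\log J^{-t})$ and applying Faà di Bruno's formula, every $\partial^\alpha (J^{-t})^{1/2}$ is a finite sum, over partitions $\beta_1+\dots+\beta_k=\alpha$ with $|\beta_j|\geq 1$, of terms $C_{k,\alpha}(J^{-t})^{1/2}\prod_{j=1}^k \partial^{\beta_j}\!\log J^{-t}$. The $C^L$ bound \eqref{eq:transEq3b} from Proposition \ref{lem:Jacobian} gives $\|\partial^{\beta_j}\!\log J^{-t}\|_\infty \leq C h^{-\varepsilon-|\beta_j|(\beta+\varepsilon)}$, so each such product is bounded by $C^k h^{-k\varepsilon-L(\beta+\varepsilon)}$. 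Maximising over $1\leq k\leq L$ (at $k=L$) and combining with the $C^0$ bound yields
\[
\bigl\|(J^{-t}_{\delta,\Lambda_{s+t}})^{1/2}\bigr\|_{C^L} = O\bigl(h^{-L(\beta+2\varepsilon)}\bigr).
\]

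Finally, applying the Leibniz product rule
\[
\|T^t_{\delta,\Lambda_s} f\|_{C^L} \leq C_L \sum_{k=0}^L \|f\circ y^{-t}_{\delta,\Lambda_{s+t}}\|_{C^k}\cdot\bigl\|(J^{-t}_{\delta,\Lambda_{s+t}})^{1/2}\bigr\|_{C^{L-k}}
\]
together with the two estimates above produces the desired bound $O(h^{-L(\beta+2\varepsilon)})$; any spurious subleading factors such as $h^{-\varepsilon}$ can be absorbed by slightly decreasing $\varepsilon$ at the outset, which is permitted by the statement. The main technical obstacle I anticipate is the uniform $C^0$ bound on $J^{-t}_{\delta,\Lambda_{s+t}}$: Proposition \ref{lem:Jacobian} only controls $\|\log J^{-t}\|_{C^0}$ by $O(h^{-\varepsilon})$, which upon exponentiation is super-polynomial in $h^{-1}$ and is therefore useless for the $L=0$ case of the conclusion. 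One must bypass Proposition \ref{lem:Jacobian} for the $C^0$ estimate and return directly to the unstability of $\Lambda_s$ to obtain the sharp constant bound on the Jacobian determinant of $y^{-t}_{\delta,\Lambda_{s+t}}$.
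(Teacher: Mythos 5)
Your proposal follows essentially the same route as the paper's proof: the same factorization of $T^t_{\delta,\Lambda_s}f$ into the composition $f\circ y^{-t}_{\delta,\Lambda_{s+t}}$ and the Jacobian factor, the same application of Lemma \ref{lem:CLCompo} with $\sigma_1=\sigma_3=\beta+\varepsilon$, $\sigma_2=\varepsilon$, derivative bounds for $\bigl(J^{-t}_{\delta,\Lambda_{s+t}}\bigr)^{1/2}$ obtained from \eqref{eq:transEq3b} of Proposition \ref{lem:Jacobian} by chain/product rule (your Fa\`a di Bruno bookkeeping is the same computation, phrased as the relative estimate $\partial^\alpha J^{-t}=O(h^{-|\alpha|(\beta+2\varepsilon)})\,J^{-t}$ in the paper), and a final Leibniz combination.

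The one point where you go beyond the written proof is the pointwise bound $J^{-t}_{\delta,\Lambda_{s+t}}=O(1)$, which you rightly identify as indispensable for the $L=0$ case and as unreachable from Proposition \ref{lem:Jacobian} alone; the paper is silent on this step, so your attention to it is well placed. However, your justification has a soft spot: Corollary \ref{Cor:EtaInstable} via \eqref{eq:NonContract2} only gives $|v_t|\geq C c^t|w|$ with some $c>0$ that is \emph{not} asserted to be $\geq 1$ (it comes from iterating the non-contraction estimate \eqref{eq:noncontract}, whose constant may be $<1$), so "all singular values uniformly bounded below'' does not follow verbatim; over times $t\leq \mathfrak{d}|\log h|$ this route only yields $J^{-t}=O(h^{-\varepsilon'})$, which can be absorbed for $L\geq 1$ but not for $L=0$. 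Moreover \eqref{eq:NonContract2} concerns $d\Phi^t_\delta$ acting on $T\Lambda\subset T(T^*X)$; to control the Jacobian of $x^t_{\delta,\Lambda_s}$ one must also compose with $d\pi_X$ and invoke transversality of $E^{+}_{\delta,\rho}\oplus E^{0}_{\delta,\rho}$ to the vertical fibres, as in the derivation of \eqref{eq:LowerJacobian}. The clean argument for the uniform bound is the one used in the proof of Corollary \ref{cor:DecayAmplitude}: by \eqref{eq:Matrix} and the $(\delta,c\eta,c\eta)$-instability of the evolved manifold (Proposition \ref{Prop:LagInitiale}), $d-1$ tangent directions of $\Lambda_s$ are expanded like $A^{t}$ under $d\pi_X\circ d\Phi^t_\delta$ while the neutral direction stays bounded below, so $\det\partial x^t_{\delta,\Lambda_s}$ is bounded below by a constant (in fact grows exponentially), whence $J^{-t}$ is bounded (in fact exponentially small). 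With that substitution for your $C^0$ step, the proof is complete and coincides with the paper's.
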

\begin{proof}
By \eqref{eq:homSolTrans} and the product rule, it is sufficient to show that 
\begin{equation}\label{eq:JeSuisPresqueAStrasbourg}
\begin{aligned}
	\left\|\left( J^{-t}_{\delta,\Lambda_{s+t}}(\cdot)\right)^{1/2}\right\|_{C^L} 
	&\leq C_L h^{-L(\beta+\varepsilon)},\\
	\left\|f\circ y^{-t}_{\delta,\Lambda_{s+t}}(\cdot) \right\|_{C^L} 
	&\leq  C_L h^{-L (\beta+2\varepsilon)}.
\end{aligned}
\end{equation}
\MI{The fact that 
$$\left\| \left( J^{-t}_{\delta,\Lambda_{s+t}}\right)^{1/2}\right\|_{C^0}
\leq C$$ 
follows directly from equations \eqref{eq:DecayJacob} and (\ref{eq:transEq10}). 
Next, let $0<\varepsilon<\varepsilon_0$. }We know from the second 
assertion of Proposition \ref{lem:Jacobian} that there exist 
$\mathfrak{d}>0$ and $h_0>0$ such that for all $L\in \N$, 
there exists a constant $C_L>0$ such that for all $h\in (0,h_0]$ and 
all $s,t \in [0,\mathfrak{d}|\log h|]$, with $s+t\leq \mathfrak{d}|\log h|$, 
we have that \eqref{eq:transEq3b} holds. This together with \corM{the 
multidimensional F\`aa di Bruno formula as in \eqref{eq:Faa3}}, yields 
that for any $\alpha\in \N^d\backslash\{0\}$, $h\in (0,h_0]$ and  
$s,t \in [0,\mathfrak{d}|\log h|]$, with $s+t\leq \mathfrak{d}|\log h|$,
\begin{equation}\label{eq:lem_P1.1_pf1a}
	\partial^\alpha_xJ^{-t}_{\delta,\Lambda_{s+t}}(x)
	= O_{|\alpha|}\!\left(
	h^{-|\alpha|(\beta+\corM{2\varepsilon})}\right)
	J^{-t}_{\delta,\Lambda_{s+t}}(x),
\end{equation}
uniformly on $\wit{\cO}_{s+t,\delta}$. Notice here that the same estimate 
holds also with $J^{-t}$ replaced by $(J^{-t})^{1/2}$.  This gives us the 
first estimate in (\ref{eq:JeSuisPresqueAStrasbourg}).
\par

To obtain the second part of (\ref{eq:JeSuisPresqueAStrasbourg}), we 
use Lemma \ref{lem:CLCompo}, with $\sigma_2 = \varepsilon$, 
and $\sigma_1= \sigma_3  = \beta + \varepsilon$ \MI{and $\sigma_1' = \beta+2\varepsilon$}, obtaining
\begin{equation*}
	\left\|f\circ y^{-t}_{\delta,\Lambda_{s+t}}(\cdot) \right\|_{C^L}  
	\leq C_L h^{\MI{-\beta - 2\varepsilon - (L-1) (\beta+2\varepsilon)}},
\end{equation*}
as announced.
\end{proof}
	
\subsection{\corM{$C^L$ estimates on the amplitudes $b_n$ in the WKB expansion}}
\corM{We aim to estimate the $C^L$ norms of the terms $b_n$ in the WKB expansion 
\eqref{eq:WKB_Ansatz}.} First, we present the following technical result.
\begin{lem}\label{Lem:DistCutOff}
Let $\beta, \delta, \varepsilon_0$ be as in (\ref{eq:CondBeta}) 
and let $\psi_{N+1,t}:=\psi_t, \psi_{N,t},\dots,\psi_{0,t}$ be as in 
\eqref{eq:CutOfffun2}.
For any $0<\varepsilon<\varepsilon_0$, there exist $\mathfrak{d}>0$ 
such that for any $N\in\N$ and $h_0>0$ small enough we have that 
for all $t\in [0,\mathfrak{d}|\log h|]$, all $h\in ]0,h_0]$ and 
all $n=0,\dots,N$ 
\begin{equation}\label{eq:supportdistance}
	\dist_{\wt{X}}(\overline{\wt{\mathcal{O}}''_{t,\delta}}\cap\supp(1-\psi_{n+1,t}),\supp\psi_{n,t}) 
	\geq h^{2\varepsilon}.
\end{equation}
\end{lem}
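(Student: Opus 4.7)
The strategy is to exploit the fact that the cut-off functions $\psi, \psi_0, \ldots, \psi_N$ are \emph{fixed}, $h$-independent functions with $\psi_{n+1} \succ \psi_n$ for each $n$. In particular, by compactness of supports there exists a constant $c_0 > 0$, depending only on the choice of the cut-off sequence in \eqref{eq:CutOfffun1}, such that
\begin{equation*}
\dist_{\widetilde{X}}\bigl(\supp \psi_n\,,\,\supp(1-\psi_{n+1})\bigr) \geq c_0 \quad \text{for every } n=0,\dots,N.
\end{equation*}
Since $\psi_{i,t} = \psi_i \circ y^{-t}_{\delta,\Lambda_t}$, a point $x_1 \in \supp \psi_{n,t}$ satisfies $y^{-t}_{\delta,\Lambda_t}(x_1) \in \supp \psi_n$, and a point $x_2 \in \overline{\mathcal{O}''_{t,\delta}} \cap \supp(1-\psi_{n+1,t})$ satisfies $y^{-t}_{\delta,\Lambda_t}(x_2) \in \supp(1-\psi_{n+1})$. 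Hence
\begin{equation*}
\dist_{\widetilde{X}}\bigl(y^{-t}_{\delta,\Lambda_t}(x_1), y^{-t}_{\delta,\Lambda_t}(x_2)\bigr) \geq c_0.
\end{equation*}

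The plan is now to transfer this lower bound to $\dist_{\widetilde{X}}(x_1, x_2)$ using a Lipschitz estimate on $y^{-t}_{\delta,\Lambda_t}$. First I would argue by contradiction: suppose $\dist_{\widetilde{X}}(x_1, x_2) < h^{2\varepsilon}$. For $h$ small enough this is less than the injectivity radius of $\widetilde{X}$, so $x_1$ and $x_2$ lie in a common coordinate patch of the type used in \eqref{eq:sa8.3}, with Euclidean and Riemannian distances comparable up to a fixed multiplicative constant. Applying the mean value theorem in this chart, together with the uniform bound
\begin{equation*}
\|y^{-t}_{\delta,\Lambda_t, k,\iota,k',\iota'}\|_{C^1} \leq C h^{-\varepsilon}
\end{equation*}
provided by Lemma \ref{lem:DerivativesOfFlow1} (taking $s=0$ there), one obtains
\begin{equation*}
\dist_{\widetilde{X}}\bigl(y^{-t}_{\delta,\Lambda_t}(x_1), y^{-t}_{\delta,\Lambda_t}(x_2)\bigr) \leq C' h^{-\varepsilon}\,\dist_{\widetilde{X}}(x_1, x_2) < C' h^{\varepsilon}.
\end{equation*}
For $h$ sufficiently small this is strictly less than $c_0$, which contradicts the separation established above. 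Therefore $\dist_{\widetilde{X}}(x_1, x_2) \geq h^{2\varepsilon}$, proving \eqref{eq:supportdistance}.

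The only mildly delicate point — and the place where one needs to be careful — is ensuring that the coordinate-wise $C^1$ bound of Lemma \ref{lem:DerivativesOfFlow1} gives a genuine Lipschitz estimate for $y^{-t}_{\delta,\Lambda_t}$ as a map between open subsets of $\widetilde{X}$. Since $\widetilde{X}$ is the universal cover of a compact manifold, it has bounded geometry and the family of geodesic charts used in \eqref{eq:sa8.3} has uniform control of the transition between Euclidean and Riemannian distances; this makes the passage from the chart estimate to a global Lipschitz bound on the scale $h^{2\varepsilon} \ll r_{\mathrm{inj}}$ straightforward. The choice of $\mathfrak{d}$ and $h_0$ is then inherited directly from that of Lemma \ref{lem:DerivativesOfFlow1}.
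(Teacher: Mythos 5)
Your overall scheme — fixed separation $c_0>0$ between $\supp\psi_n$ and $\supp(1-\psi_{n+1})$ at time zero, transported through $y^{-t}_{\delta,\Lambda_t}$ and played against an $h^{-\varepsilon}$ bound on the backward map for $t\leq\mathfrak{d}|\log h|$ — is the right one, and the orders of magnitude match the paper's ($h^{-\varepsilon}\cdot h^{2\varepsilon}=h^{\varepsilon}\ll c_0$). The gap is in the step where you convert the chartwise bound $\|y^{-t}_{\delta,\Lambda_t,k,\iota,k',\iota'}\|_{C^1}\leq Ch^{-\varepsilon}$ of Lemma \ref{lem:DerivativesOfFlow1} (or \eqref{eq:BorneGInverseC1}) into the two-point estimate $\dist_{\wt{X}}\bigl(y^{-t}(x_1),y^{-t}(x_2)\bigr)\leq C'h^{-\varepsilon}\dist_{\wt{X}}(x_1,x_2)$, and it is not the point you flag. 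The map $y^{-t}_{\delta,\Lambda_t}$ is defined only on $\mathcal{O}_{t,\delta}=\pi_{\wt{X}}(\wt{\Phi}^t_\delta(\wt{\Lambda}_0))$ (plus an unquantified small neighbourhood), and the mean value theorem requires a path joining $x_1$ to $x_2$, of length comparable to $\dist_{\wt{X}}(x_1,x_2)$, lying inside this domain. After a time $t\sim\mathfrak{d}|\log h|$ the set $\mathcal{O}_{t,\delta}$ is enormously stretched and there is no a priori reason it is geodesically convex at scale $h^{2\varepsilon}$: two points of $\overline{\mathcal{O}''_{t,\delta}}$ could be at extrinsic distance $<h^{2\varepsilon}$ while every path joining them inside $\mathcal{O}_{t,\delta}$ has length of order one, in which case your $C^1$ bound produces no contradiction. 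Excluding this folding scenario is exactly where a dynamical input is required — the non-contraction of base distances along the evolved Lagrangian (Corollary \ref{cor:412}, combined with Lemma \ref{Lem:Gron} to pass from $\Phi^t_0$ to $\Phi^t_\delta$) — and it is of the same nature as the statement you are proving, so it cannot simply be assumed. The chart-uniformity/bounded-geometry issue you single out as the delicate point is harmless; the domain issue is the real one.

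The paper's proof avoids any need for a connecting path in the base by arguing in phase space: it takes the Lagrangian points $\rho_k=(x_k,d_{x_k}\phi_{t,\delta})$ above the two base points, applies the Gronwall estimate of Lemma \ref{Lem:Gron} to the backward flow $\Phi^{-t}_\delta$ (a statement valid for arbitrary pairs of points in the energy shell, with no path involved), compares $\dist_{T^*\wt{X}}(\rho_0,\rho_1)$ with $\dist_{\wt{X}}(x_0,x_1)$ via the $C^2$ bound on $\phi_{t,\delta}$, i.e. \eqref{eq:MaximeEstTropBeauGosse}, and absorbs the extra $C\delta h^{-\beta}$ error using \eqref{eq:CondBeta}; the contradiction with the separation of $\supp\psi_n$ and $\supp(1-\psi_{n+1})$ is then the same as yours. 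If you want to keep your forward-looking formulation, a clean repair is to drop the Lipschitz estimate on $y^{-t}$ altogether: from $\dist_{\wt{X}}(y_1,y_2)\geq c_0$, Corollary \ref{cor:412} gives $\dist_{\wt{X}}\bigl(\wt{\Phi}^t_0(y_1,d_{y_1}\phi_0),\wt{\Phi}^t_0(y_2,d_{y_2}\phi_0)\bigr)\geq\tfrac{c_0}{2}$, and Lemma \ref{Lem:Gron} then yields $\dist_{\wt{X}}(x_1,x_2)\geq\tfrac{c_0}{2}-C\delta h^{-\beta-\varepsilon}\geq\tfrac{c_0}{4}\gg h^{2\varepsilon}$ for $h$ small, which is the desired bound.
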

\begin{proof}
Given $\varepsilon \in ]0,\varepsilon_0[$, thanks to Lemma \ref{Lem:Gron}, we may
take $\mathfrak{d}>0$ small enough such that for all $t\in [0,\mathfrak{d}|\log h|]$ 
and all $\rho_0, \rho_1 \in \mathcal{E}_{\delta, (\frac{1}{2}, 2)}$
\begin{equation}\label{eq:supportdistance1}
		\mathrm{dist}_{T^*\wt{X}}(\wt{\Phi}_\delta^{-t}(\rho_0),\wt{\Phi}_\delta^{-t}(\rho_1))
			\leq h^{-\varepsilon/2} \mathrm{dist}_{T^*X}(\rho_0,\rho_1) 
			+  C \delta h^{-\beta} h^{-\varepsilon/2}.
\end{equation}
Suppose now that \eqref{eq:supportdistance} does not hold. So there exists 
an $N\in\N$ such that for all $h_0>0$ there exists a $t\in [0,\mathfrak{d}|\log h|]$, 
an $h\in]0,h_0]$ and a $n\in \{0,\dots,N\}$ such that 
\begin{equation}\label{eq:supportdistance2}
	\dist_{\wt{X}}(\overline{\wt{\mathcal{O}}''_{t,\delta}}\cap\supp(1-\psi_{n+1,t}),\supp\psi_{n,t})
	 < h^{2\varepsilon}.
\end{equation}
Let $x_0 \in \overline{\wt{\mathcal{O}}''_{t,\delta}}\cap\supp(1-\psi_{n+1,t})$ 
and $x_1\in\supp\psi_{n,t}$. Put $y_k=y^{-t}_\delta(x_k)$, $k=0,1$, and 
$\rho_k = \rho_\delta^t(y^{-t}_\delta(x_k))$, in the notations introduced 
before Proposition \ref{prop:WKB}. Plugging this 
into \eqref{eq:supportdistance1} yields 
\begin{equation*}
\begin{split}
	\dist_{T^*\wt{X}}((y_0,d_{y_0}\corM{\wt{\phi}}_{0,\delta}),
					(y_1,d_{y_1}\corM{\wt{\phi}}_{0,\delta}))
	&\leq h^{-\varepsilon/2} \mathrm{dist}_{T^*\wt{X}}(\rho_0,\rho_1) 
	+  C \delta h^{-\beta} h^{-\varepsilon/2}\\
	&\stackrel{\eqref{eq:BorneCL} }{\leq} 
	C h^{-3\varepsilon/2 }\mathrm{dist}_{\wt{X}}(x_0,x_1) 
	+  C \delta h^{-\beta} h^{-\varepsilon/2}\\
	&\stackrel{\eqref{eq:supportdistance2} }{\leq} 
	C h^{\varepsilon /2}
	+  C \delta h^{-\beta} h^{-\varepsilon/2}\\
	&\stackrel{(\ref{eq:CondBeta}) }{\leq} 
	h^{\varepsilon/2}
	+  C h^{\varepsilon/2+\beta}.
\end{split}
\end{equation*}
Thanks to \eqref{eq:CutOfffun1}, the left hand side of the 
first line is bounded from below by some positive constant. Since we can 
take $h_0>0$, and therefore $h>0$, arbitrarily small, we have a contradiction 
which concludes the proof. 
\end{proof}
Next, we need to understand better the operator $\e^{-\frac{i}{h} 
\wit{\phi}_{t,\delta}(x)} \wt{Q}_\omega \e^{\frac{i}{h} \wit{\phi}_{t,\delta}(\cdot)}$ 
appearing in (\ref{eq:WKB_Ansatz.0}), when working in the \ref{eq:Pseudo}.
\begin{prop}\label{lem:TE1}
Let $\beta, \delta, \varepsilon_0$ be as in \eqref{eq:CondBeta} 
with $\beta+\varepsilon_0<1/2$. 
Let $Q_\omega\in \Psi_{h,\beta}^{-\infty}(X)$ be as in the \ref{eq:Pseudo} 
with principal symbol $q_\omega$, 
and let $\wt{Q}_\omega\in \Psi_{h,\beta}^{-\infty}(\wt{X})$ 
be a lift of $Q_\omega$ with principal symbol $\wt{q}_\omega=\wh{\pi}^*q_\omega$ 
and satisfying \eqref{eq:liftePseudo1}, \eqref{eq:liftePseudo2}, 
\eqref{eq:liftePseudo3}. Let $t \in [0,\mathfrak{d}|\log h|]$, $\mathfrak{d}>0$ 
small enough, and let $\psi_t$, $\psi_{n,t}$, $n=0,\dots,N$ be as 
in \eqref{eq:CutOfffun31}. Let $\corM{\wt{\phi}}_{t,\delta}$ be as in 
\eqref{eq:eik1.1}, \eqref{eq:Eikonal} and let $0<\varepsilon< \varepsilon_0$. 
\par
Then, there exist $\mathfrak{d}>0$ and $h_0>0$ such that for all 
$h\in (0, h_0]$ and all $t \in [0,\mathfrak{d}|\log h|]$
\begin{equation}\label{eq:WKBn2}
	\wt{Q}_{t,\omega}^n:=
	\psi_t \e^{-\frac{i}{h} \corM{\wt{\phi}}_{t,\delta}(x)} \wt{Q}_\omega
		\e^{\frac{i}{h} \corM{\wt{\phi}}_{t,\delta}(\cdot)} \psi_{n,t}
	\in \Psi_{h,\beta+\varepsilon}^{-\infty}(\wt{X})
\end{equation}
with principal symbol $\wt{q}_{\omega,n}^t(x,\xi):=(\wh{\pi}^*q_\omega)(x,\xi + d_x\corM{\wt{\phi}}_{t,\delta})\psi_{n,t}(x) \in 
S^{-\infty}_{\beta+\varepsilon}(T^*\wt{X})$. 
\par 
Moreover,  for each lifted cut-off chart $(\wt{\kappa}_\iota,
\wt{\chi}_\iota)$, $\iota \in I$, on $\wt{X}$, see Section 
\ref{subsec:LiftPseudo} for the definition, we have that 
\begin{equation}\label{eq:LocalSymbola}
	(\wt{\kappa}_\iota^{-1})^*\wt{\chi}_\iota
	\wt{Q}_{t,\omega}^n
	\wt{\chi}_\iota\wt{\kappa}_\iota^*
	= 
	\chi\circ\kappa^{-1}\Op_h(\wt{q}_{\kappa,\iota,\omega,n}^t)
	\chi\circ\kappa^{-1},
\end{equation}
\corM{
with $\wt{q}_{\kappa,\iota,\omega,n}^t = (\wt{q}_{\kappa,\iota,\omega,n}^t)^0
+O(h^{1-2(\beta+ \varepsilon)}) \in S^{-\infty}_{\beta+\varepsilon}(T^*\R^d)$, 
for $h\in]0,h_0]$ and 
with the constant in the symbol estimates uniform in $(x,\xi)\in T^*\R^d$ 
and in $t\in [0,\mathfrak{d}|\log h|]$ and independent of $\iota$. 
Moreover, 
\begin{equation}\label{eq:LocalSymbolb}
	(\wt{q}_{\kappa,\iota,\omega,n}^t)^0 = 
	((\widehat{\kappa}^{-1})^* q_\omega)(x,\xi+d_x(\widetilde{\kappa}_{\iota}^{-1})^* \corM{\wt{\phi}}_{t,\delta})
	(\psi_{n,t}\circ \wt{\kappa}_\iota^{-1})(x), 
	\text{ on } T^*O,
\end{equation}
where $O$ is a small open neighbourhood of $\supp \chi\circ\kappa^{-1}$.}
\par
Furthermore, if $\theta,\varphi \in C^\infty_c(X)$ are such that 
$\supp \theta \cap \supp \varphi = \emptyset$, then for every $N\in\N$ we have
\begin{equation}\label{eq:ConjPseudo0}
	\wt{\theta}_{\iota'}
	\wt{Q}_{t,\omega}
	\wt{\varphi}_{\iota}
	=
	O_{\theta,\varphi,N}(h^\infty)
	:H^{-N}_{h}(\wt{X}) \to H^N_{h}(\wt{X}).
\end{equation}
\end{prop}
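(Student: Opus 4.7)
\textbf{Proof plan for Proposition \ref{lem:TE1}.} The statement is essentially a pseudo-differential calculus result: conjugating the $h^\beta$-oscillating operator $\wt Q_\omega$ by the WKB-type exponential $e^{\pm i\phi_{t,\delta}/h}$ and localizing with the cut-offs $\psi_t,\psi_{n,t}$ produces an operator in a (slightly larger) symbol class, with the classical composition/Egorov principal symbol $q_\omega(x,\xi+d_x\phi_{t,\delta})$. The strategy has four parts: reduction to local coordinates, an oscillatory-integral computation that identifies the symbol, verification that this symbol lies in $S^{-\infty}_{\beta+\varepsilon}$, and pseudolocality.

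First I would reduce to $\R^d$: by the lift property \eqref{eq:liftePseudo2}, each lifted cut-off chart $(\wt\kappa_\iota,\wt\chi_\iota)$ gives $(\wt\kappa_\iota^{-1})^*\wt\chi_\iota \wt Q_\omega \wt\chi_\iota \wt\kappa_\iota^* = \chi\!\circ\!\kappa^{-1}\,\Op_h(q_{\kappa,\omega})\,\chi\!\circ\!\kappa^{-1}$ with $q_{\kappa,\omega}\in S^{-\infty}_\beta(T^*\R^d)$. Inserting the chart, one checks that $(\wt\kappa_\iota^{-1})^*\wt\chi_\iota\wt Q_{t,\omega}\wt\chi_\iota\wt\kappa_\iota^*$ is, modulo a negligible operator (controlled by the pseudolocality property \eqref{eq:liftePseudo3} together with the disjoint supports of $(1-\wt\chi_{\iota})\wt\chi_\iota$), equal to $\chi\!\circ\!\kappa^{-1}$ times
\[
A_\iota u(x) := \frac{\psi_{t,\iota}(x)}{(2\pi h)^d}\iint e^{i[(x-y)\cdot\xi-\tilde\phi(x)+\tilde\phi(y)]/h}\,q_{\kappa,\omega}(x,\xi)\,\psi_{n,t,\iota}(y)\,u(y)\,dy\,d\xi,
\]
times $\chi\!\circ\!\kappa^{-1}$, where $\tilde\phi=(\wt\kappa_\iota^{-1})^*\phi_{t,\delta}$.

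Second, using $\tilde\phi(y)-\tilde\phi(x)=(y-x)\cdot\Phi(x,y)$ with $\Phi(x,y)=\int_0^1\nabla\tilde\phi(x+s(y-x))\,ds$, the substitution $\xi\mapsto \xi-\Phi(x,y)$ turns $A_\iota$ into the amplitude quantization with amplitude $a(x,y,\xi):=q_{\kappa,\omega}(x,\xi+\Phi(x,y))\psi_{n,t,\iota}(y)$ and outer factor $\psi_{t,\iota}(x)$. The usual reduction from an amplitude $a(x,y,\xi)$ to a left symbol gives
\[
\wt q^t_{\kappa,\iota,\omega,n}(x,\xi;h) \sim \sum_{\alpha}\frac{(-ih)^{|\alpha|}}{\alpha!}\partial_\xi^\alpha\partial_y^\alpha a(x,y,\xi)\big|_{y=x},
\]
whose leading term, after noting $\Phi(x,x)=\nabla\tilde\phi(x)$, is $q_{\kappa,\omega}(x,\xi+d_x\tilde\phi(x))\psi_{n,t,\iota}(x)$, producing the announced principal symbol once multiplied by $\chi\!\circ\!\kappa^{-1}\psi_{t,\iota}$. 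The error term has order $h$, so the expansion terminates at the first correction with an error of the size claimed.

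Third, I would verify that $\wt q^t_{\kappa,\iota,\omega,n}\in S^{-\infty}_{\beta+\varepsilon}$ and that the first correction is $O(h^{1-2(\beta+\varepsilon)})$. Each $\partial_\xi$ applied to $q_{\kappa,\omega}(x,\xi+d_x\tilde\phi(x))$ costs $h^{-\beta}$; each $\partial_x$ either lands on $q_{\kappa,\omega}$ (cost $h^{-\beta}$, plus a factor $\|d\tilde\phi\|_{C^1}=O(h^{-\varepsilon})$ coming from the inner derivative via Proposition \ref{lem:ControlDeriv}), or lands on the cut-offs (cost $h^{-(\beta+\varepsilon)}$ by Remark \ref{rem:Cut-off_est1}). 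Iterating and using the chain-rule bounds of Proposition \ref{lem:ControlDeriv} together with Lemma \ref{lem:CLCompo}, one obtains the $h^{-(\beta+\varepsilon)(|\alpha|+|\beta|)}$ behaviour required for $S^{-\infty}_{\beta+\varepsilon}$. The $\partial_y\partial_\xi$ correction term contains exactly one $\partial_y\Phi\cdot\partial_\xi q_{\kappa,\omega}$, contributing $h\cdot h^{-\varepsilon}\cdot h^{-(\beta+\varepsilon)}\cdot h^{-\beta}=O(h^{1-2\beta-2\varepsilon})$, which under Hypothesis \ref{Hyp:BetaDelta} (ensuring $\beta+\varepsilon<\tfrac12$) is indeed $o(1)$.

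Finally, the pseudolocality claim \eqref{eq:ConjPseudo0} follows by combining the pseudolocality \eqref{eq:liftePseudo3} of $\wt Q_\omega$ with the fact that multiplication by $e^{\pm i\phi_{t,\delta}/h}$ and by $\psi_t,\psi_{n,t}$ does not enlarge the essential support: if $\supp\theta\cap\supp\varphi=\emptyset$ one may insert a smooth partition intermediate between $\supp\theta$ and $\supp\varphi$ and use \eqref{eq:liftePseudo3} together with the fact that $\psi_t e^{-i\phi/h}$ is bounded in all $H^{-N}_{h,\comp}(\wt X)\to H^{-N}_{h,\loc}(\wt X)$ norms uniformly in $h$ (with a loss at most $h^{-N'}$), which is absorbed by the $h^\infty$ gain.

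The main obstacle is the third step: the precise book-keeping of derivatives in $h$ needed to land inside $S^{-\infty}_{\beta+\varepsilon}$ uniformly in $(t,\iota)$ and independently of the random parameter $\omega$. The delicate point is that derivatives of the phase $\tilde\phi$ degrade polynomially in $h$ through Proposition \ref{lem:ControlDeriv}, so the argument works precisely because Hypothesis \ref{Hyp:BetaDelta} imposes $\beta+\varepsilon<1/2$, keeping the reduced symbol in a genuine pseudo-differential class.
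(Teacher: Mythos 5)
Your plan follows essentially the same route as the paper's proof: reduction to lifted cut-off charts via \eqref{eq:liftePseudo2}, the Kuranishi trick with $F_{t,\delta}(x,y)=\int_0^1\nabla\phi_{t,\delta}(\tau y+(1-\tau)x)\,d\tau$ followed by a change of variables, a symbol expansion whose leading term is $q_\omega(x,\xi+d_x\phi_{t,\delta})$ times the cut-offs with first correction $O(h^{1-2(\beta+\varepsilon)})$, symbol-class bookkeeping from the phase estimates of Proposition \ref{lem:ControlDeriv} and Remark \ref{rem:Cut-off_est1}, and pseudolocality deduced from \eqref{eq:liftePseudo3} with polynomial losses absorbed by the $O(h^\infty)$ gain. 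The only cosmetic difference is that you reduce an amplitude to a left symbol directly, whereas the paper identifies the symbol by oscillatory testing (\cite[Theorem 4.19]{Zw12}); the underlying computation and estimates coincide.
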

\red{
\begin{rem}
	This theorem is standard when the phase $\wit{\phi}_{t,\delta}$ \MI{is}
	a smooth function with compact support and bounded derivatives. 
	The difficulty here is that $\wit{\phi}_{t,\delta}$ is $h$-dependent 
	and that both its support and its derivatives are large as a 
	function of $h$.  \MI{We will thus} apply the standard 
	proof while keeping track of these to peculiarities. 
\end{rem}
}
\begin{proof}[Proof of Proposition \ref{lem:TE1}]
1. To prove the last point of the proposition, we first consider the case 
when $\theta,\varphi \in C^\infty_c(X)$ have sufficiently small supports 
so that their lifts $\wt{\theta}_{\iota'}, \wt{\varphi}_\iota \in  C^\infty_c(\wt{X})$ 
are well defined. It follows from \eqref{eq:DefHyp} that there exists a 
constant $C>0$ such that 
\begin{equation}\label{eq:ConjPseudo1.01}
\mathrm{vol}_{\wt{g}}(\wt{\mathcal{O}}_{t,\delta})= 
	O( h^{-C\mathfrak{d}})
\end{equation}
for all $t\in [0,\mathfrak{d}|\log h|]$. Take a locally finite 
partition of unity of $\wt{X}$ by a lifted partition of unity of $X$, as in 
the paragraph above {\eqref{eq:sa8.3}.} We see that we can cover 
$\wt{\mathcal{O}}_{t,\delta}$ by $O( h^{-C\mathfrak{d}})$ many cut-off functions 
from the lifted partition of unity. Combining this 
observation with \eqref{eq:liftePseudo3}, \eqref{eq:SetInclusion_Cut-off2} 
and Proposition \ref{lem:ControlDeriv} yields \eqref{eq:ConjPseudo0}. 
\\
\par
2. Let $(\wt{\kappa}_{\iota},\wt{\chi}_{\iota})$ be a lifted cut-off chart 
such that $\supp \chi\circ \kappa^{-1}$ is convex and such that 
$\supp \wt{\chi}_{\iota} \cap \supp \psi_{n,t}\neq \emptyset$. 
The case of a general cut-off function can be recovered by a partition 
of unity.  By \eqref{eq:liftePseudo2}, 
\begin{equation}\label{eq:ConjPseudo1}
	(\wt{\kappa}_{\iota}^{-1})^*\wt{\chi}_{\iota}
	\wt{Q}_{t,\omega}^n\wt{\chi}_{\iota}\wt{\kappa}_{\iota}^*
	= \chi_\kappa \psi_{t,\iota}
	\e^{-\frac{i}{h}\wit{\phi}_{t,\delta,\iota}}
	\Op_h((q_\omega)_{\kappa,\corM{\chi}})
	\e^{-\frac{i}{h}\wit{\phi}_{t,\delta,\iota}}
	\psi_{n,t,\iota}
	\chi_\kappa .
\end{equation}
where $\psi_{t,\iota}:=\psi_t\circ \wt{\kappa}_{\iota}^{-1}$, 
$\psi_{n,t,\iota}:=\psi_{n,t}\circ \wt{\kappa}_{\iota}^{-1}$, 
$\chi_\kappa = (\kappa^{-1})^*\chi$, 
$\wit{\phi}_{t,\delta,\iota} := \wit{\phi}_{t,\delta}\circ \wt{\kappa}_{\iota}^{-1}$ 
\corM{and $(q_\omega)_{\kappa,\chi}\in S^{-\infty}_\beta(T^*\R^d)$. 
By \eqref{eq:principalSymbrel} we have that its principal part is 
\begin{equation}\label{eq:WKBlaireau.0}
	(q_\omega)_{\kappa,\chi}^0 = q_\omega \circ \wh{\kappa}^{-1} 
	\text{ on } T^*O,
\end{equation}
where $O$ is a small open neighbourhood of $\supp \chi_\kappa$.} 
\par 
We will recover the symbol $\wt{q}_{\kappa,\iota,\omega,n}^t$ \eqref{eq:LocalSymbola}  
in each coordinate patch $(k,\iota)$ by the method of oscillatory 
testing \cite[Theorem 4.19]{Zw12}. Let $C^\infty_c(X)\ni\chi' 
\succ \chi$ with support in a sufficiently small neighbourhood of 
the support of $\chi$ such that the support of 
$\chi_\kappa' = (\kappa^{-1})^*\chi'$ is convex. Set 
\begin{equation}\label{eq:WKBlaireau}	
\begin{split}
	&r_{\kappa,\iota,t}(x,\xi;h)
	:=
	\e^{-\frac{i}{h} x\cdot \xi}
	\psi_{t,\iota}(x)
	\e^{-\frac{i}{h}\corM{\wt{\phi}}_{t,\delta,\iota}(x)}
	\chi_\kappa'
	\Op_h((q_\omega)_{\kappa,\chi})\left(
	\chi_\kappa'(y)
	\e^{-\frac{i}{h}\corM{\wt{\phi}}_{t,\delta,\iota}(\cdot)}
	\psi_{n,t,\iota}(\cdot)
	\e^{\frac{i}{h} \cdot  \xi }\right)\\
	&= 
	\frac{1}{(2h\pi)^d} \iint_{\R^{2d}} 
	\e^{\frac{i}{h} (\eta -\xi) \cdot (x-y) 
	+\frac{i}{h}( \corM{\wt{\phi}}_{t,\delta,\iota}(y)- \corM{\wt{\phi}}_{t,\delta,\iota}(x))} 
	(q_{\omega})_{\kappa,\chi}(x,\eta;h)
	\psi_{t,\iota}(x)\chi_\kappa'(x)\chi_\kappa'(y)\psi_{n,t,\iota}(y) d y d\eta.
\end{split}
\end{equation}
\corM{Notice here that $r_{\kappa,\iota,t}(x,\xi;h)=0$ when $x\notin \supp \chi'_\kappa$ 
and} that $r_{\kappa, \iota,t}$ depends on $\iota$ only through the phases 
$\corM{\wt{\phi}}_{t,\delta,\iota}$ and the cut-off functions $\psi_{t,\iota}$, 
$\psi_{n,t,\iota}$. 
\par 
Our aim is now to show that 
$r_{\kappa,\iota,t}\in S^{-\infty}_{\beta+\varepsilon}(T^*\R^d)$, and that it 
has the form \eqref{eq:LocalSymbolb}. Once this is shown, \eqref{eq:LocalSymbola} 
follows from \cite[Theorem 4.19]{Zw12} and \eqref{eq:WKBn2} follows 
from using additionally \eqref{eq:ConjPseudo0} and a partition of unity. 
\par 
3. To evaluate (\ref{eq:WKBlaireau}), we use Kuranishi's trick and 
write
\begin{equation*}
	\corM{\wt{\phi}}_{t,\delta,\iota}(y)- \corM{\wt{\phi}}_{t,\delta,\iota}(x)
	= \langle F_{t,\delta}(x,y), (y-x) \rangle,
\end{equation*}
with
\begin{equation*}
	F_{t,\delta}(x,y) 
	= \int_0^1 \nabla \corM{\wt{\phi}}_{t,\delta,\iota}(\tau y + (1-\tau) x) d\tau.
\end{equation*}	
\par
Performing a linear change of variable in (\ref{eq:WKBlaireau}), we get
\begin{equation}\label{eq:WKBaignade}
r_{\kappa,\iota,t}(x,\xi;h)
= 	\frac{1}{(2h\pi)^d} \iint_{\R^{2d}} 
	\e^{\frac{i}{h} (\eta -\xi)\cdot(x-y)} 
	\check{q}_{\omega,\kappa} (x,y,\eta ; h)
	\psi_{t,\iota}(x)\chi_\kappa'(x)\chi_\kappa'(y)\psi_{n,t,\iota}(y) 
	d y d\eta.
\end{equation}
with $\check{q}_{\omega,\kappa} (x,y,\eta ; h) := 
(q_{\omega})_{\kappa,\chi} (x,\eta + F_{t,\delta}(x,y);h)$.
\par 
By Proposition \ref{lem:ControlDeriv} we have that for 
$0<\varepsilon<\varepsilon_0$ there exist $\mathfrak{d}>0$ and $h_0>0$ 
such that for all $h\in]0,h_0]$
\begin{equation*}
\|F_{t,\delta}\|_{\corM{C^L((\supp \chi_\kappa')^2}} = 
\begin{cases}
O(1), \quad L=0, \\
O(h^{-\varepsilon- (L-1) (\beta+ \varepsilon)}), \quad L\geq 1,
\end{cases}
\end{equation*}
uniformly in $t\in[0,\mathfrak{d}|\log h|]$. Notice here that the 
constant in the error estimate can be taken independently of $\iota$. 
\par 
Recall that $q_\omega \in S^{-\infty}_{\beta}(T^*X)$ and notice that 
since $\|F_{t,\delta}\|_{C^0} =O(1)$, we find that for each $M\in\N$ 
there exists a $C_M>0$ (independent of $\iota$) such that for all 
$\eta\in T_x\R^d$, all $x,y\in \supp \chi\circ \kappa^{-1}$ and all 
$t\in[0,\mathfrak{d}|\log h|]$
\begin{equation*}
	\langle \eta + F_{t,\delta}(x,y)\rangle^{-M}
	\leq C_M \langle \eta \rangle^{-M}.
\end{equation*}
The product and chain rule then yield that for all $\alpha,\gamma,\tau\in\N^d$, 
$M\in\N$ there exists a constant $C_{\alpha,\gamma,\tau,M}>0$ independent of 
$\iota$ such that for all $t\in[0,\mathfrak{d}|\log h|]$
\begin{equation}\label{eq:SymbolClass}
	|\partial_x^\alpha\partial_y^\gamma \partial_\eta^\tau 
	\check{q}_{\kappa_k}(x,y,\eta;h)|\leq C_{\alpha,\gamma,\tau,M}  
	h^{- (|\alpha|+|\gamma|)(\beta + \varepsilon) -|\tau|\beta}
	\langle \eta \rangle^{-M},
\end{equation}
\corM{uniformly for $x,y$ in a small open neighbourhood of $\supp\chi_\kappa'$.} 
The method of stationary phase applied to \eqref{eq:WKBaignade} yields
\begin{equation}\label{eq:WKBn6}
	r_{\kappa, \iota, t}(x,\xi;h)  
	= \sum_{n=0}^{N-1} r_{k, \iota, t,n}(x,\xi;h)  
	+ R_{\kappa,\iota,t,N}(x,\xi; h),
\end{equation}
where
\begin{equation}\label{eq:WKBn7}
\begin{split}
r_{\kappa, \iota, t, n}(x,\xi;h) 
&=  \frac{h^{n}i^n}{n!} (\partial_y\cdot\partial_\eta)^n
\check{q}_{\omega,\kappa} (x,y,\eta ; h)
\psi_{t,\iota}(x)\chi_\kappa'(x)\chi_\kappa'(y)\psi_{n,t,\iota}(y) 
\bigg|_{\substack{y=x\\ \eta=\xi}}.
\end{split}
\end{equation}
Moreover, 
\begin{equation}\label{eq:WKBn8}
\begin{split}
&R_{\kappa,\iota,t,N}(x,\xi;h)\\
&= O(h^N)\!\!\sum_{|k + \beta| \leq 2d+1}
	\|\partial_y^k\partial_\eta^\beta(\partial_y\cdot\partial_\eta)^N
		\check{q}_{\omega,k} (x,y,\eta ; h) 
		\psi_{t,\iota}(x)\chi_\kappa'(x)\chi_\kappa'(y)\psi_{n,t,\iota}(y) 
		\|_{L^1(\R_y^{d}\times \R^d_\eta)},
\end{split}
\end{equation}
where the constant in the estimate depends only on the dimension $d$ and 
$N$. \corM{Moreover, notice that $r_{\kappa, \iota, t,n}(x,\xi;h), R_{\kappa,\iota,t,N}(x,\xi;h)=0$ 
when $x\notin\supp \chi_\kappa'$.}
\par 
Using (\ref{eq:SymbolClass}), \corM{Proposition \ref{lem:ControlDeriv}} and Remark \ref{rem:Cut-off_est1} 
along with the rapid decay of $q_\omega$ in the $\xi$ variable, we obtain by the product 
rule that for all $m\in \N$ and all 
$\alpha,\gamma \in \N^d$ there exists a constant $C_{\alpha,\gamma,m,n}>0$ 
such that for all $t\in [0,\mathfrak{d}|\log h|]$ and all $(x,\xi)\in T^*\R^d$, 
\begin{equation}\label{eq:WKBn9}
	|\partial_x^\alpha \partial_\xi^\gamma r_{\kappa, \iota, t, n}(x,\xi;h)| 
	\leq C_{\alpha,\gamma,m,n} h^{n(1-2\beta - 4 \varepsilon)} h^{-( |\alpha| + |\gamma|) (\beta + 2\varepsilon)}
	\langle \xi \rangle^{-m}, 
\end{equation}
and
\begin{equation}\label{eq:WKBn10}
	|\partial_x^\alpha \partial_\xi^\gamma R_{\kappa,\iota,t,N}(x,\xi;h)| 
		\leq C_{\alpha,\beta,m,N} h^N
		h^{-(2d+1 + 2N + |\alpha| + |\gamma|) (\beta + 2\varepsilon)}
		\langle \xi \rangle^{-m}.
\end{equation}
We insist here on the fact that the symbol estimates are independent of 
$\iota$ indexing the sheet in the universal cover to which we have lifted the 
cut-off chart $(\kappa,\chi)$. 
\par
Taking $\varepsilon>0$, and hence $\mathfrak{d}>0$, smaller, the symbol 
estimates \eqref{eq:WKBn9} and \eqref{eq:WKBn10} hold with $2\varepsilon$ 
replaced by $\varepsilon$. We deduce that 
\begin{equation}\label{eq:WKBn11}
	r_{\kappa,\iota, t}(x,\xi;h) \sim \sum_{n=0}^\infty r_{\kappa,\iota,t,n}(x,\xi;h) 
	\quad \text{in } S^{-\infty}_{\beta+\varepsilon}(T^*\R^d)
\end{equation}
with the constant in the symbol estimates uniform in $(x,\xi)\in T^*\R^d$
and in $t\in [0,\mathfrak{d}|\log h|]$ and independent of $\iota$. This 
implies \eqref{eq:LocalSymbola}. 
\par 
\corM{On a small neighbourhood of $\supp \chi_\kappa$ we have 
$\chi_\kappa'= 1$. Moreover, 
$\psi_t=1$ on the support of $\psi_{n-1,t}$ by \eqref{eq:CutOfffun31}. 
Thus, restricting to $x$ in  a small neighbourhood of $\supp \chi_\kappa$, we get}
%
\begin{equation*}
	r_{\kappa, \iota, t, 0}(x,\xi;h) 
	= (q_{\omega})_{\kappa,\chi} (x,\xi + \nabla \wit{\phi}_{t,\delta,\iota}(x);h)
	\psi_{n,t,\iota}(x) 
\end{equation*}
which by \eqref{eq:WKBlaireau.0} yields the formula for the leading part 
in \eqref{eq:LocalSymbolb} and thus, \corM{using a partition of unity,} 
yields the formula for the principal symbol of \eqref{eq:WKBn2}. 
\end{proof}

Now, we may use the previous proposition to give estimates on the 
operator $\mathcal{R}_{\delta,t}$ defined in \eqref{eq:WKB2.0}.
\begin{prop}\label{lem:lem_P2}
Let $\beta, \delta, \varepsilon_0$ be as in (\ref{eq:CondBeta}) with 
$\beta+\varepsilon_0<1/2$ \red{and let $\red{\psi_t}$ be as in 
\eqref{eq:CutOfffun2}, \eqref{eq:CutOfffun31}}. Let $\mathcal{R}_{\delta,t}$ be as in 
\eqref{eq:WKB2.0}.  For every $0<\varepsilon< \varepsilon_0$, there 
exists $\mathfrak{d}>0$, $h_0>0$ such that the following holds uniformly 
in $h\in ]0,h_0]$ and $t\in [0,\mathfrak{d}|\log h|]$: 
Let $f\in C_c^\infty(\wt{\mO}_{t,\delta})$ such that, for all $L\in \N$ there 
exists a constant $C_L>0$ such that
\begin{equation}\label{eq:WKBnn11.0}
	\|f\|_{C^L} \leq C_L h^{-L (\beta + \varepsilon)}.
\end{equation}
Then, for all $L \in \N$ and all $n=1,\dots,N$ there exists a constant $C_{n,L}>0$
such that 
\begin{equation}\label{eq:WKBnn11}
	\|\red{\psi_t}\mathcal{R}_{\delta,t}\psi_{n-1,t} f\|_{C^L} 
	\leq 
	\corM{
	C_{n,L} h^{1-(L+2)(\beta+\varepsilon)}}.
\end{equation}

\end{prop}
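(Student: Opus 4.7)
The plan is to invoke Proposition~\ref{lem:TE1} with $\psi_{n-1,t}$ in the role of $\psi_{n,t}$, and then identify the first two terms of the resulting symbol expansion of $\wt{Q}_{t,\omega,n-1}:=\psi_t\e^{-\frac{i}{h}\phi_{t,\delta}}\wt Q_\omega\e^{\frac{i}{h}\phi_{t,\delta}}\psi_{n-1,t}$ with the principal and subprincipal pieces subtracted in the definition of $\mathcal R_{\omega,t}$. In the random potential case, $q_\omega$ is independent of $\xi$, so $H^1_{\phi_{t,\delta},\wit{q}_\omega}\equiv 0$, $\mathrm{div}_{\wit g}(H^1)\equiv 0$, and the conjugated operator equals $\wit q_\omega(x)=\wit q_\omega(x,d\phi)$ pointwise, so $\mathcal R_{\omega,t}\equiv 0$ and the estimate is trivial. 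The content below concerns the random $\Psi$DO case.

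By Proposition~\ref{lem:TE1}, $\wt Q_{t,\omega,n-1}\in\Psi^{-\infty}_{h,\beta+\varepsilon}(\wt X)$ with local symbol $\tilde q\sim\sum_n r_{k,\iota,t,n}$ as in \eqref{eq:WKBn6}--\eqref{eq:WKBn11}. Using the Kuranishi identities $F(x,x)=d_x\phi_{t,\delta}$ and $\partial_{y_k}F_\ell|_{y=x}=\tfrac12\partial^2_{x_k x_\ell}\phi_{t,\delta}$, together with $\chi_\kappa'\equiv 1$ on the relevant supports, one computes
\begin{equation*}
r_{k,\iota,t,0}(x,0)=\psi_t\psi_{n-1,t}q_\omega(x,d\phi),\qquad
r_{k,\iota,t,1}(x,0)=ih\,\psi_t\Big[H^1_{\phi,q_\omega}\psi_{n-1,t}+\tfrac12\psi_{n-1,t}\!\sum_{k,\ell}\partial^2_{\xi_k\xi_\ell}q_\omega(x,d\phi)\,\partial^2_{x_k x_\ell}\phi\Big],
\end{equation*}
and $\partial_\xi r_{k,\iota,t,0}(x,0)=\psi_t\psi_{n-1,t}\partial_\xi q_\omega(x,d\phi)$. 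Next I would Taylor-expand $\tilde q$ in $\xi$ around $0$ as $\tilde q(x,\xi)=\tilde q(x,0)+\xi\cdot\partial_\xi\tilde q(x,0)+\xi^T R(x,\xi)\xi$ with $R\in S^{-\infty}_{\beta+\varepsilon}$, and quantize, obtaining
\begin{equation*}
\wt Q_{t,\omega,n-1}f=\tilde q(x,0)f+\tfrac{h}{i}\partial_\xi\tilde q(x,0)\cdot\partial_x f+\Op_h(\xi^T R\xi)f,
\end{equation*}
where the last term, being quadratic in $\xi$, contributes $O(h^{2-2(\beta+\varepsilon)})$ times second derivatives of $f$, strictly smaller than the target. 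Combined with the product rule $H^1(\psi_{n-1,t}f)=(H^1\psi_{n-1,t})f+\psi_{n-1,t}H^1f$, the leading $r_0(x,0)f$ cancels $\psi_t q_\omega(x,d\phi)\psi_{n-1,t}f$, while $\tfrac{h}{i}\partial_\xi\tilde q(x,0)\cdot\partial_xf$ cancels the $\psi_{n-1,t}H^1f$ piece of $\tfrac{h}{i}\psi_t H^1(\psi_{n-1,t}f)$. The surviving $O(h)$ contributions are $r_{k,\iota,t,1}(x,0)f$, the residual $-\tfrac{h}{i}\psi_t(H^1\psi_{n-1,t})f$, and $-\tfrac{h}{2i}\psi_t\mathrm{div}_g(H^1_{\phi,q_\omega})\psi_{n-1,t}f$. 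Using the intrinsic expression $\mathrm{div}_g(H^1_{\phi,q_\omega})=\sum_k\partial_{x_k}\partial_{\xi_k}q_\omega(x,d\phi)+\sum_{k,\ell}\partial^2_{\xi_k\xi_\ell}q_\omega(x,d\phi)\partial^2_{x_k x_\ell}\phi+(\partial_\xi q_\omega(x,d\phi))\cdot\nabla_x\log\sqrt{\det g}$, the $\partial^2_\xi q\partial^2_x\phi$ parts cancel against $r_1(x,0)$, and the remainder is exactly $h$ times a symbol in $S^{-\infty}_{\beta+\varepsilon}(T^*\wt X)$ involving only one $x$-derivative of $\partial_\xi q_\omega$, at most one $x$-derivative of $\psi_{n-1,t}$, and metric coefficients.

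Finally, the $C^L$ bound follows by applying $\partial^\alpha$ for $|\alpha|\leq L$ and tracking losses: each derivative of $q_\omega$ costs $h^{-\beta}$ via \eqref{eq:PotentialDer}, of $\phi_{t,\delta}$ costs $h^{-(\beta+\varepsilon)}$ via Proposition~\ref{lem:ControlDeriv}, and of $\psi_t,\psi_{n-1,t}$ costs $h^{-(\beta+\varepsilon)}$ via Remark~\ref{rem:Cut-off_est1}. Together with the overall factor of $h$ and the two $(\beta+\varepsilon)$-losses absorbed in bounding the combined $h$-order symbol at the $C^0$ level, this yields $\|\mathcal R_{\omega,t}\psi_{n-1,t}f\|_{C^L}\leq C_{n,L}h^{1-(L+2)(\beta+\varepsilon)-\varepsilon}$. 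The principal obstacle is the precise algebraic matching of $\tfrac{h}{2i}\mathrm{div}_g(H^1_{\phi,q_\omega})$ with the $O(h)$ subprincipal correction coming from the $n=1$ term of the stationary phase expansion: one must track the Riemannian correction $(\partial_\xi q_\omega)\cdot\nabla_x\log\sqrt{\det g}$ that distinguishes $\mathrm{div}_g$ from its coordinate-Euclidean counterpart, so that all $O(h)$ contributions not already bounded in $hS^{-\infty}_{\beta+\varepsilon}$ are exactly absorbed. Secondary technical points include propagating local-chart symbol bounds to global $C^L$ bounds on $\wt X$ via the lifted partition of unity of Section~\ref{subsec:LiftPseudo}, and using Lemma~\ref{Lem:DistCutOff} together with pseudolocality \eqref{eq:ConjPseudo0} to discard off-diagonal cross-contributions between disjoint supports as $O(h^\infty)$.
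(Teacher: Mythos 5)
Your proposal is correct in substance and shares the paper's core mechanism: localization by lifted cut-off charts with pseudolocality \eqref{eq:ConjPseudo0} disposing of off-support terms, the Kuranishi computation of Proposition \ref{lem:TE1}, and the crucial gain of one power of $h$ from the fact that the difference symbol vanishes at $\xi=0$, so that $\xi$ quantizes to $hD_y$ acting on $f$, which oscillates only at scale $h^{\beta+\varepsilon}$. Where you differ is in the treatment of the order-$h$ terms: you organize them as an exact cancellation between the $n=1$ stationary-phase correction and the subtracted $\frac{h}{i}H^1_{\phi_{t,\delta},\wit{q}_\omega}+\frac{h}{2i}\mathrm{div}_{\wit{g}}(H^1_{\phi_{t,\delta},\wit{q}_\omega})$, and you present this algebraic matching (including the $(\partial_\xi q_\omega)\cdot\nabla_x\log\sqrt{\det g}$ correction) as the principal obstacle. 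The paper performs no such cancellation: it estimates the subtracted terms (the operator $\mathcal{L}_{\kappa_k,\iota,t}$) directly via \eqref{eq:WKBnn2.9a}, each being $O(h^{1-2(\beta+\varepsilon)-L(\beta+\varepsilon)})$ in $C^L$ and hence already within the target, and then only needs to bound $\Op_h(\wt{q}^t_{\kappa,\iota,\omega,n-1})-q_{\omega,\kappa_k}(x,d_x\phi_{t,\delta})$ by your same Taylor-in-$\xi$ device, carried out at the level of the oscillatory integral with integration by parts and the $O(h^{-\varepsilon})$ support volume of $v$ (which is where the extra $-\varepsilon$ in \eqref{eq:WKBnn11} comes from). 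Your route buys a cleaner remainder but costs delicate subprincipal algebra, and as written it contains a sign slip: with the convention $r_1=ih(\partial_y\cdot\partial_\eta)(\cdots)$ that you use (and that \eqref{eq:WKBn7} states), the $\partial^2_{\xi}q_\omega\,\partial^2_x\phi_{t,\delta}$ and $H^1\psi_{n-1,t}$ pieces of $r_1(x,0)$ add to, rather than cancel, the contributions of $-\frac{h}{2i}\mathrm{div}_{\wit{g}}(H^1)$ and $-\frac{h}{i}(H^1\psi_{n-1,t})$; the cancellation holds with the standard reduction $r_1=\frac{h}{i}(\partial_y\cdot\partial_\eta)(\cdots)$. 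This does not endanger your conclusion, because the would-be uncancelled terms are themselves $O(h^{1-2(\beta+\varepsilon)-\varepsilon})$ in $C^0$ with the usual $h^{-(\beta+\varepsilon)}$ loss per derivative (using Proposition \ref{lem:ControlDeriv} and Remark \ref{rem:Cut-off_est1}) — which is exactly why the paper's cruder term-by-term estimate suffices and why the matching you single out is not actually needed.
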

\begin{proof}
1. 
Let $0<\varepsilon< \varepsilon_0$, let $\mathfrak{d}>0$, $h_0>0$ be 
small enough, let $h\in (0, h_0]$ and $t \in [0,\mathfrak{d}|\log h|]$.
Recall the discussion and quantities in the paragraph above \eqref{eq:sa8.3}. 
Let $\chi_{k}'	 \succ \chi_k$ with support in a sufficiently small 
neighbourhood of the support of $\chi_k$. Then we have the relation 
$\wt{\chi}_{k,\iota}' \succ \wt{\chi}_{k,\iota}$, $\iota \in I$, for the 
lifted cut-off functions and 

\begin{equation}\label{eq:WKBnn2.2}
	\|\red{\psi_t\mathcal{R}_{\delta,t}}\psi_{n-1,t}f\|_{C^L} 
	\leq \sum_{\substack{ k\in K \\ \iota\in I} }
	\bigg(
	\|\widetilde{\chi}'_{k,\iota}
	\red{\psi_t\mathcal{R}_{\delta,t}}\psi_{n-1,t}
	\widetilde{\chi}_{k,\iota}f \|_{C^L}
	+\|(1-\widetilde{\chi}'_{k,\iota})
	\red{\psi_t\mathcal{R}_{\delta,t}}\psi_{n-1,t}
	\widetilde{\chi}_{k,\iota} f \|_{C^L}
	\bigg).	
\end{equation}
Here $K$ is finite and $I$ is a countable set. However, it follows 
from \eqref{eq:ConjPseudo1.01} that \corM{due to the} $\psi_t$ prefactor 
the sum in \eqref{eq:WKBnn2.2} 
is over at most $O(h^{-2C\mathfrak{d}})$ many terms. This is bounded 
by $O(h^{-\varepsilon})$ for $\mathfrak{d}$ small enough and both estimates  
are uniform in $t \in [0,\mathfrak{d}|\log h|]$. 
\par
Since the supports of $(1-\widetilde{\chi}_{k,\iota}')$ and 
$\widetilde{\chi}_{k,\iota}$ are disjoint, it follows from 
\eqref{eq:WKBn2}, \eqref{eq:WKB2.0} that 
\begin{equation}\label{eq:WKBnn2.3}
	(1-\widetilde{\chi}_{k,\iota}')
	\red{\psi_t\mathcal{R}_{\delta,t}}\psi_{n-1,t} \widetilde{\chi}_{k,\iota}
		=
	(1-\widetilde{\chi}'_{k,\iota})\wt{Q}_{t,\omega}^{\corM{n-1}}
	\widetilde{\chi}_{k,\iota}.  
\end{equation}
Hence, keeping in mind \eqref{eq:ConjPseudo1.01}, it 
follows from \eqref{eq:ConjPseudo0} that for every 
$L\in\N$  
\begin{equation}\label{eq:WKBnn2.4}
\begin{split}
	\sum_{\substack{ k\in K \\ \iota\in I} }
	\|(1-\widetilde{\chi}'_{k,\iota})
	&\red{\psi_t\mathcal{R}_{\delta,t}}\psi_{n-1,t}
	\widetilde{\chi}_{k,\iota}f \|_{H^L_h(\wt{X})}
\leq 
\sum_{\substack{ k',k\in K \\ \iota',\iota\in I} }
\|(1-\widetilde{\chi}_{k,\iota}')
	\wt{Q}_{t,\omega}^{\corM{n-1}}
	\widetilde{\chi}_{k,\iota}\widetilde{\chi}_{k',\iota'}f 
	\|_{H^L_h(\wt{X})}
=
O_{L}(h^\infty),
\end{split}
\end{equation}
uniformly in $t \in [0,\mathfrak{d}|\log h|]$. By the Sobolev 
inequalities \eqref{eq:sa8.2} and \eqref{eq:WKBnn2.4}, we find that 
\begin{equation}\label{eq:WKBnn2.4.a}
\sum_{\substack{ k\in K \\ \iota\in I} }
	\|(1-\widetilde{\chi}'_{k,\iota})
	\red{\psi_t\mathcal{R}_{\delta,t}}\psi_{n-1,t}
	\widetilde{\chi}_{k,\iota}f \|_{C^L}\\
=
O_{L}(h^\infty).
\end{equation}
%
%
2. Next we turn to the first term on the right hand side of 
\eqref{eq:WKBnn2.2}. By \eqref{eq:sa8.1} 
\begin{equation}\label{eq:WKBnn2.5}
	\|\widetilde{\chi}'_{k,\iota}
	\red{\psi_t\mathcal{R}_{\delta,t}}\psi_{n-1,t}
	\widetilde{\chi}_{k,\iota}f \|_{C^L}
	= \max_{k'\in K, \iota' \in I}
	\|(\wt{\kappa}_{k',\iota'}^{-1})^*(\widetilde{\chi}_{k',\iota'}
	\widetilde{\chi}'_{k,\iota})
	\red{\psi_t\mathcal{R}_{\delta,t}}
	\widetilde{\chi}_{k,\iota}\psi_{n-1,t}
	f\|_{C^L(\R^d)}. 
\end{equation}
By Lemma \ref{lem:geodesicCoord} we know that the maximal number 
of the balls $\widetilde{U}_{k,\iota}$ with non-empty intersection is 
bounded by a constant $0<C_0<\infty$. Given an index pair $(k,\iota)$
there are at most $C_0$ many index pairs $(k',\iota')$ such that 
$\supp \widetilde{\chi}_{k',\iota'}\cap \supp \widetilde{\chi}'_{k,\iota}
\neq\emptyset$. We will denote this condition by the relation 
$(k,\iota)\sim (k',\iota')$. Now, if $(k,\iota)\sim (k',\iota')$, we have 
\begin{equation*}
	(\wt{\kappa}_{k',\iota'}^{-1})^*(\widetilde{\chi}_{k',\iota'})
	\widetilde{\chi}'_{k,\iota} = \left((\wt{\kappa}_{k,\iota})\circ (\wt{\kappa}_{k',\iota'}^{-1}) \right)^*(\wt{\kappa}_{k,\iota}^{-1})^*(\widetilde{\chi}_{k',\iota'}
	\widetilde{\chi}'_{k,\iota}).
\end{equation*}
The map $\left((\wt{\kappa}_{k,\iota})\circ (\wt{\kappa}_{k',\iota'}^{-1}) \right)^*$ 
is a diffeomorphism on the support of $\widetilde{\chi}_{k',\iota'}
\widetilde{\chi}'_{k,\iota}$, and its derivatives as well as the derivatives 
of its inverse are all bounded independently of $\iota, \iota'$. We deduce that
\begin{equation}\label{eq:WKBnn2.6 }
	\begin{aligned}
\|\widetilde{\chi}'_{k,\iota}
\red{\psi_t\mathcal{R}_{\delta,t}}\psi_{n-1,t}
\widetilde{\chi}_{k,\iota}f \|_{C^L}
&\leq O(1) \max_{(k',\iota')\sim (k,\iota)}
\|(\wt{\kappa}_{k,\iota}^{-1})^*\left(
\widetilde{\chi}_{k',\iota'} \widetilde{\chi}'_{k,\iota}\red{\psi_t\mathcal{R}_{\delta,t}}
\widetilde{\chi}_{k,\iota}\psi_{n-1,t}
f\right)\|_{C^L(\R^d)}\\
&= O(1)
\|(\wt{\kappa}_{k,\iota}^{-1})^*\left(
	\widetilde{\chi}'_{k,\iota}\red{\psi_t\mathcal{R}_{\delta,t}}
\widetilde{\chi}_{k,\iota}\psi_{n-1,t}
f\right)\|_{C^L(\R^d)}.
\end{aligned}
\end{equation}
\corM{
Expanding the term $\red{\psi_t\mathcal{R}_{\delta,t}}$ as in \eqref{eq:WKB2.0}, 
we get from \eqref{eq:WKBn2} that 
\begin{equation}\label{eq:WKBnn2.7}
\begin{aligned}
&(\wt{\kappa}_{k,\iota}^{-1})^*\left(\widetilde{\chi}'_{k,\iota}\red{\psi_t\mathcal{R}_{\delta,t}}
	\widetilde{\chi}_{k,\iota}\psi_{n-1,t} f \right)\\
& = (\wt{\kappa}_{k,\iota}^{-1})^*\left(\widetilde{\chi}'_{k,\iota} \wt{Q}^{n-1}_{t,\omega}
	\widetilde{\chi}_{k,\iota}'\widetilde{\chi}_{k,\iota} f-\left(\wit{q}_\omega(x,d_x\wt{\phi}_{t,\delta}(x))
		+\left(\frac{h}{i}H^1_{\wt{\phi}_{t,\delta},\wit{q}_\omega}
		+\frac{h}{2i}\mathrm{div}_{\wit{g}} (H^1_{\wt{\phi}_{t,\delta},\wit{q}_\omega})
		\right)\right)\widetilde{\chi}_{k,\iota}\psi_{n-1,t} f
	 \right),
\end{aligned}
\end{equation}
where we used as well that $\psi_t \succ \psi_{n-1,t}$, 
$\widetilde{\chi}'_{k,\iota}\succ \widetilde{\chi}_{k,\iota}$ and the fact that 
the last three terms in \eqref{eq:WKBnn2.7} are local operators. 
Next, write $\psi_{n-1,t,k,\iota} := (\wt{\kappa}_{k,\iota}^{-1})^*
\left(\widetilde{\chi}_{k,\iota}\psi_{n-1,t}\right)$. Using 
\eqref{eq:LocalSymbola} we get that \eqref{eq:WKBnn2.7} is equal to 
%
\begin{equation}\label{eq:WKBnn2.7.0}
\begin{aligned}
	(\chi_{k}'\circ \kappa_{k}^{-1})
	\Op_h(\wt{q}_{\kappa_k,\iota,\omega,n-1}^t)
	(\wt{\kappa}_{k,\iota}^{-1})^*(\widetilde{\chi}_{k,\iota}f)
	-
	\left(
	(\wt{\kappa}_{k,\iota}^{-1})^*(\wit{q}_\omega(x,d_x\wit{\phi}_{t,\delta}(x)))
	+
	\mathcal{L}_{\kappa_k,\iota,t}
	\right)
	\psi_{n-1,t,k,\iota}  (\wt{\kappa}_{k,\iota}^{-1})^*f,
\end{aligned}
\end{equation}
where $\wt{q}_{\kappa_k,\iota,\omega,n-1}^t\in S^{-\infty}_{\beta+\varepsilon}(T^*\R^d)$ and}
\begin{equation*}
	\mathcal{L}_{\kappa_k,\iota,t}:=(\wt{\kappa}_{k,\iota}^{-1})^*\left(
		\frac{h}{i}H^1_{\wit{\phi}_{t,\delta},\wit{q}_\omega}
		+\frac{h}{2i}\mathrm{div}_{\wit{g}} (H^1_{\wit{\phi}_{t,\delta},\wit{q}_\omega})
	\right)(\wt{\kappa}_{k,\iota})^*.
\end{equation*}
3. We first estimate 
\corM{
\begin{equation*}
\begin{aligned}
	I_{k,\iota}&:=
	\|\mathcal{L}_{\kappa_k,\iota,t}
	\psi_{n-1,t,k,\iota}  (\wt{\kappa}_{k,\iota}^{-1})^*\widetilde{\chi}'_{k,\iota}f\|_{C^L(\R^d)}. \\
	\end{aligned}
\end{equation*}
}
%
%
Using the observation after \eqref{eq:WKBnn2.5}, it readily follows that
\begin{equation}\label{eq:WKBnn2.713}
	\|
	(\wt{\kappa}_{k,\iota}^{-1})^*
	\widetilde{\chi}_{k,\iota}'
	f\|_{C^L(\R^d)} \leq O_{k}(1)\|f\|_{C^L},
\end{equation}
uniformly in $\iota$. 
\par
\corM{
Next, recall that $\wit{q}_\omega=\wh{\pi}^*q_\omega$ and write 
$q_{\omega,\kappa_k}=(\wh{\kappa}_k^{-1})^*q_\omega$ for $q_\omega$ 
expressed in the local coordinate chart $\kappa_k$.} A direct computation shows 
that 
\begin{equation}\label{eq:WKBnn2.713.0}
	(\red{\wh{\wt{\kappa}}_{k,\iota}^{-1}})^* \wit{q}_\omega(x,d_x\wit{\phi}_{t,\delta})
=q_{\omega,\kappa_k}(x,d_x(\red{\wt{\kappa}_{k,\iota}^{-1}})^* \wit{\phi}_{t,\delta})
\end{equation}
 Hence, 
\begin{equation*}
\begin{split}
\mathcal{L}_{\kappa_k,\iota,t}=\bigg(
	\frac{h}{i}
	(\nabla_\xi q_{\omega,\kappa_k})(x,d_x(\wt{\kappa}_{k,\iota}^{-1})^{*}\wit{\phi}_{t,\delta})
	\cdot \nabla_x
	+\frac{h}{2i}
	\mathrm{div}_{g} ((\nabla_\xi q_{\omega,\kappa_k})(x,d_x(\wt{\kappa}_{k,\iota}^{-1})^{*}\wit{\phi}_{t,\delta})
	)
\bigg).
\end{split}
\end{equation*}
%
\corM{
Since $q_\omega \in S^{-\infty}_\beta(T^*X)$, get by applying  
Lemma \ref{lem:CLCompo}, Proposition \ref{lem:ControlDeriv}, Remark \ref{rem:Cut-off_est1} 
and the product rule that for all $L\in\N$}
\begin{equation}\label{eq:WKBnn2.9a}
\begin{split}
	&\| \psi_{n-1,t,k,\iota}
	q_{\omega,\kappa_k}(\cdot,d_x(\kappa_{k,\iota}^{-1})^* \wit{\phi}_{t,\delta})\|_{C^L} 
	\leq O_L(h^{-L(\beta+\varepsilon)}),\\
	&\| \psi_{n-1,t,k,\iota}
	(\nabla_{\xi_j} q_{\omega,\kappa_k})(\cdot,d_x(\wt{\kappa}_{k,\iota}^{-1})^{*}\wit{\phi}_{t,\delta})
	\|_{C^L} 
	\leq O_L(h^{-\beta-L(\beta+\varepsilon)}),\\
	&\| 
	(\nabla_\xi q_{\omega,\kappa_k})(\cdot,d_x(\wt{\kappa}_{k,\iota}^{-1})^{*}\wit{\phi}_{t,\delta})
	\cdot \nabla_x\psi_{n-1,t,k,\iota}\|_{C^L} 
	\leq O_L(h^{-(\beta+\varepsilon) -L(\beta+\varepsilon)}),\\
	&\|	\mathrm{div}_{g}((\nabla_\xi q_{\omega,\kappa_k})(\cdot,d_x(\wt{\kappa}_{k,\iota}^{-1})^{*}\wit{\phi}_{t,\delta})
		)\psi_{n-1,t,k,\iota}\|_{C^L} 
	\leq O_L(h^{-2(\beta+\varepsilon) -L(\beta+\varepsilon)}),
\end{split}
\end{equation}
uniformly in $t\in [0,\mathfrak{d}|\log h|]$ \corM{and in $k,\iota$}. Hence, combining 
\eqref{eq:WKBnn2.713} and \eqref{eq:WKBnn2.9a} with the product rule and assumption 
\eqref{eq:WKBnn11.0} yields that
\begin{equation}\label{eq:WKBnn2.9b}
	I_{k,\iota} 
	\leq 
	O_L(h^{1-2(\beta+\varepsilon) -L(\beta+\varepsilon)})
\end{equation}
uniformly in $t\in [0,\mathfrak{d}|\log h|]$, uniformly in $k$ and in $\iota$. 
\\
\par
4. In view of \eqref{eq:WKBnn2.6 }, \eqref{eq:WKBnn2.7.0} and \eqref{eq:WKBnn2.713.0} 
it remains to estimate 
\corM{
\begin{equation*}
\left\|(\chi_{k}'\circ \kappa_{k}^{-1})
	\bigg(
	\Op_h(\wt{q}_{\kappa_k,\iota,\omega,n-1}^t)
	-
	q_{\omega,\kappa_k}(x,d_x(\wt{\kappa}_{k,\iota}^{-1})^* \wit{\phi}_{t,\delta})
	(\psi_{n-1,t}\circ\wt{\kappa}_{k,\iota}^{-1})
	\bigg)
	(\wt{\kappa}_{k,\iota}^{-1})^*(\widetilde{\chi}_{k,\iota}f)\right\|_{C^L}.
\end{equation*}	
}
\corM{
By \eqref{eq:LocalSymbola} we know that 
 $\wt{q}_{\kappa_k,\iota,\omega,n-1}^t = (\wt{q}_{\kappa,\iota,\omega,n}^t)^0
+O(h^{1-2(\beta+ \varepsilon)}) \in S^{-\infty}_{\beta+\varepsilon}(T^*\R^d)$
\eqref{eq:LocalSymbolb}, \eqref{eq:WKBnn2.7}, \eqref{eq:WKBnn2.7.0} we know that 
the leading part of $\wt{q}_{\kappa_k,\iota,\omega,n-1}^t$ satisfies 
\begin{equation}\label{eq:WKBnn2.7.1}
	(\wt{q}_{\kappa_k,\iota,\omega,n-1}^t)^0 = 
	((\widehat{\kappa}_k^{-1})^* q_\omega)(x,\xi+d_x(\widetilde{\kappa}_{k,\iota}^{-1})^* \corM{\wt{\phi}}_{t,\delta})
	(\psi_{n-1,t}\circ \wt{\kappa}_{k,\iota}^{-1})(x), 
	\text{ on } T^*O,
\end{equation}
where $O\subset \R^d$ is a small open neighbourhood of $\supp \chi_k'\circ\kappa_k^{-1}$.}
Put $v:= (\wt{\kappa}_{k,\iota}^{-1})^*(\widetilde{\chi}_{k,\iota}'f)$ and 
notice that the $C^L$ norms of $v$ satisfy the same estimates as in \eqref{eq:WKBnn2.713}. 
Thus, it suffices to prove that
\begin{equation}\label{eq:WKBnn2.611}
	\|
	\Op_h(r_{k,\iota,t})		
	v\|_{C^L(\supp \chi_k'\circ\kappa_{k}^{-1})}
	\leq C_{n,L} h^{1-(L+2)(\beta+\varepsilon)},
\end{equation}
where 
\corM{
\begin{equation}\label{eq:WKBnn2.611.0}
		r_{\kappa_k,\iota,t}(x,\xi):=\wt{q}_{\kappa_k,\iota,\omega,n-1}^t(x,\xi)
	- 	q_{\omega,\kappa_k}(x,d_x(\wt{\kappa}_{k,\iota}^{-1})^* \wit{\phi}_{t,\delta})
	(\psi_{n-1,t}\circ\wt{\kappa}_{k,\iota}^{-1})(x) \in S^0_{\beta+\varepsilon}(T^*O).
\end{equation}
}
Taylor expanding the \corM{leading part in \eqref{eq:WKBnn2.7.1},} we find that
\begin{equation}\label{eq:WKBnn2.11}
	\begin{split}
	q_{\omega,\kappa_k}(x,d_x(\red{\wit{\phi}_{t, \delta}}\circ\kappa_{k,\iota}^{-1})+\xi)
	=
	&q_{\omega,\kappa_k}(x,d_x(\red{\wit{\phi}_{t, \delta}}\circ\kappa_{k,\iota}^{-1})(x)) 
	+  e_{t,\kappa,\iota}(x,\xi)\xi
	\end{split}
\end{equation}
with 
\begin{equation*}
	e_{t,\kappa_k,\iota}(x,\xi):= 
	\int_0^1\nabla_\xi
	q_{\omega,\kappa_k}(x,
	d_x(\red{\wit{\phi}_{t, \delta}}\circ\kappa_{k,\iota}^{-1})(x)+\tau \xi) 
	d\tau.
\end{equation*}
\corM{
Combining  \eqref{eq:WKBnn2.7.1}, \eqref{eq:WKBnn2.611.0}
and \eqref{eq:WKBnn2.11}, we find that}
\begin{equation}
\begin{split}
	r_{\kappa_k,\iota,t}(x,\xi;h)&=
	(\psi_{n-1,t}\circ\wt{\kappa}_{k,\iota}^{-1})(x)e_{t,\kappa,\iota}(x,\xi)\xi
	+h^{1-2(\beta+\varepsilon)}s_{\kappa_k,\iota,t}(x,\xi)\\
	&\corM{:=
	e'_{t,\kappa_k,\iota}(x,\xi)\xi
	+h^{1-2(\beta+\varepsilon)}s_{\kappa_k,\iota,t}(x,\xi)}\\
\end{split}
\end{equation}
\corM{
for some symbol $s_{\kappa_k,\iota,t}\in S^{-\infty}_{\beta+\varepsilon}(T^*O)$ 
with symbol estimates uniform in $k$ and $\iota$. 
Similar to the symbol estimates \eqref{eq:SymbolClass} we see by Remark \ref{rem:Cut-off_est1} 
that $e_{t,\kappa_k,\iota}'\in h^{-\beta}S^{0}_{\beta+\varepsilon}(T^*O)$ 
with symbol estimates uniform in $k$ and $\iota$.} 
Integration by parts shows that
\begin{equation*}
\begin{split}
	&\Op_h(r_{\kappa_k,\iota,t})v(x)\\
	&=
	\frac{1}{(2h\pi)^d} \iint_{\R^{2d}} 
	\e^{\frac{i}{h} \xi \cdot (x-y)} 
	\left( 
	\corM{e_{t,\kappa_k,\iota}'(x,\xi)}\xi
	+  h^{1-2(\beta+\varepsilon)}s_{k,\iota,t}(x,\xi)\right) v(y) d y d\xi \\
	&=
	\frac{1}{(2h\pi)^d} \iint_{\R^{2d}} 
		\e^{\frac{i}{h} \xi \cdot (x-y)} 
	\left( 
	\corM{e_{t,\kappa_k,\iota}'(x,\xi)}\left(\frac{1+\xi\cdot hD_y}{1+\xi^2}\right)^{d+1}hD_y
		v(y)  
	+  h^{1-2(\beta+\varepsilon)}s_{k,\iota,t}(x,\xi)v(y) \right) d y d\xi.
\end{split}
\end{equation*}
The integral in the last line is absolutely convergent since 
$\supp v \subset \supp \chi_k\circ\kappa_k^{-1}$ and since 
\begin{equation*}
	\left(\frac{1+\xi\cdot hD_y}{1+\xi^2}\right)^{d+1}hD_y
	v(y) 
	= O_L(h^{1-(\beta+\varepsilon)})
	\left(\frac{1}{1+|\xi|}\right)^{d+1},
\end{equation*}
by \eqref{eq:WKBnn11.0} and the paragraph above \eqref{eq:WKBnn2.611}. 
When we apply derivatives $\partial_x^\alpha$ to 
$\Op_h(r_{\kappa_k,\iota,t})v(x)$ they either fall onto the symbols 
$e_{t,\kappa_k,\iota}'$ and $s_{\kappa_k,\iota,t}$, or onto the exponential. In the 
latter case we use that $\partial_x	\e^{\frac{i}{h} \xi \cdot (x-y)}  = 
-\partial_y	\e^{\frac{i}{h} \xi \cdot (x-y)} $ and integration by parts 
which makes the derivative fall onto $v$. Using \eqref{eq:WKBnn2.713}, \eqref{eq:WKBnn11.0} we 
find that for $|\alpha|\leq L$ 
\begin{equation*}
	\partial^\alpha_y\left(\frac{1+\xi\cdot hD_y}{1+\xi^2}\right)^{d+1}hD_y
	v(y) 
	= O_L(h^{1-(L+1)(\beta+\varepsilon)})
	\left(\frac{1}{1+|\xi|}\right)^{d+1}.
\end{equation*}
\corM{
Using also that $s_{\kappa_k,\iota,t}\in S^{-\infty}_{\beta+\varepsilon}(T^*O)$ we conclude that}
\begin{align*}
	\|\Op_h(r_{\kappa_k,\iota})
	v \|_{C^L(\supp \chi_k'\circ\kappa_{k}^{-1})} 
	&= O_{k,L}(1)
		\corM{h^{1-(L+2)(\beta+\varepsilon)}}.
\end{align*}
In particular, the constant in the estimate only depends on $k$ and $L$. 
Combining \eqref{eq:WKBnn2.611}, \eqref{eq:WKBnn2.9b}, \eqref{eq:WKBnn2.6 }, 
\eqref{eq:WKBnn2.2}, \eqref{eq:WKBnn2.4.a}
\begin{equation}\label{eq:WKBnn2end}
		\|\red{\psi_t\mathcal{R}_{\delta,t}}\psi_{n-1,t}f\|_{C^L} 
		=O_{L}(1)
		\corM{h^{1-(L+2)(\beta+\varepsilon)}},
\end{equation}
which ends the proof of the proposition. 
\end{proof}
\MI{We may now give estimates on each of the terms appearing in the WKB expansion.}
\begin{prop}\label{Porp:SobolevEst}
Let $0< \varepsilon_{1} < \MI{\frac{\varepsilon_0}{2}}$
 and let $N\in \N$. 
There exist $\mathfrak{d}>0$ \MI{such that, for all $L\in \N$, the 
following holds. There exists} $h_0 \MI{= h_0(\varepsilon_1,\corM{N}, L)}>0$, 
such that for all 
$h\in (0,h_0]$, all $t \in [0,\mathfrak{d}|\log h|]$, all 
$n\in \{0,..., N\}$, the solution $b_n$ given in 
\eqref{eq:transEq4} and \eqref{eq:transEq5} satisfies
\begin{equation}\label{eq:SobolevEst}
\begin{split}
&\|b_n\|_{C^L} 
	= \MI{O(h^{-L(\beta+ \varepsilon_{1})- n(1- \varepsilon_1) })}.
\end{split}
\end{equation} 
\end{prop}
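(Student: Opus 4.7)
The plan is to proceed by induction on $n\in\{0,1,\ldots,N\}$. The base case $n=0$ is immediate from the explicit formula $b_0=T^t_{\delta,\Lambda_0}a_0$ of \eqref{eq:transEq4}: since the amplitudes $a_n$ satisfy $\|a_n\|_{C^L}=O(1)$ uniformly in $h$, Proposition \ref{lem:lem_P1} applied with an auxiliary parameter $\varepsilon$ chosen small enough (see below) yields $\|b_0\|_{C^L}=O(h^{-L(\beta+2\varepsilon)})=O(h^{-L(\beta+\varepsilon_1)})$.

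For the inductive step, assume the bound holds at level $n-1$ and write, via Duhamel's formula \eqref{eq:transEq5},
\begin{equation*}
b_n \;=\; T^t_{\delta,\Lambda_0}a_n \;+\; \tfrac{i}{2}\!\int_0^t T^{t-s}_{\delta,\Lambda_s}\Delta b_{n-1}(s,\cdot)\,ds \;-\; i h^{-2}\delta\!\int_0^t T^{t-s}_{\delta,\Lambda_s}\psi_{n,s}\mathcal{R}_{\omega,s}b_{n-1}(s,\cdot)\,ds.
\end{equation*}
The first term is handled exactly as in the base case. For the second, the inductive hypothesis and the trivial estimate $\|\Delta b_{n-1}\|_{C^L}\leq C\|b_{n-1}\|_{C^{L+2}}$ give $\|\Delta b_{n-1}\|_{C^L}=O(h^{-(2n+L)(\beta+\varepsilon_1)})$; factoring out the $L$-independent scalar $h^{-2n(\beta+\varepsilon_1)}$ and applying Proposition \ref{lem:lem_P1} to the remaining function produces the required bound on $T^{t-s}_{\delta,\Lambda_s}\Delta b_{n-1}$, and the time integral over $[0,\mathfrak{d}|\log h|]$ contributes only a logarithmic factor, absorbed by shrinking $\mathfrak{d}$.

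The third term, absent in the \ref{eq:Pot} since $\mathcal{R}_{\omega,s}\equiv 0$, is the delicate point. Scaling $b_{n-1}=h^{-2(n-1)(\beta+\varepsilon_1)}\tilde b_{n-1}$ with $\|\tilde b_{n-1}\|_{C^L}=O(h^{-L(\beta+\varepsilon_1)})$ and applying Proposition \ref{lem:lem_P2} yields
\begin{equation*}
\|\mathcal{R}_{\omega,s}\psi_{n-1,s}b_{n-1}\|_{C^L}\;=\;O\bigl(h^{1-(L+2)(\beta+\varepsilon_1)-\varepsilon_1-2(n-1)(\beta+\varepsilon_1)}\bigr).
\end{equation*}
Another application of Proposition \ref{lem:lem_P1} followed by multiplication by $|h^{-2}\delta|\leq h^{-2+2\beta+\varepsilon_0}$ (from \eqref{eq:CondBeta}) and integration in $s$ then gives a bound of the claimed order, the key being that the constraint $\varepsilon_1<\tfrac{1}{2}-\beta$ together with $\beta+\varepsilon_0<\tfrac{1}{2}$ from Hypothesis \ref{Hyp:BetaDelta} makes the stationary-phase gain supplied by $\mathcal{R}_{\omega,s}$ beat the destabilising prefactor $h^{-2}\delta$.

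The main obstacle is the bookkeeping of the auxiliary $\varepsilon$. Each application of Proposition \ref{lem:lem_P1} sends the $L$-coefficient from $\beta+\varepsilon$ to $\beta+2\varepsilon$, so after $N$ iterations a naive choice of $\varepsilon\sim\varepsilon_1$ would compound to $\sim 2^N\varepsilon_1$, exceeding the target. The induction is saved by fixing $N$ at the outset and choosing the initial $\varepsilon$ of order $\varepsilon_1/2^{N+1}$ (which is admissible since $N$ is fixed and $\varepsilon_1<1/2-\beta$). With this choice, the loss accumulated through the $n\leq N$ inductive steps fits inside the target $\varepsilon_1$, and simultaneously the numerical balance needed in the third term between $h^{-2}\delta$ and the $\mathcal{R}_{\omega,s}$-gain closes.
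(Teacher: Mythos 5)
Your overall strategy coincides with the paper's proof: induction on $n$, the Duhamel representation \eqref{eq:transEq5}, Proposition \ref{lem:lem_P1} for the transported terms, Proposition \ref{lem:lem_P2} for the $\mathcal{R}_{\omega,s}$ term, and the same cure for the $\varepsilon$-doubling (the paper runs the induction with parameters $2^{n}\varepsilon$ and only compares with $\varepsilon_1$ at the end, which is exactly your choice $\varepsilon\sim\varepsilon_1 2^{-(N+1)}$ in disguise). The base case and the $\Delta b_{n-1}$ term are treated as in the paper, which likewise factors out an $L$-independent quantity (called $A_{n-1}$ there) before applying Proposition \ref{lem:lem_P1}.

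The step that does not hold up as written is your closing of the $h^{-2}\delta\,\mathcal{R}_{\omega,s}$ term. With the bound of Proposition \ref{lem:lem_P2}, the $L$-independent, per-step contribution of this term is of size $|\log h|\,h^{-2}\delta\,h^{1-2(\beta+\varepsilon)-\varepsilon}=|\log h|\,\delta\,h^{-1-2\beta-3\varepsilon}$, and for your induction (which allots a gain of $h^{-2(\beta+\varepsilon_1)}$ per step) to close you need essentially $\delta\,h^{-1-2\beta}\lesssim h^{-2(\beta+\varepsilon_1)}$, i.e. $\delta\lesssim h^{1-2\varepsilon_1}$. Your justification, namely $h^{-2}\delta\le h^{-2+2\beta+\varepsilon_0}$ from \eqref{eq:CondBeta} combined with ``$\varepsilon_1<\tfrac12-\beta$ and $\beta+\varepsilon_0<\tfrac12$'', does not deliver this: writing out the exponents in your third-term estimate, the required inequality reduces to $2\beta+\varepsilon_0\ge 1+\varepsilon_1$, which is not implied by Hypothesis \ref{Hyp:BetaDelta} and fails for instance when $\beta$ is small. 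The paper does not attempt this direct count; instead it compares the $\delta h^{-2}\mathcal{R}_{\omega,t}b_{n-1}$ contribution with the $\Delta b_{n-1}$ contribution, invokes Hypothesis \ref{Hyp:BetaDelta} to argue which one dominates, and absorbs the per-step loss into the recursively defined factor $A_n=A_{n-1}\,|\log h|\,h^{-2(\beta+2^{n-1}\varepsilon)}$, verifying only at the very end that $A_n\le h^{-n(2\beta+\varepsilon_1)}$ for $\varepsilon$ small. To repair your argument you should replace the asserted arithmetic by such a comparison of the two Duhamel terms (or otherwise genuinely justify $\delta\,h^{-1-2\beta-3\varepsilon}\lesssim h^{-2(\beta+\varepsilon_1)}$ from the hypotheses); as it stands, that step is a gap.
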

\begin{proof}
\MI{Let us set, for every $n\in \N$, $\wit{\varepsilon}_n := \frac{\varepsilon_1}{10 \times \tilde{ 2^n}}$.}
\MI{First of all, w}e will show by induction \MI{over $n$ that, for every $n\in \N$,  for all $0<\varepsilon \leq \wit{\varepsilon}_n$, there exists $\gd(\varepsilon)>0$ such that for all  $L\in \N$, we have for all $h>0$ smaller than some $h_0(n,\varepsilon, L)$ and} 
all $t\in [0, \gd(\varepsilon) |\log h|]$
\begin{equation}\label{eq:NormeCLInduc}
\|b_n\|_{C^L} \leq \red{C_L} h^{-L(\beta+ \red{2^{n+1}} \varepsilon)} A_n
\end{equation}
for all 
where $A_n\geq 1$ is a quantity that depends on $h$ and $n$, but not on $L$, 
\red{and $C_L>0$ is an $L$-dependent constant}.
\par 
%
When $n=0$, it is a consequence 
of Proposition \ref{lem:lem_P1} and equation \eqref{eq:transEq4} 
\red{with $A_0=1$}. Suppose 
that \eqref{eq:NormeCLInduc} holds up to index $n-1$, for all $L\in \N$. 
Applying \corM{\eqref{eq:transEq5},} Proposition \ref{lem:lem_P1} and recalling that $t=O(|\log h|)$, we get
\begin{equation}\label{eq:Induction1}
\begin{split}
		\|b_n\|_{C^L} 
		&\leq \|T^t_{\delta,\Lambda_0} a_n\|_{C^L}
		+\frac{1}{2}\int_0^t 
		\|
		T^{t-s}_{\delta,\Lambda_{s}} \Delta b_{n-1}(s,\cdot\,;\delta)
		-2h^{-2}\delta T^{t-s}_{\delta,\Lambda_{s}} \red{\psi_{n,s}} \mathcal{R}_{\delta,t} b_{n-1}
		(s,\cdot\,;\delta)
		\|_{C^L} ds\\
		&\leq O_L(h^{-(\beta+ 2\varepsilon)L}) \MI{\|a_n\|_{C^L}}\\
		&+O(|\log h|)\max_{s\in[0,t]} \left( \|T^{t-s}_{\delta,\Lambda_{s}} \Delta b_{n-1}(s,\cdot\,;\delta)\|_{C^L}  
		+ \|h^{-2} \delta T^{t-s}_{\delta,\Lambda_{s}}\red{\psi_{n,s}} \mathcal{R}_{\delta,t} b_{n-1}
		(s,\cdot\,;\delta)\|_{C^L}\right).
\end{split}
\end{equation}
Now,  by assumption,  we have for any $k\in \N$, 
$\|\Delta b_{n-1} \|_{C^k} \leq C_{\MI{k}} h^{-(k+2)(\beta+ 2^{\red{n}}\varepsilon)} \red{A_{n-1}}$, 
so we may apply Proposition \ref{lem:lem_P1} with $f= \red{A_{n-1}^{-1}} 
h^{2(\beta + 2^{\red{n}} \varepsilon)} \Delta b_{n-1}$ and $2^{\red{n}} \varepsilon$ 
playing the role of $\varepsilon$ to obtain
\begin{equation*}
	\|T^{t-s}_{\delta,\Lambda_{s}} \Delta 
	b_{n-1}(s,\cdot\,;\delta)\|_{C^L} 
	\leq O_L(1) A_{n-1} h^{-2(\beta + 2^{\red{n}} \varepsilon)}  
		h^{- L (\beta + 2^{\red{n+1}} \varepsilon)}.
\end{equation*}
To deal with the last term in \corM{\eqref{eq:Induction1}}, we first recall that, by the induction 
hypothesis,  we have $\|b_{n-1}\|_{C^k} \leq C A_{n-1} 
h^{-k( \beta + 2^{\red{n}} \varepsilon)}$. We may thus apply Proposition 
\ref{lem:lem_P2} with $f= A_{n-1}^{-1} b_{n-1}$ and $2^{\red{n}}\varepsilon$ 
playing the role of $\varepsilon$ to deduce that, for any $L\in \N$,
$$\|\red{\psi_{n,s}}\mathcal{R}_{\delta,t} b_{n-1}
		(s,\cdot\,;\delta)\|_{C^L} \leq C_{L} A_{n-1} \corM{h^{1-2(\beta+2^{\red{n}} 
		\varepsilon) -L(\beta+2^n\varepsilon)}}.$$
\red{Here we also used the fact that $\psi_{s}\succ\psi_{n,s}$ \eqref{eq:CutOfffun31}, 
Remark \ref{rem:Cut-off_est1} and the product rule.} Next, 
apply again Proposition \ref{lem:lem_P1} with 
$f= A_{n-1}^{-1} h^{-1+2(\beta+2^{\red{n}}\varepsilon) + 2^{\red{n}} \varepsilon}\red{\psi_{n,s}}\mathcal{R}_{\delta,t} b_{n-1}(s,\cdot\,;\delta)$ 
and $2^{\red{n}} \varepsilon$ playing the role of $\varepsilon$ to obtain
\begin{equation*}
	\|T^{t-s}_{\delta,\Lambda_{s}}\red{\psi_{n,s}}\mathcal{R}_{\delta,t} b_{n-1}
		(s,\cdot\,;\delta)\|_{C^L} 
	\leq O_L(1) 
	A_{n-1} h^{1-2(\beta+2^{\red{n}} 
		\varepsilon) - 2^{\red{n}} \varepsilon}  
		h^{- L (\beta + 2^{\red{n+1}} \varepsilon)}.
\end{equation*}
All in all, we have obtained that
\begin{align*}
\|b_n\|_{C^L} \leq O_L(1) h^{-(\beta+ 2^{\red{n+1}}\varepsilon)L}
 \left( \MI{\|a_n\|_{C^L}}+ |\log h| A_{n-1} h^{-2(\beta + 2^{\red{n}} \varepsilon)} 
 + |\log h| h^{-2} \delta A_{n-1} h^{1-2(\beta+  2^{\corM{n}}\varepsilon)}\right).
\end{align*}
Now, \red{thanks to \eqref{eq:CondBetaDelta}}, the \red{third} term is the largest, 
provided \MI{$h$ is} small enough \corM{depending on $n$ and $L$}. Therefore,  \MI{taking}
\begin{equation}\label{eq:ConditionAn}
A_n \MI{\geq} |\log h| h^{-2} \delta A_{n-1} h^{1-\corM{2(\beta+ 2^{\red{n}}\varepsilon)}},
\end{equation}
we deduce (\ref{eq:NormeCLInduc}) at step \red{$n$}. In particular, using 
\eqref{eq:CondBeta} \MI{and the definition of $\wit{\varepsilon}_n$}, 
we see that we may take \MI{$A_n = h^{-n \left(1 - \varepsilon_1\right)}$}, 
\corM{provided that $h>0$ is small enough so that 
$$
	|\log h| h^{\frac{3 \varepsilon_0}{20}} \leq 1.
$$
}
%
The statement then follows from (\ref{eq:NormeCLInduc}), \MI{by taking $\varepsilon= \wit{\varepsilon}_N$, and taking $\gd$ accordingly}.
\end{proof}

We may deduce an estimate on the error term in the WKB method, 
appearing in (\ref{eq:WKB_Ansatz.0}).
\begin{corollary}\label{cor:SobolevEstRest}
\red{Let $\psi_t$ be as in \eqref{eq:CutOfffun2}, \eqref{eq:CutOfffun31} and} 
write 
\begin{equation*}
	R_N:=
	-\frac{ih^{N+1}}{2}\e^{\frac{i}{h} \wt{\phi}_{t,\delta}(x)}
	\Delta_{\widetilde{g}} b_{N-1}
	+\delta\sum_{n=1}^{N-1}h^n(1-\psi_{n,t})\wt{Q}_\omega \e^{\frac{i}{h} \wt{\phi}_{t,\delta}(x)} b_{n-1}
	+\delta h^{N-1} \e^{\frac{i}{h} \wt{\phi}_{t,\delta}(x)} 
	\mathcal{R}_{\delta,t} b_{N-1}.
\end{equation*}
For every $L, M\in \N$, there exists $N(M, L)\in \N$ such that, 
for all $\gd>0$ and $h_0>0$ small enough, we have that 
for all $h\in ]0,h_0]$
\begin{equation}\label{eq:BorneReste}
\|\corM{\psi_t} R_N\|_{H^L} = O(h^M),
\end{equation}
\corM{uniformly in $t\in [0,\mathfrak{d}|\log h|]$.} 
Here we use the non-semiclassical Sobolev norm with $H^L=H^L_1$. 
\end{corollary}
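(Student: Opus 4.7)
The plan is to split $R_N=R_{N,1}+R_{N,2}+R_{N,3}$ into its three natural summands and to bound each separately in $\|\cdot\|_{H^L_h}$, choosing $N$ large enough in terms of $L$ and $M$. Throughout, on the relatively compact supports in play I will use the multiplication estimate
\[
\|e^{i\phi_{t,\delta}/h}f\|_{H^L_h}\le P_L(h)\sum_{k\le L}\|f\|_{C^k},
\]
where $P_L(h)=O(h^{-\sigma_L})$ with $\sigma_L$ small in $\varepsilon$: each application of $h\nabla$ to $e^{i\phi/h}$ yields the bounded factor $i\nabla\phi_{t,\delta}$, while higher derivatives of $\phi_{t,\delta}$ are controlled by Proposition \ref{lem:ControlDeriv}. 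A further loss $O(h^{-C\mathfrak d})$ stems from the exponentially growing volume of $\mathrm{supp}\,b_n\subset\mathcal O_{t,\delta}$ as in \eqref{eq:ConjPseudo1.01}; all these losses are absorbable by taking $\mathfrak d$ and $\varepsilon$ small, since the leading powers of $h$ I gain are polynomial in $N$.

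\textbf{First summand.} By Proposition \ref{Porp:SobolevEst}, $\|\Delta_{\wt g}b_{N-1}\|_{C^K}=O(h^{-(2N+K)(\beta+\varepsilon_1)})$. Applying the multiplication estimate,
\[
\|R_{N,1}\|_{H^L_h}\le C\,h^{N+1-(2N+L)(\beta+\varepsilon_1)-\varepsilon'}
\]
for some small $\varepsilon'>0$. Since $\beta+\varepsilon_1<1/2$, the exponent exceeds $M$ once $N\ge N_1(L,M)$.

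\textbf{Third summand.} By Proposition \ref{lem:TransportEqSol}, $b_{N-1}=\psi_{N-1,t}b_{N-1}$. Rescale $f:=h^{2(N-1)(\beta+\varepsilon_1)}b_{N-1}$, so that $\|f\|_{C^K}=O(h^{-K(\beta+\varepsilon_1)})$ by Proposition \ref{Porp:SobolevEst}. Applying Proposition \ref{lem:lem_P2} with $\varepsilon=\varepsilon_1$ yields
\[
\|\mathcal R_{\omega,t}b_{N-1}\|_{C^L}\le C\,h^{1-(L+2+2(N-1))(\beta+\varepsilon_1)-\varepsilon_1}.
\]
Multiplying by $\delta h^{N-1}e^{i\phi/h}$ and passing to $H^L_h$ gives $\|R_{N,3}\|_{H^L_h}=O(\delta\,h^{N-(2N+L)(\beta+\varepsilon_1)-\varepsilon''})$, which is $O(h^M)$ for $N\ge N_3(L,M)$.

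\textbf{Second summand --- the main obstacle.} For each $1\le n\le N-1$, using $\mathrm{supp}\,b_{n-1}\subset\mathrm{supp}\,\psi_{n-1,t}$, the $n$-th summand rewrites as $\delta h^n[\psi_t(1-\psi_{n,t})\wt Q_\omega\psi_{n-1,t}](e^{i\phi/h}b_{n-1})$. By Lemma \ref{Lem:DistCutOff} the supports of $\psi_t(1-\psi_{n,t})$ and $\psi_{n-1,t}$ are separated by $\ge h^{2\varepsilon}$, while by Remark \ref{rem:Cut-off_est1} both cutoffs obey $C^K$ bounds $O(h^{-K(\beta+\varepsilon)})$. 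Since $\beta+\varepsilon_0<1/2$ by Hypothesis \ref{Hyp:BetaDelta}, the extended pseudolocality recalled after \eqref{eq:sa3} applies, so for every $M'\in\N$,
\[
\psi_t(1-\psi_{n,t})\wt Q_\omega\psi_{n-1,t}=O_{M'}(h^{M'}):H^{-M'}_h(\wt X)\to H^{M'}_h(\wt X)
\]
uniformly in $t\in[0,\mathfrak d|\log h|]$. In the random potential case the same operator vanishes identically, so only the pseudodifferential case requires this reasoning. Bounding $\|e^{i\phi/h}b_{n-1}\|_{H^{-M'}_h}$ crudely via Proposition \ref{Porp:SobolevEst} and the volume $|\mathcal O_{t,\delta}|=O(h^{-C\mathfrak d})$ gives $\|R_{N,2}\|_{H^L_h}=O(h^\infty)$, the factor $N-1$ from the sum being harmless.

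The hard part is precisely this second summand: one must verify carefully that the pseudolocality statement of Section \ref{sec:SemClass} applies to our $h$-dependent cutoffs whose derivatives blow up as $h^{-K(\beta+\varepsilon)}$, and to the support separation $h^{2\varepsilon}$; both reduce to taking $\varepsilon$ small so that $\beta+\varepsilon<1/2$ and $2\varepsilon<1/2$. The other two summands are quantitative but routine, and rely on the crucial constraint $\beta+\varepsilon_1<1/2$ underlying Proposition \ref{Porp:SobolevEst}.
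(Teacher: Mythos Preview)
Your proof is correct and follows essentially the same decomposition and estimates as the paper's own proof: the three summands are handled via Proposition~\ref{Porp:SobolevEst} for $R_{N,1}$, Propositions~\ref{lem:lem_P2} and~\ref{Porp:SobolevEst} for $R_{N,3}$, and pseudolocality plus Lemma~\ref{Lem:DistCutOff} for $R_{N,2}$.

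One technical point deserves mention. For the second summand you invoke the extended pseudolocality stated after~\eqref{eq:sa3} directly with the cutoffs $\psi_t(1-\psi_{n,t})$ and $\psi_{n-1,t}$. That statement, however, requires the cutoffs to have support in an \emph{$h$-independent} compact set, which fails here since $\mathrm{supp}\,\psi_t\subset\mathcal O_{t,\delta}$ has volume $O(h^{-C\mathfrak d})$ on the cover. The paper fixes this by inserting a partition of unity $\sum_{k,\iota}\wt\chi_{k,\iota}=1$ of lifted cutoffs with fixed compact support, reducing to $O(h^{-C\mathfrak d})$ many terms of the form $[\psi_t(1-\psi_{n,t})\wt\chi_{k,\iota}]\,\wt Q_\omega\,[\wt\chi_{k',\iota'}\psi_{n-1,t}]$, to each of which~\eqref{eq:liftePseudo3} (and, for overlapping charts, the local integration-by-parts argument with the derivative bounds of Remark~\ref{rem:Cut-off_est1}) applies. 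You are clearly aware of the volume loss from your preamble, so this is a small omission rather than a genuine gap; but as written the invocation of pseudolocality is not quite justified. Also, the condition $\beta+\varepsilon_0<1/2$ is not a consequence of Hypothesis~\ref{Hyp:BetaDelta} alone; it is an additional standing assumption made throughout Section~\ref{sec:RegularityOfWKB} (see the hypotheses of Propositions~\ref{lem:TE1} and~\ref{lem:lem_P2}).
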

\begin{proof}
1. Using Proposition \ref{lem:ControlDeriv} \corM{and Lemma \ref{lem:CLCompo}}
shows that for all $L\in\N$
\begin{equation}\label{eq:expCLnorm}
	\|\e^{\frac{i}{h} \wt{\phi}_{t,\delta}}\|_{C^L(\wt{\mathcal{O}}_{t,\delta})} 
	\leq O_L( h^{- L}),
\end{equation}
uniformly in $t \in [0,\mathfrak{d}|\log h|]$. 
\par
2. Using a partition of unity of lifted cut-off functions as in 
the discussion \corM{before} \eqref{eq:sa8.3}, we get 
\begin{equation*}
\begin{split}
\|\sum_{n=1}^{N-1}h^n \psi_t(1-\psi_{n,t})\wt{Q}_\omega \e^{\frac{i}{h} \wt{\phi}_{t,\delta}} 
b_{n-1}\|_{\red{H^L_h}} 
\leq 
\sum_{n=1}^{N-1}h^n \sum_{k,\iota,k',\iota'}
\|\psi_t(1-\psi_{n,t})\wt{\chi}_{k,\iota}\wt{Q}_\omega\wt{\chi}_{k',\iota'}
\e^{\frac{i}{h} \wt{\phi}_{t,\delta}} b_{n-1}\|_{\red{H^L_h}},
\end{split}
\end{equation*}
where the sum \corM{over $k,k'$ is finite and over $\iota,\iota'$ has at most}  
$O(h^{-C\mathfrak{d}})$ many terms. 
Recall from Proposition \ref{lem:TransportEqSol} that 
$\supp b_{n-1}(t,\cdot;\delta) \subset \supp \psi_{n-1,t}$, so 
the supports of $b_{n-1}(t,\cdot;\delta)$ and $(1 -\psi_{n,t})$ 
are disjoint. Using Lemma \ref{Lem:DistCutOff}, we see that, for 
$0<\varepsilon \leq \varepsilon_0$, if $t\leq \gd |\log h|$ for 
$\gd$ and $h$ small enough, we have
\begin{equation*}
	\mathrm{dist}(\supp (1-\psi_{n,t})\psi_t, \supp b_{n-1}(t,\cdot;\delta)) 
	\geq h^{2\varepsilon}/C. 
\end{equation*}
Using \eqref{eq:liftePseudo3} and Remark \ref{rem:Cut-off_est1} we get that 
\begin{equation*}
\begin{split}
	\|\psi_t(1-\psi_{n,t})\wt{\chi}_{k,\iota}\wt{Q}_\omega\wt{\chi}_{k',\iota'}
	\e^{\frac{i}{h} \wt{\phi}_{t,\delta}} b_{n-1}\|_{\red{H^L_h}}
	&\leq 
	O_{k,k',L}(h^\infty)\|\e^{\frac{i}{h} \wt{\phi}_{t,\delta}} b_{n-1}\|_{\red{H^0_h}}\\
	&=
	O_{k,k',L}(h^\infty),
\end{split}
\end{equation*}
\corM{where in the last line we used} \eqref{eq:SobolevEst} and the fact 
that the support of $b_{n-1}$ has a diameter bounded polynomially with 
respect to $h^{-1}$. All in all
\begin{equation}\label{eq:sobnew1}
	\begin{split}
	\|\sum_{n=1}^{N-1}h^n \psi_t(1-\psi_{n,t})\wt{Q}_\omega 
		\e^{\frac{i}{h} \wt{\phi}_{t,\delta}} b_{n-1}\|_{\red{H^L_h}} 
	= O(h^\infty).
	\end{split}
\end{equation}
\red{
Since $\|u\|_{H^L}\leq \mathcal{O}(h^{-L})\|u\|_{H^L_h}$, $u\in H^L_h$, we 
conclude that the estimate \eqref{eq:sobnew1} holds as well for the $H^L$ norm.
}
\\
\par
3.  \MI{Let $\varepsilon_1 < \frac{\varepsilon_0}{2}$.}
To deal with the first term in $R_N$, we apply \eqref{eq:expCLnorm} 
along with Proposition \ref{Porp:SobolevEst}, which gives us
\begin{equation*}
\begin{split}
	\big{\|}-\frac{ih^{N+1}}{2}\e^{\frac{i}{h} \wt{\phi}_{t,\delta}}
	\Delta_{\widetilde{g}} b_{N-1}\big{\|}_{C^L} 
	&= O(h^{N+1})\|\e^{\frac{i}{h} \wt{\phi}_{t,\delta}} b_{N-1}\|_{C^{L+2}}\\
	& = \corM{O(h^{(N-1) \varepsilon_1 - L})}	
\end{split}
\end{equation*}
In particular,  for every $L\in \N$, this term can be made smaller than 
any power of $h$ by taking $N$ large enough \MI{(and thus $\gd$ small enough)}. \red{By Proposition \ref{lem:TransportEqSol} 
we know that $\supp b_{N-1}\Subset \mathcal{O}''_{t,\delta}\subset  \mathcal{O}_{t,\delta}$, 
so by \eqref{eq:ConjPseudo1.01} we have that for every $L, M\in \N$, there exists $N(M, L)\in \N$ such that, 
for all $\gd>0$ and $h>0$ small enough
\begin{equation*}
	\big{\|}-\frac{ih^{N+1}}{2}\e^{\frac{i}{h} \wt{\phi}_{t,\delta}}
	\Delta_{\widetilde{g}} b_{N-1}\big{\|}_{H^L} 
	= O(h^{M}).
\end{equation*}
}
4. To deal with the last term in $R_N$, we note that Propositions 
\ref{lem:lem_P2} and \ref{Porp:SobolevEst} imply that, for any $k,N\in \N$,
\begin{equation*}
	\|\red{\psi_t}\mathcal{R}_{\delta,t} b_{N-1}\|_{C^k} 
	\leq 
	C_{n,k} h^{1-(k+2)(\beta+\varepsilon_{\MI{1}})-\varepsilon_{\MI{1}}} 
			h^{-(N-1)\MI{(1-\varepsilon_1)}},
\end{equation*}
so that
\begin{equation*}
\begin{split}
	\big{\|}\delta h^{N-1} \red{\psi_t} \e^{\frac{i}{h} \wt{\phi}_{t,\delta}(x)} 
	\mathcal{R}_{\delta,t} b_{N-1} \big{\|}_{C^L}
	&= O (\delta h^{(N-1)\MI{\varepsilon_1}+(1-2\beta - 3 \varepsilon_1) - L}).
\end{split}
\end{equation*}
In particular,  for every $L\in \N$, this term can also be made smaller 
than any power of $h$ by taking $N$ large enough \MI{(and $\gd$ small enough)}. \red{Since $\supp \psi_t\Subset \mathcal{O}''_{t,\delta}$, 
we conclude as in step 3, which ends the proof.}
\end{proof}
\section{Local expansions for the propagated Lagrangian states on $X$}\label{sec:Proj}
\subsection{To the universal cover and back again}
As in the previous sections, let $(X,g)$ be a compact 
Riemannian manifold of negative sectional curvature. 
Let $\mO=\mO_0 \subset X$ be an open, simply connected 
relatively compact set, and let $\phi_0:\mO\to \R$ be a 
monochromatic smooth function as in \eqref{eq:LagState_monochrom} 
such that the hypotheses of Proposition \ref{Prop:OnTheCover2} are satisfied. 
Consider the Lagrangian state 
\begin{equation}\label{eq:UVC1}
	f_h(x) =  a(x ;h)\e^{\frac{i}{h} \phi_0(x)},
	\quad \red{x\in \mO\subset X},
\end{equation}
where $a(\cdot\,;h)$ is a smooth compactly supported map on $\mathcal{O}$ 
with $\|a(\cdot;h)\|_{C^L} = O_L(1)$ and 
such that $a(x;h)\sim a_0(x) +ha_1(x) + \dots $ where $a_n$ are smooth 
compactly supported maps on $\mathcal{O}$. We assume that there exists 
a compact $h$-independent set $K\Subset \mathcal{O}$ such that 
\begin{equation}\label{eq:UVC2}
	\supp a(\cdot\,;h) \subset K
\end{equation}
for all $h\in ]0,h_0]$. 
\\
\par 
The aim of this section is to construct a WKB approximation of the 
propagated state 
\begin{equation}\label{eq:UVC3}
	u(t,x;h) = \e^{\red{-}i\frac{t}{h}P_h^\delta}f_h(x)
\end{equation}
on $X$. In other words, we want to construct an approximate solution to 
\begin{equation}\label{eq:UVC4}
	\begin{cases}
		ih\partial_t u = P^\delta_h u \\
		u|_{t=0} = f_h.
	\end{cases}
\end{equation}
Before we proceed, let us give a brief outline of the general strategy 
to achieve this goal. First we lift all 
quantities to the universal cover $\wt{X}$ of $X$. There, we will use 
Proposition \ref{prop:WKB} to obtain an approximate solution $u_\delta$ 
to the lifted equation 
\begin{equation}\label{eq:UVC5}
	\begin{cases}
		ih\partial_t \wt{u} = \wt{P}^\delta_h \wt{u}  \\
		\wt{u} |_{t=0} = \wt{f}_h.
	\end{cases}
\end{equation}
on $\wt{X}$. Then, using \eqref{eq:PullBackMapping2}, we ``project'' 
the approximate solution $u_\delta$ back down to the base manifold $X$, 
and we show that this yields the desired approximate solution to 
\eqref{eq:UVC4}. 
\\
\par
\textbf{Lifting to the universal cover $\wt{X}$.} 
Recall from section \ref{sec:notation} that we denote by $(\widetilde{X},\widetilde{g})$ 
the universal cover of $(X,g)$, which is thus a simply connected (non-compact) manifold 
of negative curvature, and we denote by $\widetilde{\pi} : \widetilde{X} \longrightarrow X$ the 
covering map. 
\par
Let $\wt{\mO}_0,\wt{K}\subset \wt{X}$ be lifts of $\mO_0,K$, respectively. 
That is to say,   \MI{$\wt{K}\subset \widetilde{\mO}_0$ are bounded subsets of $\wt{X}$}
such that 
$\wt{\pi}(\widetilde{\mO}_0) = \mO_0$ and $\wt{\pi}(\wt{K})=K$. 
\red{Note that there are many possible lifts of these sets and we simply choose one.} 
We lift $\phi_0$ to a smooth function $\widetilde{\phi}_0$ on $\wt{\mO}_0$ 
such that $\widetilde{\phi}_0 = \wt{\pi}^*\phi_0$ on $\wt{\mO_0}$. Similarly, 
we lift $a$ and $a_n$ to smooth compactly supported functions $\wt{a}$ and $\wt{a}_n$ on 
$\widetilde{\mO}_0$ with support in $\wt{K}$ such that $\widetilde{a} = \wt{\pi}^*a$,  
$\widetilde{a}_n = \wt{\pi}^*a_n$ on $\widetilde{\mO}_0$. 
\par
Recall from Section \ref{subsec:LiftPseudo} how we lift the pseudo-differential 
operator $Q_\omega$ to a pseudo-differential operator $\widetilde{Q}_\omega$ on 
the universal cover $\widetilde{X}$. When $Q_\omega$ is a multiplication operator 
on $X$ by the function $q_\omega$, then $\widetilde{Q}_\omega$ is simply the multiplication 
operator on $\wt{X}$ by the lifted function $\wt{q}_\omega:=\wt{\pi}^*q_\omega$. Furthermore, 
recall from \eqref{eq:liftedSchroedinger} the definition of the lifted Schrödinger operator 
$\widetilde{P}_h^\delta$. 
\par 
The above lifts yield a monochromatic Lagrangian state 
\begin{equation*}
	\wt{f}_h(\widetilde{x}) =  
	\wt{a}(\widetilde{x} ;h)\e^{\frac{i}{h} \wt{\phi}_0(\widetilde{x})}
	\text{ on } \wt{X},
\end{equation*}
satisfying the assumptions of Proposition \ref{prop:WKB}, so, given $N\in\N$   
there exists a function  
\begin{equation}\label{eq:liftpropstate}
	u_\delta(t,\widetilde{x};\red{h}):=
	b_N(t,\red{\wt{x}};\delta,h)\e^{\frac{i}{h} 
	\widetilde{\phi}_{t,\delta}(\widetilde{x})}
	:=
	\e^{\frac{i}{h} 
	\widetilde{\phi}_{t,\delta}(\widetilde{x})} 
	\sum_{n=0}^{N-1}h^n b_n(t,\widetilde{x};\delta),
\end{equation}
%
with $b_n$ given in \eqref{eq:transEq4}, \eqref{eq:transEq5}, 
and phase $\widetilde{\phi}_{t,\delta}$ given in \eqref{eq:eik0.0}, 
\eqref{eq:Eikonal}, so that 
\begin{equation*}
\begin{dcases}
	&ih\partial_t u_\delta 
	= 
	\widetilde{P}_h^\delta u_\delta + \wt{R}_N \red{\text{ on }\mathcal{O}''_{t,\delta}}\\
	& u_\delta(0,\widetilde{x};\red{h}) = \e^{\frac{i}{h} \widetilde{\phi}_{0}(\widetilde{x})} 
	\sum_{n=0}^{N-1}h^n \widetilde{a}_n(\widetilde{x}).
\end{dcases}
\end{equation*}
By Corollary \ref{cor:SobolevEstRest}, \red{applied to $R_N = \psi_t\wt{R}_N$}, 
we see that for every $L, M\in \N$, there exists $N(M, L)\in \N$ such that, 
for $\gd>0$, $h_0>0$ small enough, we have for all 
$h\in]0,h_0]$
\begin{equation}\label{eq:RemEstimate1}
\|\red{\psi_t} \wt{R}_N\|_{H^L} = O(h^M),
\end{equation}
\corM{uniformly in $0\leq t \leq \gd |\log h|$.} 
We know from Proposition \ref{lem:TransportEqSol}, that 
$\supp b_N(t,\cdot;\delta,h) \subset \supp \psi_{N-1,t}$, 
see also \eqref{eq:CutOfffun31}, \eqref{eq:CutOfffun2}. 
In particular, when $\wt{Q}_\omega$ is a multiplication operator then we 
have $\supp b_N(t,\cdot;\delta,h) \subset \supp \psi_{0,t}$.
\corM{By} \eqref{eq:CutOfffun31}, we see that $b_N(t,\cdot;\delta,h)$ is compactly 
supported in \corM{$\wt{\mO}_{t,\delta}''\subset\wt{\mO}_{t,\delta}$}. \corM{Moreover, by \eqref{eq:SetInclusion}} 
$\wt{\phi}_0$ and $\wt{\phi}_{t,\delta}$ are well-defined in 
\corM{$\wt{\mO}_{t,\delta}''$}. Furthermore, we know from \eqref{eq:ConjPseudo1.01}, 
that for any $0<\varepsilon <\varepsilon_0$ there exist $\mathfrak{d},h_0>0$ such 
that 
\begin{equation}\label{eq:BoundedSupp}
\mathrm{vol}_{\wt{g}}(\wt{\mO}_{t,\delta})= 
	O( h^{-\varepsilon})
\end{equation}
uniformly in $h\in]0,h_0]$ and in $t\in [0,\mathfrak{d}|\log h|]$. 
\\
\par
\textbf{Back to the base manifold $X$.} 
The aim now is to show that ``projecting'' the WKB state $u_\delta$ back 
down to the base manifold $X$ yields a good approximation of the solution $u(t,x;h)$ of \eqref{eq:UVC3}.
Recall \eqref{eq:PullBackMapping2}, \eqref{eq:PullBackMapping4} and define 
\begin{equation}\label{eq:ProjectedLagrangianState}
	\widehat{u}_{h,N}(t,x) 
	:= (^t\wt{\pi}^* u_\delta(t,\cdot\,;h))(x)
	=\sum_{\wt{\pi}(\wt{x})=x} 
	u_\delta(t,\wt{x};h)
	=\sum_{\wt{\pi}(\wt{x})=x} 
	b_N(t,\widetilde{x};\delta,h)
	\e^{\frac{i}{h} \widetilde{\phi}_{t,\delta}(\widetilde{x})}, 
	~~ x\in X.
\end{equation}
Since $a(x;h)\sim a_0(x) +ha_1(x) + \dots $, it follows that for any 
$N\in\N$ there exists a smooth compactly supported function $r_N(x;h)$ with 
$\supp r_N(\cdot;h)\subset K$ and $\|r(\cdot;h)\|_{C^L} = O_L(1)$, such that 
\begin{equation*}
	a(x;h) = \sum_{n=0}^{N-1}h^n a_n(x) + h^N r(x;h).
\end{equation*}
Note that 
\begin{equation}\label{eq:ProjectedLagrangianState.1}
	\widehat{u}_{h,N}(0,x) = f_h(x)-h^N r(x;h)\e^{\frac{i}{h} \phi_0(x)}
\end{equation}
with $f_h$ as in \eqref{eq:UVC1}. 
%
%
\\
\par
Consider first the case when $Q_\omega$ is as in the \ref{eq:Pot}, 
so $\wt{Q}_\omega$ is the operator multiplication with $\wt{q}_\omega 
= \wt{\pi}^*q_\omega$, see \eqref{eq:liftedSchroedinger} and Proposition 
\ref{prop:WKB}. Since differential operators are local operators, we deduce from 
\eqref{eq:PullBackMapping5} that 
\red{
\begin{equation}\label{eq:ProjectedLagrangianState4.1}
	(i h \partial_t  + \frac{h^2}{2}\Delta_g 
	-\delta Q_\omega)\widehat{u}_{h,N}(t,x)
	=\sum_{\wt{\pi}(\wt{x})=x} 
	(i h \partial_t + \frac{h^2}{2}\Delta_{\wt{g}} -
	\delta \wt{Q}_\omega)
	u_\delta(t,\wt{x}).
\end{equation}
}
When $Q_\omega$ is as in the \ref{eq:Pseudo},  \MI{understanding} its action on $\wh{u}_{h,N}$ 
requires a little bit more work. 
%
%
\corM{
By Definition \ref{def:Quantization} we get 
\begin{equation}\label{eq:CTC}
 	Q_\omega \widehat{u}_{h,N}(t,x) 
	=
 	\sum_k \vartheta_k'\kappa_k^* 
	\Op_h(q_{\omega,k}) 
	(\kappa_k^{-1})^* \vartheta_k' \widehat{u}_{h,N}(t,x) .
\end{equation}
with $q_{\omega,k}:=(\vartheta_k q_{\omega})\circ\widehat{\kappa}_k^{-1} \in S^{-\infty}_\beta(T^*\R^d)$. 
}
%
\red{
By \eqref{eq:ProjectedLagrangianState}, we get 
\begin{equation}\label{eq:projeLagState1}
	\corM{\vartheta_k'}\widehat{u}_{h,N}
	= 
	\sum_{\iota\in I_k}(\wit{\pi}_{k,\iota}^{-1})^* \wt{\vartheta}_{k,\iota}'
	u_\delta(t,\wt{x};h).
\end{equation}
Here $|I_k| = O(h^{-\varepsilon})$ for all $h\in ]0,h_0]$ and $t\in [0,\mathfrak{d}|\log h|]$, 
provided $\gd$ is smaller than some $\gd(\varepsilon)$. This estimate on the cardinality of 
$I_k$ is a direct consequence of \eqref{eq:BoundedSupp} and the fact that $u_\delta, b_N$ 
have compact support in $\mathcal{O}''_{t,\delta}$, see \eqref{eq:liftpropstate} and 
the paragraph after \eqref{eq:RemEstimate1}. Then, 
}
\red{
\begin{equation}\label{eq:projeLagState2}
\begin{split}
	Q_\omega \widehat{u}_{h,N}(x) 
	&=
	\sum_{k\in K,\iota\in I_k} \left(\corM{\vartheta_k'}\kappa_k^*
	\Op(q_{\omega,k})(\kappa_k^{-1})^*
	(\wit{\pi}_{k,\iota}^{-1})^* 
	\corM{\wt{\vartheta}_{k,\iota}'}u_\delta\right)\!(x)\\
	&=
	\sum_{k\in K,\iota\in I_k} \left((\wit{\pi}_{k,\iota}^{-1})^* 
	\corM{\wt{\vartheta}_{k,\iota}'}\wit{\kappa}_{k,\iota}^*
	\Op(q_{\omega,k})(\wit{\kappa}_{k,\iota}^{-1})^*
	\corM{\wt{\vartheta}_{k,\iota}'}u_\delta\right)\!(x)\\
	&\stackrel{\text{Def. } \ref{def:QuantizationLift}}{=}
	\sum_{\wt{\pi}(\wt{x})=x} (\wt{Q}_\omega u_\delta) (\wt{x}).
\end{split}
\end{equation}
}
\red{
where in the last equality we used that $\wt{Q}_\omega$ is properly 
supported.}
\par
\red{
Next, let $\psi_t$ be as in \eqref{eq:CutOfffun2} and recall 
\eqref{eq:CutOfffun31}. Our aim is to control $^t\wt{\pi}^*(1-\psi_t)\wt{Q}_\omega u_\delta$, 
see also \eqref{eq:PullBackMapping2}. 
}
\red{
Recall \eqref{eq:sa6}, \eqref{eq:sa8.3} and the discussion above \eqref{eq:sa8.3}. 
Then, for $k\in K$ (a finite $h$-independent index set), 
\begin{equation}\label{eq:CO_control1}
\begin{split}
	(\kappa_k^{-1})^*\chi_k \,^t\wt{\pi}^*((1-\psi_t)\wt{Q}_\omega u_\delta)
	&=\sum_{\iota\in I_k} (\wit{\kappa}_{k,\iota}^{-1})^* (\wit{\chi}_{k,\iota}(1-\psi_t)\wt{Q}_\omega u_\delta)
\end{split}
\end{equation}
where $|I_k|=\mathcal{O}(h^{-\varepsilon})$ by the same argument as in the paragraph above 
\eqref{eq:projeLagState2} and the fact that $\wt{Q}_\omega$ is properly supported. Keeping 
in mind that $|I_k|=O(h^{-\varepsilon})$, the H\"older inequality and \eqref{eq:sa8.3} imply that 
\begin{equation}\label{eq:CO_control2}
	\|\,^t\wt{\pi}^*((1-\psi_t)\wt{Q}_\omega u_\delta)\|_{H^L_h(X)}
	\leq O(h^{-\varepsilon}) \|(1-\psi_t)\wt{Q}_\omega u_\delta\|_{H^L_h(\wt{X})}.
\end{equation}
Let $\{\chi_k\}_{k\in K}\subset C^\infty_c(X;[0,1)$ be a partition of unity of $X$ 
subordinate to an open covering of $X$ by chart domains as in the paragraph above 
\eqref{eq:sa8.3}. In particular recall that this gives rise to a locally finite 
partition of unity on $\wt{X}$ by the lifted charts $\wt{\chi}_{k,\iota}\in C^\infty_c(\wt{X};[0,1])$. 
Next, recall \eqref{eq:CutOfffun31} and in particular that 
$\psi_{N,t}\succ \psi_{N-1,t}$. As discussed in the paragraph before  
\eqref{eq:BoundedSupp} we have that $\supp b_N(t,\cdot;h,\delta)
\subset \supp \psi_{N-1,t}$. So we can see each $\wit{\chi}_{k,\iota}(1-\psi_t)$ and $\psi_{N,t}\wt{\chi}_{k',\iota'}$ 
as lifted cut-off functions with compact support \corM{contained in compact $h$-independent sets}. Furthermore, by 
Lemma \ref{Lem:DistCutOff} the distance between their supports is bounded from below by $h^{2\varepsilon}$, 
for $0<\varepsilon <\varepsilon_0$, uniformly in $t\in [0,\mathfrak{d}|\log h|]$, and 
their derivatives are bounded by the right hand side of \eqref{eq:SetInclusion_Cut-off2}. 
Since $\beta+\varepsilon_0<1/2$, we may then use \eqref{eq:liftePseudo3}, 
while keeping in mind that $|I_k|=O(h^{-\varepsilon})$, and deduce that 
\begin{equation}\label{eq:projeLagState2.5}
\begin{split}
	\|(1-\psi_t)\wt{Q}_\omega u_\delta\|_{H^L_h(\wt{X})}
	&\leq \sum_{k,k'\in K, \iota,\iota }
	\|\chi_{k,\iota}(1-\psi_t)\wt{Q}_\omega \chi_{k',\iota'} \psi_{N,t} u_\delta\|_{H^L_h(\wt{X})}
	\\
	&= O_L(h^\infty)\|u_\delta\|_{H^{\corM{0}}_h(\wt{X})}\\
	&=\corM{O_{L}(h^\infty)},
\end{split}
\end{equation}
uniformly in $t\in [0,\mathfrak{d}|\log h|]$. Here, in the second line 
we also used the fact that $K$ is a finite 
$h$-independent set of indices and that for each $k,k'$ the sum over 
$\iota,\iota'$ has at most $O(h^{-\varepsilon})$ many non-zero terms. 
Indeed, this last assertion follows by the same 
argument as in the paragraph after \eqref{eq:projeLagState1}. \corM{In the 
last line of \eqref{eq:projeLagState2.5} we also used the fact that 
$\|u_\delta\|_{H^{\corM{0}}_h(\wt{X})}= \mO(h^{-\varepsilon})$ 
uniformly in $t\in [0,\mathfrak{d}|\log h|]$. This is a direct consequence 
of Proposition \ref{Porp:SobolevEst}, Proposition \ref{lem:TransportEqSol}, 
\eqref{eq:ConjPseudo1.01} and \eqref{eq:WKB_Ansatz}.}
Combining \eqref{eq:projeLagState2} and \eqref{eq:projeLagState2.5} gives 
%
%
\begin{equation}\label{eq:ProjectedLagrangianState4.2}
\begin{split}
	Q_\omega \widehat{u}_{h,N}(t,x) 
	=
	\sum_{\wit{\pi}(\widetilde{x}) = x}
	\psi_t\wit{Q}_\omega u_\delta 
 	+ \corM{O_{H^{L}_h(\wt{X})}(h^\infty)}
\end{split}
\end{equation}
Since $\psi_t\equiv 1$ on the support of $u_\delta$, 
see \eqref{eq:CutOfffun31} and the paragraph after 
\eqref{eq:RemEstimate1}, it follows from \eqref{eq:PullBackMapping5} and 
\eqref{eq:ProjectedLagrangianState4.2} that 
\begin{equation}\label{eq:ProjectedLagrangianState4.2b}
\begin{split}
	\left(i h \partial_t + \frac{h^2}{2}\Delta_g -
	\delta Q_\omega \right)\widehat{u}_{h,N}(x)
&= 
\sum_{\wit{\pi}(\widetilde{x}) = x}
	\psi_t\left(i h \partial_t + \frac{h^2}{2}\Delta_{\widetilde{g}}
		  - \delta\widetilde{Q}_\omega \right) 
	u_\delta
	+ \corM{O_{H^{L}_h(\wt{X})}(h^\infty)}
\\
&=: \widehat{R}_{N,t} .
\end{split}
\end{equation}
}
For any $L\in \N$, we can use \eqref{eq:sa7}, to see that 
$\|g\|_{H^{L}_h(\wt{X})} = O(1)h^{-L}\|g\|_{H^{L}(\wt{X})}$ for all 
$g\in H^{L}(\wt{X})$. Hence, we may replace the semiclassical Sobolev norm 
$\|\cdot\|_{H^{L}_h(\wt{X})}$ \corM{in the remainder estimate} by $\|\cdot\|_{H^{L}(\wt{X})}$. 
Next, notice that each term in the sum on the right hand side of 
\eqref{eq:ProjectedLagrangianState4.2b} satisfies \eqref{eq:RemEstimate1} 
in any $H^L$ norm. The number of terms is $O(h^{-\varepsilon})$ provided 
$t\in [0, \mathfrak{d} |\log h|]$ for some $0< \mathfrak{d} \leq \gd (\varepsilon)$. 
So, we deduce from (\ref{eq:BorneReste}) that, for any $M,L\in \N$, we may find 
$N\in \N$ such that for $\gd>0$, $h_0>0$ small enough, we have that 
for all $h\in]0,h_0]$
\begin{equation}\label{eq:SobolevEstRemainder}
\|\widehat{R}_{N,t}\|_{H^L(X)} = O(h^{M}),
\end{equation}
uniformly in $t\in [0, \leq \mathfrak{d} |\log h|]$. Since $P_h^\delta$ is 
the infinitesimal generator of the unitary group $U_h^\delta(t)$, it follows 
that these two operators commute. Using additionally that 
$\delta Q_\omega = O(\delta):L^2(X)\to L^2(X)$, we find that for 
$f\in C^\infty(X)$
\begin{equation}\label{eq:Step1SobNormEst}
\begin{split}
	\| \Delta U_h^\delta(t) f\|_{L^2(X)}
	&= h^{-2 }\| h^2\Delta U_h^\delta(t) f\|_{L^2(X)}\\
	&\leq  h^{-2 }\left(
		\| P_h^\delta U_h^\delta(t) f\|_{L^2(X)}
		+
		\| \delta Q_\omega U_h^\delta(t) f\|_{L^2(X)}
		\right)
	\\
	& \leq 
	h^{-2 }\left(
		\| h^2\Delta f\|_{L^2(X)}
		+
		O(\delta)\|  f\|_{L^2(X)}
		\right).
\end{split}
\end{equation}
Since $Q_\omega \in \Psi^{-\infty}_{h,\beta}$, we have that 
$\delta (h^2\Delta)^n Q_\omega = O(\delta):L^2(X)\to L^2(X)$ 
for any $n\in\N$. Using this, we may iterate the argument 
in \eqref{eq:Step1SobNormEst} and get 
\begin{equation}\label{eq:Step2SobNormEst}
 \| \Delta^{n} U_h^\delta(t) f\|_{L^2(X)}
 \leq O(h^{-2n})\sum_{k=0}^{n}
  \| h^{2k}\Delta^{k} f\|_{L^2(X)}.
\end{equation}
In view of \eqref{eq:ProjectedLagrangianState.1}, 
Duhamel's principle implies 
\begin{equation*}
	\widehat{u}_{h,N}(t) 
	= U_h^\delta(t)\left( f_h-h^N r(\cdot;h)\e^{\frac{i}{h} \phi_0}\right)
+ \frac{1}{ih}\int_0^t U_h^\delta(t-s) \widehat{R}_{N,s}ds,
\end{equation*}
so that, for any $M\in \N$, we have
\begin{equation}\label{eq:WKB2.20}
\begin{split}
	&\|U_h^\delta(t)f_h -\widehat{u}_{h,N}\|_{H^{2n}(X)} 
	\leq 
	\frac{1}{h}\int_0^t \left\|U_h^\delta(t-s)
	\widehat{R}_{N,s} \right\|_{H^{2n}(X)} d s
	+h^N\|U_h^\delta(t)r(\cdot;h)\e^{\frac{i}{h} \phi_0}\|_{H^{2n}(X)} 
	\\
	&\stackrel{\eqref{eq:Step2SobNormEst}}{\leq}  
	O(|\log h| h^{-1-2n})\sum_{k=0}^{n}
	\| h^{2k}\Delta^{k} \widehat{R}_{N,s}\|_{L^2(X)}
	+O(h^{N-2n})\sum_{k=0}^{n}\|h^{2k}\Delta^{k}r(\cdot;h)\e^{\frac{i}{h} \phi_0}\|_{H^{2n}(X)}\\
&= O (h^M)
\end{split}
\end{equation}
thanks to (\ref{eq:SobolevEstRemainder}), up to taking $N$ larger and 
therefore $\gd>0$ and $h_0>0$ smaller. In the last line we also used \corM{that} all 
derivatives of $\phi_0$ and $r(\cdot;h)$ are bounded \corM{uniformly in} $h$. 
\par
Now, since $M$ and $n$ are arbitrary, we deduce from the 
Sobolev inequalities \eqref{eq:sa8.2} that the remainder 
can be made arbitrarily small in any $C^L$ norm, by taking $N$ large enough 
and $\gd>0$ and $h_0>0$ smaller. In other words, for every 
$M,L\in \N$, we may find a $N\in \N$ such that for $\gd>0$, $h_0>0$ small 
enough, we have that for all $h\in]0,h_0]$
\begin{equation}\label{eq:CTC2}
U_h^\delta(t)f_h\red{(x)}
	=\sum_{\wt{\pi}(\wt{x})=x} 
	b_N(t,\widetilde{x};\delta,h)
	\e^{\frac{i}{h} \widetilde{\phi}_{t,\delta}(\widetilde{x})} + O_{C^L}(h^M),
\end{equation}
uniformly in $t\in [0, \leq \mathfrak{d} |\log h|]$.
\par
For each $x\in X$ and $t\geq 0$ we set 
\begin{equation}\label{eq:DefA}
 A_{x,t}:= \{ \wit{x}\in \wit{\mathcal{O}}_{t,\MI{0}}
 			\text{ with } \wit{\pi}(\wit{x}) =x \}.
\end{equation}
Since the manifold $X$ is compact, there exists 
$c>0$ such that $\left[\pi(\widetilde{x}) = \pi(\widetilde{x}')\right] 
\Rightarrow \big[(x=x')$ or $\mathrm{dist} (x,x') >c\big]$. 
Therefore, we have that the cardinality of $A_{x,t}$ is bounded by 
\begin{equation}\label{eq:DefA2}
	|A_{x,t}| \leq C \e^{Ct}.
\end{equation}
Thus,  \MI{recalling that all the $b_N (t, \cdot ; \delta, h)$ are supported in $\wt{\cO}''_{\delta,t} \subset \wit{\mathcal{O}}_{t,0}$, we get}
\begin{equation}\label{eq:ProjectedLagrangianState2}
	\widehat{u}_{h,N}(t,x) 
	= 
	\sum_{\widetilde{x} \in A_{x,t}} 
	b_N(t,\widetilde{x};\delta,h)
	\e^{\frac{i}{h} \widetilde{\phi}_{t,\delta}(\widetilde{x})},
\end{equation}
and \eqref{eq:CTC2} becomes 
\begin{equation*}\label{eq:CTC3} 
	U_h^\delta(t)f_h = \sum_{\widetilde{x} \in A_{x,t}} 
		b_N(t,\widetilde{x};\delta,h)
		\e^{\frac{i}{h} \widetilde{\phi}_{t,\delta}(\widetilde{x})} 
		+ O_{C^L}(h^M).
\end{equation*}
\subsection{Rescaling to the scale of the wavelength}
Let $L\in \N$. We shall write 
\begin{equation*}
	\psi_{h,t,\delta}:= U_h^\delta (t) f_h.
\end{equation*}
Let $\mathcal{U}\subset X$ be an open subset on which there exists 
a family of vector fields $(V_1,\dots,V_d)$ forming an orthonormal 
frame of the tangent bundle $T\cU$. Given 
$x\in \mathcal{U}$, we write 
$\exp_x(\boldsymbol{y}):=\exp_x(\sum_{j=1}^d y_jV_j(x))$, $\boldsymbol{y}\in \R^d$, 
and if 
$\widetilde{x}\in \widetilde{X}$ is a lift of $x$, we denote by 
$\widetilde{\exp}_{\widetilde{x}}(\boldsymbol{y})$ a lift of 
$\exp_x(\boldsymbol{y})$ depending continuously on $\boldsymbol{y}$ 
and such that $\widetilde{\exp}_{\widetilde{x}}(\boldsymbol{0})  
= \widetilde{x}$.
\par
If $x\in \mathcal{U}$, then for any compact set \red{$\mathfrak{K}\subset \R^d$} 
and any $L,M\in \N$, there exists a $N\in \N$ such 
that for $\gd>0$, $h_0>0$ small enough, we have that for all 
$h\in]0,h_0]$ and all $t\in [0,  \mathfrak{d} |\log h|]$ 
\begin{align*}
\psi_{h, t, x, \delta}(\boldsymbol{y}) 
&:= \psi_{h,t, \delta} 
	\left(\exp_{x}(h \boldsymbol{y}) \right)\\
&= \sum_{\widetilde{x} \in A_{x,t}} 
	b_N(t,\widetilde{\exp}_{\widetilde{x}}(h \boldsymbol{y}) ;\delta,h)
	\e^{\frac{i}{h} \widetilde{\phi}_{t,\delta} 
	\left( \widetilde{\exp}_{\wt{x}}(h \boldsymbol{y}) \right)}  
	+ O_{C^L(\mathfrak{K})}(h^M).
\end{align*}
Here we also assumed tacitly that $h>0$ is small enough (depending on 
$\mathfrak{K}$) so that $\widetilde{\exp}_{\widetilde{x}}(h \boldsymbol{y}) 
\in \wt{\mathcal{O}}_{t,\delta}$. \red{Moreover, the remainder term 
is uniform in $x\in X$, $\delta$ as in Hypothesis \ref{Hyp:BetaDelta}, 
and $t\in [0,\mathfrak{d}\log h|]$.} From now on we work with $L,M,N\in\N$, 
$\gd>0$ and $h_0>0$ as above.
\\ 
\\
\textbf{First order approximation of the amplitudes}
Recall that 
\begin{equation*}
	b_N(t,\widetilde{\exp}_{\widetilde{x}}(h \boldsymbol{y}) ;\delta,h)
	= \sum_{n=0}^{N-1}h^n b_n\left(t,\widetilde{\exp}_{\wt{x}}(h \boldsymbol{y}) ;
	\delta\right).
\end{equation*}
Now,  thanks to Proposition \ref{Porp:SobolevEst}, for any $n\geq 1$, the 
$C^L(\mathfrak{K})$ norm of $\boldsymbol{y} \mapsto h^n b_n\left(t,\widetilde{\exp}_x
(h \boldsymbol{y}) ;\delta\right)$ is $O(h^{\gamma})$, for some $\gamma>0$ 
independent of $n$.  
This comes from the fact that $\beta < \frac{1}{2}$, and that, when we 
differentiate $\boldsymbol{y} \mapsto  b_n\left(t,\widetilde{\exp}_x
(h \boldsymbol{y})\right)$ $L$ times, we gain a factor $h^L$ which balances 
the $h^{-(\beta +\varepsilon_0)L}$ in (\ref{eq:SobolevEst}).
\par
We thus have
\begin{equation*}
	\psi_{h, t, x, \delta}(\boldsymbol{y}) 
	= \sum_{\widetilde{x} \in  A_{x,t}} 
	b_0\left(t,   \widetilde{\exp}_{\wt{x}}(h \boldsymbol{y}) ;\delta\right)
	\e^{\frac{i}{h} \widetilde{\phi}_{t,\delta} 
	\left( \widetilde{\exp}_{\widetilde{x}}(h \boldsymbol{y}) \right)} 
	+ O_{C^L(\fK)}(h^\gamma)
\end{equation*}
for some $\gamma>0$.
\par 
Now, Proposition \ref{Porp:SobolevEst} along with a Taylor expansion also 
implies that, upon potentially further decreasing $\gd>0$ and $h_0>0$,
\begin{equation*}
	\|b_0\left(t, \widetilde{\exp}_{\wt{x}}(h \boldsymbol{y}) ;\delta\right) 
	- b_0\left(t,  \widetilde{x} ;\delta\right)\|_{C^L(\mathfrak{K})} = O(h^\gamma)
\end{equation*}
for some (possibly smaller) $\gamma>0$ and uniformly in 
$t\in [0,  \mathfrak{d} |\log h|]$.
Combining this with Proposition \ref{Prop:LeadingAmpl}, we obtain 
\begin{equation*}
	\|b_0\left(t,   \widetilde{\exp}_{\wt{x}}(h \boldsymbol{y}) ;\delta\right) 
	- b_0\left(t,  \widetilde{x} ; 0\right)\|_{C^L(\mathfrak{K})} = O(h^\gamma),
\end{equation*}
uniformly in $t\in [0,  \mathfrak{d} |\log h|]$.
Summing up, we have shown in this paragraph that for any compact set 
$\mathfrak{K}\Subset \R^d$ and any $L,M\in \N$ there exists a $N\in \N$ such 
that for $\gd>0$, $h_0>0$ small enough, there exists a $\gamma>0$ such that 
for all $h\in]0,h_0]$ and all $t\in [0,  \mathfrak{d} |\log h|]$
\begin{equation}
\psi_{h, t, x, \delta}(\boldsymbol{y}) 
= \sum_{\widetilde{x} \in A_{x,t}} 
	 b_0\left(t,  \widetilde{x} ; 0\right)
	\e^{\frac{i}{h} \widetilde{\phi}_{t,\delta} 
	\left( \widetilde{\exp}_{\widetilde{x}}(h \boldsymbol{y}) \right)}  
	+ O_{C^L(\fK)}(h^\gamma), \quad \boldsymbol{y}\in \mathfrak{K}, 
\end{equation}
In particular, in this first-order expansion, the amplitude does not depend 
on $\boldsymbol{y}$.
\\ 
\\
\textbf{First order approximation of the phases.}
Let $\mathfrak{K}\Subset \R^d$ be a compact set as above. 
For each $\widetilde{x} \in A_{x,t}$,  let us write
\begin{equation*}
	\xi_{t, \widetilde{x}, \delta} 
	:= \frac{\partial}{\partial \boldsymbol{y}}\Big{|}_{\boldsymbol{y}=0} 
	 \widetilde{\phi}_{t, \delta} 
	 (\widetilde{\exp}_{\widetilde{x}}(\boldsymbol{y})) \in \R^d.
\end{equation*} 
By \eqref{eq:BorneC1} we know that there exists a constant 
$C>0$ such that for all $h\in ]0,h_0]$, all $t\in[0,\gd |\log h|]$ and 
all $\widetilde{x} \in A_{x,t}$ 
\begin{equation}\label{estim:xi}
	|\xi_{t, \widetilde{x}, \delta} | \leq C.
\end{equation} 
By Taylor expansion,
\begin{equation*}
	\widetilde{\phi}_{t,\delta} 
	\left( \widetilde{\exp}_{\widetilde{x}}(h \boldsymbol{y})\right)
	 = \widetilde{\phi}_{t,\delta} (\widetilde{x}) 
	 	+ h \xi_{t, \widetilde{x}, \delta} \cdot \boldsymbol{y} 
		+ r_{\widetilde{x}, h}(\boldsymbol{y}).
\end{equation*}
It follows from \eqref{eq:BorneCL} that for $\gd,h_0>0$ small enough
\begin{equation*}
	\|r_{\widetilde{x}, h}\|_{C^L(\mathfrak{K})} = O(h^{1+\gamma}) 
\end{equation*}
for some $\gamma>0$ and uniformly in $t\in [0,  \mathfrak{d} |\log h|]$. 
Therefore, we may write
\begin{equation*}
\e^{\frac{i}{h} \widetilde{\phi}_{t,\delta} 
\left( \widetilde{\exp}_{\widetilde{x}}(h \boldsymbol{y}) \right)} 
= \e^{\frac{i}{h} \widetilde{\phi}_{t,\delta} (\widetilde{x}) +
	i  \xi_{t, \widetilde{x}, \delta} \cdot \boldsymbol{y}} 
	+ O_{C^L(\fK)}(h^\gamma)
\end{equation*}
for some $\gamma>0$ and $\boldsymbol{y}\in \mathfrak{K}$. 
\par
%
%
%
%
%
Furthermore, Lemma \ref{lem:BoundDiffFlows} implies that
\begin{equation*}
	|\xi_{t, \widetilde{x}, \delta} - \xi_{t, \widetilde{x}, 0}| = O(h^\gamma)
\end{equation*}
for some $\gamma>0$.
\par 
All in all, we have shown the following result:
\begin{prop}
For any $L\in \N$ and any compact set $\fK\subset \R^d$, there 
exist $\gd>0$, $h_0>0$ and $\gamma>0$ such that, for all 
$h\in ]0,h_0]$, all $0\leq t\leq \gd |\log h|$ and all $\by\in \fK$
\begin{equation*}
\psi_{h, t, x, \delta}(\boldsymbol{y}) 
= \sum_{\widetilde{x} \in A_{x,t}}
	 b_0\left(t,  \widetilde{x} ; 0\right)  
	 \e^{\frac{i}{h} \widetilde{\phi}_{t,\delta}( \widetilde{x})}
	 \e^{i  \xi_{t, \widetilde{x}, 0} \cdot \boldsymbol{y}} 
	 + O_{C^L(\fK)}(h^\gamma).
\end{equation*}
\red{Here, the remainder term 
is uniform in $x\in X$, $\delta$ as in Hypothesis \ref{Hyp:BetaDelta}, 
and $t\in [0,\mathfrak{d}\log h|]$.}
\end{prop}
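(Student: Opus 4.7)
The proposition is a consolidation statement: everything needed has essentially been established in the preceding paragraphs of Section~\ref{sec:Proj}, so the plan is to assemble those ingredients carefully, keeping track of the $C^L(\fK)$ norms and of the (at most polynomial in $h^{-1}$) number of terms in the sum over $A_{x,t}$.

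\textbf{Step 1: Reduction to the leading WKB contribution.} Starting from \eqref{eq:CTC2} (itself a consequence of Corollary~\ref{cor:SobolevEstRest} and Duhamel applied to $\wh{u}_{h,N}$), we obtain for any $L,M\in\N$ an $N=N(L,M)$ such that
\begin{equation*}
\psi_{h,t,x,\delta}(\by)=\sum_{\wt{x}\in A_{x,t}} b_N\!\big(t,\wt{\exp}_{\wt{x}}(h\by);\delta,h\big)\,\e^{\frac{i}{h}\wt{\phi}_{t,\delta}(\wt{\exp}_{\wt{x}}(h\by))}+O_{C^L(\fK)}(h^M).
\end{equation*}
Proposition~\ref{Porp:SobolevEst} gives $\|h^n b_n\|_{C^L(\fK)}=O(h^{n(1-2(\beta+\varepsilon_1))-L(2\beta+\varepsilon_1)})$ in the rescaled variable $\by$, since each $\by$-derivative absorbs a factor $h$; for $n\geq 1$ and $\beta+\varepsilon_1<\tfrac12$ this is $O(h^{\gamma_1})$ with $\gamma_1>0$ independent of $n$, so only the $n=0$ term survives modulo $O(h^{\gamma_1})$ per sheet.

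\textbf{Step 2: Taylor expansion of amplitude and phase.} Using Proposition~\ref{lem:ControlDeriv} we Taylor expand
\begin{equation*}
\wt{\phi}_{t,\delta}\big(\wt{\exp}_{\wt{x}}(h\by)\big)=\wt{\phi}_{t,\delta}(\wt{x})+h\,\xi_{t,\wt{x},\delta}\cdot \by + r_{\wt{x},h}(\by),\qquad \|r_{\wt{x},h}\|_{C^L(\fK)}=O(h^{1+\gamma_2}),
\end{equation*}
where the remainder estimate comes from \eqref{eq:BorneCL} applied to second and higher derivatives (the $h^{-\varepsilon}$ loss is absorbed by $h^{2}$). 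Dividing by $h$, we get $\|\tfrac{1}{h}r_{\wt{x},h}\|_{C^L(\fK)}=O(h^{\gamma_2})$ uniformly on $\fK$, so the bound \eqref{estim:xi} on $\xi_{t,\wt{x},\delta}$ lets us replace $\e^{\frac{i}{h}\wt{\phi}_{t,\delta}(\wt{\exp}_{\wt{x}}(h\by))}$ by $\e^{\frac{i}{h}\wt{\phi}_{t,\delta}(\wt{x})}\e^{i\xi_{t,\wt{x},\delta}\cdot \by}$ modulo $O_{C^L(\fK)}(h^{\gamma_2})$. Similarly, Taylor expanding $b_0(t,\wt{\exp}_{\wt{x}}(h\by);\delta)$ and using the uniform estimates on $b_0$ from Proposition~\ref{Porp:SobolevEst} (with $n=0$) and then Proposition~\ref{Prop:LeadingAmpl} gives $\|b_0(t,\wt{\exp}_{\wt{x}}(h\by);\delta)-b_0(t,\wt{x};0)\|_{C^L(\fK)}=O(h^{\gamma_3})$.

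\textbf{Step 3: Replacing $\delta$ by $0$ in phase and momentum.} Equation \eqref{eq:PhaseApprochee2} gives
\begin{equation*}
\tfrac{1}{h}\wt{\phi}_{t,\delta}(\wt{x})=\tfrac{1}{h}\wt{\phi}_{t,0}(\wt{x})+\tfrac{\delta}{h}\theta_t(\wt{x})+O\big(\delta^2 h^{-1-2\beta-2\varepsilon}\big),
\end{equation*}
and under Hypothesis~\ref{Hyp:BetaDelta} the remainder is $O(h^{\gamma_4})$ for some $\gamma_4>0$ (this is where we use $\delta h^{-2\beta-\varepsilon_0}\leq 1$ twice). Finally Lemma~\ref{lem:BoundDiffFlows}, combined with \eqref{eq:BorneCL} applied to the difference of second derivatives of $\wt{\phi}_{t,\delta}$ and $\wt{\phi}_{t,0}$, yields $|\xi_{t,\wt{x},\delta}-\xi_{t,\wt{x},0}|=O(h^{\gamma_5})$, so $\|\e^{i\xi_{t,\wt{x},\delta}\cdot\by}-\e^{i\xi_{t,\wt{x},0}\cdot\by}\|_{C^L(\fK)}=O(h^{\gamma_5})$ uniformly on $\fK$.

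\textbf{Step 4: Summing over sheets.} By \eqref{eq:DefA2}, $|A_{x,t}|\leq C\e^{Ct}\leq h^{-C\gd}$, and by Corollary~\ref{cor:DecayAmplitude} the prefactor $|b_0(t,\wt{x};0)|$ is uniformly bounded. The errors per sheet from Steps 1--3 are $O(h^{\gamma})$ with $\gamma:=\min(\gamma_1,\ldots,\gamma_5)>0$ independent of $t$ (and independent of $\wt{x}\in A_{x,t}$). Choosing $\gd$ small enough that $C\gd<\gamma/2$, the total error after summing is $O(h^{\gamma/2})$ in $C^L(\fK)$, which yields the claim with $\gamma$ relabelled as $\gamma/2$. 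The main (minor) obstacle is bookkeeping this cancellation: the sum over the exponentially-many sheets $A_{x,t}$ must not overwhelm the $h$-small per-sheet error, which forces the upper bound $t\leq \gd|\log h|$ with $\gd$ chosen smaller than the worst of the $\gamma_i$ divided by the expansion rate $C$ of the geodesic flow.
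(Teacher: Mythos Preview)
Your argument mirrors the paper's own derivation in Section~\ref{sec:Proj} step for step (drop the higher $b_n$ via Proposition~\ref{Porp:SobolevEst}, Taylor expand amplitude and phase, invoke Proposition~\ref{Prop:LeadingAmpl} and \eqref{eq:PhaseApprochee2}, then Lemma~\ref{lem:BoundDiffFlows} for the momenta, and sum over the $|A_{x,t}|\leq h^{-C\gd}$ sheets); the global strategy is exactly the same.

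Two quantitative slips deserve attention. In Step~1 your exponent is off: each $\by$-derivative of $b_n(t,\wt{\exp}_{\wt{x}}(h\by);\delta)$ contributes a net factor $h\cdot h^{-(\beta+\varepsilon_1)}=h^{1-(\beta+\varepsilon_1)}$, so higher $\by$-derivatives only \emph{help} and the $C^L(\fK)$ bound is $O(h^{n(1-2(\beta+\varepsilon_1))})$; with your displayed formula $n(1-2(\beta+\varepsilon_1))-L(2\beta+\varepsilon_1)$ the exponent would be negative for $L$ large, contradicting your own claimed $\gamma_1>0$. More substantively, in Step~3 the parenthetical ``use $\delta h^{-2\beta-\varepsilon_0}\leq 1$ twice'' only yields $\delta^2 h^{-1-2\beta-2\varepsilon}\leq h^{2\beta+2(\varepsilon_0-\varepsilon)-1}$, and this exponent is \emph{negative} across most of the admissible region of Hypothesis~\ref{Hyp:BetaDelta} (e.g.\ $\delta=h^{0.7}$, $\beta=0.3$ satisfies \eqref{eq:CondBeta}--\eqref{eq:CondBetaDelta} yet gives $\delta^2 h^{-1-2\beta}=h^{-0.2}$). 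The paper itself merely recalls \eqref{eq:PhaseApprochee2} and passes directly to the proposition without verifying that this remainder, once divided by $h$, is $o(1)$; so you are not departing from the paper's argument, but the explicit justification you add does not secure $\gamma_4>0$ over the full parameter range.
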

This expression deserves several comments:
\begin{itemize}
\item Up to a $O_{C^L}(h^\gamma)$ remainder, the function 
$\psi_{h, t, x, \delta}$ can be written as a sum of plane waves, 
with \red{$O(h^{-c \gd})$} terms. In particular, the dependence on the 
variable $\by$ is only in the exponentials $\e^{i  \xi_{t, \widetilde{x}, 0} 
\cdot \boldsymbol{y}}$.
\item The amplitudes $ b_0\left(t,  \widetilde{x} ; 0\right)$ and the 
directions of propagation $\xi_{t, \widetilde{x}, 0}$ do not depend on 
$\delta$, and hence do not depend on the random parameter $\omega$. The 
only quantities which depend on the random parameter $\omega$ are the 
phases $\widetilde{\phi}_{t,\delta}( \widetilde{x})$. 
\end{itemize}
In the sequel,  we will often need to have $\widetilde{x}$ vary in an 
open set, and in particular in a ball of small radius; we will then think 
of it as a random variable, being picked uniformly in the open set. So, 
let $x_0\in X$, and let $\vartheta>0$. Thanks to Propositions 
\ref{lem:ControlDeriv} and \ref{Porp:SobolevEst}, we know that, there 
exists $\gamma(\vartheta)>0$ such that, for all $x\in B(x_0, h^\vartheta)$, 
we have for all lifts $\widetilde{x}$,  $\widetilde{x}_0$ of $x, x_0$ with 
$\widetilde{x} \in B(\widetilde{x}_0, h^{\vartheta})$:
\begin{align*}
| b_0\left(t,  \widetilde{x} ; 0\right) -  b_0\left(t,  \widetilde{x_0} ; 0\right)| &= O(h^{\gamma})\\
| \xi_{t, \widetilde{x}, 0} -  \xi_{t, \widetilde{x_0}, 0}| &= O(h^{\gamma}).
\end{align*}
Therefore, 
we  have the following result:
\begin{corollary}\label{cor:LocalNearX0}
For any $L\in \N$, any compact $\fK\subset \R^d$ and any $\vartheta >0$, 
there exists $\gd>0$ and $\gamma>0$ such that, for all $x_0\in X$, all 
$h\in ]0,h_0]$, all $x\in B(x_0, h^\vartheta)$ and 
all $0\leq t\leq \gd |\log h|$, and for all $\by\in \fK$
\begin{equation}\label{eq:TaylorFinal}
\psi_{h, t, x, \delta}(\boldsymbol{y}) 
= \sum_{\widetilde{x} \in A_{x,t}} 
	 b_0\left(t,  \widetilde{x_0} ; 0\right)  
	 \e^{\frac{i}{h} \widetilde{\phi}_{t,\delta}( \widetilde{x}) }  
	 \e^{i  \xi_{t, \widetilde{x_0},0} \cdot \boldsymbol{y}} 
	 + O_{C^L(\fK)}(h^\gamma).
\end{equation}
\red{Here, the remainder term 
is uniform in $x\in X$, $\delta$ as in Hypothesis \ref{Hyp:BetaDelta}, 
and $t\in [0,\mathfrak{d}\log h|]$.}
\end{corollary}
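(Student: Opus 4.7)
The plan is to obtain the corollary from the unnamed Proposition immediately preceding it by substituting, inside the sum, the $\wt x$-dependent amplitude $b_0(t,\wt x;0)$ and direction vector $\xi_{t,\wt x,0}$ by their values at a chosen reference lift $\wt{x_0}$ of $x_0$, while leaving the rapid oscillation $e^{\frac{i}{h}\Theta_t(\wt x;\omega)}=e^{\frac{i}{h}\wt\phi_{t,0}(\wt x)}e^{i\frac{\delta}{h}\theta_t(\wt x)}$ untouched. The asymmetric treatment of the two factors is essential: because the phase is divided by $h$, an $O(h^\vartheta)$ perturbation of $\wt\phi_{t,0}(\wt x)$ would yield an $O(h^{\vartheta-1})$ error in the exponent, which is uncontrollable; the amplitude and direction vector, however, suffer no such amplification and can safely be evaluated at the reference lift.

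First I would pick, for each $\wt x\in A_{x,t}$, the unique lift $\wt{x_0}$ of $x_0$ in the geodesic ball $B(\wt x, r_I)$, where $r_I$ is the injectivity radius of $\wt X$; for $h$ small so that $h^\vartheta<r_I$, this $\wt{x_0}$ automatically belongs to $A_{x_0,t}$ and satisfies $\mathrm{dist}_{\wt X}(\wt x,\wt{x_0})=\mathrm{dist}_X(x,x_0)<h^\vartheta$. The two key pointwise estimates
\[
|b_0(t,\wt x;0)-b_0(t,\wt{x_0};0)| = O(h^{\gamma(\vartheta)}), \qquad |\xi_{t,\wt x,0}-\xi_{t,\wt{x_0},0}| = O(h^{\gamma(\vartheta)}),
\]
stated in the paragraph preceding the corollary, then follow from the mean value inequality on $B(\wt{x_0},h^\vartheta)$, combined with the $C^1$-bound on $b_0(t,\cdot;0)$ and the $C^2$-bound on $\wt\phi_{t,0}$ (which controls $\xi_{t,\cdot,0}$ via its definition as a gradient of $\wt\phi_{t,0}$ read in the orthonormal frame). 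To promote this to the required $C^L(\fK)$-estimate, I would write
\[
b_0(t,\wt x;0)e^{i\xi_{t,\wt x,0}\cdot\by}-b_0(t,\wt{x_0};0)e^{i\xi_{t,\wt{x_0},0}\cdot\by}
\]
as a telescoping difference, using the compactness of $\fK$ and the uniform bound $|\xi_{t,\wt x,0}|\leq C$ from \eqref{estim:xi} to absorb the polynomial factors in $\xi$ produced by $\by$-differentiation of the plane waves.

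Finally, summing over $\wt x\in A_{x,t}$ contributes a factor $|A_{x,t}|=O(e^{Ct})=O(h^{-C\gd})$; taking $\gd$ small enough that $\gamma(\vartheta)-C\gd>0$ leaves an $O(h^{\gamma'})$ error for some $\gamma'>0$, which combines with the $O_{C^L(\fK)}(h^\gamma)$ remainder already present in the preceding Proposition.

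The delicate point, and main obstacle, is that $\vartheta$ may be arbitrarily small, so we cannot afford a factor $h^{-\beta}$ in the $C^1$-type bounds used by the mean value step. This forces us to exploit the $\delta=0$ character of the substituted quantities: the $(\beta+\varepsilon)$ factors in Propositions \ref{lem:ControlDeriv} and \ref{Porp:SobolevEst} originate from $\delta$-dependent terms in the transport and Hamilton--Jacobi equations, and for $\delta=0$ they degenerate to $O(h^{-\varepsilon})$ with $\varepsilon$ arbitrarily small upon shrinking $\gd$, making $h^\vartheta\cdot h^{-\varepsilon}$ a genuine positive power of $h$.
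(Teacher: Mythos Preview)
Your proposal is correct and follows essentially the same approach as the paper. The paper's own argument is extremely terse: it simply states the two pointwise bounds $|b_0(t,\wt x;0)-b_0(t,\wt{x_0};0)|=O(h^\gamma)$ and $|\xi_{t,\wt x,0}-\xi_{t,\wt{x_0},0}|=O(h^\gamma)$ by citing Propositions~\ref{lem:ControlDeriv} and~\ref{Porp:SobolevEst}, defines $\Theta_t$, and declares the Corollary. Your write-up supplies the details the paper omits---the choice of lift (which matches Remark~\ref{rem:RemSec8}), the telescoping argument for the $C^L(\fK)$ norm, the summation over $A_{x,t}$ with the cardinality bound~\eqref{eq:DefA2}, and especially the observation in your final paragraph that the $h^{-\beta}$ losses in those Propositions come from the $\delta$-dependent parts of the flow and transport equations and therefore disappear when $\delta=0$, leaving only the harmless $h^{-\varepsilon}$ that can be absorbed into any $h^\vartheta$. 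This last point is the genuine content behind the paper's bare citation and you have identified it correctly.
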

Note that, in the expression above, the phases 
$ \widetilde{\phi}_{t,\delta}( \widetilde{x})$ are the only quantities depending 
on the random parameter $\omega$ and on the quasi-random parameter 
$\widetilde{x}$. This remark will be essential in the next two sections.

\section{Independence of the phases}\label{Sec:Indep}
From Corollary \ref{cor:LocalNearX0} we see that, up to a small error, 
the propagated Lagrangian state is given by the superposition of random functions 
where the randomness comes from the phases  $\widetilde{\phi}_{t,\delta}( \widetilde{x})$, 
given by the expression \eqref{eq:PhiIntegrale2}.  To derive Theorem 
\ref{th:MartinEtMaximeSontDesBeauxGosses} from this expression we will use a 
Central Limit Theorem. However, the principal obstacle is that the family 
of random variables $(\widetilde{\phi}_{t,\delta}( \widetilde{x}))_{\widetilde{x}\in A_{x,t}}$ 
is in general not independent. For instance, in the \ref{eq:Pot}, two lifts 
$\wit{x}, \wit{x}'\in A_{x,t}$ may correspond to trajectories which cross 
(or come close to each other) on $X$, making (part of) the phases 
$\widetilde{\phi}_{t,\delta}( \widetilde{x})$,  $\widetilde{\phi}_{t,\delta}( \widetilde{x}')$ 
dependent. This issue is remedied in Section \ref{sec:DepTim} by removing the 
``exceptional times'' at which trajectories cross, and thus defining new 
phases $ \widetilde{\phi}^0_{t,\delta}( \widetilde{x})$.
\par
Actually, even in the \ref{eq:Pseudo}, two lifts $\wit{x}, \wit{x}'\in A_{x,t}$ 
may correspond to trajectories which come close to each other 
\emph{in the phase space $S^*X$}, making the random variables 
$\widetilde{\phi}_{t,\delta}( \widetilde{x})$ and $\widetilde{\phi}_{t,\delta}( \widetilde{x}')$ 
dependent: this phenomenon is illustrated in Figure \ref{Fig:Dependence}. 
However, we will show in Section \ref{sec:DepPoints} that this happens only 
for a small set of points $x\in X$, which we may thus neglect when proving 
Theorem \ref{th:MartinEtMaximeSontDesBeauxGosses}. 
\begin{figure}
\begin{center}
\begin{tikzpicture}[scale=1.5]
\draw[smooth] (0,1) to[out=30,in=150] (2,1) to[out=-30,in=210] (3,1) to[out=30,in=150] (5,1) to[out=-30,in=30] (5,-1) to[out=210,in=-30] (3,-1) to[out=150,in=30] (2,-1) to[out=210,in=-30] (0,-1) to[out=150,in=-150] (0,1);
\draw[smooth] (0.4,0.1) .. controls (0.8,-0.25) and (1.2,-0.25) .. (1.6,0.1);
\draw[smooth] (0.5,0) .. controls (0.8,0.2) and (1.2,0.2) .. (1.5,0);
\draw[smooth] (3.4,0.1) .. controls (3.8,-0.25) and (4.2,-0.25) .. (4.6,0.1);
\draw[smooth] (3.5,0) .. controls (3.8,0.2) and (4.2,0.2) .. (4.5,0);
\draw[smooth] (2.5,-0.85) .. controls (2.4,-0.8) and (2.4,0.8) .. (2.5,0.85);
\draw[dashed] (2.5,-0.85) .. controls (2.6,-0.8) and (2.6,0.8) .. (2.5,0.85);
\draw[smooth] (2.12,0) .. controls (2.11,0.1) and (2.1,0.85) .. (2.2,0.9);
\draw[dashed] (2.2,0.9) .. controls (2.35,0.85) and (2.2,-0.85) .. (2.15,-0.92);
\draw[smooth] (2.15,-0.92) .. controls (1.9,-0.5) and (1.8,0.9) .. (2,1);
\draw[dashed] (2,1) .. controls (2.1,0.7) and (1.9,-1) .. (1.8,-1.1);
\draw[smooth] (1.8,-1.1) .. controls (1.6,-1) and (1.3,-0.1) .. (1.4,-0.05);
\draw[dashed] (1.4,-0.05) .. controls (1.6,-0.15) and (1.6,-1) .. (1.5,-1.23);
\draw[smooth] (1.5,-1.23) .. controls (1.35,-1.1) and (1.5,-0.1) .. (1.8,-0);
\draw  [red] (2.42,0) node{$\bullet$};
\draw [red, thick] [->] (2.42,0) -- (2.42,0.4) ;
\draw [red] (2.6, 0.2) node{$\rho_0$};
\draw [blue](2.12,  -0.2) node{$\rho$};
\draw  [blue, thick] [->] (2.12,0) -- (2.11,0.4) ;
\draw [blue] (2.12,0) node{$\bullet$};
\draw [magenta](1.92,0) node{$\bullet$};
\draw  [magenta, thick] [->] (1.92,0) -- (1.88,0.4) ;
\draw [magenta] (1.75,  0.15) node{$\rho'$};
\draw  (1.48,  -0.6) node{$\bullet$};
\draw (1.3, -0.6) node{$x$};
\end{tikzpicture}
\end{center}
\caption{An example where the phases $\widetilde{\phi}_{t,\delta}( \widetilde{x})$
and $\widetilde{\phi}_{t,\delta}( \widetilde{x}')$ are not independent. Here, 
$\Lambda_0$ is a piece of the unstable manifold of 
the periodic point $\rho_0$ to which $\rho$ and $\rho'$ 
both belong. There are two lifts of $x$, $\wit{x}$ 
and $\wit{x}'$ such that $\rho_{\wit{x}}$ and 
$\rho_{\wit{x}'}$ originate respectively from 
the points $\rho$ and $\rho'$; since $\rho$ and $\rho'$ 
belong to the same geodesic, the phases $\widetilde{\phi}_{t,\delta}( \widetilde{x})$
and $\widetilde{\phi}_{t,\delta}( \widetilde{x}')$ are dependent, since the integrals 
in \eqref{eq:PhiIntegrale2} defining them contain a part 
which is equal.}\label{Fig:Dependence}
\end{figure}
The aim of this section is thus to replace 
(\ref{eq:TaylorFinal}) by an expansion where, around 
most points of $X$, the different terms in the sum are 
independent. 

This is the content of the following proposition.
The proposition also states that most close points yield independent phases. 
To make this second point more precise, we need to introduce the following 
sets for every $x\in X$, $\varepsilon>0$ and $t>0$:
\begin{equation}\label{eq:defVt}
\mathcal{V}_{t, \varepsilon}(x)
:=  \{ y\in X ; \exists s\in [-t, t], \exists \wit{x}\in A_{x,t}  
	\text{ with }
	\mathrm{dist}_{X} \left( \pi_X\Phi^{s} (\rho_{\wit{x}}  ) , y\right) 
< h^{\beta- \varepsilon}\}.
\end{equation}
This is thus a set of points in $X$ which may be approached by trajectories 
passing through $x$. In \eqref{eq:defVt} we used the following notation:  
For every $\wit{x}\in A_{x,t}$, see \eqref{eq:DefA}, we define
\begin{equation*}
	\rho_{\wit{x}} 
	:= \wih{\pi} (\wit{x}, d_{\wit{x}} \wit{\phi}_{t,0} )\in T^*X.
\end{equation*}
%
%
\begin{prop}\label{PropSection8}
Let $\varepsilon>0$, let $k\in \N$ and let $\mathfrak{K}\Subset \R^d$ be 
a compact set. There exists $\varepsilon_1>0$, $\gamma>0$, 
$C_0, c_0>0$,  $\gd(\varepsilon)>0$, $h_0>0$, such that the 
following holds for all $h\in (0, h_0]$. We may write 
\begin{equation*}
	X = \left(\bigsqcup_{i\in \wih{I}_h} \wih{X}_i  \right) \sqcup X^0_h,
\end{equation*}
where each $\wih{X}_i$ has a diameter $\leq C_0 h^{\beta - \varepsilon_1}$ 
and has volume $\geq c_0 h^{d(\beta - \varepsilon_1)}$, and where 
$\Vol(X_h^0) = o_{h\to 0}(1)$.
\par 
Furthermore, for every $x\in \wih{X}_i$, we have
\begin{equation}\label{eq:ExpansionpIndep}
\psi_{h, t, x, \delta}(\boldsymbol{y}) = \sum_{\widetilde{x} \in A_{x,t}} 
	 b_0\left(t,  \widetilde{x} ; 0\right)  \e^{\frac{i}{h} 
	 \widetilde{\phi}^0_{t,\delta}( \widetilde{x})} 
	  \e^{i  \xi_{t, \widetilde{x},0} \cdot \boldsymbol{y}} 
	  + O_{C^k(\mathfrak{K})}(h^\gamma), 
	\quad \boldsymbol{y}\in \mathfrak{K},
\end{equation}
uniformly in $0\leq t\leq \gd |\log h|$. Here 
$\widetilde{\phi}^0_{t,\delta}( \widetilde{x})$
is defined in \eqref{eq:DefThetaZero} below and 
\begin{itemize}
\item  for each fixed $x\in X\backslash X_h^0$, 
the phases $(\widetilde{\phi}^0_{t,\delta}( \widetilde{x}) )_{\{\wit{x} 
\in A_{x,t} \}}$ 
are \red{statistically} independent.  
\item Let $x_1,..., x_p \in \red{\wih{X}}_i$ be such that, for all $1\leq j,j' \leq p$, 
we have $x_j \notin \mathcal{V}_{t, \varepsilon}(x_{j'})$. Then the phases 
$(\widetilde{\phi}^0_{t,\delta}( \widetilde{x}_j))_{\{j =1,..., p ; ~ 
\wit{x}_j \in A_{x_j,t}\}}$ are independent. 
\end{itemize}
\end{prop}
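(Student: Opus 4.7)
The idea is to excise from $\Theta_t(\wit x;\omega)$ the short time intervals on which the trajectory $\{\Phi_0^{-s}\rho_{\wit x}\}$ comes close (in the appropriate ambient space) to some other trajectory in the family $\{\Phi_0^{-s'}\rho_{\wit x'}\}_{\wit x'\in A_{x,t}\setminus\{\wit x\}}$. The excised phase $\Theta^0_{\omega,t}(\wit x)$ then only uses those random weights $\omega_j$ attached to single-site potentials $q_j$ touched by exactly one trajectory of the family, which is the mechanism producing independence.

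Concretely, fix $0<\varepsilon_1<\varepsilon'<\varepsilon$. For each $x\in X$ and $\wit x\in A_{x,t}$ set
\begin{equation*}
B_t(\wit x):=\bigl\{s\in[0,t]:\exists\,\wit x'\in A_{x,t}\setminus\{\wit x\},\ \exists\,s'\in[0,t],\ \mathrm{dist}_{\sharp}(\Phi_0^{-s}\rho_{\wit x},\Phi_0^{-s'}\rho_{\wit x'})<h^{\beta-\varepsilon'}\bigr\},
\end{equation*}
with $\mathrm{dist}_\sharp(\rho,\rho')=\mathrm{dist}_X(\pi_X\rho,\pi_X\rho')$ in the \ref{eq:Pot} and $\mathrm{dist}_\sharp=\mathrm{dist}_{T^*X}$ in the \ref{eq:Pseudo}, and set
\begin{equation*}
\Theta^0_{\omega,t}(\wit x):=\wit\phi_{t,0}(\wit x)-\delta\int_{[0,t]\setminus B_t(\wit x)}q_\omega\bigl(\Phi_0^{-s}\rho_{\wit x}\bigr)ds.
\end{equation*}
Lemma \ref{Lem:DernierLemmeAvantLesVacances} (applied to pairs of points on the $\eta$-unstable Lagrangian $\Lambda_0$ transported forward by the flow) combined with Corollary \ref{cor:Convex} (convexity of squared distance in nonpositive curvature) shows that each pair of distinct trajectories is $h^{\beta-\varepsilon'}$-close along a union of $O(1)$ intervals of total length $O(h^{\beta-\varepsilon'})$ in the \ref{eq:Pseudo}; in the \ref{eq:Pot} an additional contribution of the same order appears near $s=0$, produced by the geodesics fanning out from the common base point $x$. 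Summing over the $|A_{x,t}|=O(h^{-C\gd})$ pairs yields $|B_t(\wit x)|\leq h^{\beta-\varepsilon'-C\gd}$ for every $x$ outside an exceptional set; Hypothesis \ref{Hyp:BetaDelta} then gives $\frac1h|\Theta_t-\Theta^0_{\omega,t}|\leq\frac{C\delta}{h}|B_t(\wit x)|=O(h^\gamma)$ for some $\gamma>0$, once $\gd,\varepsilon'$ are chosen small enough. Defining $X_h^0$ as the set of base points where this pairwise control fails, a Fubini-type argument using the same two tools gives $\Vol(X_h^0)=o_{h\to0}(1)$. Substituting $\Theta^0_{\omega,t}$ for $\Theta_t$ in Corollary \ref{cor:LocalNearX0} then produces \eqref{eq:ExpansionpIndep}.

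For the partition, cover $X\setminus X_h^0$ by geodesic balls $\wih X_i$ of radius $\asymp h^{\beta-\varepsilon_1}$ with bounded overlap, which automatically furnishes both the diameter upper bound and the volume lower bound $|\wih X_i|\geq c_0h^{d(\beta-\varepsilon_1)}$. The two independence properties follow from the observation that, by construction, the set of indices $j\in J_h$ whose $\omega_j$ enters $\Theta^0_{\omega,t}(\wit x)$ consists precisely of those $j$ for which $\supp q_j$ meets the trajectory of $\wit x$ \emph{outside} $B_t(\wit x)$. Since $\mathrm{diam}\,\supp q_j=O(h^\beta)\ll h^{\beta-\varepsilon'}$, these index sets are pairwise disjoint for distinct $\wit x\in A_{x,t}$, and Hypothesis \ref{Hyp:Prob} yields the pointwise independence. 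For distinct base points $x_j,x_{j'}\in\wih X_i$ satisfying $x_j\notin\mathcal V_{t,\varepsilon}(x_{j'})$, suppose for contradiction that a single $q_j$ touches both $\wit x_j$'s and $\wit x_{j'}$'s trajectories: then $\mathrm{dist}_\sharp(\Phi_0^{-s_1}\rho_{\wit x_j},\Phi_0^{-s_2}\rho_{\wit x_{j'}})=O(h^\beta)$ for some $s_1,s_2\in[0,t]$, and Lemma \ref{Lem:Gron} propagated over time $|s_1-s_2|$, combined with $\mathrm{diam}\,\wih X_i\leq h^{\beta-\varepsilon_1}$, produces $\mathrm{dist}_X(x_j,\pi_X\Phi^{s_1-s_2}\rho_{\wit x_{j'}})\leq h^{\beta-C\gd-\varepsilon_1}<h^{\beta-\varepsilon}$ (for $\gd,\varepsilon_1$ small enough), contradicting the hypothesis $x_j\notin\mathcal V_{t,\varepsilon}(x_{j'})$.

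The hard part is the quantitative control of $|B_t(\wit x)|$ away from an exceptional set of vanishing measure: the number of pairs $|A_{x,t}|^2=O(h^{-2C\gd})$ grows exponentially in $t$ and the margin in Hypothesis \ref{Hyp:BetaDelta} is tight, so the estimate must fully exploit the Anosov divergence through Corollary \ref{cor:Convex} and Lemma \ref{Lem:DernierLemmeAvantLesVacances} to ensure that, for most base points, only $O(1)$ pairs contribute nontrivially to $B_t$.
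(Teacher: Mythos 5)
Your overall architecture (excise the times of pairwise closeness, note that $\mathrm{diam}\,\supp q_j=O(h^\beta)\ll h^{\beta-\varepsilon'}$ so the surviving single-site potentials are hit by at most one trajectory, partition into cells of size $h^{\beta-\varepsilon_1}$, and treat the second bullet exactly as in Lemma \ref{Lem:LePtiLemmeDeLaFin}) is the right one, but the quantitative core has a genuine gap, and it is precisely where the paper has to work. First, you excise times in \emph{both} cases and control the resulting phase error by $\frac{\delta}{h}|B_t(\wit x)|\lesssim \delta h^{\beta-\varepsilon'-C\gd-1}$, invoking Hypothesis \ref{Hyp:BetaDelta}. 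But the inequality you need, \eqref{eq:CondBetaDelta2}, is only assumed in the \ref{eq:Pot}; in the \ref{eq:Pseudo} (take $\delta=h^\alpha$ with $\alpha$ slightly above $2\beta$ and $\beta<1/3$) one has $\delta h^{\beta-1}\to\infty$, so an excised set of measure $h^{\beta-\varepsilon'}$ destroys the expansion. In the \ref{eq:Pseudo} the correct statement is stronger and different: after removing the bad set of base points, distinct lifts give trajectories that \emph{never} come within $h^{\gamma'}$ of each other in $T^*X$ for any $\gamma'<\beta$ (this is Lemma \ref{lem:MostPointsIndep}, proved via the near-return set $X_{T_0,T,\gamma}$ of \eqref{eq:BadSet}, Lemma \ref{Lem:DernierLemmeAvantLesVacances} and Corollary \ref{cor:Convex}), so no time excision is needed there at all. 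Your sketch never establishes this emptiness, and without it the $\Psi$DO case does not close.

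Second, even in the \ref{eq:Pot}, your per-pair claim that closeness occurs on ``$O(1)$ intervals of total length $O(h^{\beta-\varepsilon'})$'' is not justified as stated and is false before the exceptional base points are removed: for a point $x$ whose trajectory nearly returns to $x$ (the configuration of Figure \ref{Fig:Dependence}), two lifts can run along (essentially) the same geodesic for a time comparable to $t$, so the closeness set has macroscopic measure and no small excision exists. The convexity argument of Corollary \ref{cor:Convex} only yields a short closeness window once one knows that, at the moment of base-closeness, the two phase-space points are far apart in $T^*X$ (the step \eqref{eq:InfDist} in Proposition \ref{Prop:TimeInteraction}), and that step is exactly where membership of $x$ in the complement of the near-return set is used. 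In your write-up the exceptional set is defined circularly (``the set of base points where this pairwise control fails'') and its volume bound is attributed to ``a Fubini-type argument''; the paper instead defines it intrinsically through near-returns and proves $\Vol(X_{T_0,T,\gamma})=O(h^r)$ by a genuine geometric argument (Proposition \ref{Prop:ExceptionalPoints}: the second point is forced into an $O(h^\gamma)$-neighbourhood of a geodesic segment). So the logical order must be: remove the near-return points first, then derive the $h^{\beta-5\varepsilon}$ bound on interaction times (Potential case) or the absence of phase-space closeness ($\Psi$DO case), and only then define $\Theta^0_{\omega,t}$. One further, more cosmetic, difference: the paper excises a time set $\gI_{i,t}$ common to the whole cell $\wih X_i$ (using \eqref{eq:BorneUnionI}), whereas your $B_t(\wit x)$ is per-lift; your version is compatible with the stated independence claims, but it does not repair the two gaps above.
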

\begin{rem}\label{rem:RemSec8}
Thanks to Corollary \ref{cor:LocalNearX0}, it is possible, in 
(\ref{eq:ExpansionpIndep}), to replace for every $x\in \wih{X}_i$ 
the amplitude $ b_0\left(t,  \widetilde{x} ; 0\right)$ by 
$ b_0\left(t,  \widetilde{x}_i ; 0\right)$,  where $x_i$ is a fixed 
point in $\wih{X}_i$, and where $\wit{x}_i$ corresponds to the lift 
of $x_i$ in the same 
sheet as $\wit{x}$, i.e., the point $\wit{x}_i\in \wit{X}$ 
such that $\mathrm{dist}_{\wit{X}} (\wit{x}, 
\wit{x}_i) = \mathrm{dist}_X(x, x_i)$. (This point is unique 
as soon as $h$ is small enough so that the diameter 
of $\wh{X}_i$ is smaller than the injectivity radius $r_I$.)
\end{rem}
Proving Proposition \ref{PropSection8} will take the rest of Section 
\ref{Sec:Indep}.
\red{Note that, although the expression \eqref{eq:PhiIntegrale2} for the phases involve the perturbed trajectories $\zeta_\delta^{s,t}$,  most of the arguments in the proof of Proposition \ref{PropSection8} will only concern the unperturbed trajectories given by the flow $\Phi^t = \Phi^t_0$.}
\subsection{Removing exceptional points}\label{sec:DepPoints}
The first step in the proof of Proposition \ref{PropSection8} is to 
remove exceptional points, which will correspond to the set $X_h^0$. 
\par
\MI{To this end},  \red{recall that $r_I$ denotes the injectivity radius of $X$.  For $0<T_0<T$},  we define the \emph{bad} set
\begin{equation}\label{eq:BadSet}
	\begin{split}
		X_{T_0,T, \gamma} 
		:= \bigg\{ x\in X \text{ such that } &\exists t_1\in [T_0, T], 
		\exists t_2\in [r_I, T] \\
		&\text{ such that }  
		\exists  \wit{x} \in A_{x, t_1} \text{ with } \mathrm{dist}_X 
		(\pi_X(\Phi^{t_2} (\rho_{\wit{x}})), x) < h^\gamma \bigg\}.
	\end{split}
\end{equation}
\red{In the sequel, $T_0$ will be independent of $h$, while $T$ will be bounded by $\gd |\log h|$.}
Roughly speaking, the bad set contains all points $x\in X$ which are attained 
by some trajectory of the time evolution of $\Lambda_0$ \red{at some time $t_1\in [T_0,T]$}, and which are closely 
approached by the same trajectory at a future time \red{$t_1+ t_2$, with $t_2\in [r_I, T]$}.
\begin{lem}\label{lem:MostPointsIndep}
Let $0< \gamma < \gamma'$. There exists $T_1>0$, $\gd>0$, $h_0>0$ such that, 
if $0<h\leq h_0$, $T_1\leq T_0 \leq t \leq \gd |\log h|$, and 
$x\in X \setminus X_{T_0,t, \gamma}$, the following holds. 
For all $\wit{x} \neq  \wit{x}'\in A_{x,t}$ and all 
$0 \leq s_1, s_2 \leq t$, we have 
\begin{equation*}
\mathrm{dist}_{T^*X} (\Phi^{-s_1}(\rho_{\wit{x}}), 
 \Phi^{-s_2}(\rho_{\wit{x}'})) \geq h^{\gamma'}.
\end{equation*}
\end{lem}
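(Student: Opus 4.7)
The plan is to argue by contradiction. Suppose there exist $\wit x \neq \wit x' \in A_{x,t}$ and $s_1, s_2 \in [0, t]$ with $\mathrm{dist}_{T^*X}(\Phi^{-s_1}\rho_{\wit x}, \Phi^{-s_2}\rho_{\wit x'}) < h^{\gamma'}$. Set $\rho_1 := \rho_{\wit x}$ and $\rho_2 := \rho_{\wit x'}$; after interchanging the two pairs if necessary, I may assume $t_2 := s_2 - s_1 \geq 0$. Applying $\Phi^{s_2}$ and invoking \eqref{eq:ExpRate} gives
\begin{equation*}
\mathrm{dist}_{T^*X}(\Phi^{t_2}\rho_1,\, \rho_2) \leq C \e^{C s_2} h^{\gamma'} \leq h^{\gamma + \eta}
\end{equation*}
for some fixed $\eta > 0$, provided $\gd$ is chosen small enough depending on $\gamma' - \gamma$ (so that $\e^{C \gd |\log h|} \leq h^{-C\gd}$ is absorbed by $h^{\gamma'}$). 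Projecting to the base then gives $\mathrm{dist}_X(\pi_X \Phi^{t_2}\rho_1,\, x) \leq C h^{\gamma+\eta}$.

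If $t_2 \geq r_I$, this is precisely the defining condition \eqref{eq:BadSet} of the bad set $X_{T_0,t,\gamma}$ with the choice $t_1 := t \in [T_0, t]$, $t_2 \in [r_I, t]$ and the lift $\wit x \in A_{x,t}$, contradicting the hypothesis $x \notin X_{T_0, t, \gamma}$.

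If instead $t_2 < r_I$, I pass to the universal cover $\wit X$. Since $\wit X$ is Cartan--Hadamard, the point $\pi_{\wit X} \wit\Phi^{t_2}(\wit x,\, d_{\wit x}\wit\phi_{t,0})$ lies at $\wit X$-distance exactly $|t_2|$ from $\wit x$, while any two distinct lifts of $x$ are separated by at least $2 r_I$ (a geodesic joining them projects to a geodesic loop at $x$, of length at least twice the injectivity radius). For $h$ small this forces $\wit x$ itself to be the lift nearest this point, hence $|t_2| \leq h^{\gamma + \eta}$; combining with the $\Phi^{t_2}$-continuity of the flow yields $\mathrm{dist}_{T^*X}(\rho_1, \rho_2) = O(h^{\gamma + \eta})$. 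Lifting to $T^*\wit X$, there exists a deck transformation $\tau$ with $\mathrm{dist}_{T^*\wit X}(\wit\rho_1,\, \tau \wit\rho_2) = O(h^{\gamma + \eta})$, where $\wit\rho_1 := (\wit x, d_{\wit x}\wit\phi_{t,0})$ and $\wit\rho_2 := (\wit x', d_{\wit x'}\wit\phi_{t,0})$. The base points $\wit x$ and $\tau(\wit x')$ being both lifts of $x$ and within $O(h^{\gamma + \eta}) \ll 2 r_I$ of each other, the same $2r_I$-separation forces $\tau \wit x' = \wit x$; in particular $\tau \neq \mathrm{id}$, since $\wit x \neq \wit x'$.

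To close the contradiction, I apply $\wit\Phi^{-t}$ to both points and use Lemma \ref{Lem:Gron} on the cover:
\begin{equation*}
\mathrm{dist}_{T^*\wit X}\!\left(\wit\Phi^{-t}\wit\rho_1,\, \tau \wit\Phi^{-t}\wit\rho_2\right) \leq C \e^{Ct} h^{\gamma + \eta} \leq h^{\gamma + \eta - C \gd},
\end{equation*}
which tends to $0$ with $h$ once $\gd$ is small enough. But the first point lies in $\wit\Lambda_0$ (with base in $\wit{\mathcal O}$) and the second in $\tau \wit\Lambda_0$ (with base in $\tau \wit{\mathcal O}$), while proper discontinuity of the deck group acting on a compact set containing the relevant portion of $\wit{\mathcal O}$ guarantees that $\inf_{\tau \neq \mathrm{id}} \mathrm{dist}_{\wit X}(\wit{\mathcal O}, \tau \wit{\mathcal O}) > 0$, yielding the desired contradiction for $h$ sufficiently small. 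The main delicate point throughout is bookkeeping the constants so that the exponential-in-time losses $\e^{C s_2}$ and $\e^{Ct}$ stay dominated by the $h^{\gamma' - \gamma}$ gain; this is exactly what dictates the smallness of $\gd$ relative to $\gamma' - \gamma$.
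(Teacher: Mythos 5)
Your reduction is fine up to a point: the Gr\"onwall/\eqref{eq:ExpRate} step giving $\mathrm{dist}_{T^*X}(\Phi^{t_2}\rho_{\wit{x}},\rho_{\wit{x}'})\leq h^{\gamma+\eta}$ with $t_2=s_2-s_1\geq 0$, and the treatment of the case $t_2\geq r_I$ via the definition \eqref{eq:BadSet} of the bad set, are exactly the paper's steps. The gap is in the case $t_2<r_I$. Your Cartan--Hadamard argument correctly forces $t_2=O(h^{\gamma+\eta})$ and hence $\mathrm{dist}_{T^*X}(\rho_{\wit{x}},\rho_{\wit{x}'})=O(h^{\gamma+\eta})$, but the way you then exclude this --- ``proper discontinuity of the deck group gives $\inf_{\tau\neq\mathrm{id}}\mathrm{dist}_{\wit{X}}(\wit{\mathcal O},\tau\wit{\mathcal O})>0$'' --- is false in general. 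Proper discontinuity only says finitely many translates meet a given compact set; it says nothing about mutual distances. Since $\mathcal O$ is merely a simply connected, relatively compact open subset of $X$ (no smallness assumption), the disjoint open sheets $\tau\wit{\mathcal O}$ can be adjacent, sharing boundary, so the infimum of their distances can be $0$; a point of $\wit{\Lambda}_0$ and a point of $\tau\wit{\Lambda}_0$ (and likewise $\wit{\Phi}^{-t}\wit{\rho}_1$ and $\tau\wit{\Phi}^{-t}\wit{\rho}_2$) can be arbitrarily close. The configuration you must rule out --- two distinct sheets of the evolved Lagrangian passing over $x$ with nearly identical covectors --- is precisely the dependent-phase scenario of Figure \ref{Fig:Dependence}, and it cannot be excluded by covering-space topology alone.

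Excluding it is the analytic heart of the lemma, and it is where the paper uses the dynamics: one shows the quantitative lower bound $\mathrm{dist}_{T^*X}(\rho_{\wit{x}},\rho_{\wit{x}'})>h^{\varepsilon}$ (with $\varepsilon<\gamma$), by arguing that if the two covectors over $x$ were that close, then by Lemma \ref{Lem:Gron} the two backward trajectories would stay $r_I$-close on the base for all $s\in[0,t]$; their time-$(-t)$ images are two distinct points of the $\eta$-unstable Lagrangian $\Lambda_0$, and Lemma \ref{Lem:DernierLemmeAvantLesVacances} (transversality to the stable directions, after a fixed expansion time $t_0$) combined with the convexity of $s\mapsto\mathrm{dist}_X^2$ along geodesics (Corollary \ref{cor:Convex}) produces a nonnegative convex function $f$ on $[0,t]$ with $f(t)=0$ and $f(t_0)>4f(0)$, a contradiction --- provided $t\geq T_0\geq T_1:=t_0$. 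Note that your proof never invokes the $\eta$-instability of $\Lambda_0$ and never uses (or defines) the threshold $T_1$ that the statement explicitly posits; this is a reliable sign that the key mechanism is missing. To repair your argument you would need to replace the sheet-separation claim by an argument of this type (or otherwise exploit the instability hypothesis); as written, the $t_2<r_I$ case is not proved.
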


\begin{rem}\label{rem:DeltaToZero}
Let $\gamma'<\beta$. Upon potentially shrinking $\gd>0$ and $h_0>0$, we deduce 
from \eqref{eq:PetitCoupDeStressDeLaFin} and \eqref{eq:CondBeta} that if 
$\mathrm{dist}_{T^*X} (\Phi^{-s_1}(\rho_{\wit{x}}), \Phi^{-s_2}(\rho_{\wit{x}'})) \geq h^{\gamma'}$ 
for some $0 \leq s_1, s_2 \leq \red{t \leq } \gd |\log h|$, then we also have
\begin{equation*}
	\mathrm{dist}_{T^*X} \left(\zeta_\delta^{s_1,t}(x), \zeta_\delta^{s_2,t}(x)\right) 
	\geq h^{\gamma'}/2.
\end{equation*}
\par
In particular,  equation \eqref{eq:PhiIntegrale2} \red{and Lemma \ref{lem:MostPointsIndep} imply} 
that, in the \ref{eq:Pseudo}, the phases 
$\widetilde{\phi}^0_{t,\delta}( \widetilde{x})$ and 
$\widetilde{\phi}^0_{t,\delta}( \widetilde{x}')$ are 
independent if $\wit{x}, \wit{x'}$ satisfy the assumptions 
of Lemma \ref{lem:MostPointsIndep}.
\end{rem}
\begin{proof}[Proof of Lemma \ref{lem:MostPointsIndep}]
\red{Let $t_0$ be as in \MI{Lemma} \ref{Lem:DernierLemmeAvantLesVacances}, and set $T_1:= t_0$. }
Let $0<\gamma< \gamma'$. We will make an argument by contradiction. 
Suppose that the result is false, so that for any $\gd>0$ and $h>0$ 
arbitrarily small, we may find \red{$T_0< t < \gd |\log h|$,  $x\in X$,  $\wit{x}\neq \wit{x}'\in A_{x,t}$ and} $s_1, s_2 \in [0,t]$ 
such that $\mathrm{dist}_{T^*X} (\Phi^{-s_1}(\rho_{\wit{x}}), 
\Phi^{-s_2}(\rho_{\wit{x}'})) < h^{\gamma'}$. 
\par 
Let us take $0<\varepsilon< \gamma$ such that $\gamma + \varepsilon < \gamma'$.
Without loss of generality, we may suppose that $s_2 \geq s_1$. 
Thanks to Lemma \ref{Lem:Gron},  we have $\mathrm{dist}_{T^*X} 
(\Phi^{s_2 - s_1}(\rho_{\wit{x}}), 
\rho_{\wit{x}'}) < h^{\gamma'- \varepsilon}$, provided 
$\gd$ is small enough. \red{We thus have $\mathrm{dist}_X 
		(\pi_X(\Phi^{s_2-s_1} (\rho_{\wit{x}})), x) < h^\gamma $.  We will now show that $s_2-s_1\geq r_I$,  and this will give us a contradiction with the fact that $x\notin X_{T_0, t, \gamma}$.}
\par 
\red{To this end, we first} claim that, for $h$ and $\gd$ small enough,
\begin{equation}\label{eq:DistanceLifts} 
 \mathrm{dist}_{T^*X}(\rho_{\wit{x}},  \rho_{\wit{x}'})> h^\varepsilon.
\end{equation}  
Indeed, if $\mathrm{dist}_{T^*X}(\rho_{\wit{x}}, 
\rho_{\wit{x}'})\leq h^\varepsilon$, then taking $\gd$ \red{and $h_0$} small enough and
using Lemma \ref{Lem:Gron}, we would have 
$\mathrm{dist}_X(\Phi^{-s}(\rho_{\wit{x}},) \Phi^{-s}(\rho_{\wit{x}'}))< \red{h^{\varepsilon/2}}$ 
for all $s\in [0,t]$. By Lemma \ref{Lem:DernierLemmeAvantLesVacances} \red{and the definition we took for $T_1$,} 
there would exist \red{$\tau \in (-\varepsilon, \varepsilon)$} such that for all $\tau'$ small enough, 
$ \mathrm{dist}^2_X ( \Phi^{-t +t_0}(\rho_{\wit{x}}), 
\Phi^{-t +t_0+ \tau'}(\rho_{\wit{x}'})) \red{\geq} 4 \mathrm{dist}^2_X ( \Phi^{-t}
(\rho_{\wit{x}}), \Phi^{-t + \tau}(\rho_{\wit{x}'})).$ Let us write 
$\Phi^{-t + \tau}(\rho_{\wit{x}'})= (x_0, \xi_0)$,  and set 
$\rho'= (x_0,  \lambda \xi_0)$, with $\lambda =\frac{t}{t-\tau_0}$, so that 
$\pi_X(\Phi^t(\rho')) = x$. The map $f: [0, t] \ni s 
\mapsto \mathrm{dist}_X^2\left(\Phi^s(\rho'), \Phi^s( \Phi^{-t}(\rho_{\wit{x}}))
\right)$ would then be non-negative, convex (by Corollary \ref{cor:Convex}), 
and satisfy $f(t)=0$, and $f(t_0)\red{\geq} 4 f(0)$.  \red{Since $t\geq t_0$,  this would imply that $f\equiv 0$,  and thus that $\rho'= \Phi^{-t}(\rho_{\wit{x}})$. We would deduce that
 $\rho_{\wit{x}} = \rho_{\wit{x}'}$, which is a contradiction. Thus,  (\ref{eq:DistanceLifts}) must hold.}

\par 
Now,  (\ref{eq:DistanceLifts}) implies in particular that we cannot have 
$\mathrm{dist}_{T^*X} (\Phi^{s}(\rho_{\wit{x}}), \rho_{\wit{x}'}) < h^\gamma$ 
for $s< \frac{h^\varepsilon}{2}$. On the other hand, thanks to Remark \ref{Rem:LeavingBall},
we cannot have $\mathrm{dist}_{T^*X} (\Phi^{s}(\rho_{\wit{x}}),  
\rho_{\wit{x}'}) < h^\gamma$ either for $  \frac{h^\varepsilon}{{2}} \leq  s< r_I$.  Therefore, 
we must have $s_2-s_1 \geq  r_I$, which contradicts the assumption that 
$x\in X_{T_0,t,\gamma}$.
\end{proof}
Next, we show that the bad set \eqref{eq:BadSet} has small measure. 
\begin{prop}\label{Prop:ExceptionalPoints}
For any $\gamma>0$, there exists $T_1>0$, $\gd>0$, $r>0$ and $h_0>0$ such 
that,  for all $h\leq h_0$,  if $T_1\leq T_0\leq T\leq \gd |\log h|$, then
\begin{equation*}
	\mathrm{Vol}(X_{T_0,T, \gamma}) < h^r.
\end{equation*}
\end{prop}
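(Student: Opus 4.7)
The approach has three main steps: discretisation of the pair $(t_1,t_2)$, reduction to the universal cover with a summation over deck transformations $\kappa \in \Gamma = \pi_1(X)$, and a transversality/Jacobian estimate for each fixed $(t_1,t_2,\kappa)$.

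\textbf{Step 1 (Discretisation).} Fix a small $\mu>0$ and pick a grid $\mathcal{N}\subset[T_0,T]\times[r_I,T]$ of spacing $h^\mu$, of cardinality $O(h^{-2\mu}|\log h|^2)$. By Lemma~\ref{Lem:Gron} applied with $\delta=0$, if $|t-t'|\leq h^\mu$ and $|t|,|t'|\leq \gd|\log h|$ then $\mathrm{dist}_{T^*X}(\Phi^t\rho,\Phi^{t'}\rho)\leq C h^{\mu-C\gd}\leq h^\gamma/2$ as soon as $\gd$ is small enough in terms of $\mu$ and $\gamma$. This yields the inclusion
\[
X_{T_0,T,\gamma}\subset\bigcup_{(t_1,t_2)\in\mathcal{N}}B_{t_1,t_2},\qquad B_{t_1,t_2}:=\bigl\{x\in X:\exists\wit{x}\in A_{x,t_1},\,\mathrm{dist}_X(\pi_X\Phi^{t_2}\rho_{\wit{x}},x)<2h^\gamma\bigr\},
\]
so it suffices to bound $\mathrm{Vol}(B_{t_1,t_2})$ uniformly on the grid.

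\textbf{Step 2 (Lift and deck decomposition).} Fix $(t_1,t_2)\in\mathcal{N}$, and parametrise $A_{x,t_1}$ on the universal cover by $\wit{y}\in\wit{\mathcal{O}}$ via $\wit{y}\mapsto \pi_{\wit{X}}\wit{\Phi}^{t_1}(\wit{y},d_{\wit{y}}\wit{\phi}_0)$. Writing $\wit{\rho}(\wit{y}):=\wit{\Phi}^{t_1}(\wit{y},d_{\wit{y}}\wit{\phi}_0)$, a point $x\in B_{t_1,t_2}$ lifts to a pair $(\wit{y},\kappa)\in\wit{\mathcal{O}}\times\Gamma$ satisfying
\[
\mathrm{dist}_{\wit{X}}\bigl(\pi_{\wit{X}}\wit{\Phi}^{t_2}(\wit{\rho}(\wit{y})),\,\kappa\cdot\pi_{\wit{X}}\wit{\rho}(\wit{y})\bigr)<2h^\gamma.
\]
Only $\kappa$ of translation length $\leq T+1$ can contribute, and by volume comparison in $\wit{X}$ (of negative curvature) there are at most $O(\mathrm{e}^{CT})\leq h^{-C\gd}$ such $\kappa$.

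\textbf{Step 3 (Transversality and Jacobian estimate).} For each triple $(t_1,t_2,\kappa)$ with $\kappa\neq\mathrm{id}$ (the identity is ruled out because $\wit{X}$ has no closed geodesics and $t_2\geq r_I$), introduce
\[
G_\kappa(\wit{y}):=\exp_{\kappa\,\pi_{\wit{X}}\wit{\rho}(\wit{y})}^{-1}\pi_{\wit{X}}\wit{\Phi}^{t_2}(\wit{\rho}(\wit{y}))\in T\wit{X}.
\]
The differential $dG_\kappa$ exhibits $(d-1)$ directions of expansion at rate $\mathrm{e}^{ct_2}$ coming from the unstable foliation of $\wit{\Lambda}_{t_1}$ (Corollary~\ref{Cor:EtaInstable}); along the flow direction, the two geodesic segments joining $\wit{\rho}(\wit{y})$ to $\wit{\Phi}^{t_2}\wit{\rho}(\wit{y})$ and from $\kappa\pi_{\wit{X}}\wit{\rho}(\wit{y})$ back to $\pi_{\wit{X}}\wit{\rho}(\wit{y})$ cannot coincide (otherwise $\kappa$ would fix a geodesic on $\wit{X}$), so the strict convexity of the distance function in negative curvature (Lemma~\ref{lem:convex}) gives a pointwise non-degeneracy, which becomes uniform by compactness of $X$. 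A standard transversality computation then yields $|\{\wit{y}\in\wit{\mathcal{O}}:|G_\kappa(\wit{y})|<2h^\gamma\}|\leq Ch^{\gamma d}\mathrm{e}^{-c(d-1)t_2}$, and the pushforward to $X$ via the map $\wit{y}\mapsto \pi_X\wit{\Phi}^{t_1}(\wit{y},d_{\wit{y}}\wit{\phi}_0)$ contributes a Jacobian $\leq \mathrm{e}^{C(d-1)t_1}\leq h^{-C\gd}$ (by Lemma~\ref{lem:DerivFlot}). Summing over $\mathcal{N}$ and over the admissible $\kappa$'s gives
\[
\mathrm{Vol}(X_{T_0,T,\gamma})\leq Ch^{-2\mu}|\log h|^2\cdot h^{-C\gd}\cdot h^{\gamma d-C\gd}\leq h^r
\]
for some $r>0$, provided $\mu$ and $\gd$ are chosen small enough in terms of $\gamma d$.

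\textbf{Main obstacle.} The heart of the argument is the transversality in Step~3, specifically the uniform lower bound on the flow-direction component of $dG_\kappa$: Lemma~\ref{lem:convex} and the absence of closed geodesics on $\wit{X}$ yield pointwise non-degeneracy, but a bound valid for all admissible $(t_1,t_2,\kappa)$ requires either a fundamental domain argument using compactness of $X$, or a direct estimate on the angle between the two geodesic segments based on the hyperbolic geometry of $\wit{X}$ and on the fact that the displacement function of each $\kappa$ is strictly convex with its minimum on the axis of $\kappa$.
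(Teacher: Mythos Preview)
Your Step~3 contains a genuine error in the flow-direction transversality. You argue that $dG_\kappa$ is non-degenerate along the flow direction because ``otherwise $\kappa$ would fix a geodesic on $\wit{X}$''. But on a compact negatively curved manifold every non-trivial $\kappa\in\Gamma$ \emph{does} have an axis: a geodesic in $\wit{X}$ that $\kappa$ translates along by its translation length $\ell(\kappa)$ (it fixes it setwise, not pointwise). Whenever $\wit{\rho}(\wit{y})$ lies on this axis, sliding $\wit{y}$ along the flow direction moves both $\pi_{\wit{X}}\wit{\Phi}^{t_2}\wit{\rho}(\wit{y})$ and $\kappa\cdot\pi_{\wit{X}}\wit{\rho}(\wit{y})$ along the same geodesic at unit speed, so $dG_\kappa$ vanishes in that direction; if moreover $t_2=\ell(\kappa)$ then $G_\kappa$ itself vanishes on a whole arc. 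Your bound $|\{|G_\kappa|<2h^\gamma\}|\leq Ch^{\gamma d}e^{-c(d-1)t_2}$ therefore fails: the flow direction cannot contribute an extra factor $h^\gamma$, and neither convexity (Lemma~\ref{lem:convex}) nor the displacement function of $\kappa$ rescues this, since both are flat along the axis.

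The route is salvageable with exponent $(d-1)\gamma$ in place of $d\gamma$: the $(d-1)$ unstable directions of $\wit{\Lambda}_0$ genuinely expand under $G_\kappa$, so for each fixed flow-coordinate the transverse preimage of the $2h^\gamma$-ball has diameter $O(h^\gamma e^{-c(t_1+t_2)})$, and integrating over the bounded flow extent of $\wit{\mathcal{O}}$ gives $O(h^{(d-1)\gamma})$ up to factors $h^{-C\gd}$. Summing over the grid and over $\kappa$ still yields a positive $r$ once $\mu,\gd$ are small enough.

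For comparison, the paper bypasses both the deck-transformation sum and the flow-direction analysis. It discretises not only in $(t_1,t_2)$ but also in a reference point $\rho_0\in S^*X$ at scale $h^{\varepsilon_1}$, and for each cell compares two bad points $x,x'$ directly: using the expansion estimate of Lemma~\ref{Lem:DernierLemmeAvantLesVacances} and the convexity of Corollary~\ref{cor:Convex} in a triangle-inequality argument, it shows that their $\Lambda_0$-preimages must lie within $O(h^\gamma)$ of a common short geodesic segment. This confines $x'$ to a tube of volume $O(h^{(d-1)(\gamma-\varepsilon)})$ about a curve through $x$, giving the same final exponent without ever confronting the closed-geodesic degeneracy.
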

\begin{proof}
\textbf{\red{Step 1: Dividing $X_{T_0, T, \gamma}$ into smaller sets.}}
\MI{Let $t_0$ be as in Lemma \ref{Lem:DernierLemmeAvantLesVacances}, and set $T_1 := \min(r_I,  t_0)$.}
Let $\varepsilon_1,T, \gamma >0$, let $\rho_0\in S^*X$ and 
$r_I\leq T_1, T_2\leq T$.  We define
\begin{align*}
X_{T, \gamma, \rho_0, T_1, T_2, \varepsilon_1} 
	:= \{& x\in X \text{ such that } \exists t_1\in [T_1- h^{\varepsilon_1}, 
	T_1+ h^{\varepsilon_1}],  \exists t_2\in [T_2- h^{\varepsilon_1}, 
	T_2+ h^{\varepsilon_1}],   \exists  \wit{x} \in A_{x, t_1}\\
&\text{ with } \mathrm{dist}_{T^*X}(\rho_{\wit{x}}, \rho_0)< h^{\varepsilon_1} 
\text{ and }  \mathrm{dist}_X (\Phi^{t_2} (\rho_{\wit{x}}), x) < h^\gamma\}.
\end{align*}

\red{This is thus the set of points $x$ such that there is a trajectory coming from $\Lambda_0$, reaching $x$ at time $t_1$ at some point $\rho_{\wit{x}}$,  and coming back $h^\gamma$-close to $x$ at some time $t_2$. Here, $t_1$ and $t_2$ are $h^{\varepsilon_1}$-close to $T_1$ and $T_2$ respectively, and $\rho_{\wit{x}}$ is $h^{\varepsilon_1}$-close to $\rho_0$.}

\red{Covering $S^*X$ by balls of radius $h^{\varepsilon_1}$ and covering $[T_0, T]$ and $[r_I, T]$ with intervals of length $2 h^{\varepsilon_1}$, we thus see that $X_{T_0T, \gamma}$ is included in a union of sets $(X_{T, \gamma, \rho^j_0, T^j_1, T^j_2, 
\varepsilon_1})_{j\in \mathfrak{J}_h}$, with $\mathfrak{J}_h$ a set of cardinal  $O(h^{-c \varepsilon_1})$, for some $c>0$.  Therefore,  to prove the result, it is enough to show that there exists $r'>0$  independent of $\varepsilon_1$ such that, for every $\rho_0, T_1, T_2$, we have 
\begin{equation}\label{eq:BoundSmallVol}
\mathrm{Vol} \left( X_{T, \gamma, \rho_0, T_1, T_2, \varepsilon_1} \right) = O(h^{r'}).
\end{equation}
In order to show (\ref{eq:BoundSmallVol}), we will study how the trajectories of points $X_{T, \gamma, \rho_0, T_1, T_2, \varepsilon_1}$ diverge from each other, as depicted in Figure \ref{Fig:Eloignement}.}
%

\definecolor{qqzzqq}{rgb}{0,0.6,0}
\definecolor{xdxdff}{rgb}{0.49019607843137253,0.49019607843137253,1}
\definecolor{qqqqff}{rgb}{0,0,1}
\definecolor{ffqqqq}{rgb}{1,0,0}
\begin{figure}
\begin{center}
\begin{tikzpicture}[line cap=round,line join=round,>=triangle 45,x=1cm,y=1cm,scale=0.6]
\clip(-5.92117993409683,-6.688877728315953) rectangle (7.453121684126218,8.25022613850898);
\draw [shift={(0,-13.5)},line width=2pt,color=qqqqff]  plot[domain=1.0808390005411683:2.060753653048625,variable=\t]({1*8.5*cos(\t r)+0*8.5*sin(\t r)},{0*8.5*cos(\t r)+1*8.5*sin(\t r)});
\draw [->,line width=1pt,color=ffqqqq] (0,-5) -- (0,-4);
\draw [->,line width=1pt,color=qqzzqq] (0.9931506043228757,-5.058219863255552) -- (0.84,-4.1);
\draw [line width=2pt,color=ffqqqq] (0,-5)-- (0,7);
\draw [shift={(37.925167373842484,-2.0708612289737482)},line width=2pt,color=qqzzqq]  plot[domain=2.89426880950201:3.222304974439406,variable=\t]({1*37.05264058436181*cos(\t r)+0*37.05264058436181*sin(\t r)},{0*37.05264058436181*cos(\t r)+1*37.05264058436181*sin(\t r)});
\draw [color=ffqqqq](-1.1142968695604993,1.0579283161620414) node[anchor=north west] {$\rho_{\widetilde{x}}$};
\draw [color=qqzzqq](1.4831357378884928,1.237328035086795) node[anchor=north west] {$\rho_{\widetilde{x}'}$};
\draw [color=ffqqqq](-1.436186960977967,-5.106898388706763) node[anchor=north west] {$\rho= \Phi^{-t_1}(\rho_{\widetilde{x}})$};
\draw [color=qqzzqq](1.597299195386064,-3.8674094215902834) node[anchor=north west] {$\Phi^{\tau}(\rho')= \Phi^{-t'_1+ \tau}(\rho_{\widetilde{x}})$};
\draw [color=ffqqqq](-2.4318957452595173,7.255373151744437) node[anchor=north west] {$\Phi^{t_2}(\rho_{\widetilde{x}})$};
\draw [color=qqzzqq](2.2496618096578986,7.108591563533275) node[anchor=north west] {$\Phi^{t'_2}(\rho_{\widetilde{x}'})$};
\draw [->,line width=1pt] (0.9586687622096514,6.9916172230675455) -- (0,7);
\draw [->,line width=1pt] (0.9586687622096514,6.9916172230675455) -- (2,7);
\draw [color=qqqqff](-3.3769657384366747,-4.42191764372134) node[anchor=north west] {$\Lambda_0$};
\draw (0.1741192936263741,8.089281092800248) node[anchor=north west] {$\leq 2 h^\gamma$};
\begin{scriptsize}
\draw [fill=ffqqqq] (0,-5) circle (2.5pt);
\draw [fill=qqzzqq] (0.9931506043228757,-5.058219863255552) circle (2.5pt);
\draw [fill=ffqqqq] (0,7) circle (2.5pt);
\draw [fill=qqzzqq] (1,1) circle (2.5pt);
\draw [fill=qqzzqq] (2,7) circle (2.5pt);
\draw [fill=ffqqqq] (0,1) circle (2.5pt);
\end{scriptsize}
\end{tikzpicture}
\end{center}
\caption{\red{Illustration of Step 2 of the proof of Proposition \ref{Prop:ExceptionalPoints}.  By Corollary \ref{cor:Convex} and Lemma \ref{Lem:DernierLemmeAvantLesVacances},  points of $\Lambda_0$ have trajectories that are ultimately diverging from each other (as long as they remain closer than $r_I$). Therefore,  the fact that they are at a distance $\leq 2 h^\gamma$ after times $t_1+t_2$ (resp. $t_1'+t_2'$) imply that they must also be at a distance $\leq 2 h^\gamma$ after times $t_1$ (resp. $t_1'$).}}\label{Fig:Eloignement}
\end{figure}

\red{\textbf{Step 2: Comparing the trajectories of points of $X_{T, \gamma, \rho_0, T_1, T_2, \varepsilon_1}$.}}
\red{Let $x, x'\in X_{T, \gamma, \rho_0, T_1, T_2, \varepsilon_1}$, let 
$t_1, t_2, t_1', t_2'$ be the associated times and let $\rho_{\wit{x}}, \rho_{\wit{x}'}$ 
be the associated points in $S^*X$. By assumption, we have $\mathrm{dist}_{T^*X} (\rho_{\wit{x}}, \rho_{\wit{x}'})< 2 
h^{\varepsilon_1}$. In particular,  thanks to Lemma \ref{Lem:Gron} if $\gd$ is smaller than some $\gd(\varepsilon_1)$, we have 
\begin{equation}\label{eq:alwaysclose}
\forall t \in [-\gd |\log h|, \gd |\log h|],~~~~
\mathrm{dist}_{X} (\Phi^t(\rho_{\wit{x}}),  \Phi^t(\rho_{\wit{x}'}))< r_I
\end{equation}}
\par 
\red{
Write $\rho:=\Phi^{-t_1}(\rho_x)\in \Lambda_0$, 
and $\rho':= \Phi^{- t_1'} (\rho_{\wit{x}})\in \Lambda_0$.
Thanks to Lemma \ref{Lem:DernierLemmeAvantLesVacances}, we know that there exists 
$\varepsilon_2>0$ and a (small) 
$\tau\in \R$ such that for all $\lambda\in (1-\varepsilon_2, 1+ \varepsilon_2)$ and all $\tau'\in (-\varepsilon_2, \varepsilon_2)$, we have 
\begin{equation}\label{eq:Ecartement}
\mathrm{dist}_X(\Phi^{t_0}(\rho), \Phi^{\lambda t_0+ \tau'}(\rho')) \geq
2 \mathrm{dist}_X\left(\rho, \Phi^{\tau}(\rho')\right).
\end{equation}}

\red{The parameters $\lambda$ and $\tau'$ will be chosen later.  
Equation (\ref{eq:Ecartement}) implies that there exists $t_3\in [0,t_0]$ such that 
\begin{equation*}
	\frac{d}{dt}\Big{|}_{t=t_3} 
	\mathrm{dist}_X \left(\Phi^t(\rho), \Phi^{\lambda t + \tau'}(\rho')\right) 
	\geq 0.
\end{equation*}}
\MI{Since we assumed that $t_1 \geq T_1 \geq t_0$,} \red{we may apply Corollary \ref{cor:Convex} \MI{along with (\ref{eq:alwaysclose})} to see that
\begin{equation}\label{eq:LaConvexiteCestCool}
	\mathrm{dist}_X \left(\Phi^{t_1+ t_2}(\rho), 
	\Phi^{\lambda (t_1+t_2) + \tau' }(\rho')\right) - \mathrm{dist}_X \left(\Phi^{t_1}(\rho), \Phi^{\lambda t_1 + \tau'}(\rho') \right) 
	\geq 0.
\end{equation}}

\red{Now,  since  $|t_j-t_j'|< 2 h^{\varepsilon_1}$ for $j=1,2$,  we see that for $h$ small enough, there exist $\lambda, \tau'$ with $|\lambda-1 | < \varepsilon_2$, $|\tau'| < \varepsilon_2$ such that 
\begin{align*}
\begin{cases} t_1' &=  \lambda t_1 + \tau'\\
t_2' &= \lambda t_2.
\end{cases}
\end{align*}
Therefore,  equation (\ref{eq:LaConvexiteCestCool}) gives
\begin{equation*}
\begin{aligned}
0&\leq 
	\mathrm{dist}_X \left(\Phi^{t_1+t_2}(\rho), 
	\Phi^{t_1'+t'_2 }(\rho')\right) - \mathrm{dist}_X \left(\Phi^{t_1}(\rho), \Phi^{t'_1}(\rho') \right)\\
	&= \mathrm{dist}_X \left(\Phi^{t_2}(\rho_{\wit{x}}), 
	\Phi^{t'_2 }(\rho_{\wit{x}'})\right) - \mathrm{dist}_X(x,x')\\
	& \leq 2 h^\gamma -  \mathrm{dist}_X(x,x').
\end{aligned}
\end{equation*}
We deduce that, $x$ being fixed, $x'$ must belong to a set of volume $O(h^{d\gamma})$ so that (\ref{eq:BoundSmallVol}) holds. The result follows.}
\end{proof}

\subsection{Removing dependent times in the \ref{eq:Pot}}\label{sec:DepTim}
Given $x,y \in X$,  $t\geq 0$ and  $\wit{x} \in A_{x,t}$, $\wit{y} \in A_{y,t}$, 
let us write 
\begin{align*}
\gI_{\wit{x},\wit{y},\varepsilon, t} &:= \left\{ s\in [0, t] 
\text{ such that } \exists s'\in [0, t] \text{ with }  
\mathrm{dist}_X \left( \Phi^{-s} (\rho_{\wit{x}}), 
	\Phi^{-s'}(\rho_{\wit{y}}) \right) < h^{\beta-\varepsilon}  \right\}.
\end{align*}
This is thus the set of times $s$ at which $\Phi^s(\rho_{\wt{x}})$ 
is approached by $\Phi^{s'}(\rho_{\wt{y}})$ for some time $s'$. 

\begin{lem}\label{Lem:LePtiLemmeDeLaFin}
For all $\varepsilon>0$, there exists $\gd>0$ such that, for all  $0\leq t\leq \gd |\log h|$, for all $x\in X$, for all $y\in X\setminus \mathcal{V}_{t, 2 \varepsilon}(x)$, we have 
$$\forall \wit{x}\in A_{x,t}, \forall \wit{y} \in A_{y,t},~~\gI_{\wit{x},\wit{y},\varepsilon, t} = \emptyset.$$
\end{lem}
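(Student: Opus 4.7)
The plan is to argue by contraposition. Assume there exist lifts $\wit{x}\in A_{x,t}$, $\wit{y}\in A_{y,t}$ and times $s,s'\in[0,t]$ such that
\[
\mathrm{dist}_X\!\left(\Phi^{-s}(\rho_{\wit{x}}),\,\Phi^{-s'}(\rho_{\wit{y}})\right)< h^{\beta-\varepsilon}.
\]
I will show that this forces $y\in\mathcal{V}_{t,2\varepsilon}(x)$, contradicting the hypothesis.

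The key tool is Lemma \ref{Lem:Gron} applied to the flow $\Phi^{s'}_0$ (the unperturbed geodesic flow, since $\delta=0$ here). Applying $\Phi^{s'}$ to the two points $\Phi^{-s}(\rho_{\wit{x}})$ and $\Phi^{-s'}(\rho_{\wit{y}})$, Gronwall gives
\[
\mathrm{dist}_{T^*X}\!\left(\Phi^{s'-s}(\rho_{\wit{x}}),\,\rho_{\wit{y}}\right) \leq C_0\,\e^{C_0 s'}\,\mathrm{dist}_{T^*X}\!\left(\Phi^{-s}(\rho_{\wit{x}}),\,\Phi^{-s'}(\rho_{\wit{y}})\right).
\]
Since $s'\leq t\leq\gd|\log h|$, the prefactor is at most $C_0 h^{-C_0\gd}$. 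Choosing $\gd>0$ small enough so that $C_0\gd\leq\varepsilon$, and using the inequality $\mathrm{dist}_X(\pi_X(\cdot),\pi_X(\cdot))\leq C^{-1}\mathrm{dist}_{T^*X}(\cdot,\cdot)$ between the projections, we obtain
\[
\mathrm{dist}_X\!\left(\pi_X\Phi^{s'-s}(\rho_{\wit{x}}),\,y\right)\leq C' h^{\beta-2\varepsilon},
\]
which is in turn $<h^{\beta-2\varepsilon}$ for $h$ small enough after a further shrinking of $\varepsilon$ (or absorbing the constant by taking $\gd$ slightly smaller still).

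Setting $s'':=s'-s\in[-t,t]$, this is precisely the statement that $y\in\mathcal{V}_{t,2\varepsilon}(x)$ as defined in \eqref{eq:defVt}, witnessed by the lift $\wit{x}\in A_{x,t}$ and the time $s''$. This contradicts the assumption $y\in X\setminus\mathcal{V}_{t,2\varepsilon}(x)$, and hence no such $s,s'$ can exist, i.e.\ $\gI_{\wit{x},\wit{y},\varepsilon,t}=\emptyset$. The main (and only) subtlety is the choice of $\gd$: it must be small enough that the exponential blow-up of distances under the geodesic flow over time $t\leq\gd|\log h|$ costs at most one factor of $h^{-\varepsilon}$, which is the difference between the tolerances $\varepsilon$ and $2\varepsilon$. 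No further difficulty arises since the estimate is purely deterministic and does not involve the random perturbation.
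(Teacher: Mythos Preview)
Your argument is essentially identical to the paper's: both assume $s\in\gI_{\wit{x},\wit{y},\varepsilon,t}$ with associated time $s'$, propagate forward by time $s'$ via the exponential-rate estimate (you invoke Lemma~\ref{Lem:Gron}, the paper its consequence~\eqref{eq:ExpRate}), and observe that $s'-s\in[-t,t]$ witnesses $y\in\mathcal{V}_{t,2\varepsilon}(x)$. The only cosmetic difference is that you spell out the choice of $\gd$ and the projection step $\mathrm{dist}_X\leq c^{-1}\mathrm{dist}_{T^*X}$ explicitly.
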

\begin{proof}
Let $\varepsilon>0$, $x\in X, y\in X \setminus \mathcal{V}_{x, 2\varepsilon}(x)$, and let $\wit{x}\in A_{x,t},  \wit{y} \in A_{y,t}$.  Suppose for contradiction that there exists $s\in \gI_{\wit{x},\wit{y},\varepsilon, t}$, so that there exists $s'\in [0,t]$ such that $\mathrm{dist}_X \left( \Phi^{-s} (\rho_{\wit{x}}), 
	\Phi^{-s'}(\rho_{\wit{y}}) \right) < h^{\beta-\varepsilon}$.  Using (\ref{eq:ExpRate}), we deduce that $\mathrm{dist}_X \left( \Phi^{-s+s'} (\rho_{\wit{x}}), 
	\rho_{\wit{y}} \right) < h^{\beta-2\varepsilon}$, which contradicts the fact that $y\notin \mathcal{V}_{t, 2 \varepsilon}(x)$.
\end{proof}
We then set
\begin{equation*}
	\gI_{x, \varepsilon,t} 
	:= \bigcup_{{\wit{x}, \wit{x}' \in A_{x,t}}} \gI_{\wit{x}, \wit{x}',
	\varepsilon,t}.
\end{equation*}
\begin{prop}\label{Prop:TimeInteraction}
Let $\varepsilon >0$. There exist $\gd>0$, 
$h_0>0$ such that for all $h\in]0,h_0]$, for all $T_0\geq r_I$ and 
all $x\in X \setminus X_{T_0, \gd |\log h|, 
\varepsilon}$, and all $t\in [0, \gd |\log h|]$, 
we have,
\begin{equation}\label{eq:BorneUnionI.0}
|\gI_{x,\varepsilon,t}|\leq h^{\beta - 5 \varepsilon}.
\end{equation}
Furthermore,  there exists 
$0< \gamma_0< \beta$ such that,  for any $x_0\in X$,  we have
\begin{equation}\label{eq:BorneUnionI}
\left|\bigcup_{x\in B(x_0,  h^{\gamma_0})\setminus X_{T_0,\gd |\log h|, 
 \varepsilon}} \gI_{x,\varepsilon,t} \right| \leq h^{\beta - 6 \varepsilon}.
\end{equation}
Here $B(x_0, h^{\gamma_0})$ denotes the geodesic ball of radius $h^{\gamma_0}$ 
around $x_0$. 
\end{prop}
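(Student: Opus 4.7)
The plan is to reduce the bound on $\gI_{x,\varepsilon,t}$ to counting and estimating transverse near-crossings of the projected geodesics $s\mapsto \pi_X\Phi^{-s}(\rho_{\wit{x}})$ as $\wit{x}$ ranges over $A_{x,t}$. First, I will apply Lemma \ref{lem:MostPointsIndep} with $\gamma=\varepsilon$ and $\gamma'=2\varepsilon$ (valid once $\varepsilon<\beta/3$): for $x\notin X_{T_0,t,\varepsilon}$ and any distinct pair $\wit{x}\neq \wit{x}'\in A_{x,t}$, this yields $\mathrm{dist}_{T^*X}(\Phi^{-s_1}(\rho_{\wit{x}}),\Phi^{-s_2}(\rho_{\wit{x}'}))\geq h^{2\varepsilon}$ for every $s_1,s_2\in[0,t]$. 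Using the decomposition \eqref{eq:DirNeutre2a}, which splits off the vertical cotangent fibre from the flow and hyperbolic directions, this $T^*X$-separation forces any base-point approach $\mathrm{dist}_X< h^{\beta-\varepsilon}$ to be a transverse crossing with intersection angle bounded below by $c\, h^{2\varepsilon}$. The self-case $\wit{x}=\wit{x}'$ is handled by lifting to $T^*\wit X$: injectivity of the flow on $T^*\wit X$ and the cocompact action of the deck group reduce each self-approach on $X$ to an approach between two distinct deck translates, to which the same argument applies.

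Second, for each such near-crossing, a direct computation in geodesic normal coordinates will show that the $s$-interval on which the projected distance stays below $h^{\beta-\varepsilon}$ has length $O(h^{\beta-\varepsilon}/\sin\theta)=O(h^{\beta-3\varepsilon})$. The number of such near-crossings will be bounded by a volume argument: each trajectory of duration $t=O(|\log h|)$ sweeps out a curve in $S^*X$ of length $O(|\log h|)$, so the number of near-crossing events per pair of lifts is $O(|\log h|^{O(1)})$. Combined with the cardinality bound $|A_{x,t}|^2=O(\e^{2Ct})=O(h^{-C\gd})$, the three estimates multiply to
\begin{equation*}
|\gI_{x,\varepsilon,t}|\leq O\!\left(|\log h|^{O(1)}\,h^{\beta-3\varepsilon-C\gd}\right)\leq h^{\beta-5\varepsilon},
\end{equation*}
once $\gd$ is taken small enough depending on $\varepsilon$; this gives \eqref{eq:BorneUnionI.0}.

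Third, for \eqref{eq:BorneUnionI}, the plan is to exploit smooth dependence of the trajectories on $x$. By Lemma \ref{Lem:Gron}, perturbing $x$ by $h^{\gamma_0}$ displaces each lifted trajectory in $T^*X$ by at most $O(\e^{Ct}h^{\gamma_0})=O(h^{\gamma_0-C\gd})$. Choosing $\gamma_0$ with $\beta-\varepsilon<\gamma_0-C\gd$, the set $\gI_{x,\varepsilon,t}$ for $x\in B(x_0,h^{\gamma_0})$ will be contained in the $O(h^{\gamma_0-C\gd})$-enlargement of $\gI_{x_0,\varepsilon,t}$, viewed as a union of $O(|\log h|^{O(1)}\e^{2Ct})$ intervals of length $O(h^{\beta-3\varepsilon})$. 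Summing the enlargements over the component intervals produces the extra measure $O(|\log h|^{O(1)}h^{\gamma_0-C\gd})$, which combined with the first bound gives the announced $h^{\beta-6\varepsilon}$ estimate once $\gd$ is further reduced.

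The main technical difficulty will be step one—passing from the $T^*X$-separation of Lemma \ref{lem:MostPointsIndep} to a quantitative lower bound on the crossing angle between the projected geodesics. The subtlety is that $\mathrm{dist}_{T^*X}$ sees both the base and the fibre, and only the vertical fibre component controls the angle; confirming that the horizontal components of $\Phi^{-s_1}(\rho_{\wit x})$ and $\Phi^{-s_2}(\rho_{\wit x'})$ cannot eat up the separation near a crossing will rely on a careful use of the hyperbolicity structure and of the Sasaki-type metric $g_0$ fixed in Section \ref{sec:notation}.
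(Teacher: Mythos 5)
Your first part follows the same route as the paper (exclusion of the bad set $\Rightarrow$ phase-space separation $\geq h^{2\varepsilon}$ $\Rightarrow$ transverse near-crossings with angle $\gtrsim h^{2\varepsilon}$ $\Rightarrow$ each crossing occupies an $s$-interval of length $O(h^{\beta-3\varepsilon})$), but the step ``the number of near-crossing events per pair of lifts is $O(|\log h|^{O(1)})$ by a volume argument'' is a genuine gap. The lengths of the two projected geodesic arcs do not, by themselves, bound the number of disjoint maximal $s$-intervals of $\gI_{\wit{x},\wit{x}',\varepsilon,t}$: a priori the two arcs could leave and re-enter each other's $h^{\beta-\varepsilon}$-tube many times, and your angle bound only controls the duration of each visit, not the number of returns. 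The paper closes precisely this gap by cutting $[0,t]\times[0,t]$ into blocks of a fixed length $t_0$ dictated by the injectivity radius (see \eqref{eq:DefII}) and invoking the convexity of $\tau\mapsto\mathrm{dist}_X^2\left(\Phi^{\tau}(\rho),\Phi^{\tau}(\rho')\right)$ in nonpositive curvature (Corollary \ref{cor:Convex}): within one block pair, once the base distance exceeds $h^{\beta-\varepsilon}$ it cannot return below it (cf. \eqref{eq:CaSecarte}), so each of the $O(|\log h|^2)$ block pairs contributes at most one interval of length $O(h^{\beta-4\varepsilon})$. Your plan never uses convexity or any substitute (e.g.\ a quantitative count of transverse intersections), so the polylogarithmic event count is asserted rather than proved. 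Incidentally, your treatment of the ``self-case'' $\wit{x}=\wit{x}'$ is both unnecessary (the union defining $\gI_{x,\varepsilon,t}$ is over distinct lifts, otherwise it would trivially equal $[0,t]$) and not actually covered by Lemma \ref{lem:MostPointsIndep}, which is stated only for distinct lifts.

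The second part has a more serious flaw: in \eqref{eq:BorneUnionI} the centre $x_0$ is an \emph{arbitrary} point of $X$, so it may lie in the bad set $X_{T_0,\gd|\log h|,\varepsilon}$ of \eqref{eq:BadSet}. You bound the union by an enlargement of $\gI_{x_0,\varepsilon,t}$, ``viewed as a union of $O(|\log h|^{O(1)}\e^{2Ct})$ intervals of length $O(h^{\beta-3\varepsilon})$'', but both the measure bound \eqref{eq:BorneUnionI.0} and that interval structure were obtained only for points \emph{outside} the bad set; for a bad centre, $\gI_{x_0,\varepsilon,t}$ can have measure comparable to $t=\gd|\log h|$ and the step collapses. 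The paper avoids this by covering $B(x_0,h^{\gamma_0})\setminus X_{T_0,\gd|\log h|,\varepsilon}$ with an $h^{\beta}$-dense net of points $x_j$ taken \emph{inside that good set}, using Lemma \ref{Lem:Gron} to get the inclusion $\gI_{x,\varepsilon,t}\subset\gI_{x_j,\varepsilon+c\gd,t}$ for the nearest net point (the transfer distance $h^{\beta}$ is then far smaller than the tube radius $h^{\beta-\varepsilon}$, so the loss is only $c\gd$, whereas your transfer over a distance $h^{\gamma_0}$ comparable to the radius costs a full power $h^{-\varepsilon}$ that you do not track), and summing over the $O(h^{-\varepsilon_1})$ net points with $\gamma_0$ slightly below $\beta$. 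Your stability idea is salvageable only after replacing the centre by such good reference points and redoing this bookkeeping.
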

\begin{proof}[Proof of Proposition \ref{Prop:TimeInteraction}]
1.  \red{First of all, note that by the triangle inequality,} there exists $t_0>0$ such that, for all 
$\rho, \rho'\in \mathcal{E}_{0, (\frac{1}{2}, 2)}$,  if 
$\mathrm{dist}_X (\rho, \rho') < \frac{r_I}{3}$,  then for all 
$\tau \in (-t_0, t_0)$, we have $\mathrm{dist}_X (\Phi^\tau (\rho),  \Phi^\tau( \rho')) <r_I$.
\par 
Let us write, for $k,k'\in \N_0$, $\wit{x}, \wit{x}'\in A_{x,t}$,
\begin{equation}\label{eq:DefII}
\begin{aligned}
\gI_{\wit{x}, \wit{x}',\varepsilon ,t,k, k'} 
:= \big\{& s\in [0, t]\cap [kt_0, (k+1)t_0)  \text{ such that } 
\exists s'\in [0, t]\cap [k't_0, (k'+1)t_0)\\
& \text{ with }  \mathrm{dist}_X \left( \Phi^{-s}(\rho_{\wit{x}}), 
\Phi^{-s'}(\rho_{\wit{x}'})\right)  < h^{\beta-\varepsilon}  \big\}.
\end{aligned}
\end{equation}
Since the sets $\gI_{x,\varepsilon,t}$ 
are 
unions of $\gI_{\wit{x}, \wit{x}',\varepsilon ,t,k, k'} $ over $k,k'\in \N$, 
with $k,k' \leq O(|\log h|)$ and over $O(h^{-C\gd})$
many $\wit{x}$ and $\wit{x}'$  (by \eqref{eq:DefA2}) , it is sufficient to prove bounds on $|\gI_{\wit{x}, 
\wit{x}',\varepsilon,t,k, k'}|$.
\\
\\
2. 
Let $s\in \gI_{\wit{x}, \wit{x}',\varepsilon ,t,k, k'} $, and let 
$s'\in [0, t]\cap [k't_0, (k'+1)t_0)$ be an associated time for $\wit{x}'$.
Let us write
\begin{equation*}
	\rho := \Phi^{-s}(\rho_{\wit{x}}) 
	~~~~ \rho' := \Phi^{-s'}(\rho_{\wit{x}'}),
\end{equation*}
so that
\begin{equation}\label{eq:DistRho}
\mathrm{dist}_X (\rho, \rho')< h^{\beta-\varepsilon}.
\end{equation}
We claim that, for $\gd>0$ small enough,  we have 
\begin{equation}\label{eq:InfDist}
\mathrm{dist}_{T^*X} (\rho, \rho') > h^{2 \varepsilon}.
\end{equation}
To prove (\ref{eq:InfDist}),   we argue by contradiction. Suppose that 
$\mathrm{dist}_{T^*X} (\rho, \rho') \leq h^{2\varepsilon}$.  
Since $\wit{x}\neq \wit{x}'$, we must have $|s-s'| \geq r_I$ (see the end 
of the proof of Lemma \ref{lem:MostPointsIndep}). Suppose that $s\geq s'$. 
Then, thanks to \eqref{eq:ExpRate}, we would have 
\begin{align*}
\mathrm{dist}_X (\Phi^{s-s'} (\rho_{\wit{x}'}), x) 
= \mathrm{dist}_X (\Phi^{s-s'} (\rho_{\wit{x}'}), \Phi^{s}(\rho)) 
&\leq h^{-C\gd}  \mathrm{dist}_{T^*X} (\Phi^{-s'} (\rho_{\wit{x}'}), \rho) \\
&=  h^{-C\gd} \mathrm{dist}_{T^*X} (\rho', \rho) 
\leq h^{2 \varepsilon -C\gd},
\end{align*}
which contradicts the fact that $x\notin X_{T_0,\gd |\log h|, 
\varepsilon}$, provided $\gd$ is chosen small enough. 
We reach a contradiction in the same way if we suppose that $s'\geq s$.
\\
\par 
3. Equations (\ref{eq:DistRho}) and (\ref{eq:InfDist}) imply that 
$\rho$ and $\rho'$ are close when projected on the base manifold $X$, 
but at a much larger distance in $T^*X$.  
Working in a local chart, we see that this implies that there exists 
$\tau\in [0,  h^{\beta - 4 \varepsilon}]$ such that (provided $h$ 
is small enough), we have
\begin{equation}\label{eq:CaSecarte}
\mathrm{dist}_X
(\Phi^\tau(\rho), \Phi^\tau(\rho')) > h^{\beta - \varepsilon}.
\end{equation}
Now, we know that,  for all $\tau \in [0, t_0]$, we have 
$\mathrm{dist}_X (\Phi^\tau(\rho) \Phi^\tau(\rho')) < r_I$,  so that 
$ [0, t_0] \ni \tau \mapsto \mathrm{dist}^2_X (\Phi^\tau(\rho) 
\Phi^\tau(\rho'))$ is convex thanks to Corollary \ref{cor:Convex}. 
We deduce from (\ref{eq:CaSecarte}) that for all $t\in [h^{\beta - 
4 \varepsilon}, t_0]$, we have $\mathrm{dist}_X (\Phi^\tau(\rho), 
\Phi^\tau(\rho')) > h^{\beta - \varepsilon}$, so that $[s+h^{\beta - 
4 \varepsilon}, t_0]\cap \gI_{\wit{x}, \wit{x}',\varepsilon ,t,k, k'} 
 = \emptyset$. Since this holds for all $s\in \gI_{\wit{x}, \wit{x}',
\varepsilon ,t,k, k'}$,  the first part of the statement follows readily.
\par
4. For the second part, we  note that Lemma \ref{Lem:Gron} implies that 
there exists $c>0$ such that
\begin{equation*}
	\left( \mathrm{dist}_X(x,y) < h^{\beta} \right) \Longrightarrow 
\gI_{\wit{x}, \wit{x}',  \varepsilon ,t,k, k'} \subset 
\gI_{\wit{y}_{\wit{x}}, \wit{y}'_{\wit{x}'},  \varepsilon + 
c \gd  ,t,k, k'},
\end{equation*}
where $\wit{y}_{\wit{x}}$ is a lift of $y$ such that 
$\mathrm{dist}_{\wit{X}}(\wit{y}_{\wit{x}}, \wit{x}) = 
\mathrm{dist}_X(y,x)$, and similarly for $\wit{y}'_{\wit{x}'}$.
The set defined in (\ref{eq:BorneUnionI}) is thus contained in
\begin{equation*}
	\bigcup_{x_j}  \gI_{x_j,\varepsilon + c\gd ,t}  ,
\end{equation*}
where $B(x_0, h^{\gamma_0})\subset \bigcup_{x_j}B(x_j, h^\beta)$,
with $x_j\in B(x_0, h^{\gamma_0}) \setminus X_{T_0,\gd |\log h|, \varepsilon}$.
In particular,  if $\gamma_0$ is taken slightly smaller than $\beta$, 
the number of sets in the union is a $O(h^{-\varepsilon_1})$ for an 
$\varepsilon_1$ arbitrarily small.
\par 
Now, each $|\gI_{x_j, \varepsilon +  c \gd ,t}|$ can be bounded just as in 
the first part of the proof, and the result follows by taking $\varepsilon_1$ 
small enough.
\end{proof}
\subsection{Proof of Proposition \ref{PropSection8}}
In the sequel, we take $\varepsilon_1>0$ which will be fixed below, and write 
\begin{equation*}
	X = \bigsqcup_{i\in I_h} X_i,
\end{equation*}
where each $X_i$ has a diameter $\leq C h^{\beta - \varepsilon_1}$ and 
has volume $\geq c h^{d(\beta - \varepsilon_1)}$. Such a partition can 
be obtained by using finitely many local charts, and by using cubes of 
size $h^{\beta - \varepsilon_1}$ in each chart.
\par
Let $\varepsilon>0$ (which we will also fix below), take $T_0$ large 
enough so that Lemma \ref{lem:MostPointsIndep} and Proposition 
\ref{Prop:ExceptionalPoints} apply,  set 
\begin{equation*}
	\wih{X}_i:= X_i \setminus X_{T_0,\gd |\log h|,  \varepsilon},
\end{equation*}
and write 
\begin{equation*}
	\wih{I}_h:= \left\{i\in I_h \text{ such that } \mathrm{Vol}(\wih{X}_i)
	\geq \frac{c}{2} h^{d(\beta-\varepsilon_1)} \right\}.
\end{equation*}
Note that $I_h \setminus \wih{I}_h = o_{h\to 0}(|I_h|)$, so that if we write 
$X_h^0 = X^0_{h,\varepsilon}:= X \setminus \bigsqcup_{i\in \wih{I}_h} \wih{X}_i$, 
we have, for any $\varepsilon>0$,
\begin{equation*}
	\mathrm{Vol}\left(X^0_{h,\varepsilon} \right) = o_{h\to 0}(1).
\end{equation*}
For every $i\in \wih{I}_h$, we will define sets $\gI_{i,t} \subset \R$.
\begin{itemize}
\item In the \ref{eq:Pseudo}, we may take any $\varepsilon, \varepsilon_1>0$, 
and we set $$\gI_{i,t}:= \emptyset.$$
\item In the \ref{eq:Pot}, we take $\varepsilon>0$ small enough so that
\begin{equation}\label{eq:CondEpsilon,laFin}
\delta h^{\beta - 6 \varepsilon} = O(h^{1 + \varepsilon}),
\end{equation}
which is possible thanks to (\ref{eq:CondBetaDelta2}). 
For each $i\in \wih{I}_h$ we then define
$$\gI_{i,t} := \bigcup_{x\in \wih{X}_i} \gI_{x,\varepsilon,t}.$$ 
Proposition \ref{Prop:TimeInteraction} gives us the existence of 
a $\gamma_0(\varepsilon)>0$. Taking $\varepsilon_1 =\beta- \gamma_0>0$, 
we deduce from the proposition that
\begin{equation}\label{eq:IlPleutAPeyresq}
|\gI_{i,t}| \leq h^{\beta - 6 \varepsilon}.
\end{equation}
\end{itemize}

We then set, for each $\wit{x}\in \wit{\mathcal{O}}_t$ with 
$\wit{\pi} (\wit{x})\in  \wih{X}_i$,
\begin{equation}\label{eq:DefThetaZero}
\begin{aligned}
	\wit{\phi}_{t,\delta}^0(\wit{x}) 
	&:= 
	\phi_{t,0}(x) - \delta
		\int_{[0,t] \setminus \gI_{i,t}} q_\omega\left(\zeta_\delta^{s,t}(x)\right)d s\\
	\wit{\phi}_{t,\delta}^1(\wit{x}) 
	&:= - \delta
		\int_{ \gI_{i,t}} q_\omega\left(\zeta_\delta^{s,t}(x)\right)  d s
\end{aligned}
\end{equation}
so that 
\begin{equation*}
\wit{\phi}_{t,\delta}(\wit{x})  
= \wit{\phi}_{t,\delta}^0(\wit{x}) + \wit{\phi}_{t,\delta}^1(\wit{x}) .
\end{equation*}
In the \ref{eq:Pseudo}, we have $ \wit{\phi}_{t,\delta}^1(\wit{x}) \equiv 0$, while 
in the \ref{eq:Pot}, equations (\ref{eq:IlPleutAPeyresq}) and 
(\ref{eq:CondEpsilon,laFin}) imply that 
\begin{equation}\label{eq:SmallTimes1}
|\wit{\phi}_{t,\delta}^1(\wit{x})  |\leq C \delta h^{\beta - 6 \varepsilon} 
= O (h^{1+\varepsilon}).
\end{equation}

\red{Equation (\ref{eq:SmallTimes1}), along with equation (\ref{eq:TaylorFinal}) imply that there exist $\varepsilon, \gamma'>0$ such that
\begin{equation}\label{eq:TaylorFinal2}
\psi_{h, t, x, \delta}(\boldsymbol{y}) 
= \sum_{\widetilde{x} \in A_{x,t}} 
	 b_0\left(t,  \widetilde{x_0} ; 0\right)  
	\left[ \e^{\frac{i}{h} \widetilde{\phi}^0_{t,\delta}( \widetilde{x}) }  
	 \e^{i  \xi_{t, \widetilde{x_0},0} \cdot \boldsymbol{y}} + O(h^\varepsilon) \right]
	 + O_{C^L(\fK)}(h^{\gamma'}).
\end{equation}
Since the number of terms in the sum is $O(h^{-\varepsilon/2})$ provided $\gd$ is small enough, this gives us (\ref{eq:ExpansionpIndep}), by setting $\gamma = \min (\gamma', \varepsilon/2)$.}
%
The first point of Proposition 
\ref{PropSection8} then follows from the definition of $\mathfrak{I}_{i,t}$ 
along with Remark \ref{rem:DeltaToZero} and the first point of Hypothesis \ref{HypPot}.
The second point of Proposition \ref{PropSection8} (with an arbitrarily 
small $\varepsilon>0$, provided $\gd$ is taken small enough) follows from 
Lemma \ref{Lem:LePtiLemmeDeLaFin} and Remark \ref{rem:DeltaToZero}.
\section{Proof of the main results}\label{sec:ProofMainTheorems}
The aim of this section is to prove Theorems \ref{th:MartinEtMaximeSontDesSuperBeauxGosses} and 
\ref{th:MartinEtMaximeSontDesBeauxGosses}. 
The starting point is formula \eqref{eq:ExpansionpIndep}. From now on, we 
fix $\mathcal{U}\subset X$  an open set, and $V$ an orthonormal 
frame on $\mathcal{U}$.
\subsection{Varying $\omega$ with $x$ fixed}\label{Sec:OmegaRandom}
Recall from the previous section that we decomposed
\begin{equation*}
	X = \left(\bigsqcup_{i\in \wih{I}_h} \wih{X}_i  \right) \sqcup X^0_h,
\end{equation*}
with $\mathrm{Vol}(X_h^0) = o_{h\to 0}(1)$.
\par 
Notice that, \red{by regularity of the Lebesgue measure}, up to taking $X_h^0$ slightly larger (but still of negligible 
volume), we can arrange so that $\wih{X}_i$ is either inside $\mathcal{U}$ 
or in its complement.
%
\par 
For each $\ell\in \wih{I}_h$, each $x\in \wh{X}_\ell$ (as in Remark \ref{rem:RemSec8}), 
and all $0\leq t\leq o_{h\to 0}( |\log h|)$, we write 
\begin{equation}\label{eq:EffectivFunction}
\psi^0_{h, t, x, \omega}(\boldsymbol{y}) 
:= \sum_{\widetilde{x} \in A_{x,t}} 
	 b_0\left(t,  \widetilde{x}_\ell ; 0\right)  
	 \e^{\frac{i}{h} \wit{\phi}^0_{t,\delta} (\wit{x}) } 
	  \e^{i  \xi_{t, \widetilde{x}_\ell,0} \cdot \boldsymbol{y}},
\end{equation}
so that \eqref{eq:ExpansionpIndep}, refined as described in Remark 
\ref{rem:RemSec8}, now reads as 
\begin{equation}\label{eq:EffectivFunction2}
	\psi_{h, t, x, \delta}(\boldsymbol{y}) 
	= \psi^0_{h, t, x, \omega}(\boldsymbol{y}) 
	  + R_h, \quad \text{with} \quad R_h=O_{C^k(\mathfrak{K})}(h^\gamma),
\end{equation}
\red{the bounds on $R_h$ being uniform in $\omega$.}
We deduce that, for any $\chi\in C_c^\infty(\R^d)$, and any continuous bounded 
functional $F: C^\infty(\R^d)\to \R$,
we have $F(\chi \psi_{h, t_h, x, \omega}) = F(\chi\psi^0_{h, t_h, x, \omega}) 
+ o_{h\to 0}(1)$.  Now,  recalling (\ref{eq:DistanceFunctions}), both 
$\mathrm{d}( \chi \psi_{h, t_h, x, \omega},  \psi_{h, t_h, x, \omega})$ and 
$\mathrm{d}( \chi \psi^0_{h, t_h, x, \omega},  \psi^0_{h, t_h, x, \omega})$ 
can be made smaller than any $\varepsilon>0$, by taking $\chi$ equal to $1$ 
in a large set depending on $\varepsilon$, but not on $h$. Using the continuity 
of $F$, we deduce that
\begin{equation}\label{eq:EffectivFunction3}
F(\psi_{h, t_h, x, \omega}) = F(\psi^0_{h, t_h, x, \omega}) + o_{h\to 0}(1),
\end{equation}
\red{uniformly in $\omega$.}
For the rest of the current \MI{sub}section we fix $x\in \wh{X}_\ell$, 
and the randomness which we shall consider comes solely from the random 
perturbation $q_\omega$. To emphasize the fact that in this section  
we compute expectations with respect to the random variable $\omega$, 
we will denote such expectations by $\E_\omega$.
Furthermore, we see $\R^d \ni \boldsymbol{y}\to\psi^0_{h, t, x, \omega}(\boldsymbol{y})$ 
as a random smooth function on $\R^d$.
\par
The aim of this subsection is to prove the following proposition. 
Recall that $C^L(\R^d)$ is equipped with the topology of convergence of 
derivatives of order $\leq L$ on all compact subsets of $\R^d$.  \red{Recall also that the amplitude of the Lagrangian state we consider (see \eqref{eq:trans1}) is written as $a= a_0 +O(h)$.}
\begin{prop}\label{Prop:RandOmega}
Let $t_h > 0$ be such that $\lim_{h\to 0} t_h  = +\infty$ and $t_h= o(|\log h|)$.
Let $\ell\in \wih{I}_h$ and let $x\in \wih{X}_{\ell}$. Then, for every 
bounded continuous map $F : C^\infty(\R^d) \longrightarrow \R$, we have
\begin{equation*}
	\E_\omega [F(\psi_{h,t_h,x,\omega})] \underset{h\to 0}{\longrightarrow} 
	\E_{\mathrm{BGF}_{\lambda_a}}[F].
\end{equation*}
with $\lambda_a= \frac{\|a_0\|^2}{\mathrm{Vol}(X)}$.
\end{prop}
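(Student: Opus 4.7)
\textbf{Proof plan for Proposition \ref{Prop:RandOmega}.}

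First I would use \eqref{eq:EffectivFunction3} to reduce the problem to showing $\erw_\omega[F(\psi^0_{h,t_h,x,\omega})] \to \erw_{\mathrm{BGF}_{\lambda_a}}[F]$. The random smooth function $\psi^0_{h,t_h,x,\omega}$ is, by construction, a sum of $|A_{x,t_h}| = O(\e^{Ct_h})$ complex plane waves of the form $b_0(t,\widetilde{x}_\ell;0) \e^{i \Theta^0_{\omega,t_h}(\widetilde{x})/h} \e^{i\xi_{t,\widetilde{x}_\ell,0}\cdot\by}$, whose only source of randomness (in $\omega$) lies in the independent phases $\Theta^0_{\omega,t_h}(\widetilde{x})$. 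The strategy is the standard route: prove convergence of all finite-dimensional distributions by the method of moments, combined with tightness in $C^\infty(\R^d)$.

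For the moments, I would fix $\by_1,\dots,\by_p \in \R^d$ and signs $\varepsilon_k \in \{\pm1\}$ and compute
\begin{equation*}
\erw_\omega\!\left[\prod_{k=1}^p (\psi^0_{h,t_h,x,\omega}(\by_k))^{\varepsilon_k\text{-th power}}\right] = \sum_{(\widetilde{x}_1,\dots,\widetilde{x}_p)} \Big(\prod_k b_0^{\varepsilon_k}(\widetilde{x}_{k,\ell})\,\e^{i\varepsilon_k \xi_{t,\widetilde{x}_{k,\ell},0}\cdot \by_k}\Big)\, \erw_\omega\!\Big[\e^{\frac{i}{h}\sum_k \varepsilon_k \Theta^0_{\omega,t_h}(\widetilde{x}_k)}\Big].
\end{equation*}
By the first point of Proposition \ref{PropSection8}, whenever a lift $\widetilde{x}_k$ appears in an unpaired way in the tuple, the corresponding factor $\erw_\omega[\e^{\pm i\Theta^0_{\omega,t_h}(\widetilde{x}_k)/h}]$ is independent of the other $\omega$-factors. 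Using Hypotheses \ref{HypPot}(iv) and \ref{Hyp:Prob} together with \eqref{eq:CondBetaDelta}, I would show that this factor is $O(h^\infty)$: writing $\Theta^0_{\omega,t_h}(\widetilde{x}) = \sum_j \omega_j \alpha_j(\widetilde{x})$ and using boundedness of $\omega_j$ and the positivity of its variance, the characteristic function is bounded by $\exp(-c \sum_j \alpha_j(\widetilde{x})^2/h^2)$; by Cauchy--Schwarz and \eqref{eq:PotentialEverywhere} the exponent is $\geq c\, t_h \delta^2 h^{\beta-2} \geq c\, t_h h^{-\varepsilon_0}$, which beats any polynomial. Since the total number of tuples is only exponential in $t_h = o(|\log h|)$, unpaired contributions are negligible. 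This forces the surviving tuples to come in conjugate pairs, yielding a Wick-type formula producing exactly the Gaussian moments.

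It remains to identify the two-point contribution as the BGF covariance. I would show
\begin{equation*}
\sum_{\widetilde{x}\in A_{x,t_h}} |b_0(t_h,\widetilde{x}_\ell;0)|^2 \,\e^{i\xi_{t_h,\widetilde{x}_\ell,0}\cdot(\by_1-\by_2)} \;\xrightarrow[h\to0]{}\; \lambda_a \int_{\mathbb{S}^{d-1}} \e^{i(\by_1-\by_2)\cdot \xi}\, d\sigma(\xi),
\end{equation*}
by recognising the left-hand side as the evaluation at $x$ of $U_h(t_h)\wt f_h$ tested against a pseudo-differential operator whose symbol is $\e^{i\xi\cdot(\by_1-\by_2)}$ restricted to $S^*_x X$, and applying Schubert's quantum ergodicity theorem for propagated Lagrangian states (valid for $\eta$-unstable Lagrangians up to times $o(|\log h|)$). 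The isotropy of the limit measure comes from the equidistribution of the propagated Lagrangian on $S^*X$, and the constant $\lambda_a=\|a_0\|^2/\Vol(X)$ is the standard Schubert normalisation. The higher Wick-paired moments reduce to products of such two-point terms, which finishes finite-dimensional convergence.

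Finally, to promote finite-dimensional convergence to convergence in law on $C^\infty(\R^d)$, I would establish tightness of the laws of $\psi^0_{h,t_h,x,\omega}$ with respect to the seminorms in \eqref{eq:DistanceFunctions}. This is where the representation as a sum of plane waves with $|\xi_{t,\widetilde{x}_\ell,0}|=1$ is essential: differentiating $\psi^0$ in $\by$ brings down only unit vectors, so bounding $\erw_\omega\|\partial^\alpha \psi^0\|_{L^\infty(\overline{B(0,n)})}^{2m}$ reduces to the same kind of moment computation as above and yields uniform bounds in $h$. Tightness then follows via a Kolmogorov--Chentsov-type argument on each derivative. The main technical obstacle I anticipate is \textbf{the two-point limit}: checking that the random-phase cancellation, exponential proliferation of lifts, and Schubert equidistribution combine coherently at the quantitative level demanded by \eqref{eq:CondBetaDelta}; once this is in place, the higher moments and tightness follow essentially by iteration of the same estimates.
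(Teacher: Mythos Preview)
Your overall architecture is correct and close to the paper's, but the paper takes a slightly different and cleaner route for the finite-dimensional convergence: instead of computing all mixed moments and extracting a Wick formula, it applies the multivariate Lindeberg Central Limit Theorem directly to the independent summands
\[
\eta_{\widetilde{x}} = b_0(t_h,\widetilde{x}_\ell;0)\,\e^{\frac{i}{h}\Theta^0_{\omega,t_h}(\widetilde{x})}\big(\e^{i\xi_{t_h,\widetilde{x}_\ell,0}\cdot\by_1},\dots,\e^{i\xi_{t_h,\widetilde{x}_\ell,0}\cdot\by_k}\big).
\]
The three inputs are exactly the ones you isolate: (i) Lemma~\ref{lem:MoyenneNulle} gives $\E_\omega[\eta_{\widetilde{x}}]=O(h^\infty)$; (ii) the covariance sum \eqref{eq:Cov1.0} converges to the BGF kernel by Lemma~3.9 of \cite{IngRiv} (this is the equidistribution input you attribute to Schubert); (iii) the Lindeberg condition follows from $\sup_{\widetilde{x}}|b_0(t_h,\widetilde{x};0)|\to 0$ (Corollary~\ref{cor:DecayAmplitude}). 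Your moment method is a legitimate alternative, but note two places where your sketch is thin. First, after killing ``unpaired'' lifts you still have configurations where some $\widetilde{x}$ appears $k$ times with sign $+$ and $k$ times with sign $-$ for $k\geq 2$; these are not Wick pairs, and to show they are negligible you need precisely the amplitude decay $\sum_{\widetilde{x}}|b_0|^{2k}\leq(\sup|b_0|^2)^{k-1}\sum|b_0|^2\to 0$, i.e.\ Corollary~\ref{cor:DecayAmplitude}, which you do not invoke. Second, your bound $|\E[e^{i\theta\omega_j}]|\leq e^{-c\theta^2}$ from ``bounded plus positive variance'' is only valid for small $\theta$; when $\delta h^{\beta-1}$ is not small (which is allowed under Hypothesis~\ref{Hyp:BetaDelta}) the paper uses the density assumption in Hypothesis~\ref{Hyp:Prob}(3) and Riemann--Lebesgue (Lemma~\ref{Lem:TFTrivial}) to get decay---your argument as written does not cover this regime. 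For tightness, the paper argues via uniform bounds on $\E_\omega\|\psi^0\|_{C^k}$ (from the two-point estimate and Sobolev embedding) and Arzel\`a--Ascoli, which is essentially equivalent to your Kolmogorov--Chentsov plan. Finally, the ``main obstacle'' you flag, the two-point limit, is not proved from scratch here: it is imported from \cite{IngRiv}.
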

This result immediately implies Theorem \ref{th:MartinEtMaximeSontDesSuperBeauxGosses}.  
In order to prove Proposition \ref{Prop:RandOmega}, we will need the following \MI{proposition}.
\begin{prop}\label{lem:MoyenneNulle}
Let $t_h > 0$ be such that $t_h= o(|\log h|)$ and $\liminf_{h\to 0} t_h >0$. 
Then for all $x\in X\setminus X_h^0$ and all $\wit{x}\in A_{x,t_h}$, we have
\begin{equation}\label{eq:ZeroAverage}
\left|\E_\omega \left[  \e^{\frac{i}{h} \wit{\phi}^0_{t_h,\delta} (\wit{x}) }  \right] \right|  
= O(\min\{h \delta^{-1} h^{-\beta/2},\delta h^{-2\beta}\}).
\end{equation}
\end{prop}
Note that, when $\delta = h^\alpha$ as in Remark \ref{rem:specialdelta}, then 
the right-hand side of (\ref{eq:ZeroAverage}) is $O(h^{\Gamma})$ with 
$\Gamma = \min (1- \alpha - \frac{\beta}{2},  \alpha - 2\beta)$.
\begin{rem}
Following the exact same steps of the proof below, we can show that for all $x\in X$ 
and all $\wit{x}\in A_{x,t_h}$
\begin{equation*}
	\left|\E_\omega \left[  \e^{\frac{i}{h} \wit{\phi}_{t_h,\delta} (\wit{x}) }   \right] \right|  
	=  O(\min\{h \delta^{-1} h^{-\beta/2},\delta h^{-2\beta}\}).
\end{equation*}
\end{rem}

\begin{proof}[Proof of Proposition \ref{lem:MoyenneNulle}]
\red{Let $x\in X \setminus X_h^0$,  so that we have $x\in \widehat{X}_k$ for some $k\in \widehat{I}_h$.  We consider a lift  $\wit{x}\in \wit{X}$, as in the previous sections.}
Recall \eqref{eq:PhiIntegrale2}, \eqref{eq:PhaseApprochee}, and write 
\begin{equation*}
	\wit{\phi}^0_{t_h,\delta} (\wit{x}) 
	=  \wit{\phi}_{t,0}(\wit{x}) + \Theta^0_{\omega,t_h} (\wit{x}) 
	+  \Theta^1_{\omega,t_h} (\wit{x}),
\end{equation*}
with 
\begin{equation}\label{eq:DefTheta0}
	\Theta^0_{\omega,t_h} (\wit{x}) 
	:= -\delta \int_{[0,t_h]\setminus  \gI_{k,t_h}} q_\omega  
		\left( \Phi_0^{-s}(\rho_{\wit{x}}) \right) ds,
\end{equation}
and 
\begin{equation}\label{eq:DefTheta1}
	\Theta^1_{\omega,t_h} (\wit{x}) 
	:= -\Theta^0_{\omega,t_h} (\wit{x})  - \delta
	\red{\int_{[0,t_h]\setminus  \gI_{k,t_h}}}  q_\omega\left(\zeta_\delta^{s,t}(x)\right) ds.
\end{equation}
Next, we need the following  
\begin{lem}\label{lem:LipschitzReg}
	For any $j\in J_h$, $\omega_j \mapsto \Theta^1_{\red{\omega}, t_h}$ is 
	differentiable almost everywhere, and for any $\varepsilon>0$, 
	there exists $\gd>0$ such that for all $t\leq \gd |\log h|$, we have
	\begin{equation}\label{eq:BornerTheta1}
	\|\partial_{\omega_j} \Theta^1_{\red{\omega}, t_{\red{h}}}\|_{L^\infty} 
		\leq \delta^2 h^{-\beta - \varepsilon},
	\end{equation}
\end{lem}
We will prove this result further below and continue for now with the 
proof of Proposition \ref{lem:MoyenneNulle}. 
\par 
Let us denote by $m(\omega_j)$ the common density of the variables $\omega_j$, 
which we suppose $C^2$ and supported in some bounded set $[-M,M]$, $M>0$, 
as in Hypothesis \ref{Hyp:Prob}.
%
We shall write 
\begin{equation*}
	\theta_j 
	:=  - \delta\int_{[0,t_{\red{h}}] \setminus \mathcal{I}_{k, t_h} } 
		(\partial_{\omega_j} q_{\omega}  (\Phi_0^{-s} (x, d_x \phi_{t,0}))d s \red{=  - \delta\int_{[0,t_{\red{h}}] \setminus \mathcal{I}_{k, t_h} } 
		(q_j  (\Phi_0^{-s} (x, d_x \phi_{t,0}))d s},
\end{equation*}
so that $ \frac{1}{h} \Theta^0_{\omega,t_h} (\wit{x}) = \sum_{j\in J_h} \theta_j \omega_j$.
\par 
Since $\wit{\phi}_{t,0}(\wit{x})$ is independent of $\omega$, we find that 
\begin{align*}
\left|\E\left[ \e^{\frac{i}{h}\wit{\phi}^0_{t_h,\delta} (\wit{x})}\right]\right| &=: |I|\\
&= \left| \int_{[-M,M]^{|J_h|}} e^{\frac{i}{h} \Theta^1_{\red{\omega}, t_h}(\wit{x})} 
	\prod_{j\in J_h} e^{\frac{i}{h} \theta_j \cdot \omega_j}  m(\omega_j) d\omega_j  \right|\\
& = \left|\int_{[-M,M]^{|J_h|}} e^{\frac{i}{h} \Theta^1_{\red{\omega}, t_h}(\wit{x})}    
	 e^{\frac{i}{h} \vec{\theta} \cdot \omega}  \boldsymbol{m}(\omega) d\omega\right|,
\end{align*}
where $\boldsymbol{m}(\omega) := \prod_{j\in J_h}  m(\omega_j)$, where $d\omega$ 
denotes the product measure on $[-M,M]^{|J_h|}$, and where $\vec{\theta}$ denotes 
the vector in $\R^{J_h}$ whose entries are the $\theta_j$. 
\par 
\textbf{Step 1: A lower bound on most of the $\theta_j$.} Thanks to 
(\ref{eq:PotentialEverywhere}) and \eqref{eq:IlPleutAPeyresq}, there exists 
$c_0>0$ (independent of $t_h$ and of $h$), such that
$$\sum_{j\in J_h} |\theta_j| \geq c_0 \delta t_h.$$

We claim that there exists constants $c_1, c_2, \epsilon>0$, independent of $t_h$ and $h$, and a set $J'_h\subset J_h$
 such that $ \red{c_1 h^{\red{-}\beta}} \leq |J_h'| \leq c_2 t_h h^{\red{-}\beta}$,  and
\begin{equation}\label{eq:LowerTheta}
\forall j \in J_h', ~~ |\theta_j|\geq  \epsilon \delta h^\beta.
\end{equation}

To prove (\ref{eq:LowerTheta}), we write $J_{h,\epsilon,1} := \{ j \in J_h ; 0< |\theta_j| \leq \epsilon \delta h^\beta \}$ 
and $J_{h,\epsilon,2} := \{ j \in J_h ; |\theta_j| \geq \epsilon \delta h^\beta \}$. Thanks to \MI{ the first four points of}
Hypothesis \ref{HypPot},  there exists 
$C>0$ such that 
\MI{\begin{equation}\label{eq:BoundNumberofSupports}
|J_{h,\epsilon,1} \cup J_{h,\epsilon,2}| \leq C t_h h^{- \beta}.
\end{equation} 
Indeed,  for every $j\in J_h$, there exists $\rho_j\in X$ (resp $\rho_j\in T^*X$) such that $B(x_j,  c_0 h^\beta)\subset \mathrm{supp}(q_j) \subset B(\rho_j, c_1 h^\beta)$, for some $c_0< c_1$ independent of $h$ and $j$.  The fact that each point belongs to at most $O(1)$ of the $B(x_j,  c_0 h^\beta)$ then implies that, for any $c_2>0$, any point belongs to at most $O(1)$ of the $B(x_j,  c_2 h^\beta)$. From this, we may deduce that each geodesic segment of length $h^\beta$ belongs to at most $O(1)$ of the $B(x_j,  c_1 h^\beta)$, and hence of the $\mathrm{supp}(q_j)$. Equation (\ref{eq:BoundNumberofSupports}) follows.}

Furthermore, up to taking $C$ larger in (\ref{eq:BoundNumberofSupports}),  we have
\begin{equation}\label{eq:UpperTheta}
\forall j \in J_h,  |\theta_j|\leq C t_h \delta h^\beta.
\end{equation}
\red{This follows from Remark \ref{Rem:LeavingBall} and from the fact that the support of $\red{q_{j}}$ has 
diameter $O(h^{\beta})$.}


We thus get
\begin{align*}
c_0 \delta t_h \leq \sum_{j\in J_{h,\epsilon, 1}} |\theta_j| + \sum_{j\in J_{h,\epsilon, 2}} |\theta_j| \leq C \epsilon\delta t_h + C \delta t_h h^{\red{\beta}} |J_{h,\epsilon, 2}|.
\end{align*}
Taking $\epsilon$ small enough,  the first term \red{can be made} smaller than $\frac{c_0}{2}\delta t_h$. We deduce that there exists $\epsilon>0$ such that
$$|J_{h, \varepsilon,2}| \geq \frac{c_0}{\red{2 \epsilon}} h^{\red{-}\beta} .$$
Taking $J_h' = J_{h, \epsilon,2}$, equation (\ref{eq:LowerTheta}) follows.

\textbf{Step 2: Integrating by parts.}
Let us denote by $\vec{\theta}'$ the vector whose entries 
are $\theta_j$ if $j\in J_h'$, and $0$ otherwise. In particular, we have
\begin{equation*}
\|\vec{\theta}'\| \geq \epsilon  \delta h^{\beta/2},
\end{equation*}
where $\|\vec{\theta}'\|$ denotes the $\ell^2$ norm of $\vec{\theta}'$. 
In particular, thanks to \eqref{eq:CondBetaDelta}, this norm is much 
larger than $\MI{h}$. 

We perform an integration by parts to deduce that
\begin{align*}
I &= - \frac{i h}{\|\vec{\theta}'\|^2} \int_{[-M,M]^{|J_h|}} \vec{\theta}' 
	\cdot \nabla_\omega \left( e^{\frac{i}{h} \Theta^1_{\red{\omega}, t_h}(\wit{x})} 
	\boldsymbol{m}(\omega) \right)  e^{\frac{i}{h} \vec{\theta} \cdot \omega}  d\omega \\
&=   - \frac{i h}{\|\vec{\theta}'\|^2} \int_{[-M,M]^{|J_h|}} \vec{\theta}' 
	\cdot \nabla_\omega \left( e^{\frac{i}{h} \Theta^1_{\red{\omega}, t_h}(\wit{x})} \right) 
	 \boldsymbol{m}(\omega)  e^{\frac{i}{h} \vec{\theta} \cdot \omega}  d\omega \\
& \phantom{=}- \frac{i h}{\|\vec{\theta}'\|^2} \int_{[-M,M]^{|J_h|}} \vec{\theta}' 
	\cdot \nabla_\omega \left(  \boldsymbol{m}(\omega) \right)  e^{\frac{i}{h} \Theta^1_{\red{\omega}, t_h}(\wit{x})}  
		e^{\frac{i}{h} \vec{\theta} \cdot \omega}  d\omega
\\
&=: I_1+I_2.
\end{align*}

Using the Cauchy-Schwarz inequality along with (\ref{eq:BornerTheta1}), we see that 
we have, almost everywhere
\begin{align*}
\left|\vec{\theta}' \cdot \nabla_\omega e^{\frac{i}{h} \Theta^1_{\red{\omega}, t_h}(\wit{x})} \right| 
&\leq \|\vec{\theta}'\| \left( \sum_{j\in J_h'} \left|\partial_{\omega_j}e^{\frac{i}{h} \Theta^1_{\red{\omega}, t_h}(\wit{x})} \right|^2 \right)^{1/2}\\
&\leq C\|\vec{\theta}'\|  h^{-1} \delta^2 h^{-\beta - \varepsilon} h^{-\beta/2} \MI{\sqrt{t_h}},
\end{align*}
from which we deduce
\begin{align*}
|I_1| &\leq C h \|\vec{\theta}'\|^{-1} h^{-1}  \delta^2 h^{-\beta - \varepsilon} h^{-\beta/2} \MI{\sqrt{t_h}}\\
&\leq C \delta h^{-2\beta},
\end{align*}
which is small thanks to (\ref{eq:CondBeta}). 

\textbf{Step 3: An independence argument to deal with $I_2$.}
To deal with $I_2$, we note that formally 
\begin{align*}
\boldsymbol{m}_1(\omega) &:= \vec{\theta}' \cdot \partial \left(  \boldsymbol{m}(\omega) \right)\\
&= \sum_{j\in J'_h} \theta_j  m'(\omega_j) \prod_{i\neq j} m(\omega_i)\\
&= \boldsymbol{m}(\omega)  \sum_{j\in J'_h} \theta_j  \frac{m'(\omega_j)}{m(\omega_j)}.
\end{align*}

Let us write $X_j = \theta_j  \frac{m'(\omega_j)}{m(\omega_j)}$, 
so that $\boldsymbol{m}_1= \boldsymbol{m} \sum_j X_j$.

The family $(X_j)_{j\in J_h'}$ may be seen as a family of independent centred 
almost surely finite random variables on the space $[-M, M]^{J_h'}$ equipped with the 
measure $\boldsymbol{m}(\omega) d\omega$.  Let us show that it is square-integrable. 
To this end, we start by noting that $\frac{(m')^2}{m}$ is bounded 
by $4\|\MI{m}''\|_{\infty}$. Indeed by Taylor expansion we have that 
$0\leq m(x_0 +x) \leq \|m''\|_\infty x^2 + x m'(x_0) + m(x_0)$. The 
discriminant of this quadratic equation must be positive, so  
$|m'(x_0)|^2 \leq 4 \|m''\|_{\infty} m(x_0)$, for any $x_0\in \R$.

We thus have
\begin{align*}
\int_{[-M,M]^{J_h'}} \left|\sum_{j\in J_h'}  X_j \right|^2 \boldsymbol{m}(\omega) d\omega 
=  \int_{[-M,M]^{J_h'}} \sum_{j\in J_h'} \left| X_j \right|^2 \boldsymbol{m}(\omega) d\omega 
\leq C \| \vec{\theta}'\|^2.
\end{align*} 

We therefore deduce from Hölder's inequality that
\begin{align*}
|I_2|
&\leq \frac{h}{\|\vec{\theta}'\|^2} \left( \int_{[-M,M]^{|J_h|}} 
	\left|e^{\frac{i}{h} \Theta^1_{\red{\omega}, t_h}(\wit{x})}  e^{\frac{i}{h} \vec{\theta} 
	\cdot \omega}\right|^2  \boldsymbol{m}(\omega) d\omega\right)^{1/2} \times 
	\left( \int_{[-M,M]^{J_h'}} \left|\sum_{j\in J_h'}  X_j \right|^2 
		\boldsymbol{m}(\omega) d\omega \right)^{1/2}\\
&\leq C \frac{h}{\|\vec{\theta}'\|} \\ 
&\leq C h \delta^{-1} h^{-\beta/2}. 
\end{align*}
The result follows.
\end{proof}
\begin{proof}[Proof of Lemma \ref{lem:LipschitzReg}] \red{In the course of the proof, we will write $t$ instead of $t_h$ to lighten notation.}
By definition,  we have in the sense of distributions
\begin{equation}\label{eq:DecompoPhase}
\partial_{\omega_j} \Theta^1_{\red{\omega}, t}(\wit{x}) = \red{-}  \theta_j - \delta	\red{\int_{[0,t]\setminus  \gI_{k,t}}}  (\partial_{\omega_j} q_\omega) (\zeta_\delta^{s,t}(x)) d s  - \delta	\red{\int_{[0,t]\setminus  \gI_{k,t}}}  ( \nabla q_\omega) \cdot  \partial_{\omega_j} {\zeta_\delta^{s,t}(x)}  d s
\end{equation}

The first two terms in (\ref{eq:DecompoPhase})  give
\begin{align*}
\delta	\red{\int_{[0,t]\setminus  \gI_{k,t}}}  \left[ (\partial_{\omega_j} q_\omega)  (\Phi_0^{-s} (x, d_x \phi_{s,0}))  - (\partial_{\omega_j} q_\omega)  ({\zeta_\delta^{s,t}(x)}) \right]d s, 
\end{align*}
which is a smooth function of $\omega_j$.

Since the trajectories we compare are at a distance 
$O(\delta h^{-\beta - \varepsilon})$ from each 
other (thanks to (\ref{eq:PetitCoupDeStressDeLaFin})), 
the integrand is of the order of $\delta \times O(\delta h^{-2\beta - \varepsilon})$, 
but the integration takes place only on \red{unions of} time intervals 
of \red{total length} $O(t h^{\beta})$ \red{thanks to Remark \ref{Rem:LeavingBall}}, so we get 
$O( \delta^2 h^{-\beta - 2 \varepsilon})$. 
Hence, the first two terms in (\ref{eq:DecompoPhase}) give 
$O( \delta^2 h^{-\beta - 2 \varepsilon})$.

To bound the last term in (\ref{eq:DecompoPhase}), let 
$s\in [0,t]$, and $\lambda, \lambda'\in [-M,M]$. We claim that 
\begin{equation}\label{eq:DistZeta}
\mathrm{dist}_{T^*X} \left(\zeta_{\delta,  \omega_j = \lambda}^{s,t}(x), \zeta_{\delta,  \omega_j = \lambda'}^{s,t}(x)\right) = O(\delta  |\lambda - \lambda'| h^{- \varepsilon}).
\end{equation}
\red{In the previous equation,  we consider that, for the trajectories we compare,  all the $\omega_i$ for $i\neq j$ take the same value, and only $\omega_j$ differs.}
Equation (\ref{eq:DistZeta}) implies that $\omega_j \mapsto \zeta_\delta^{s,t}$ is a Lipschitz function (hence differentiable almost everywhere by Rademacher's theorem) with Lipschitz constant $ O(\delta  h^{-\varepsilon})$.
We thus deduce that the last term in (\ref{eq:DecompoPhase}) is a  $O(\delta^2 h^{-\beta - \varepsilon})$.

To prove (\ref{eq:DistZeta}), we will first proof that, for any $0<\lambda_1<\lambda_2$ and any $\rho\in \mathcal{E}_{0, (\lambda_1, \lambda_2)}$, we have for all  $s\in \R$ with $|s| \leq t \leq \gd |\log h|$ and $\gd$ small enough
 \begin{equation}\label{eq:UnPtiGronwallEtOnEstBons}
 \mathrm{dist}_{T^*X}\left( \Phi^{s}_{\delta, \omega_j = \lambda} (\rho), \Phi^{s}_{\delta, \omega_j = \lambda'}(\rho) \right)= O( \delta |\lambda - \lambda'| h^{-\varepsilon}).
 \end{equation}
Indeed,  \red{Remark \ref{Rem:LeavingBall}} implies that
the set of $s\in [-t,t]$ such that  $\Phi^s_0(\rho)$ belongs to 
$\mathrm{supp}(\red{q_j})$ is included in a 
union of $C t$ intervals  of length $\red{\leq} C h^{\red{\beta}}$, with $C>0$ 
independent of $h$ and $t$.  Since, by Lemma \ref{Lem:Gron} 
and condition \eqref{eq:CondBetaDelta},  $\Phi^s_\delta(\rho)$ 
remains at a distance $o(h^\beta)$  from $\Phi^s_0(\rho)$, we 
deduce that the set of $s\in [-t,t]$ such that both $\Phi^s_0(\rho)$ 
and $\Phi^s_\delta(\rho)$ belong to $\mathrm{supp}(\red{q_j})$ 
is included in a union of $C' t$ intervals  of length $\red{\leq } C' h^{\beta}$, 
with $C'>0$ independent of $h$ and $t$.  Here, $C'$ and the intervals 
can be chosen independent of the value of the random parameter $\omega$.
 
Let us denote these intervals by $[T_j,  T_j + C'h^{\beta}]$, 
with $j\leq C't$, and where $T_j < T_{j+1}$. Thanks to (\ref{eq:Gronwall2}), 
we have \red{ for all $s\in [T_j,  T_j + C'h^{\beta}]$} 
\begin{equation}\label{eq:DistDifferentOmega}
\begin{aligned}
\mathrm{dist}_{T^*X}\left( \Phi^s_{\delta, \omega_j = \lambda} (\rho), \Phi^s_{\delta, \omega_j = \lambda'}  (\rho) \right) &\leq C e^{Ch^{\beta}} \mathrm{dist}_{T^*X}\left( \Phi^{T_j}_{\delta, \omega_j = \lambda} (\rho), \Phi^{T_j}_{\delta, \omega_j = \lambda'}  (\rho) \right) + C \delta  |\lambda - \lambda'| h^{-\beta} \left( e^{C h^{\beta}} -1 \right)\\
&\leq C'' \mathrm{dist}_{T^*X}\left( \Phi^{T_j}_{\delta, \omega_j = \lambda} (\rho), \Phi^{T_j}_{\delta, \omega_j = \lambda'}  (\rho) \right) + C'' \delta |\lambda - \lambda'|.
\end{aligned}
\end{equation} 
On the other hand, (\ref{eq:Gronwall}) implies that, for all $s\in [T_j + C' h^\beta,   T_{j+1}]$, we have 
\begin{equation}\label{eq:DistSameOmega}
\begin{aligned}
\mathrm{dist}_{T^*X}\left( \Phi^s_{\delta, \omega_j = \lambda} (\rho), \Phi^s_{\delta, \omega_j = \lambda'}  (\rho) \right) &\leq C e^{C (s- T_j)}  \mathrm{dist}_{T^*X}\left( \Phi^{T_j + C' h^\beta}_{\delta, \omega_j = \lambda} (\rho), \Phi^{T_j + C' h^\beta}_{\delta, \omega_j = \lambda'}  (\rho) \right).
\end{aligned}
\end{equation}
Applying  (\ref{eq:DistDifferentOmega}) and (\ref{eq:DistSameOmega}) a number of times which remains $O(t)$, we obtain $$
 \mathrm{dist}_{T^*X}\left( \Phi^s_{\delta, \omega_j = \lambda} \left( \rho \right), \Phi^s_{\delta, \omega_j = \lambda'} \left( \rho \right)\right) \leq C \delta |\lambda - \lambda'| e^{Ct},$$
 from which (\ref{eq:UnPtiGronwallEtOnEstBons}) follows by taking $\gd$ small enough.
 
To deduce (\ref{eq:DistZeta}) from (\ref{eq:UnPtiGronwallEtOnEstBons}), we argue similarly to the proof of Lemma \ref{lem:BoundDiffFlows}. We write  $y = y_{\delta, \omega_j = \lambda}^{-s, -(t-s)}(x)$ 
 and $y' =  y_{\delta,\omega_j = \lambda'}^{-s, -(t-s)}(x)$. 
 Thanks to (\ref{eq:UnPtiGronwallEtOnEstBons}) and Lemma \ref{Lem:Gron}, we have 
 \begin{align*}
 &\mathrm{dist}_X\left( \Phi^{s}_{\delta, \omega_j = \lambda'}  \left(\Phi^{t-s}_0 (y, d_y \phi_0)\right),   \Phi^{s}_{\delta, \omega_j = \lambda'}  \left(\Phi^{t-s}_0 (y', d_{y'} \phi_0)\right))\right)\\
 &=
 \mathrm{dist}_X\left( \Phi^{s}_{\delta, \omega_j = \lambda'}  \left(\Phi^{t-s}_0 (y, d_y \phi_0)\right), x)\right) =O( \delta |\lambda - \lambda'| h^{-\varepsilon}).
 \end{align*}
 
Now,  since $\Phi^{s}_{\delta, \omega_j = \lambda'}  \left(\Phi^{t-s}_0 (y, d_y \phi_0)\right)$ 
and $\Phi^{s}_{\delta, \omega_j = \lambda'}  \left(\Phi^{t-s}_0 (y', d_{y'} \phi_0)\right)$ 
both belong to $\Phi^{s}_{\delta, \omega_j = \lambda'}  \left(\Phi^{t-s}_0 (\Lambda_0)\right)$, 
equation (\ref{eq:C2Perturb}) implies that they must be at a distance $O( \delta |\lambda - \lambda'| h^{-\varepsilon})$ 
in $T^*X$.  Using Lemma \ref{Lem:Gron}, we deduce that $\mathrm{dist}_X(y,y') = O( \delta |\lambda - \lambda'| h^{-2\varepsilon})$, 
and then (\ref{eq:DistZeta}), by taking $\varepsilon$ smaller.
\end{proof}

We may now proceed with the proof of Proposition \ref{Prop:RandOmega}. 
\begin{proof}[Proof of Proposition \ref{Prop:RandOmega}]

Recall that thanks to (\ref{eq:EffectivFunction3}), it is sufficient 
to prove the proposition with $\psi_{h,t_h,x,\omega}$ replaced by 
$\psi^0_{h,t_h,x,\omega}$.
We will prove the result in two steps: first we will show that
$\psi_{h,t_h,x,\omega}^0$ converges to the $\mathrm{BGF}$ in \textit{finite dimensional 
distributions}. That is to say that for any $n\in\N$ and all 
$\by_1,\dots, \by_k\in \R^d$ the random vector $(\psi^0_{h, t_h, x, \omega}(\by_1),\dots,
\psi^0_{h, t,_h x, \omega}(\by_k))$ converges in law to 
the random vector 
$(\mathfrak{f}(\boldsymbol{y}_1),\dots,\mathfrak{f}(\boldsymbol{y}_k))$ 
where $\mathfrak{f}$ is a random function following the law of the 
Berry Gaussian field $\mathrm{BGF}_{\lambda_a}$, as in Definition \ref{DefBerry}. 
In other words, we will show that 
\begin{equation}\label{eq:fidiconvergence}
	(\psi^0_{h, t_h, x, \omega}(\by_1),\dots,
	\psi^0_{h, t,_h x, \omega}(\by_k))
	\overset{d}{\longrightarrow}
	(\mathfrak{f}(\boldsymbol{y}_1),\dots,\mathfrak{f}(\boldsymbol{y}_k)), 
	\quad h\to 0. 
\end{equation}
Secondly, we will show that the 
sequence of random functions $\psi^{0}_{h,t_h,x,\omega}\in C^\infty(\R^d)$ 
is \emph{tight} which, by Prokhorov's theorem \cite[Theorem 14.3]{Kal97}, 
is equivalent to relative compactness in distribution. Hence any subsequence 
of the sequence of random functions $\psi_{h,t_h,x,\omega}^0$ has a further 
subsequence which converges in distribution. Its limit must coincide with the 
limit of the convergence in finite dimensional distribution \eqref{eq:fidiconvergence}. 
So we may conclude the statement of the proposition that 
$\psi_{h,t_h,x,\omega}^0$ converges to the $\mathrm{BGF}_{\lambda_a}$ in distribution. 
\\
\par 
1. To prove \eqref{eq:fidiconvergence} we wish to apply a multivariate 
Lindeberg Central Limit Theorem to the sum over $\wit{x}\in A_{x,t}$ of the 
random vectors 
\begin{equation*}
\begin{split}
\eta_{\wit{x}}(t_h,h) 
&= (\eta_{\wit{x}}^1(t_h,h),..., \eta_{\wit{x}}^k(t_h,h))\\
&:=\left(b_0\left(t_h,  \widetilde{x} ; 0\right)  
 \e^{\frac{i}{h} \wit{\phi}^0_{t_h,\delta} (\wit{x})}
  \e^{i  \xi_{t_h, \widetilde{x},0} \cdot \boldsymbol{y}_1} ,\dots,b_0
  \left(t_h,  \widetilde{x} ; 0\right) \e^{\frac{i}{h} \wit{\phi}^0_{t_h,\delta} (\wit{x})} 
   \e^{i  \xi_{t, \widetilde{x},0} \cdot \boldsymbol{y}_k} \right).
\end{split}
\end{equation*}
By construction, the family of random variables $(\eta_{\wit{x}}(t_h,h))_{\wit{x}\in A_{x,t}}$ 
is independent (see Proposition \ref{PropSection8}). Thanks to (\ref{eq:ZeroAverage}), we have 
for each $\wit{x}  \in A_{x,t}$
\begin{equation*}
\E[\eta^\ell_{\wit{x}}(t_h,h)] = O(h^{\Gamma}) ~~\forall \ell \in \{1,..., k\}
\end{equation*}
\begin{equation}\label{eq:Cov1}
\E[\eta^\ell_{\wit{x}}(t_h,h)\overline{\eta_{\wit{x}}^{\ell'}(t_h,h)}]
= 	| b_0\left(t_h,  \widetilde{x} ; 0\right)|^2  \e^{i  \xi_{t_h, \widetilde{x},0} 
\cdot (\boldsymbol{y}_\ell-\boldsymbol{y}_{\ell'})} ~~\forall \ell, \ell' \in \{1,..., k\}.
\end{equation}
Let us denote by $M^{h,t_h}= (m^{h,t_h}_{\ell, \ell'})_{\ell, \ell'}$ the sum of 
the covariance matrices of the vectors $\eta_{\wit{x}}(t_h,h)$. Equation (\ref{eq:Cov1}) 
implies that it depends on $h$ only through $t_h$, as
\begin{equation}\label{eq:Cov1.0}
	m^{h,t_h}_{\ell, \ell'}
	= \sum_{\wit{x} \in A_{x,t_h}}   | b_0\left(t_h,  \widetilde{x} ; 0\right)|^2 
	\e^{i  \xi_{t_h, \widetilde{x},0} \cdot (\boldsymbol{y}_\ell-\boldsymbol{y}_{\ell'})}.
\end{equation}
Now, applying Lemma 3.9\footnote{\red{This lemma essentially relies on the mixing property of the geodesic flow on a manifold of negative curvature.  While it could be proven by introducing suitable distributions in anisotropic Sobolev spaces in the spirit of \cite{FS}, the authors of \cite{IngRiv} prove it in a more pedestrian way by using Egorov's theorem.}} in \cite{IngRiv} we get that \red{for all $x\in X$}
\begin{equation}\label{eq:Cov1.1}
 m^{h,t_h}_{\ell, \ell'} \underset{h \to 0}{\longrightarrow} m_{\ell, \ell'}
 = \frac{\|a_0\|_{L^2}^2}{\Vol(X)} \int_{\mathbb{S}^d} 
 \e^{i\xi \cdot (\boldsymbol{y}_\ell-\boldsymbol{y}_\ell')} d\xi.
\end{equation}
But the matrix $(m_{\ell, \ell'})_{\ell, \ell'}$ is the covariance 
matrix of the random vector 
$(\mathfrak{f}(\boldsymbol{y}_1),\dots,\mathfrak{f}(\boldsymbol{y}_k))$ 
where $\mathfrak{f}$ is a random function following the law of the 
Berry Gaussian field, as in Definition \ref{DefBerry}, with normalization 
constant $\lambda_a= \frac{\|a_0\|^2}{\mathrm{Vol}(X)}$. In particular, 
the matrix $M^{h,t_h}$ is invertible for small enough $h$. 
\par
Lastly, by \MI{Lemma} \ref{cor:DecayAmplitude}, 
\begin{equation*}
	\sup\limits_{\wit{x}\in A_{x,t}}  | b_0\left(t,  \widetilde{x} ; 0\right)| 
\xrightarrow[t\rightarrow 0]{}0.
\end{equation*}
So all assumptions of the multivariate Lindeberg Central Limit Theorem 
hold\footnote{Thanks to the Cramér-Wold Theorem \cite[Theorem 29.4]{Bili}, 
the multivariate Lindeberg Central Limit Theorem follows from the usual Lindeberg 
Central Limit Theorem \cite[Chapter 27]{Bili}}. Thus, as $h\rightarrow 0$, the 
vector $\sum_{\wit{x}\in A_{x,t_h}} \eta_{\wit{x}}(t_h,h)$ converges in law to a 
Gaussian random vector 
with covariance $(m_{\ell, \ell'})_{\ell, \ell'}$ and expectation $0$. This 
concludes the proof of \eqref{eq:fidiconvergence}.
\\
\par
2. It remains to prove \emph{tightness}. The sequence of random smooth functions 
$\psi^0_{h,t_h,x,\omega}\in C^\infty(\R^d)$ is said to be tight if 
\begin{equation}\label{eq:tightness}
	\sup_{\mathcal{K}} \liminf_{h\to 0} \prob[\psi^{0}_{h,t_h,x,\omega}\in \mathcal{K}] = 1, 
\end{equation}
where the supremum is taken over all compact subset 
$\mathcal{K}\Subset C^\infty(\R^d)$. 
\par 
Let $\boldsymbol{a} = (a_{k,\ell})_{k,\ell \in \N^2}$ be a sequence of positive 
real numbers depending on two parameters and consider the set 
\begin{equation}\label{eq:CompSet}
\mathcal{K}(\boldsymbol{a}):= \{f\in C^\infty(\R^d) ~|~\forall \red{k}, \ell \in \N,  
\|f\|_{C^\ell (\overline{B(0,\red{k})})} \leq a_{k,\ell}\}.
\end{equation}
It follows from the Arzela-Ascoli theorem that $\mathcal{K}(\boldsymbol{a})$ 
is a compact subset of $C^\infty(\R^d)$ for the topology of convergence of 
all derivatives over all compact sets. In view of \eqref{estim:xi}, we deduce from 
\eqref{eq:Cov1}, \eqref{eq:Cov1.0} and \eqref{eq:Cov1.1}, that, 
for any $k,n\in \N$
\begin{equation*}
	\erw_\omega [\|\psi_{h,t_h,x,\omega}^0\|^2_{H^k(\overline{B(0,n)}})] =O_{k,n}(1)
\end{equation*}
uniformly in $h\in ]0,h_0]$. By the Sobolev embeddings, we then conclude that, for any $\ell,n\in \N$
\begin{equation*}
	\erw_\omega [\|\psi^{0}_{h,t_h,x,\omega}\|_{C^{\ell}(\overline{B(0,n)}})] \leq C_{\ell,n}
\end{equation*}
for some constant $C_{\ell,n}>0$, uniformly in $h\in ]0,h_0]$. By the Markov 
inequality we find that for any $\varepsilon >0$
\begin{equation*}
	\prob_\omega[\|\psi^{0}_{h,t_h,x,\omega}\|_{C^{\ell}(\overline{B(0,n)})}
	>C_{\ell,n} 2^{\ell+n}\varepsilon^{-1}] 
	\leq \varepsilon 2^{-(\ell+n)}.
\end{equation*}
Now for $\varepsilon>0$ put $a_{\ell,n}:= C_{\MI{\ell,n}} 2^{\ell+n}\varepsilon^{-1}$ for 
$\ell,n \in\N$. Then, 
\begin{equation*}
\begin{split}
	\sup_{\mathcal{K}} \liminf_{h\to 0} \prob[\psi^{0}_{h,t_h,x,\omega}\in \mathcal{K}] 
	& \geq \liminf_{h\to 0} \prob[\psi^{0}_{h,t_h,x,\omega}\in \mathcal{K}(\boldsymbol{a})] \\
	&= 1 - \limsup_{h\to 0} \prob[\psi^{0}_{h,t_h,x,\omega}\in \mathcal{K}(\boldsymbol{a})^c]\\
	&\geq 1 - \limsup_{h\to 0} \sum_{\ell,n \in \N}\prob_\omega[\|\psi_{h,t_h,x,\omega}\|_{C^{\ell}(\overline{B(0,n)})}
	> C_{\ell,n} 2^{\ell+n}\varepsilon^{-1}] \\
	&\geq 1 - \varepsilon. 
\end{split}
\end{equation*}
Since $\varepsilon>0$ can be chosen arbitrarily, we conclude \eqref{eq:tightness}. 
This concludes the proof of Proposition \ref{Prop:RandOmega}. 
\end{proof}
\subsection{Picking $x$ at random}
Our aim in this section is to prove Theorem \ref{th:MartinEtMaximeSontDesBeauxGosses}. 
We thus fix  a family of times $t_h$ with  $t_h\to \infty$, $t_h=o(|\log h|)$.
 
Recall that we want to study the statistics of the random function 
$\psi_{h, t_h, x, \omega}(\boldsymbol{y})$ when $x$ is chosen uniformly 
at random in $\mathcal{U}\subset X$. To this end, we partitioned $X$ as
\begin{equation*}
	X = \left( \bigsqcup_{i\in \wih{I}_h} \wih{X}_i\right) \sqcup X_h^0,
\end{equation*}
where each $\wih{X}_i$ has a diameter $\leq C h^{\beta - \varepsilon_1}$ 
and has volume $\geq c h^{d(\beta - \varepsilon_1)}$ and is contained either 
inside $\mathcal{U}$ or in its complement, and where 
$ \Vol(X_h^0) = o_{h\to 0}(1)$. Let us denote by $\wih{I}_h'$ the set of 
indices $i\in \wih{I}_h$ such that $\wih{X}_i \subset \mathcal{U}$.
If $F$ is a continuous bounded functional over $C^\infty(\R^d)$, we find by 
\eqref{eq:EffectivFunction3} that 
\begin{equation}\label{eq:Decoupage1}
	\frac{1}{\Vol(\mathcal{U})}\int_{\mathcal{U}} F(\psi_{h, t_h, x, \omega}) dx 
	= 
	\frac{1}{\Vol(\mathcal{U})}\int_{\mathcal{U}} F(\psi^0_{h, t_h, x, \omega}) dx 
	+
	o_{h\to 0}(1).
\end{equation}
Further, since $F$ is bounded and $ \Vol(X_h^0) = o_{h\to 0}(1)$, we find 
that $\omega$-almost surely  
\begin{equation}\label{eq:Decoupage}
 \frac{1}{\Vol(\mathcal{U})}\int_{{\mathcal{U}}} F(\psi_{h, t_h, x, \omega}) dx 
 = \sum_{i\in \wih{I}'_h} \left( \frac{\Vol(\wih{X}_i)}{\Vol({\mathcal{U}})} 
 \times \frac{1}{\Vol(\wih{X}_i)} 
 \int_{\wih{X}_i} F(\psi^0_{h, t_h, x, \omega}) dx \right) +
  o_{h\to 0}(1).
 \end{equation}
Thanks to Proposition \ref{Prop:RandOmega} and to the 
dominated convergence theorem, we know that 
\begin{equation}\label{eq:Aprox1}
	\E_\omega \left[  \frac{1}{\Vol(\mathcal{U})}\int_{\mathcal{U}} 
	F(\psi_{h, t_h, x, \omega}) dx \right]
	 =  \E_{\mathrm{BGF}_{\lambda_a}}[F] + o_{h\to 0}(1).
\end{equation}
So $\omega$-almost surely  
\begin{equation}\label{eq:Decoupage2}
\begin{split}
	\bigg|\frac{1}{\Vol(\mathcal{U})}\int_{\mathcal{U}} F(\psi_{h, t_h, x, \omega}) dx 
	&- \E_{\mathrm{BGF}_{\lambda_a}}[F] \bigg| \\
	&= 
	\left|\frac{1}{\Vol(\mathcal{U})}\int_{\mathcal{U}} F(\psi_{h, t_h, x, \omega}) dx 
	- \E_\omega  \frac{1}{\Vol(\mathcal{U})}\int_{\mathcal{U}}
	F(\psi_{h, t_h, x, \omega}) dx  \right| 
	+
	o_{h\to 0}(1).
\end{split}
\end{equation}
By \eqref{eq:Decoupage} and the dominated convergence theorem we find that 
$\omega$-almost surely  
\begin{equation}\label{eq:Decoupage3}
	\begin{split}
		&\bigg|\frac{1}{\Vol(\mathcal{U})}\int_{\mathcal{U}} F(\psi_{h, t_h, x, \omega}) dx 
		- \E_{\mathrm{BGF}_{\lambda_a}}[F] \bigg| \\
		&\leq 
		\sum_{i\in {\wih{I}'_h}}\frac{\Vol({\wih{X}_i)}}{\Vol(\mathcal{U})} 
		\left|\frac{1}{\Vol(\wih{X}_i)} 
		\int_{ {\wih{X}_i}} F(\psi^0_{h, t_h, x, \omega}) dx 
		- \E_\omega  \frac{1}{\Vol( {\wih{X}_i})} 
		\int_{ {\wih{X}_i}}
		F(\psi^0_{h, t_h, x, \omega}) dx  \right| \\
		&+
		o_{h\to 0}(1).
	\end{split}
\end{equation}
%
We will show below that there exists an $\varepsilon_2>0$ such that 
for all $i\in  {\wih{I}'_h}$
\begin{equation}\label{eq:PointwiseLimit_n1}
	\prob_\omega\left[\left|\frac{1}{\Vol( {\wih{X}_i)}} 
	\int_{ {\wih{X}_i}} F(\psi^0_{h, t_h, x, \omega}) dx 
	- \E_\omega  \frac{1}{\Vol( {\wih{X}_i)}} 
	\int_{ {\wih{X}_i}}
	F(\psi^0_{h, t_h, x, \omega}) dx  \right|\geq h^{\varepsilon_2} \right] = O(h^\infty).
\end{equation}
Assuming \eqref{eq:PointwiseLimit_n1}, and using the fact that 
$| {\wih{I}'_h}|$ only grows polynomially in $h^{-1}$, as well as 
\begin{equation*}
	 \sum_{i\in  {\wih{I}_h'}} 
	 \frac{\Vol( {\wih{X}_i)}}{\Vol(\mathcal{U})}
	=1 + o_{h\to 0}(1),
\end{equation*}
we conclude from \eqref{eq:Decoupage3} that every $\eta>0$  
\begin{equation}\label{eq:PointwiseLimit_n2}
	\prob_\omega\left[\left|\frac{1}{\Vol(\mathcal{U})} \int_{\mathcal{U}}  
	F(\psi_{h, t_h, x, \omega}) dx -
	\E_{\mathrm{BGF}_{\lambda_a}}[F]\right|\geq \eta \right] = O(h^\infty),
\end{equation}
for $h>0$ small enough. 
This proves Theorem \ref{th:MartinEtMaximeSontDesBeauxGosses}, provided 
that we have shown \eqref{eq:PointwiseLimit_n1} which will be our task for 
the rest of this paper. %
\\
\\
\textbf{Picking $x$ at random locally}

Our aim is now to prove \eqref{eq:PointwiseLimit_n1}. Let 
$i\in \wih{I}'_h$ and let $x_0\in \wih{X}_i$. We may partition 
$\wih{X}_i$ further, as 
\begin{equation*}
	\wih{X}_i = \bigsqcup_{j\in J_{h,i}} X_{i,j},
\end{equation*}
with $|J_{h,i}|$ of the order of $h^{- d \frac{\varepsilon_1}{2}}$, 
each $X_{j,i}$ having diameter at most $c h^{\beta - \frac{\varepsilon_1}{2}}$.
\par 
Let $F$ be a continuous bounded functional over $C^\infty(\R^d)$. We may write
\begin{equation*} 
\frac{1}{\mathrm{Vol} (\wih{X}_i)} \int_{\wih{X}_i} F(\psi^{0}_{h, t_h, x, \omega}) dx 
= \frac{1}{\mathrm{Vol} (\wih{X}_i)}  
	\sum_{j\in J_{h,i}} \int_{X_{i,j}} F(\psi^{0}_{h, t_h, x, \omega}) dx.
\end{equation*}
Let us define 
\begin{equation*}
	Z := \frac{1}{\mathrm{Vol} (\wih{X}_i)} \int_{\wih{X}_i} 
F(\psi^{0}_{h, t_h, x, \omega})d x, \quad \text{and} \quad 
Z_j := \frac{1}{\mathrm{Vol} (\wih{X}_{i})} 
\int_{X_{i,j}} F(\psi^{0}_{h, t_h, x, \omega}) d x,
\end{equation*}
which are random variable depending on $\omega$. We have 
\begin{align*}
\E_\omega(Z_j) = \frac{1}{\mathrm{Vol} (\wih{X}_{i})} \int_{X_{i,j}} 
 \E_\omega\left[ F(\psi^0_{h, t_h, x, \omega})\right] dx.
\end{align*}

Next, we want to show that $Z_j$ is independent from most $Z_{j'}$. 

\begin{lem}
There exists \red{$0<\gamma< \frac{d \varepsilon_1}{2}$} such that the following holds. 
For each $j\in J_{h,i}$, there exists $\mathcal{C}_j\subset J_{h,i}$ 
with 
\begin{equation*}
	\mathrm{Card}\left(\mathcal{C}_j\right) \red{= O\left(  h^{-d\frac{\varepsilon_1}{2} + \gamma} \right)}
\end{equation*}
such that $Z_j$ is independent from $\{ Z_{j'}\}_{j'\in J_{h ,i}\setminus \mathcal{C}_j}$.
\end{lem}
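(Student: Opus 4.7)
The plan is to define $\mathcal{C}_j$ via the interaction neighbourhood $\mathcal{V}_{t_h,\varepsilon}$ from \eqref{eq:defVt}, read off the independence from the second bullet of Proposition \ref{PropSection8}, and control $|\mathcal{C}_j|$ by a direct volume estimate on this neighbourhood. Fixing $\varepsilon>0$ as in that proposition, set
\begin{equation*}
\mathcal{C}_j := \{\, j'\in J_{h,i} \,:\, \exists\, x\in X_{i,j},\, x'\in X_{i,j'} \text{ with } x'\in \mathcal{V}_{t_h,\varepsilon}(x) \,\}.
\end{equation*}
For $j'\notin\mathcal{C}_j$, every pair $(x,x')\in X_{i,j}\times X_{i,j'}$ satisfies $x'\notin\mathcal{V}_{t_h,\varepsilon}(x)$; the second bullet of Proposition \ref{PropSection8}, applied to any finite collection of such pairs (and using that each $\Theta^0_{\omega,t}$ is a measurable function of the iid variables $\omega_k$), yields joint independence of the phase families indexed by $x\in X_{i,j}$ and by $x'\in X_{i,j'}$. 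Since \eqref{eq:EffectivFunction} shows that $\psi^0_{h,t_h,x,\omega}$ depends on $\omega$ only through these phases, $Z_j$ is a measurable functional of the first family and $Z_{j'}$ of the second, whence $Z_j$ and $Z_{j'}$ are independent.

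Next, I would absorb the two existential quantifiers into a single representative $x_j\in X_{i,j}$ via the exponential spreading estimate \eqref{eq:ExpRate}. Since $\mathrm{diam}(X_{i,j})\leq Ch^{\beta-\varepsilon_1/2}$, matched-lift trajectories through $x\in X_{i,j}$ and through $x_j$ stay within
\begin{equation*}
\mathrm{dist}_{T^*X}\!\left(\Phi^s(\rho_{\wit x}),\Phi^s(\rho_{\wit x_j})\right)\lesssim e^{C|s|}h^{\beta-\varepsilon_1/2}\lesssim h^{\beta-\varepsilon_1/2-C\gd}
\end{equation*}
for all $|s|\leq t_h\leq\gd|\log h|$, which is $\ll h^{\beta-\varepsilon}$ once $\gd$ is small. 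Hence $\bigcup_{x\in X_{i,j}}\mathcal{V}_{t_h,\varepsilon}(x)\subset \mathcal{V}_{t_h,\varepsilon'}(x_j)$ with $\varepsilon'$ slightly larger than $\varepsilon$, and a symmetric enlargement by the sub-box diameter absorbs the quantifier on $x'$. Consequently $\mathcal{C}_j$ is contained in the collection of sub-boxes that meet $\mathcal{V}_{t_h,\varepsilon''}(x_j)$ for some $\varepsilon''$ still close to $\varepsilon$.

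Finally, $\mathcal{V}_{t_h,\varepsilon''}(x_j)$ is a union of $|A_{x_j,t_h}|\leq Ce^{Ct_h}=O(h^{-C\gd})$ tubes (by \eqref{eq:DefA2}) of length $O(|\log h|)$ and radius $h^{\beta-\varepsilon''}$ in $X$, hence of total volume $O(|\log h|\,h^{-C\gd}\,h^{(d-1)(\beta-\varepsilon'')})$. Dividing by the volume $\asymp h^{d(\beta-\varepsilon_1/2)}$ of a single sub-box gives
\begin{equation*}
|\mathcal{C}_j|\leq O\!\left(|\log h|\,h^{-C\gd}\,h^{-\beta-(d-1)\varepsilon''+d\varepsilon_1/2}\right)=h^{-d\varepsilon_1/2}\cdot h^{d\varepsilon_1-\beta-(d-1)\varepsilon''-C\gd+o(1)},
\end{equation*}
so some $\gamma>0$ as claimed exists provided $d\varepsilon_1>\beta+(d-1)\varepsilon''+C\gd$. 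The main obstacle is precisely securing this last inequality by a judicious choice of the small parameters $\varepsilon,\varepsilon_1,\gd$ within the freedom left in Proposition \ref{PropSection8}: one needs $\wih{X}_i$ to be large enough relative to the single-site scale $h^\beta$ that the propagation tube occupies only a vanishing fraction of it, which is the quantitative heart of the partition-refinement scheme.
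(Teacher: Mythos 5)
Your overall scheme coincides with the paper's: $\mathcal{C}_j$ is defined as the set of sub-boxes meeting the interaction tubes of the trajectories issued from $X_{i,j}$, the union over $x\in X_{i,j}$ is reduced to a single representative via the Gronwall-type flow comparison (the paper uses Lemma \ref{Lem:Gron} where you invoke \eqref{eq:ExpRate}), and independence for $j'\notin\mathcal{C}_j$ is read off from the second bullet of Proposition \ref{PropSection8}. The genuine gap is in the counting step. Your bound on $|\mathcal{C}_j|$ is global: total tube volume $O(|\log h|\,h^{-C\gd}h^{(d-1)(\beta-\varepsilon'')})$ divided by the volume of one sub-box, giving $|\mathcal{C}_j|\lesssim |\log h|\,h^{-C\gd}\,h^{-\beta-(d-1)\varepsilon''+d\varepsilon_1/2}$. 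This is worse than the trivial bound $|\mathcal{C}_j|\le |J_{h,i}|\asymp h^{-d\varepsilon_1/2}$ unless $d\varepsilon_1>\beta+(d-1)\varepsilon''+C\gd$, and that inequality is not available: $\varepsilon_1$ is fixed in Section \ref{Sec:Indep} and must be small there — in the \ref{eq:Pot} one is forced to take $\varepsilon_1=\beta-\gamma_0$ with $\gamma_0$ slightly smaller than $\beta$ (Proposition \ref{Prop:TimeInteraction}), so $\varepsilon_1\ll\beta/d$ — and the lemma has to hold for that partition, not for a re-chosen large $\varepsilon_1$. So the "judicious choice of parameters" you hope for does not exist; the global volume count is simply too lossy.

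The missing idea, which is what the paper's count relies on, is locality: all the sub-boxes $X_{i,j'}$ lie inside $\wih{X}_i$, whose diameter is $O(h^{\beta-\varepsilon_1})$, so only the intersection of the tubes with $\wih{X}_i$ matters. Since geodesics are minimizing up to the injectivity radius, a unit-speed trajectory can re-enter $\wih{X}_i$ only after a time of order $r_I$, so each of the $O(h^{-C\gd})$ lifted trajectories makes at most $O(|\log h|)$ passes through $\wih{X}_i$, each pass being a segment of length $O(h^{\beta-\varepsilon_1})$. Taking the tube radius $h^{\eta}$ with $\eta=\beta-\tfrac34\varepsilon_1$ (which still dominates the radius $h^{\beta-\varepsilon}$ needed for Proposition \ref{PropSection8}, since its $\varepsilon$ may be taken $\le\tfrac34\varepsilon_1$), one pass meets at most $O\bigl(h^{-\varepsilon_1/2}\cdot h^{-(d-1)\varepsilon_1/4}\bigr)=O\bigl(h^{-(d+1)\varepsilon_1/4}\bigr)$ sub-boxes of diameter $h^{\beta-\varepsilon_1/2}$, whence $|\mathcal{C}_j|=O\bigl(|\log h|\,h^{-C\gd-(d+1)\varepsilon_1/4}\bigr)\le h^{-d\varepsilon_1/2+\gamma}$ with $\gamma\approx\tfrac{(d-1)\varepsilon_1}{4}$, for every small $\varepsilon_1$ and $d\ge2$ once $\gd$ is small: all exponents scale with $\varepsilon_1$, so no lower bound on $\varepsilon_1$ is needed. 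A minor secondary point: the second bullet of Proposition \ref{PropSection8} literally requires mutual avoidance of all chosen points, which fails for two points of the same sub-box; to get independence of $Z_j$ from the whole family $\{Z_{j'}\}_{j'\notin\mathcal{C}_j}$ one should argue, as the paper implicitly does, that the sets of single-site potentials met by the two trajectory families are disjoint. Your parenthetical gestures at this, and the paper is equally brief, so this is not the substantive issue.
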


\begin{figure}
\begin{center}
\definecolor{qqqqff}{rgb}{0,0,1}
\definecolor{qqzzff}{rgb}{0,0.6,1}
\definecolor{ffqqqq}{rgb}{1,0,0}
\definecolor{ffzzqq}{rgb}{0.1,1,0.1}
\begin{tikzpicture}[line cap=round,line join=round,>=triangle 45,x=1cm,y=1cm]
\clip(-7.608489309033779,-7.866899689780639) rectangle (9.577524390060917,7.8127701791145);
\fill[line width=3pt,color=ffzzqq, fill opacity =0.0] (-4,-4) -- (4,-4) -- (4,4) -- (-4,4) -- cycle;
\fill[line width=1pt,color=ffqqqq,fill=ffqqqq,fill opacity=0.11] (-4,-4) -- (-3,-4) -- (-3,-3) -- (-4,-3) -- cycle;
\fill[line width=1pt,color=ffqqqq,fill=ffqqqq,fill opacity=0.11] (-3,-4) -- (-2,-4) -- (-2,-3) -- (-3,-3) -- cycle;
\fill[line width=1pt,color=ffqqqq,fill=ffqqqq,fill opacity=0.11] (-2,-4) -- (-1,-4) -- (-1,-3) -- (-2,-3) -- cycle;
\fill[line width=1pt,color=ffqqqq,fill=ffqqqq,fill opacity=0.11] (-1,-4) -- (0,-4) -- (0,-3) -- (-1,-3) -- cycle;
\fill[line width=1pt,color=ffqqqq,fill=ffqqqq,fill opacity=0.11] (-4,-3) -- (-3,-3) -- (-3,-2) -- (-4,-2) -- cycle;
\fill[line width=1pt,color=ffqqqq,fill=ffqqqq,fill opacity=0.11] (-4,-2) -- (-3,-2) -- (-3,-1) -- (-4,-1) -- cycle;
\fill[line width=1pt,color=ffqqqq,fill=ffqqqq,fill opacity=0.11] (-4,-1) -- (-3,-1) -- (-3,0) -- (-4,0) -- cycle;
\fill[line width=1pt,color=ffqqqq,fill=ffqqqq,fill opacity=0.11] (-4,0) -- (-3,0) -- (-3,1) -- (-4,1) -- cycle;
\fill[line width=1pt,color=ffqqqq,fill=ffqqqq,fill opacity=0.11] (-4,1) -- (-3,1) -- (-3,2) -- (-4,2) -- cycle;
\fill[line width=2pt,color=ffqqqq,fill=ffqqqq,fill opacity=0.11] (-4,2) -- (-3,2) -- (-3,3) -- (-4,3) -- cycle;
\fill[line width=2pt,color=ffqqqq,fill=ffqqqq,fill opacity=0.11] (-4,3) -- (-3,3) -- (-3,4) -- (-4,4) -- cycle;
\fill[line width=2pt,color=ffqqqq,fill=ffqqqq,fill opacity=0.11] (-3,3) -- (-2,3) -- (-2,4) -- (-3,4) -- cycle;
\fill[line width=2pt,color=ffqqqq,fill=ffqqqq,fill opacity=0.11] (-2,3) -- (-1,3) -- (-1,4) -- (-2,4) -- cycle;
\fill[line width=2pt,color=ffqqqq,fill=ffqqqq,fill opacity=0.11] (0,-4) -- (1,-4) -- (1,-3) -- (0,-3) -- cycle;
\fill[line width=2pt,color=ffqqqq,fill=ffqqqq,fill opacity=0.11] (1,-4) -- (2,-4) -- (2,-3) -- (1,-3) -- cycle;
\fill[line width=2pt,color=ffqqqq,fill=ffqqqq,fill opacity=0.11] (2,-4) -- (3,-4) -- (3,-3) -- (2,-3) -- cycle;
\fill[line width=2pt,color=ffqqqq,fill=ffqqqq,fill opacity=0.11] (3,-4) -- (4,-4) -- (4,-3) -- (3,-3) -- cycle;
\fill[line width=2pt,color=ffqqqq,fill=ffqqqq,fill opacity=0.11] (-3,-3) -- (-2,-3) -- (-2,-2) -- (-3,-2) -- cycle;
\fill[line width=2pt,color=ffqqqq,fill=ffqqqq,fill opacity=0.11] (-2,-3) -- (-1,-3) -- (-1,-2) -- (-2,-2) -- cycle;
\fill[line width=2pt,color=ffqqqq,fill=ffqqqq,fill opacity=0.11] (-1,-3) -- (0,-3) -- (0,-2) -- (-1,-2) -- cycle;
\fill[line width=2pt,color=ffqqqq,fill=ffqqqq,fill opacity=0.11] (-3,-2) -- (-2,-2) -- (-2,-1) -- (-3,-1) -- cycle;
\fill[line width=2pt,color=ffqqqq,fill=ffqqqq,fill opacity=0.11] (-3,-1) -- (-2,-1) -- (-2,0) -- (-3,0) -- cycle;
\fill[line width=2pt,color=ffqqqq,fill=ffqqqq,fill opacity=0.11] (-3,0) -- (-2,0) -- (-2,1) -- (-3,1) -- cycle;
\fill[line width=2pt,color=ffqqqq,fill=ffqqqq,fill opacity=0.11] (-3,1) -- (-2,1) -- (-2,2) -- (-3,2) -- cycle;
\fill[line width=2pt,color=ffqqqq,fill=ffqqqq,fill opacity=0.11] (-3,2) -- (-2,2) -- (-2,3) -- (-3,3) -- cycle;
\fill[line width=2pt,color=ffqqqq,fill=ffqqqq,fill opacity=0.11] (0,-3) -- (1,-3) -- (1,-2) -- (0,-2) -- cycle;
\fill[line width=2pt,color=ffqqqq,fill=ffqqqq,fill opacity=0.11] (1,-3) -- (2,-3) -- (2,-2) -- (1,-2) -- cycle;
\fill[line width=2pt,color=ffqqqq,fill=ffqqqq,fill opacity=0.11] (2,-3) -- (3,-3) -- (3,-2) -- (2,-2) -- cycle;
\fill[line width=2pt,color=ffqqqq,fill=ffqqqq,fill opacity=0.11] (3,-3) -- (4,-3) -- (4,-2) -- (3,-2) -- cycle;
\fill[line width=2pt,color=ffqqqq,fill=ffqqqq,fill opacity=0.11] (-2,-2) -- (-1,-2) -- (-1,-1) -- (-2,-1) -- cycle;
\fill[line width=2pt,color=ffqqqq,fill=ffqqqq,fill opacity=0.11] (-1,-2) -- (0,-2) -- (0,-1) -- (-1,-1) -- cycle;
\fill[line width=2pt,color=ffqqqq,fill=ffqqqq,fill opacity=0.11] (0,-2) -- (1,-2) -- (1,-1) -- (0,-1) -- cycle;
\fill[line width=2pt,color=ffqqqq,fill=ffqqqq,fill opacity=0.11] (1,-2) -- (2,-2) -- (2,-1) -- (1,-1) -- cycle;
\fill[line width=2pt,color=ffqqqq,fill=ffqqqq,fill opacity=0.11] (2,-2) -- (3,-2) -- (3,-1) -- (2,-1) -- cycle;
\fill[line width=2pt,color=ffqqqq,fill=ffqqqq,fill opacity=0.11] (3,-2) -- (4,-2) -- (4,-1) -- (3,-1) -- cycle;
\fill[line width=2pt,color=ffqqqq,fill=ffqqqq,fill opacity=0.11] (-2,-1) -- (-1,-1) -- (-1,0) -- (-2,0) -- cycle;
\fill[line width=2pt,color=ffqqqq,fill=ffqqqq,fill opacity=0.11] (-1,-1) -- (0,-1) -- (0,0) -- (-1,0) -- cycle;
\fill[line width=2pt,color=ffqqqq,fill=ffqqqq,fill opacity=0.11] (0,-1) -- (1,-1) -- (1,0) -- (0,0) -- cycle;
\fill[line width=2pt,color=ffqqqq,fill=ffqqqq,fill opacity=0.11] (1,-1) -- (2,-1) -- (2,0) -- (1,0) -- cycle;
\fill[line width=2pt,color=ffqqqq,fill=ffqqqq,fill opacity=0.11] (2,-1) -- (3,-1) -- (3,0) -- (2,0) -- cycle;
\fill[line width=2pt,color=ffqqqq,fill=ffqqqq,fill opacity=0.11] (3,-1) -- (4,-1) -- (4,0) -- (3,0) -- cycle;
\fill[line width=2pt,color=ffqqqq,fill=ffqqqq,fill opacity=0.11] (-2,0) -- (-1,0) -- (-1,1) -- (-2,1) -- cycle;
\fill[line width=2pt,color=ffqqqq,fill=ffqqqq,fill opacity=0.11] (-1,0) -- (0,0) -- (0,1) -- (-1,1) -- cycle;
\fill[line width=2pt,color=ffqqqq,fill=ffqqqq,fill opacity=0.11] (0,0) -- (1,0) -- (1,1) -- (0,1) -- cycle;
\fill[line width=2pt,color=ffqqqq,fill=ffqqqq,fill opacity=0.11] (-2,1) -- (-1,1) -- (-1,2) -- (-2,2) -- cycle;
\fill[line width=2pt,color=ffqqqq,fill=ffqqqq,fill opacity=0.11] (-1,1) -- (0,1) -- (0,2) -- (-1,2) -- cycle;
\fill[line width=2pt,color=ffqqqq,fill=ffqqqq,fill opacity=0.11] (0,1) -- (1,1) -- (1,2) -- (0,2) -- cycle;
\fill[line width=2pt,color=ffqqqq,fill=ffqqqq,fill opacity=0.11] (1,1) -- (2,1) -- (2,2) -- (1,2) -- cycle;
\fill[line width=2pt,color=ffqqqq,fill=ffqqqq,fill opacity=0.11] (2,1) -- (3,1) -- (3,2) -- (2,2) -- cycle;
\fill[line width=2pt,color=ffqqqq,fill=ffqqqq,fill opacity=0.11] (3,1) -- (4,1) -- (4,2) -- (3,2) -- cycle;
\fill[line width=2pt,color=ffqqqq,fill=ffqqqq,fill opacity=0.11] (1,0) -- (2,0) -- (2,1) -- (1,1) -- cycle;
\fill[line width=2pt,color=ffqqqq,fill=ffqqqq,fill opacity=0.11] (2,0) -- (3,0) -- (3,1) -- (2,1) -- cycle;
\fill[line width=2pt,color=ffqqqq,fill=ffqqqq,fill opacity=0.11] (3,0) -- (4,0) -- (4,1) -- (3,1) -- cycle;
\fill[line width=2pt,color=ffqqqq,fill=ffqqqq,fill opacity=0.11] (3,2) -- (4,2) -- (4,3) -- (3,3) -- cycle;
\fill[line width=2pt,color=ffqqqq,fill=ffqqqq,fill opacity=0.11] (-2,2) -- (-1,2) -- (-1,3) -- (-2,3) -- cycle;
\fill[line width=2pt,color=ffqqqq,fill=ffqqqq,fill opacity=0.11] (-1,2) -- (0,2) -- (0,3) -- (-1,3) -- cycle;
\fill[line width=2pt,color=ffqqqq,fill=ffqqqq,fill opacity=0.11] (-1,3) -- (0,3) -- (0,4) -- (-1,4) -- cycle;
\fill[line width=2pt,color=ffqqqq,fill=ffqqqq,fill opacity=0.11] (0,2) -- (1,2) -- (1,3) -- (0,3) -- cycle;
\fill[line width=2pt,color=ffqqqq,fill=ffqqqq,fill opacity=0.11] (0,3) -- (1,3) -- (1,4) -- (0,4) -- cycle;
\fill[line width=2pt,color=ffqqqq,fill=ffqqqq,fill opacity=0.11] (1,2) -- (2,2) -- (2,3) -- (1,3) -- cycle;
\fill[line width=2pt,color=ffqqqq,fill=ffqqqq,fill opacity=0.11] (1,3) -- (2,3) -- (2,4) -- (1,4) -- cycle;
\fill[line width=2pt,color=ffqqqq,fill=ffqqqq,fill opacity=0.11] (2,2) -- (3,2) -- (3,3) -- (2,3) -- cycle;
\fill[line width=2pt,color=ffqqqq,fill=ffqqqq,fill opacity=0.11] (2,3) -- (3,3) -- (3,4) -- (2,4) -- cycle;
\fill[line width=2pt,color=ffqqqq,fill=ffqqqq,fill opacity=0.11] (3,3) -- (4,3) -- (4,4) -- (3,4) -- cycle;
\fill[line width=2pt,color=qqzzff,fill=qqzzff,fill opacity=0.1] (-9,-7) -- (-9,-5) -- (9,-1) -- (9,-3) -- cycle;
\fill[line width=2pt,color=qqzzff,fill=qqzzff,fill opacity=0.14] (-9,0) -- (-9,-2) -- (9,0) -- (9,2) -- cycle;
\fill[line width=2pt,color=qqzzff,fill=qqzzff,fill opacity=0.16] (-9,4) -- (-9,2) -- (9,3) -- (9,5) -- cycle;
\draw [line width=3pt,color=ffzzqq] (-4,-4)-- (4,-4);
\draw [line width=3pt,color=ffzzqq] (4,-4)-- (4,4);
\draw [line width=3pt,color=ffzzqq] (4,4)-- (-4,4);
\draw [line width=3pt,color=ffzzqq] (-4,4)-- (-4,-4);
\draw [line width=1pt,color=ffqqqq] (-4,-4)-- (-3,-4);
\draw [line width=1pt,color=ffqqqq] (-3,-4)-- (-3,-3);
\draw [line width=1pt,color=ffqqqq] (-3,-3)-- (-4,-3);
\draw [line width=1pt,color=ffqqqq] (-4,-3)-- (-4,-4);
\draw [line width=1pt,color=ffqqqq] (-3,-4)-- (-2,-4);
\draw [line width=1pt,color=ffqqqq] (-2,-4)-- (-2,-3);
\draw [line width=1pt,color=ffqqqq] (-2,-3)-- (-3,-3);
\draw [line width=1pt,color=ffqqqq] (-3,-3)-- (-3,-4);
\draw [line width=1pt,color=ffqqqq] (-2,-4)-- (-1,-4);
\draw [line width=1pt,color=ffqqqq] (-1,-4)-- (-1,-3);
\draw [line width=1pt,color=ffqqqq] (-1,-3)-- (-2,-3);
\draw [line width=1pt,color=ffqqqq] (-2,-3)-- (-2,-4);
\draw [line width=1pt,color=ffqqqq] (-1,-4)-- (0,-4);
\draw [line width=1pt,color=ffqqqq] (0,-4)-- (0,-3);
\draw [line width=1pt,color=ffqqqq] (0,-3)-- (-1,-3);
\draw [line width=1pt,color=ffqqqq] (-1,-3)-- (-1,-4);
\draw [line width=1pt,color=ffqqqq] (-4,-3)-- (-3,-3);
\draw [line width=1pt,color=ffqqqq] (-3,-3)-- (-3,-2);
\draw [line width=1pt,color=ffqqqq] (-3,-2)-- (-4,-2);
\draw [line width=1pt,color=ffqqqq] (-4,-2)-- (-4,-3);
\draw [line width=1pt,color=ffqqqq] (-4,-2)-- (-3,-2);
\draw [line width=1pt,color=ffqqqq] (-3,-2)-- (-3,-1);
\draw [line width=1pt,color=ffqqqq] (-3,-1)-- (-4,-1);
\draw [line width=1pt,color=ffqqqq] (-4,-1)-- (-4,-2);
\draw [line width=1pt,color=ffqqqq] (-4,-1)-- (-3,-1);
\draw [line width=1pt,color=ffqqqq] (-3,-1)-- (-3,0);
\draw [line width=1pt,color=ffqqqq] (-3,0)-- (-4,0);
\draw[line width=1pt,color=ffqqqq](-4,0)-- (-4,-1);
\draw[line width=1pt,color=ffqqqq](-4,0)-- (-3,0);
\draw[line width=1pt,color=ffqqqq](-3,0)-- (-3,1);
\draw[line width=1pt,color=ffqqqq](-3,1)-- (-4,1);
\draw[line width=1pt,color=ffqqqq](-4,1)-- (-4,0);
\draw[line width=1pt,color=ffqqqq](-4,1)-- (-3,1);
\draw[line width=1pt,color=ffqqqq](-3,1)-- (-3,2);
\draw[line width=1pt,color=ffqqqq](-3,2)-- (-4,2);
\draw[line width=1pt,color=ffqqqq](-4,2)-- (-4,1);
\draw[line width=1pt,color=ffqqqq](-4,2)-- (-3,2);
\draw[line width=1pt,color=ffqqqq](-3,2)-- (-3,3);
\draw[line width=1pt,color=ffqqqq](-3,3)-- (-4,3);
\draw[line width=1pt,color=ffqqqq](-4,3)-- (-4,2);
\draw[line width=1pt,color=ffqqqq](-4,3)-- (-3,3);
\draw[line width=1pt,color=ffqqqq](-3,3)-- (-3,4);
\draw[line width=1pt,color=ffqqqq](-3,4)-- (-4,4);
\draw[line width=1pt,color=ffqqqq](-4,4)-- (-4,3);
\draw[line width=1pt,color=ffqqqq](-3,3)-- (-2,3);
\draw[line width=1pt,color=ffqqqq](-2,3)-- (-2,4);
\draw[line width=1pt,color=ffqqqq](-2,4)-- (-3,4);
\draw[line width=1pt,color=ffqqqq](-3,4)-- (-3,3);
\draw[line width=1pt,color=ffqqqq](-2,3)-- (-1,3);
\draw[line width=1pt,color=ffqqqq](-1,3)-- (-1,4);
\draw[line width=1pt,color=ffqqqq](-1,4)-- (-2,4);
\draw[line width=1pt,color=ffqqqq](-2,4)-- (-2,3);
\draw[line width=1pt,color=ffqqqq](0,-4)-- (1,-4);
\draw[line width=1pt,color=ffqqqq](1,-4)-- (1,-3);
\draw[line width=1pt,color=ffqqqq](1,-3)-- (0,-3);
\draw[line width=1pt,color=ffqqqq](0,-3)-- (0,-4);
\draw[line width=1pt,color=ffqqqq](1,-4)-- (2,-4);
\draw[line width=1pt,color=ffqqqq](2,-4)-- (2,-3);
\draw[line width=1pt,color=ffqqqq](2,-3)-- (1,-3);
\draw[line width=1pt,color=ffqqqq](1,-3)-- (1,-4);
\draw[line width=1pt,color=ffqqqq](2,-4)-- (3,-4);
\draw[line width=1pt,color=ffqqqq](3,-4)-- (3,-3);
\draw[line width=1pt,color=ffqqqq](3,-3)-- (2,-3);
\draw[line width=1pt,color=ffqqqq](2,-3)-- (2,-4);
\draw[line width=1pt,color=ffqqqq](3,-4)-- (4,-4);
\draw[line width=1pt,color=ffqqqq](4,-4)-- (4,-3);
\draw[line width=1pt,color=ffqqqq](4,-3)-- (3,-3);
\draw[line width=1pt,color=ffqqqq](3,-3)-- (3,-4);
\draw[line width=1pt,color=ffqqqq](-3,-3)-- (-2,-3);
\draw[line width=1pt,color=ffqqqq](-2,-3)-- (-2,-2);
\draw[line width=1pt,color=ffqqqq](-2,-2)-- (-3,-2);
\draw[line width=1pt,color=ffqqqq](-3,-2)-- (-3,-3);
\draw[line width=1pt,color=ffqqqq](-2,-3)-- (-1,-3);
\draw[line width=1pt,color=ffqqqq](-1,-3)-- (-1,-2);
\draw[line width=1pt,color=ffqqqq](-1,-2)-- (-2,-2);
\draw[line width=1pt,color=ffqqqq](-2,-2)-- (-2,-3);
\draw[line width=1pt,color=ffqqqq](-1,-3)-- (0,-3);
\draw[line width=1pt,color=ffqqqq](0,-3)-- (0,-2);
\draw[line width=1pt,color=ffqqqq](0,-2)-- (-1,-2);
\draw[line width=1pt,color=ffqqqq](-1,-2)-- (-1,-3);
\draw[line width=1pt,color=ffqqqq](-3,-2)-- (-2,-2);
\draw[line width=1pt,color=ffqqqq](-2,-2)-- (-2,-1);
\draw[line width=1pt,color=ffqqqq](-2,-1)-- (-3,-1);
\draw[line width=1pt,color=ffqqqq](-3,-1)-- (-3,-2);
\draw[line width=1pt,color=ffqqqq](-3,-1)-- (-2,-1);
\draw[line width=1pt,color=ffqqqq](-2,-1)-- (-2,0);
\draw[line width=1pt,color=ffqqqq](-2,0)-- (-3,0);
\draw[line width=1pt,color=ffqqqq](-3,0)-- (-3,-1);
\draw[line width=1pt,color=ffqqqq](-3,0)-- (-2,0);
\draw[line width=1pt,color=ffqqqq](-2,0)-- (-2,1);
\draw[line width=1pt,color=ffqqqq](-2,1)-- (-3,1);
\draw[line width=1pt,color=ffqqqq](-3,1)-- (-3,0);
\draw[line width=1pt,color=ffqqqq](-3,1)-- (-2,1);
\draw[line width=1pt,color=ffqqqq](-2,1)-- (-2,2);
\draw[line width=1pt,color=ffqqqq](-2,2)-- (-3,2);
\draw[line width=1pt,color=ffqqqq](-3,2)-- (-3,1);
\draw[line width=1pt,color=ffqqqq](-3,2)-- (-2,2);
\draw[line width=1pt,color=ffqqqq](-2,2)-- (-2,3);
\draw[line width=1pt,color=ffqqqq](-2,3)-- (-3,3);
\draw[line width=1pt,color=ffqqqq](-3,3)-- (-3,2);
\draw[line width=1pt,color=ffqqqq](0,-3)-- (1,-3);
\draw[line width=1pt,color=ffqqqq](1,-3)-- (1,-2);
\draw[line width=1pt,color=ffqqqq](1,-2)-- (0,-2);
\draw[line width=1pt,color=ffqqqq](0,-2)-- (0,-3);
\draw[line width=1pt,color=ffqqqq](1,-3)-- (2,-3);
\draw[line width=1pt,color=ffqqqq](2,-3)-- (2,-2);
\draw[line width=1pt,color=ffqqqq](2,-2)-- (1,-2);
\draw[line width=1pt,color=ffqqqq](1,-2)-- (1,-3);
\draw[line width=1pt,color=ffqqqq](2,-3)-- (3,-3);
\draw[line width=1pt,color=ffqqqq](3,-3)-- (3,-2);
\draw[line width=1pt,color=ffqqqq](3,-2)-- (2,-2);
\draw[line width=1pt,color=ffqqqq](2,-2)-- (2,-3);
\draw[line width=1pt,color=ffqqqq](3,-3)-- (4,-3);
\draw[line width=1pt,color=ffqqqq](4,-3)-- (4,-2);
\draw[line width=1pt,color=ffqqqq](4,-2)-- (3,-2);
\draw[line width=1pt,color=ffqqqq](3,-2)-- (3,-3);
\draw[line width=1pt,color=ffqqqq](-2,-2)-- (-1,-2);
\draw[line width=1pt,color=ffqqqq](-1,-2)-- (-1,-1);
\draw[line width=1pt,color=ffqqqq](-1,-1)-- (-2,-1);
\draw[line width=1pt,color=ffqqqq](-2,-1)-- (-2,-2);
\draw[line width=1pt,color=ffqqqq](-1,-2)-- (0,-2);
\draw[line width=1pt,color=ffqqqq](0,-2)-- (0,-1);
\draw[line width=1pt,color=ffqqqq](0,-1)-- (-1,-1);
\draw[line width=1pt,color=ffqqqq](-1,-1)-- (-1,-2);
\draw[line width=1pt,color=ffqqqq](0,-2)-- (1,-2);
\draw[line width=1pt,color=ffqqqq](1,-2)-- (1,-1);
\draw[line width=1pt,color=ffqqqq](1,-1)-- (0,-1);
\draw[line width=1pt,color=ffqqqq](0,-1)-- (0,-2);
\draw[line width=1pt,color=ffqqqq](1,-2)-- (2,-2);
\draw[line width=1pt,color=ffqqqq](2,-2)-- (2,-1);
\draw[line width=1pt,color=ffqqqq](2,-1)-- (1,-1);
\draw[line width=1pt,color=ffqqqq](1,-1)-- (1,-2);
\draw[line width=1pt,color=ffqqqq](2,-2)-- (3,-2);
\draw[line width=1pt,color=ffqqqq](3,-2)-- (3,-1);
\draw[line width=1pt,color=ffqqqq](3,-1)-- (2,-1);
\draw[line width=1pt,color=ffqqqq](2,-1)-- (2,-2);
\draw[line width=1pt,color=ffqqqq](3,-2)-- (4,-2);
\draw[line width=1pt,color=ffqqqq](4,-2)-- (4,-1);
\draw[line width=1pt,color=ffqqqq](4,-1)-- (3,-1);
\draw[line width=1pt,color=ffqqqq](3,-1)-- (3,-2);
\draw[line width=1pt,color=ffqqqq](-2,-1)-- (-1,-1);
\draw[line width=1pt,color=ffqqqq](-1,-1)-- (-1,0);
\draw[line width=1pt,color=ffqqqq](-1,0)-- (-2,0);
\draw[line width=1pt,color=ffqqqq](-2,0)-- (-2,-1);
\draw[line width=1pt,color=ffqqqq](-1,-1)-- (0,-1);
\draw[line width=1pt,color=ffqqqq](0,-1)-- (0,0);
\draw[line width=1pt,color=ffqqqq](0,0)-- (-1,0);
\draw[line width=1pt,color=ffqqqq](-1,0)-- (-1,-1);
\draw[line width=1pt,color=ffqqqq](0,-1)-- (1,-1);
\draw[line width=1pt,color=ffqqqq](1,-1)-- (1,0);
\draw[line width=1pt,color=ffqqqq](1,0)-- (0,0);
\draw[line width=1pt,color=ffqqqq](0,0)-- (0,-1);
\draw[line width=1pt,color=ffqqqq](1,-1)-- (2,-1);
\draw[line width=1pt,color=ffqqqq](2,-1)-- (2,0);
\draw[line width=1pt,color=ffqqqq](2,0)-- (1,0);
\draw[line width=1pt,color=ffqqqq](1,0)-- (1,-1);
\draw[line width=1pt,color=ffqqqq](2,-1)-- (3,-1);
\draw[line width=1pt,color=ffqqqq](3,-1)-- (3,0);
\draw[line width=1pt,color=ffqqqq](3,0)-- (2,0);
\draw[line width=1pt,color=ffqqqq](2,0)-- (2,-1);
\draw[line width=1pt,color=ffqqqq](3,-1)-- (4,-1);
\draw[line width=1pt,color=ffqqqq](4,-1)-- (4,0);
\draw[line width=1pt,color=ffqqqq](4,0)-- (3,0);
\draw[line width=1pt,color=ffqqqq](3,0)-- (3,-1);
\draw[line width=1pt,color=ffqqqq](-2,0)-- (-1,0);
\draw[line width=1pt,color=ffqqqq](-1,0)-- (-1,1);
\draw[line width=1pt,color=ffqqqq](-1,1)-- (-2,1);
\draw[line width=1pt,color=ffqqqq](-2,1)-- (-2,0);
\draw[line width=1pt,color=ffqqqq](-1,0)-- (0,0);
\draw[line width=1pt,color=ffqqqq](0,0)-- (0,1);
\draw[line width=1pt,color=ffqqqq](0,1)-- (-1,1);
\draw[line width=1pt,color=ffqqqq](-1,1)-- (-1,0);
\draw[line width=1pt,color=ffqqqq](0,0)-- (1,0);
\draw[line width=1pt,color=ffqqqq](1,0)-- (1,1);
\draw[line width=1pt,color=ffqqqq](1,1)-- (0,1);
\draw[line width=1pt,color=ffqqqq](0,1)-- (0,0);
\draw[line width=1pt,color=ffqqqq](-2,1)-- (-1,1);
\draw[line width=1pt,color=ffqqqq](-1,1)-- (-1,2);
\draw[line width=1pt,color=ffqqqq](-1,2)-- (-2,2);
\draw[line width=1pt,color=ffqqqq](-2,2)-- (-2,1);
\draw[line width=1pt,color=ffqqqq](-1,1)-- (0,1);
\draw[line width=1pt,color=ffqqqq](0,1)-- (0,2);
\draw[line width=1pt,color=ffqqqq](0,2)-- (-1,2);
\draw[line width=1pt,color=ffqqqq](-1,2)-- (-1,1);
\draw[line width=1pt,color=ffqqqq](0,1)-- (1,1);
\draw[line width=1pt,color=ffqqqq](1,1)-- (1,2);
\draw[line width=1pt,color=ffqqqq](1,2)-- (0,2);
\draw[line width=1pt,color=ffqqqq](0,2)-- (0,1);
\draw[line width=1pt,color=ffqqqq](1,1)-- (2,1);
\draw[line width=1pt,color=ffqqqq](2,1)-- (2,2);
\draw[line width=1pt,color=ffqqqq](2,2)-- (1,2);
\draw[line width=1pt,color=ffqqqq](1,2)-- (1,1);
\draw[line width=1pt,color=ffqqqq](2,1)-- (3,1);
\draw[line width=1pt,color=ffqqqq](3,1)-- (3,2);
\draw[line width=1pt,color=ffqqqq](3,2)-- (2,2);
\draw[line width=1pt,color=ffqqqq](2,2)-- (2,1);
\draw[line width=1pt,color=ffqqqq](3,1)-- (4,1);
\draw[line width=1pt,color=ffqqqq](4,1)-- (4,2);
\draw[line width=1pt,color=ffqqqq](4,2)-- (3,2);
\draw[line width=1pt,color=ffqqqq](3,2)-- (3,1);
\draw[line width=1pt,color=ffqqqq](1,0)-- (2,0);
\draw[line width=1pt,color=ffqqqq](2,0)-- (2,1);
\draw[line width=1pt,color=ffqqqq](2,1)-- (1,1);
\draw[line width=1pt,color=ffqqqq](1,1)-- (1,0);
\draw[line width=1pt,color=ffqqqq](2,0)-- (3,0);
\draw[line width=1pt,color=ffqqqq](3,0)-- (3,1);
\draw[line width=1pt,color=ffqqqq](3,1)-- (2,1);
\draw[line width=1pt,color=ffqqqq](2,1)-- (2,0);
\draw[line width=1pt,color=ffqqqq](3,0)-- (4,0);
\draw[line width=1pt,color=ffqqqq](4,0)-- (4,1);
\draw[line width=1pt,color=ffqqqq](4,1)-- (3,1);
\draw[line width=1pt,color=ffqqqq](3,1)-- (3,0);
\draw[line width=1pt,color=ffqqqq](3,2)-- (4,2);
\draw[line width=1pt,color=ffqqqq](4,2)-- (4,3);
\draw[line width=1pt,color=ffqqqq](4,3)-- (3,3);
\draw[line width=1pt,color=ffqqqq](3,3)-- (3,2);
\draw[line width=1pt,color=ffqqqq](-2,2)-- (-1,2);
\draw[line width=1pt,color=ffqqqq](-1,2)-- (-1,3);
\draw[line width=1pt,color=ffqqqq](-1,3)-- (-2,3);
\draw[line width=1pt,color=ffqqqq](-2,3)-- (-2,2);
\draw[line width=1pt,color=ffqqqq](-1,2)-- (0,2);
\draw[line width=1pt,color=ffqqqq](0,2)-- (0,3);
\draw[line width=1pt,color=ffqqqq](0,3)-- (-1,3);
\draw[line width=1pt,color=ffqqqq](-1,3)-- (-1,2);
\draw[line width=1pt,color=ffqqqq](-1,3)-- (0,3);
\draw[line width=1pt,color=ffqqqq](0,3)-- (0,4);
\draw[line width=1pt,color=ffqqqq](0,4)-- (-1,4);
\draw[line width=1pt,color=ffqqqq](-1,4)-- (-1,3);
\draw[line width=1pt,color=ffqqqq](0,2)-- (1,2);
\draw[line width=1pt,color=ffqqqq](1,2)-- (1,3);
\draw[line width=1pt,color=ffqqqq](1,3)-- (0,3);
\draw[line width=1pt,color=ffqqqq](0,3)-- (0,2);
\draw[line width=1pt,color=ffqqqq](0,3)-- (1,3);
\draw[line width=1pt,color=ffqqqq](1,3)-- (1,4);
\draw[line width=1pt,color=ffqqqq](1,4)-- (0,4);
\draw[line width=1pt,color=ffqqqq](0,4)-- (0,3);
\draw[line width=1pt,color=ffqqqq](1,2)-- (2,2);
\draw[line width=1pt,color=ffqqqq](2,2)-- (2,3);
\draw[line width=1pt,color=ffqqqq](2,3)-- (1,3);
\draw[line width=1pt,color=ffqqqq](1,3)-- (1,2);
\draw[line width=1pt,color=ffqqqq](1,3)-- (2,3);
\draw[line width=1pt,color=ffqqqq](2,3)-- (2,4);
\draw[line width=1pt,color=ffqqqq](2,4)-- (1,4);
\draw[line width=1pt,color=ffqqqq](1,4)-- (1,3);
\draw[line width=1pt,color=ffqqqq](2,2)-- (3,2);
\draw[line width=1pt,color=ffqqqq](3,2)-- (3,3);
\draw[line width=1pt,color=ffqqqq](3,3)-- (2,3);
\draw[line width=1pt,color=ffqqqq](2,3)-- (2,2);
\draw[line width=1pt,color=ffqqqq](2,3)-- (3,3);
\draw[line width=1pt,color=ffqqqq](3,3)-- (3,4);
\draw[line width=1pt,color=ffqqqq](3,4)-- (2,4);
\draw[line width=1pt,color=ffqqqq](2,4)-- (2,3);
\draw[line width=1pt,color=ffqqqq](3,3)-- (4,3);
\draw[line width=1pt,color=ffqqqq](4,3)-- (4,4);
\draw[line width=1pt,color=ffqqqq](4,4)-- (3,4);
\draw[line width=1pt,color=ffqqqq](3,4)-- (3,3);
\draw [line width=2pt,color=qqqqff] (-9,3)-- (9,4);
\draw [line width=2pt,color=qqqqff] (-9,-1)-- (9,1);
\draw [line width=2pt,color=qqqqff] (-9,-6)-- (9,-2);
\draw [->,line width=1pt] (2.4,-4.8) -- (3,-4.8);
\draw [->,line width=1pt] (2.4,-4.8) -- (2,-4.8);
\draw [->,line width=1pt] (-6.004505832087556,3.3006812054859775) -- (-5.998009791280535,4.15559392147635);
\draw [->,line width=1pt] (-6.004505832087556,3.3006812054859775) -- (-5.998009791280535,2.1651764426523483);
\draw (-7.2805507209089106,4.012675516565678) node[anchor=north west] {$O(h^\eta)$};
\draw [->,line width=1pt] (0,5) -- (4,5);
\draw [->,line width=1pt] (0,5) -- (-4,5);
\draw (-1.617349075285628,6.118133370140026) node[anchor=north west] {$O(h^{\beta- \varepsilon_1})$};
\draw (1.9088648908632837,-5.059622551681511) node[anchor=north west] {$O(h^{\beta- \frac{\varepsilon_1}{2}})$};
\draw [color=ffzzqq](4.459378876087497,2.8315650133410437) node[anchor=north west] {$\widehat{X}_i$};
\draw [color=ffqqqq](3.826029765662692,-4.083922570756814) node[anchor=north west] {$X_{i,j}$};
\draw [color=qqqqff](-7.077845459758943,1.9585702935663143) node[anchor=north west] {$\mathcal{V}_{t,\eta}(\widetilde{x})$};
\end{tikzpicture}
\caption{An illustration of (\ref{eq:NumberOfIntersection})}\label{Fig:Intersec}
\end{center}
\end{figure}
\begin{proof}
\red{Set $\eta := \beta - \frac{3}{4}\varepsilon_1$, so that $h^\eta$ is an intermediate scale between the diameter of the $X_{i,j}$ and that of $\wih{X}_i$.}
For every $x\in X_{i,j}$,  and every $\wit{x}\in A_{x,t}$, 
we define
\begin{equation*}
	\mathcal{V}_{t, \eta}(\wit{x}):=  \left\{ y\in X ; \exists s\in [-t, t]   \text{ such that }
\mathrm{dist}_{X} \left( \Phi^{s}(\rho_{\wit{x}}), y\right)< h^{\eta}\right\}.
\end{equation*}
This set is the $h^{\eta}$-neighbourhood of a geodesic segment 
of length  $2t$, so that it \red{has volume $O(t h^{(d-1) \eta})$.  \MI{Thanks to Remark \ref{Rem:LeavingBall},} we see that the set of $s$ such that $ \Phi^{s}(\rho_{\wit{x}}) \in \wih{X}_i$ \MI{is included in a union of $O(t)$ intervals, each of them having length $O(h^{\beta - \varepsilon_1})$. }  We deduce that $\mathcal{V}_{t, \eta}(\wit{x})\cap \wih{X}_i$ has volume $O(t h^{(d-1) \eta + \beta - \varepsilon_1})$, so that 
the number of different sets $X_{i,j'}$ intersected by the set $\mathcal{V}_{t, \eta}(\wit{x})$ is 
\begin{equation}\label{eq:NumberOfIntersection}
\begin{aligned}
O(t h^{(d-1)( \eta - \beta) + \varepsilon_1 \left ( -1 +d/2\right)})&=   O( t h^{-d \frac{\varepsilon_1}{2}} h^{ \frac{(d-1) \varepsilon_1}{4}})\\
&= O( h^{-d \frac{\varepsilon_1}{2}} h^{ \frac{(d-1) \varepsilon_1}{4} - c \gd})
\end{aligned}
\end{equation}}
 for some $c>0$. \red{This is illustrated in Figure \ref{Fig:Intersec}.}
\par
Now, let $x'\in X_{i,j}$, so that $\mathrm{dist}_X(x,x') < h^{\beta- 
\frac{\varepsilon_1}{2}}$, and let $\wit{x}'_{\wit{x}}$ be a lift 
of $\wit{x}'$ such that $\mathrm{dist}_{\wit{X}} (\wit{x}, \wit{x}') 
= \mathrm{dist}_{X} (x,x')$. Thanks to Lemma \ref{Lem:Gron}, we have
\begin{equation*}
	\mathrm{dist}_{X} \left(\Phi^s(\rho_{\wit{x}}), 
	\Phi^s(\rho_{\wit{x}'}) \right)
	 < h^{\beta - \frac{1}{2}\varepsilon_1 - c'\gd},
\end{equation*}
for some $c'>0$, provided $h$ is small enough. Hence,
\begin{equation*}
	\mathcal{V}_{t, \eta}(\wit{x}'_{\wit{x}}) \subset \mathcal{V}_{t, \eta - \varepsilon_2}(\wit{x}),
\end{equation*}
for any $\varepsilon_2>0$, provided that $\gd$ is small enough \red{(recall that $\eta < \beta - \frac{1}{2} \varepsilon_1$).} 
Therefore, the set $\bigcup_{\wit{x}\in A_{x,t}} \bigcup_{x'\in X_{i,j}} 
\mathcal{V}_{t, \eta}(\wit{x}'_{\wit{x}})$ intersects at most 
$O( h^{-d \frac{\varepsilon_1}{2}} h^{ \frac{(d-1) \varepsilon_1}{4} - c \gd})$ different sets $X_{i,j'}$.  
Taking \red{$\gamma = \frac{(d-1) \varepsilon_1}{4} - c' \gd$} and using \red{the last statement in}
Proposition \ref{PropSection8}, the result follows.
\end{proof}
Let $p\in \N$. Let us estimate
\begin{align*}
\E_\omega \left[ \left| Z- \E[Z] \right|^{2p} \right] 
&= \E_\omega \left[ \left|\sum_{j\in J_{h,i}}
	 \left(Z_j - \E[Z_j]\right) \right|^{2p} \right]\\
&= \sum_{j_1, ..., j_{2p} \in J_{h,i}} \E_\omega 
	\left[  \left(Z_{j_1} - \E_\omega[Z_{j_1}]\right) \cdots 
		 \overline{\left(Z_{j_{2p}} - \E_\omega[Z_{j_{2p}}]\right)}
    \right],
\end{align*}
where the last $p$ factors come with complex conjugates. Let us write 
\begin{equation*}
\mathcal{J}_p := \left\{(j_1,..., j_{2p})\in (J_{h,i})^{2p} 
\text{ such that } \E_\omega
 \left[  \left(Z_{j_1} - \E_\omega[Z_{j_1}]\right) \cdots 
 	\overline{\left(Z_{j_{2p}} - \E_\omega[Z_{j_{2p}}]\right)} 
\right] \neq 0 \right\}.
\end{equation*}
\begin{lem}\label{lem:BoundNumberTerms}
We have
\begin{equation*}
|\mathcal{J}_p| = O\left(h^{p \left(- d \varepsilon_1+ \gamma\right)}\right).
\end{equation*}
\end{lem}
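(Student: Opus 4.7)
The plan is to exploit the independence structure from the previous lemma: $Z_j$ is independent of $\{Z_{j'}\}_{j'\in J_{h,i}\setminus\mathcal{C}_j}$, with $|\mathcal{C}_j|\le h^{-d\varepsilon_1/2+\gamma}$. First I would symmetrize the relation by setting $\widetilde{\mathcal{C}}_j:=\mathcal{C}_j\cup\{j'\in J_{h,i}:\,j\in\mathcal{C}_{j'}\}$, so that $j'\in\widetilde{\mathcal{C}}_j$ iff $j\in\widetilde{\mathcal{C}}_{j'}$; after possibly decreasing $\gamma$ slightly, the bound $|\widetilde{\mathcal{C}}_j|=O(h^{-d\varepsilon_1/2+\gamma})$ still holds.

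Next, to each tuple $(j_1,\dots,j_{2p})\in\mathcal{J}_p$ I would associate the graph $G$ on the vertex set $\{1,\dots,2p\}$ by declaring $k\sim\ell$ whenever $j_\ell\in\widetilde{\mathcal{C}}_{j_k}$. The key observation is that every vertex of $G$ must have degree at least $1$: if some position $k$ were isolated, then $Z_{j_k}$ would be independent from all the other $Z_{j_\ell}$ with $\ell\neq k$, and the expectation defining membership in $\mathcal{J}_p$ would factor out the zero-mean term $\E_\omega[Z_{j_k}-\E_\omega Z_{j_k}]=0$, contradicting the nonvanishing condition. Consequently, every connected component of $G$ has size at least $2$.

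I would then bound $|\mathcal{J}_p|$ by summing over the (finitely many, depending only on $p$) possible combinatorial types of $G$. For a fixed partition of $\{1,\dots,2p\}$ into $q$ components of sizes $s_1,\dots,s_q$ with $s_i\ge 2$ (so $q\le p$), I fix a spanning tree of each component and pick the index at its root freely in $J_{h,i}$; each non-root position is then forced to lie in $\widetilde{\mathcal{C}}$ of its parent's index. This yields the count
$$
|J_{h,i}|^{q}\cdot|\widetilde{\mathcal{C}}|^{2p-q}
=O\bigl(h^{-pd\varepsilon_1+(2p-q)\gamma}\bigr)\le O\bigl(h^{p(-d\varepsilon_1+\gamma)}\bigr),
$$
the last inequality using $q\le p$ (hence $2p-q\ge p$) together with $h<1$. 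Summing over the bounded number of partition types then yields the claimed estimate.

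The main (mild) subtlety is the symmetrization step and tracking the adjusted constant in $\gamma$; the combinatorial counting itself is elementary, and I do not expect a serious obstacle in filling in the details.
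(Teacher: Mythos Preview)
Your argument is correct and rests on the same core idea as the paper's: every index in a tuple of $\mathcal{J}_p$ must be ``linked'' to at least one other via the dependency sets $\mathcal{C}_j$, so there are at most $p$ free choices and $2p-q\ge p$ constrained ones. The packaging, however, differs. The paper does not build a dependence graph but instead runs a descending two-step recursion on
\[
\mathfrak{J}_{p,k}:=\max_{(i_1,j'_{i_1}),\dots,(i_k,j'_{i_k})}\bigl|\{(j_1,\dots,j_{2p})\in\mathcal{J}_p:\ j_{i_\ell}=j'_{i_\ell}\ \text{for }\ell\le k\}\bigr|,
\]
showing $\mathfrak{J}_{p,k}\le C\,h^{-d\varepsilon_1+\gamma}\,\mathfrak{J}_{p,k+2}$ by the dichotomy ``either the next index lies in an already-seen $\mathcal{C}$, or else some further index must lie in its $\mathcal{C}$''. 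Your graph/spanning-tree count is the standard probabilistic formulation for moment bounds of weakly dependent sums and is arguably cleaner; the paper's recursion is more hands-on but encodes exactly the same combinatorics.

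The one point where you do a little more than the paper is the symmetrization $\widetilde{\mathcal{C}}_j=\mathcal{C}_j\cup\{j':j\in\mathcal{C}_{j'}\}$. You should not leave the bound $|\widetilde{\mathcal{C}}_j|=O(h^{-d\varepsilon_1/2+\gamma})$ as an assertion: it does not follow from the \emph{statement} of the preceding lemma alone. It does follow from its \emph{proof}, since $\mathcal{C}_j$ is built from tubular neighbourhoods of geodesic segments over times $s\in[-t,t]$, making the relation essentially symmetric up to a harmless loss in the exponent; a one-line reference to that construction would close the gap. Alternatively, you can avoid symmetrization entirely by orienting edges ($k\to\ell$ iff $j_\ell\in\mathcal{C}_{j_k}$) and noting that the independence statement forces every vertex to have out-degree $\ge 1$, hence no isolated vertex in the underlying undirected graph; the spanning-tree count then goes through using only $|\mathcal{C}_j|$.
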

We will prove Lemma \ref{lem:BoundNumberTerms} below. For now we continue with 
the proof of \eqref{eq:PointwiseLimit_n1}. 
\par 
Using the fact that 
\begin{equation*}
	|Z_j| \leq \sup |F| \times \frac{\mathrm{Vol}(X_{i,j}) }{\mathrm{Vol} (\wih{X}_i)} 
\leq \sup |F| \times C h^{\frac{d \varepsilon_1}{2}},
\end{equation*}
we deduce that
\begin{align*}
\E_\omega \left[ \left| Z- \E_\omega[Z] \right|^{2p} \right] 
&\leq |\mathcal{J}_p| (\sup |F| \times C h^{\frac{d \varepsilon_1}{2}})^{2p} \\
&\leq C(p,F) h^{p \gamma}. 
\end{align*}
We may then apply Markov inequality to deduce that
\begin{equation*}
	\mathbb{P} \left[ |Z- \E_\omega[Z]| 
	> h^{\MI{\frac{\gamma}{4}}}\right] 
	\leq C_p h^{p \red{\frac{\gamma}{2}}}.
\end{equation*}

Since this is valid for any $p\in \N$, we have found an $\varepsilon_2>0$ such that
\begin{equation*}
	\mathbb{P} \left[ |Z- \E_\omega[Z]| > h^{\varepsilon_2}\right] = O(h^\infty),
\end{equation*}
proving \eqref{eq:PointwiseLimit_n1}.  
\begin{proof}[Proof of Lemma \ref{lem:BoundNumberTerms}]
Given $k \in \{0,..., \red{2p}\}$ and $(i_1, j'_{i_1}), ..., (i_k, j'_{i_k}) \in \{1,...,2p\}
\times J_{h,i}$, let us write
\begin{equation*}
	\mathcal{J}_{p ; (i_1, j'_{i_1}), ..., (i_k, j'_{i_k})}
	:= \left\{ (j_1, ..., j_{\red{2}p})\in \mathcal{J}_p 
	\text{ with } j_{i_\ell} = j'_{i_{\ell}} 
	\text{ for all } \ell \in \{1,...,k\}\right\}.
\end{equation*}
In particular, $\mathcal{J}_p = \mathcal{J}_{p, \emptyset}$, 
corresponding to $k=0$. We shall write
\begin{equation*}
	\mathfrak{J}_{p,k} 
	:= \max_{(i_1, j'_{i_1}), ..., (i_k, j'_{i_k})} 
	|\mathcal{J}_{p ; (i_1, j'_{i_1}), ..., (i_k, j'_{i_k})}|,
\end{equation*}
\MI{where the indices $i_1,..., i_k$ are all different.}
We have $\mathfrak{J}_{p, 2p} =1$. We will thus do a descending 
recurrence to estimate $\mathfrak{J}_{p,k}$. First of all, 
note that, for any $k \in \{0,..., 2p-2\}$, any 
$(i_1, j'_{i_1}), ..., (i_k, j'_{i_k}) \in \{1,...,2p\} \times J_{h,i}$ 
and any $i_{k+1}\in \{1,..., 2p\} \setminus \{i_1,..., i_k\}$, we have
\begin{equation*}
	|\mathcal{J}_{p ; (i_1, j'_{i_1}), ..., (i_k, j'_{i_k})}| 
	\leq \sum_{j'_{i_{k+1}}}  
	|\mathcal{J}_{p ; (i_1, j'_{i_1}), ..., (i_{k+1}, j'_{i_{k+1}})}|,
\end{equation*}
so that
\begin{equation}\label{eq:RecCombinFacile}
\mathfrak{J}_{p,k} \leq |J_{h,i}| \times \mathfrak{J}_{p,k+1} \red{\leq C h^{-d \frac{\varepsilon_1}{2}} \mathfrak{J}_{p,k+1}}.
\end{equation}
We shall write a more precise recurrence estimate, using 
independence. Indeed, suppose that $(j_1,..., j_{2p}) 
\in \mathcal{J}_{p ; (i_1, j'_{i_1}), ..., (i_k, j'_{i_k})}$ 
for some $k\leq 2p-2$, and let $i_{k+1}\in \{1,..., 2p\} \setminus \{i_1,..., i_k\}$.
\par 
Suppose that $j_{i_{k+1}} \notin \bigcup_{\ell=1}^k \mathcal{C}_{j_{\red{i_\ell}}}$. 
Then, there must exist $i_{k+2} \in \{1,..., 2p\} \setminus \{i_1,..., i_{k+1}\}$ 
with $j_{i_{k+2}} \in \mathcal{C}_{j_{i_{k+1}}}$. Indeed, if this were not 
the case, the variable $Z_{i_{k+1}} - \E_\omega[Z_{i_{k+1}}]$ (or its 
complex conjugate) would be independent from all the other variables 
appearing in 
\begin{equation*}
	\E_\omega \left[  \left(Z_{j_1} - \E_\omega[Z_{j_1}]\right) 
\cdots  \overline{\left(Z_{j_{2p}} - \E_\omega[Z_{j_{2p}}]\right)} \right],
\end{equation*}
so that the expectation would be zero. Therefore, if 
$j_{i_{k+1}} \notin \bigcup_{\ell=1}^k \mathcal{C}_{j_{\red{i_\ell}}}$, we have 
\begin{equation*}
 \mathcal{J}_{p ; (i_1, j'_{i_1}), ..., (i_k, j'_{i_k}), (i_{k+1}, j_{i_{k+1}})} 
 \subset \bigcup_{i_{k+2} \in \{1,..., 2p\} \setminus \{i_1,..., i_{k+1}\}} 
 	\bigcup_{j_{i_{k+2}} \in \mathcal{C}_{j_{i_{k+1}}}} 
	 \mathcal{J}_{p ; (i_1, j'_{i_1}), ..., (i_k, j'_{i_k}), (i_{k+2}, j_{i_{k+2}})},
\end{equation*}
so that
\begin{equation}\label{eq:RecIndep}
\forall j_{i_{k+1}} \notin \bigcup_{\ell=1}^k \mathcal{C}_{j_{\red{i_\ell}}}, ~~  
|\mathcal{J}_{p ; (i_1, j'_{i_1}), ..., (i_k, j'_{i_k}), (i_{k+1}, j_{i_{k+1}})}| 
\leq C h^{-d\frac{\varepsilon_1}{2} + \gamma} \mathfrak{J}_{p, k+2}.
\end{equation}
All in all, we may write
\begin{align*}
|\mathcal{J}_{p ; (i_1, j'_{i_1}), ..., (i_k, j'_{i_k})}|
 &\leq \sum_{j_{i_{k+1}}\in J_{h,i}}  
 	|\mathcal{J}_{p ; (i_1, j'_{i_1}), ..., (i_k, j'_{i_k}), (i_{k+1}, j_{i_{k+1}})}|\\
& \leq \sum_{\underset{j_{i_{k+1}} \in 
	\bigcup_{\ell=1}^k \mathcal{C}_{j_{\red{i_\ell}}}}{j_{i_{k+1}}\in J_{h,i}}}  
	|\mathcal{J}_{p ; (i_1, j'_{i_1}), ..., (i_k, j'_{i_k}), (i_{k+1}, j_{i_{k+1}})}| \\
& \phantom{\leq}	
	+\sum_{\underset{j_{i_{k+1}} \notin \bigcup_{\ell=1}^k 
	\mathcal{C}_{j_{\red{i_\ell}}}}{j_{i_{k+1}}\in J_{h,i}}}   
	|\mathcal{J}_{p ; (i_1, j'_{i_1}), ..., (i_k, j'_{i_k}), (i_{k+1}, j_{i_{k+1}})}|\\
&\stackrel{\eqref{eq:RecIndep}}{\leq}  C h^{-d\frac{\varepsilon_1}{2} + \gamma} \mathfrak{J}_{p, k+1} 
+ \sum_{j_{i_{k+1}}\in J_{h,i}} C h^{d\frac{\varepsilon_1}{2} + \gamma} \mathfrak{J}_{p, k+2}\\
&\leq C h^{-d\frac{\varepsilon_1}{2} + \gamma} |J_{h,i}| \mathfrak{J}_{p, k+2},
\end{align*}
thanks to (\ref{eq:RecCombinFacile}).
\par 
We conclude that
\begin{equation*}
	\mathfrak{J}_{p, k} \leq C h^{-d\frac{\varepsilon_1}{2} + \gamma} | J_{h,i}| 
	\times   \mathfrak{J}_{p, k+2}\leq C  h^{-d\varepsilon_1 + \gamma} 
	\times   \mathfrak{J}_{p, k+2},
\end{equation*}
and the lemma follows by a descending induction.
\end{proof}
We may now conclude with the proof of the second part 
of Corollary \ref{cor:LWL},  \red{saying that $\omega$-almost surely, 
$\mathrm{BGF}_{\lambda_a}$ is the local weak limit of 
$\{\e^{\red{-}\frac{i}{h_j}t_{h_j}P_{h_j}^\delta}f_{h_j}\}_{h_j}$ in the 
frame $V$}.
\begin{proof}[Proof of Corollary \ref{cor:LWL}.]
Let $0<\varepsilon<1$.  Thanks to (\ref{eq:Step2SobNormEst}) along with 
(the proof of) \cite[Lemma 1]{LWL}, there exists a compact set 
$\mathcal{K}_{\varepsilon} \subset C^\infty(\R^d)$ such that for all 
$j\in \N$ and all $\omega$,
\begin{equation*}
	\mathbb{P}_{\mathrm{x}}[
		\e^{\frac{i}{h_j}t_{h_j}P_{h_j}^\delta}f_{h_j}(\exp_{\textsc{x}}(h_j\cdot)))
		\notin \mathcal{K}_{\varepsilon}] < \varepsilon.
\end{equation*}
The space $C_b(\mathcal{K}_\varepsilon)$ is separable, so we may take a 
sequence of functionals $(F_n)$ that is dense in 
$C_b(\mathcal{K}_\varepsilon)$.  We may extend each $F_n$ to $C^\infty(\R^d)$ 
so that $\|F_n\|_{C^\infty(\R^d)} = \|F_n\|_{\mathcal{K}_{\varepsilon}}$.
\par 
By the first part of Corollary \ref{cor:LWL}, $\omega$-almost surely, we have 
for all $n\in \N$
\begin{equation*}
	\erw_{\mathrm{x}}[
		F_n(\e^{\frac{i}{h_j}t_{h_j}P_{h_j}^\delta}f_{h_j}(\exp_{\textsc{x}}(h_j\cdot)))] 
		\to  \E_{\mathrm{BGF}_{\lambda_a}}[F_n], \quad j\to \infty.
\end{equation*}
Now,  let $F\in C_b (C^\infty (\R^d))$, and let $n\in \N$ be such that 
$|F(\psi)-F_{n}(\psi)|< \varepsilon$ for all $\psi \in \mathcal{K}_\varepsilon$. 
In particular,  for all $j\in \N$ and all $\omega$, we have
\begin{equation*}
	\left|\erw_{\mathrm{x}}[
		(F- F_n)(\e^{\frac{i}{h_j}t_{h_j}P_{h_j}^\delta}f_{h_j}(\exp_{\textsc{x}}(h_j\cdot)))] \right| \leq \varepsilon (1+ \| F\| + \|F_n \|)\leq \varepsilon (2+ 2\| F\|),$$
		so that, $\omega$-almost surely,
		$$\limsup_{j\to \infty}  \left|\erw_{\mathrm{x}}[
		F(\e^{\frac{i}{h_j}t_{h_j}P_{h_j}^\delta}f_{h_j}(\exp_{\textsc{x}}(h_j\cdot)))] 
		-  \E_{\mathrm{BGF}_{\lambda_a}}[F] \right| \leq  \varepsilon (2+ 2\| F\|).
\end{equation*}
Since this is true for all $F$ and all $\varepsilon$, the result follows.
\end{proof}
\end{document}